\def\MC{\multicolumn{1}{c}{}}
\newcommand{\LandauO}{\mathcal{O}}
\newcommand{\GK}{\mathbb{K}}
\newcommand{\GF}{\mathbb{F}}
\newcommand{\Cgf}{{\sf C}}
\newcommand{\Dgf}{{\sf D}}
\newcommand{\Egf}{{\sf E}}
\newcommand{\Ggf}{{\sf G}}
\newcommand{\Lgf}{{\sf L}}
\newcommand{\Pgf}{{\sf P}}
\newcommand{\Qgf}{{\sf Q}}
\newcommand{\Rgf}{{\sf R}}
\newcommand{\Sgf}{{\sf S}}
\newcommand{\Dnn}{{\mathcal D}}
\newcommand{\Enn}{{\mathcal E}}
\newcommand{\Fnn}{{\mathcal F}}
\newcommand{\Pnn}{{\mathcal P}}
\newcommand{\Knn}{{\mathcal K}}
\newcommand{\Lnn}{{\mathcal L}}
\newcommand{\Mnn}{{\mathcal M}}
\newcommand{\Znn}{{\mathcal Z}}
\newcommand{\Brm}{\mathrm{B}}
\newcommand{\Erm}{\mathrm{E}}
\newcommand{\Grm}{\mathrm{G}}
\newcommand{\Hrm}{\mathrm{H}}
\newcommand{\Srm}{\mathrm{S}}
\newcommand{\Trm}{\mathrm{T}}
\newcommand{\Xrm}{\mathrm{X}}
\newcommand{\Yrm}{\mathrm{Y}}
\newcommand{\al}{\alpha}
\newcommand{\Xz}{\chi}
\DeclareMathOperator{\Pol}{Pol}
\newcommand{\Cyl}{{\mathcal C}}
\newcommand{\Strip}{{\mathcal S}}
\newtheorem{Theorem}{Theorem}[section]
\newtheorem{Lemma}[Theorem]{Lemma}
\newtheorem{Proposition}[Theorem]{Proposition}
\newtheorem{Definition}[Theorem]{Definition}
\newtheorem{Corollary}[Theorem]{Corollary}
\newtheorem{Prediction}[Theorem]{Prediction}
\theoremstyle{definition}
\newtheorem{Remark}[Theorem]{Remark}
\def\qed{$\hfill{\vrule height 3pt width 5pt depth 2pt}$}
\def\qee{$\hfill{\Box}$}
\newcommand{\beq}{\begin{equation}}
  \newcommand{\eeq}{\end{equation}}
\def\emm#1,{{\em #1}}
\def\section{\@startsection{section}{1}%
  \z@{.7\linespacing\@plus\linespacing}{.5\linespacing}%
  {\normalfont\bfseries\scshape\centering}}
\def\subsection{\@startsection{subsection}{2}%
  \z@{.5\linespacing\@plus\linespacing}{.5\linespacing}%
  {\normalfont\bfseries\scshape}}
\def\subsubsection{\@startsection{subsubsection}{3}%
  \z@{.5\linespacing\@plus\linespacing}{-.5em}
  {\normalfont\bfseries}}
\newcommand{\ns}{\mathbb{N}}
\newcommand{\zs}{\mathbb{Z}}
\newcommand{\qs}{\mathbb{Q}}
\newcommand{\rs}{\mathbb{R}}
\newcommand{\cs}{\mathbb{C}}
\newcommand{\fps}{formal power series}
\newcommand{\bm}[1]{\mbox{\boldmath \ensuremath{#1}}}
\DeclareMathOperator{\vv}{v}
\DeclareMathOperator{\ff}{f}
\DeclareMathOperator{\ee}{e}
\DeclareMathOperator{\id}{id}
\DeclareMathOperator{\od}{od}
\DeclareMathOperator{\iq}{iq}
\DeclareMathOperator{\oc}{oc}
\newcommand{\gf}{generating function}
\newcommand{\gfs}{generating functions}
\newcommand{\om}{\omega} 
\renewcommand{\epsilon}{\varepsilon}
\begin{document}
\title{Refined enumeration of planar Eulerian orientations}

\author[M. Bousquet-M\'elou]{Mireille Bousquet-M\'elou}

\author[A. Elvey Price]{Andrew Elvey Price}

\thanks{MBM was partially supported by the ANR projects DeRerumNatura (ANR-19-CE40-0018) and Combiné (ANR-19-CE48-0011). AEP was partially
  supported by ACEMS in the form of a top up scholarship and travel stipend, a 2017 Nicolas Baudin travel grant, an Australian government research training program scholarship, the ERC project COMBINEPIC (Grant Agreement No. 759702) and the ANR project IsOMa (ANR-21-CE48-0007). MBM and AEP were also partially supported by the ANR project CartesEtPlus (ANR-23-CE48-0018).}

\address{MBM: CNRS, LaBRI, Universit\'e de Bordeaux, 351 cours de la
  Lib\'eration,  F-33405 Talence Cedex, France}

\address{AEP: CNRS, Institut Denis Poisson, Universit\'e de Tours, Parc Grandmont, 37200, Tours, France} 
\email{bousquet@labri.fr, andrew.elvey@univ-tours.fr}

\subjclass[2020]{05A15, 05A16, 82-10}
\keywords{Enumeration, planar maps, Eulerian orientations, six vertex model, Jacobi $\theta$-function, differentially algebraic series}
\maketitle

\begin{abstract}
  We address the enumeration of Eulerian  orientations of quartic (i.e., 4-valent) planar maps according to three parameters: the 
  number of vertices, the number of \emm alternating, vertices (having in/out/in/out incident edges), and the number of clockwise oriented faces. This is a refinement of the six vertex model studied by Kostov, then Zinn-Justin and Elvey Price, where one only considers the first two parameters. Via a bijection of Ambj\o rn and Budd, our problem is equivalent to the enumeration of  Eulerian partial orientations of \emm general, planar maps, counted by the 
  number of edges, the number of undirected edges, and the number of vertices.

  We first derive from combinatorial arguments a system of functional equations characterising the associated trivariate series $\Qgf(t,\omega,v)$. We then derive from this system a  compact characterisation of this series. We use it to determine $\Qgf(t,\omega,v)$  in three two-parameter cases. The first two cases  correspond to setting the variable $\omega$ counting alternating vertices (or undirected edges after the AB bijection) to $0$ or $1$: when $\omega=0$ we count Eulerian orientations of general planar maps by edges and vertices, and when $\omega=1$ we count Eulerian orientations of quartic maps by vertices and clockwise faces. The final forms of these two series, namely $\Qgf(t,0, v)$ and $\Qgf(t,1,v)$,  refine those obtained by the authors in an earlier paper for $v=1$.  The third case that we solve, namely $v=1$ (but $\omega$ arbitrary), is the standard six-vertex model, for which we provide a new proof of the formula of Elvey Price and Zinn-Justin involving Jacobi theta functions.
  This new  derivation has the advantage that it remains purely in the world of formal power series, not relying on complex analysis. Our results also use a more direct approach to solving the functional equations, in contrast to the guess and check approaches used in previous work. 
\end{abstract}

\section{Introduction}
The enumeration of \emm planar maps, (connected multigraphs embedded in the sphere) is a classical topic in enumerative combinatorics. It started in the sixties, with the seminal work of Tutte based on recursive constructions (see e.g.~\cite{tutte-triangulations,tutte-census-maps}). About fifteen years later, the topic had a second, and independent birth in physics, where new approaches related to matrix integrals were developed~\cite{BIPZ,BIZ}. Since then, the theory of maps has been enriched by enlightening bijections with families of trees, which explain combinatorially why many families of maps are counted by simple numbers and/or simple generating functions (see e.g.~\cite{Sch97,BDG-planaires,bouttier-mobiles}). Beyond the raw enumeration of maps, both the combinatorics and the physics communities showed a very early interest in counting \emm maps equipped with an additional structure, such as a spanning tree, a colouring, a self-avoiding walk or a classical statistical mechanics model~\cite{DK88,Ka86,mullin-boisees,lambda12,tutte-dichromatic-sums}. This paper follows this line of research, by studying planar maps equipped with an \emm Eulerian orientation, of its edges: at each vertex, one finds as many incoming as outgoing edges (Figure~\ref{fig:defs}, right).

This question was first raised a few years ago by Bonichon et al.~\cite{BoBoDoPe}. The question then attracted more interest~\cite{elvey-guttmann17}, and  was finally solved by the authors in a 2020 paper~\cite{mbm-aep1}, both on general planar maps and on \emm quartic, maps (those in which all vertices have degree $4$). It turned out that the latter problem, and in fact a refinement of it known as the \emm $6$-vertex model,, had been studied and solved much earlier in theoretical physics, {in a different form, by Kostov~\cite{kostov} (although his solution appeared to be incorrect). The second author and Zinn-Justin then revisited Kostov's approach and derived the correct solution of the 6-vertex model, in terms of Jacobi's theta functions~\cite{elvey-zinn}. In this refined version,  every vertex at which the in/out orientations alternate is assigned
a weight $\om$. A non-trivial calculation allowed the authors of~\cite{elvey-zinn} to recover the solution obtained in~\cite{mbm-aep1} when $\om=1$, which takes a completely different, much simpler form. 

\begin{figure}[ht]
  \centering
  \includegraphics[width=12cm]{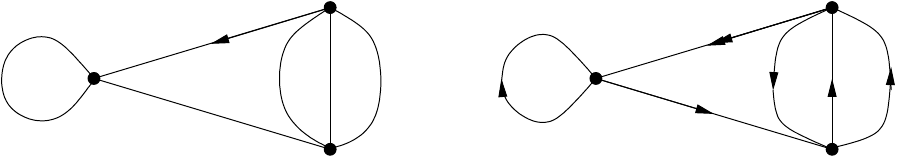}
  \caption{Left: a rooted planar map, which is 4-valent (or:
    quartic). Right: the same map, equipped with an Eulerian
    orientation.}
  \label{fig:defs} 
\end{figure}

In this paper, we refine the above enumeration results for quartic Eulerian orientations, by recording, in addition to the total number of vertices and the number of alternating vertices, the number of clockwise oriented faces. Via a bijection of Ambj\o rn and Budd~\cite{ambjorn-budd}, this problem is equivalent to the enumeration of  Eulerian \emm partial, orientations of \emm general, planar maps, counted by the 
number of edges, the number of undirected edges, and the number of vertices. In a \emm partial, Eulerian orientation, only some edges are oriented, and one requires again that the number of in- and out-edges coincide at each vertex. The case $\om=0$ corresponds to the enumeration of Eulerian orientations of planar maps, counted by edges and vertices.

We address this three-variate counting problem through recursive constructions of quartic Eulerian orientations. These constructions differ from those of earlier papers. They yield a system of functional equations for series that involve, in addition to the three variables corresponding to the three parameters of interest (denoted $t$, $\om$ and $v$), two additional variables~$x$ and~$y$, called \emm catalytic,. Manipulating this system, we derive a new characterisation of the solution, involving this time a single catalytic variable $x$. Finally, we use this to determine the solution in three two-parameter cases:
\begin{itemize}
\item {\bf The case  $\bm{\om=0}$:}  we then count Eulerian orientations of general planar maps by edges and vertices (Theorem~\ref{thm:general}); this refines~\cite{mbm-aep1}, where only the edge number was recorded.
\item {\bf The case  $\bm{\om=1}$:} we then count Eulerian orientations of quartic maps by vertices and clockwise faces.  (Theorem~\ref{thm:quartic}); this refines~\cite{mbm-aep1}, where only the vertex number was recorded.
\item {\bf The case $\bm{v=1}$:} this is the standard six-vertex model (Theorem~\ref{thm:allomega}), and we recover the result of~\cite{elvey-zinn} in terms of theta functions.
\end{itemize}
Our derivations differ significantly from those of~\cite{mbm-aep1,elvey-zinn}. The equations we start from are different, and our solution  does not involve any guessing, nor complex analytic tools or arguments, for instance the classical \emm one-cut assumption, that appears frequently in physics solutions of map problems.

\begin{figure}[ht]
  \centering
  \includegraphics[scale=0.7]{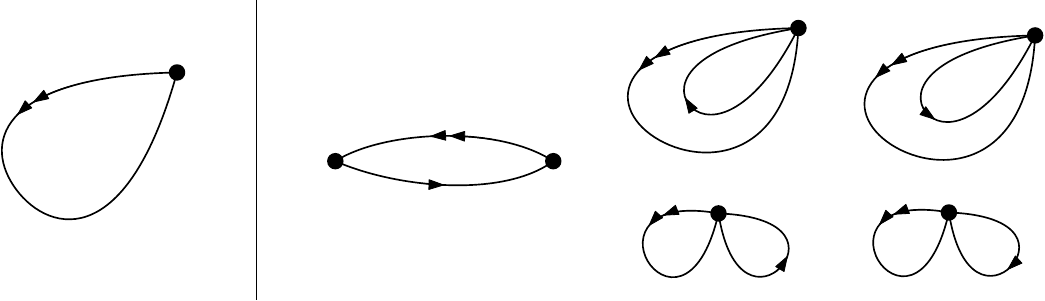}
  \caption{The planar Eulerian orientations with at most
    two edges.
    On the right are the four quartic Eulerian
    orientations with one vertex.
  }
  \label{fig:small}
\end{figure}

Let us now state our results in these three two-parameter cases, beginning with Eulerian orientations of general planar maps (case $\om=0$). As usual in map enumeration, the oriented maps that we count are \emm rooted,, meaning that one edge is marked. Complete definitions will be given in Section~\ref{sec:defs}. 

\begin{Theorem}\label{thm:general}
  Let $\Rgf_0(t,v)\equiv \Rgf_0$ be the unique \fps\ in $t$ with constant term $0$ satisfying
  \beq\label{t-R0-first}
  t=\sum_{n,k\geq0}\frac{1}{n+1}{2n\choose n}{2n+k\choose k}{2n+k\choose n}t^k (v-1)^k\,\Rgf_0^{n+1} . 
  \eeq
  This series has polynomial coefficients in $v$. Then the \gf\ of rooted planar Eulerian orientations, counted by edges (variable $t$) and vertices (variable $v$) is
  \[
    \Ggf
    = -\frac {v} 2+\frac{1}{2t^2}\sum_{{n,k\geq0},{~n+k>0}}\frac{1}{n+1}{2n\choose n}{2n+k\choose k}{2n+k-1\choose n} t^k(v-1)^k\, \Rgf_0^{n+1}.
  \]
  This series is differentially algebraic, that is, satisfies (non linear) differential equations in $t$ and~$v$  (of order~$3$ and $4$, respectively).

  The series $\Qgf_0:=2\Ggf$ also counts, by vertices and clockwise faces, quartic  Eulerian orientations having no alternating vertex.
\end{Theorem}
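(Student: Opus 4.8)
The plan is to obtain Theorem~\ref{thm:general} as the specialisation $\om=0$ of the general characterisation of $\Qgf(t,\om,v)$ that has been (or will be) derived from the system of functional equations. Concretely, the strategy proceeds in four steps. First, I would substitute $\om=0$ into the single-catalytic-variable equation for $\Qgf(t,\om,v)$ mentioned in the introduction, and verify that the resulting equation for $\Qgf_0(t,v):=\Qgf(t,0,v)$ is amenable to the classical \emph{kernel method} / \emph{canonical factorisation}: one isolates the catalytic variable $x$, identifies the kernel, and localises at its small root. Since no alternating vertices are allowed, the equation should degenerate to a genuinely algebraic-flavoured system (in contrast to the theta-function regime of Theorem~\ref{thm:allomega}), which is why a fully explicit, non-guessing resolution is possible here.

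Second, I would introduce the auxiliary series $\Rgf_0$ via the defining equation~\eqref{t-R0-first}. The natural way to discover it is to look for the ``canonical'' parametrisation of $t$ as a series in the unknown that trivialises the kernel: one posits $t = \Phi(\Rgf_0;v)$ with $\Phi$ a (hypergeometric-type) sum and checks that $\Phi$ has constant term $0$ in $\Rgf_0$, so that by Lagrange inversion $\Rgf_0$ is a well-defined \fps\ in $t$. The claim that $\Rgf_0$ has \emph{polynomial} coefficients in $v$ follows by inspecting~\eqref{t-R0-first}: for fixed power $t^N$, only finitely many pairs $(n,k)$ with $k\le N$ contribute (as each term carries $t^k\Rgf_0^{n+1}$ and $\Rgf_0=O(t)$), so each coefficient is a finite sum of monomials $(v-1)^k$.

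Third, with the parametrisation in hand, I would express $\Ggf$ (equivalently $\Qgf_0=2\Ggf$) in terms of $\Rgf_0$. The second displayed formula should come out of substituting the parametrisation back into the expression for the generating function extracted in Step~1 — typically $\Ggf$ is a residue or a constant-term in $x$ of the kernel solution, and after the change of variables $x\mapsto$ (function of $\Rgf_0$) it collapses to the stated double hypergeometric sum, the $-v/2$ and $1/(2t^2)$ prefactors being bookkeeping constants from the small-map corrections (cf.\ Figure~\ref{fig:small}). Here one again checks convergence as a \fps: the summand is $t^k\Rgf_0^{n+1}$ with $n+k>0$, hence $O(t^{k+n+1})$, so only finitely many terms contribute to each $t^N$, and division by $t^2$ is legitimate because the sum starts at order $t^2$ (the $n=0,k=2$ and $n=1,k=0$ terms).

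Fourth, for differential algebraicity: since $\Rgf_0$ satisfies the polynomial equation $P(t,v,\Rgf_0)=0$ in the \emph{algebra of \fps} only after summing an infinite series, I would instead argue that $(t,\Rgf_0)$ lies on an algebraic curve after the substitution $u=t(v-1)$ — i.e.\ for fixed $v$, equation~\eqref{t-R0-first} exhibits $\Rgf_0$ as an algebraic function of $t$ — and then note that $\Ggf$ is a rational function of $t$, $v$ and $\Rgf_0$; hence $\Ggf$ is algebraic in $t$ for fixed $v$, a fortiori differentially algebraic in $t$, with the order-$3$ ODE obtained by elimination. For the dependence in $v$ one differentiates~\eqref{t-R0-first} with respect to $v$, uses it to express $\partial_v\Rgf_0$ rationally in $t,v,\Rgf_0$ and in the ``$k$-shifted'' companion sums, and closes the system after finitely many differentiations; the resulting order-$4$ ODE in $v$ is then a matter of linear algebra over the field $\qs(t,v,\Rgf_0)$. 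The main obstacle is the very first step: correctly performing the kernel-method resolution of the $\om=0$ equation and tracking the constant corrections, since the whole explicit answer hinges on choosing the right canonical parametrisation of $t$ in terms of $\Rgf_0$; once that parametrisation is pinned down, everything else is a verification. The last sentence of the theorem, identifying $\Qgf_0=2\Ggf$ with the generating function of quartic Eulerian orientations with no alternating vertex counted by vertices and clockwise faces, is immediate from the Ambj\o rn--Budd bijection quoted in the introduction, under which undirected edges (weighted by $\om$) correspond to alternating vertices, so setting $\om=0$ forbids them on both sides.
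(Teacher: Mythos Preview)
Your Steps~1--3 are vague but not unreasonable in spirit; however, your Step~4 contains a genuine error that would make the D-algebraicity argument fail.

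You claim that ``for fixed $v$, equation~\eqref{t-R0-first} exhibits $\Rgf_0$ as an algebraic function of $t$'' and that ``$\Ggf$ is a rational function of $t$, $v$ and $\Rgf_0$''. Both assertions are false. Equation~\eqref{t-R0-first} is \emph{not} a polynomial relation: it is an infinite sum. Even at $v=1$ (so $k=0$) it reduces to $t=\sum_{n\ge 0}\frac{1}{n+1}\binom{2n}{n}^2\Rgf_0^{n+1}$, a series closely related to the complete elliptic integral, which is transcendental. Hence $\Rgf_0$ is not algebraic in $t$, and your elimination argument for an order-$3$ ODE collapses. Likewise, the displayed expression of $\Ggf$ is a second infinite hypergeometric-type sum in $\Rgf_0$ and $t(v-1)$, not a rational function of $\Rgf_0$.

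The paper's route to D-algebraicity (Section~\ref{sec:DA-om0}) is accordingly more delicate. One writes $t=\Omega(\Rgf_0,t(v-1))$ with $\Omega(r,u)=\sum_{n,k}\frac{1}{n+1}\binom{2n}{n}\binom{2n+k}{k}\binom{2n+k}{n}u^kr^{n+1}$, observes that $\Omega$ is \emph{D-finite} in $(r,u)$ with a three-dimensional space of partial derivatives (in fact a two-dimensional subspace closed under both derivations contains $\Omega_r$ and $\Phi:=\Omega-u\Omega_u$), and then differentiates the implicit relation in $t$ to obtain a second-order ODE for $\Rgf_0$. A further elimination, using that $t^2(\Qgf_0+v)$ is itself a D-finite series $\Lambda(\Rgf_0,t(v-1))$ lying in the same finite-dimensional module, yields the order-$3$ ODE for $\Qgf_0$; the $v$-variable requires one more differentiation and gives orders $3$ and $4$ respectively. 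None of this survives if you assume algebraicity.

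A smaller remark on Steps~1--3: the paper's actual mechanism at $\om=0$ is not quite a kernel method. It uses the characterisation of Proposition~\ref{prop:M}: the key expression~\eqref{symmetriceq} becomes \emph{independent of $x$} when $\om=0$, hence must be independent of $y$ too, i.e.\ $(1-y)(y\Mnn(y)-t(v-1))(1-\Mnn(y))=\Rgf_0$ for some series $\Rgf_0\in\GK[[t]]$. Solving this quadratic for $\Mnn(y)$ and expanding with the binomial theorem gives the explicit series~\eqref{My-exp}; imposing $[y^{-1}]\Mnn(y)=tv$ then forces~\eqref{t-R0-first}, and $\Qgf_0=-v+t^{-2}[y^{-2}]\Mnn(y)$ gives the stated formula for $\Ggf=\Qgf_0/2$. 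So the appearance of $\Rgf_0$ is as a hidden $y$-independent combination, not as a kernel-parametrisation of $t$.
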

The first coefficients of $\Rgf_0$ and $\Ggf$ are:
\begin{align*}
  \Rgf_0&=t -\left(1+v \right) t^{2}-\left(1+3 v \right) t^{3}-\left(3+14 v+3 v^{2} \right) t^{4}+\LandauO(t^5),
  \\
  \Ggf&= tv+ v\left(v +4\right) t^{2} +v\left(v +10\right) \left(v +2\right) t^{3} +
        v\left(v^{3}+24 v^{2}+115 v +112\right) t^{4}  +\LandauO(t^5).
\end{align*}
Figure~\ref{fig:small} confirms the value of the coefficients of $t$ and $t^2$ in $\Ggf$. Note that when $v=1$, the only non-zero summands in the expressions of Theorem~\ref{thm:general}  are obtained for $k=0$. One then realises that
  \[
    2t^2(2\Ggf+1) = t -\Rgf_0.
  \]
  This  simpler expression is the one given in~\cite{mbm-aep1}. No similar reduction seems to occur for a general value of $v$, but we will see that a natural counterpart is
  \beq\label{Q0-alt}
  2t^2(2\Ggf+v) = t -\Rgf_0 + \sum_{n,k\ge 0} \frac{1}{n+1}{2n\choose n}{2n+k\choose k}{2n+k+1\choose n} t^{k+1}(v-1)^{k+1}\, \Rgf_0^{n+1}.
  \eeq
  This expression follows in a straightforward fashion from the theorem.
  We give in Proposition~\ref{prop:trees0} a simple combinatorial interpretation in terms of trees of the series $t-\Rgf_0$ and $t^2(2\Ggf+v)$. A bijection remains to be found.

\smallskip
Our second result  (the case $\om=1$), for quartic orientations counted by vertices and clockwise oriented faces, takes a strikingly similar form.

\begin{Theorem}\label{thm:quartic}
  Let $\Rgf_1(t,v)\equiv \Rgf_1$ be the unique \fps\ in $t$ with constant term $0$ satisfying
  \beq\label{t-R1-first} 
  t=\sum_{n,k\geq0}\frac{1}{n+1}{2n\choose n}{2n+k\choose k}{3n+2k\choose n+k} t^k (v-1)^k\, \Rgf_1^{n+1}.
  \eeq
  This series has polynomial coefficients in $v$. Then the \gf\ of rooted planar quartic Eulerian orientations, counted by vertices (variable $t$) and clockwise oriented faces (variable $v$) is
  \[
    \Qgf_1
    = - v+\frac{1}{t^2}\sum_{{n,k\geq0},{~n+k>0}}\frac{1}{n+1}{2n\choose n}{2n+k\choose k}{3n+2k-1\choose 2n+k} t^k(v-1)^k\,\Rgf_1^{n+1}.
  \]
  This series is differentially algebraic, that is, satisfies (non linear) differential equations in $t$ and~$v$  (of order~$3$ and $4$, respectively).
\end{Theorem}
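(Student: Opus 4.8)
The plan is to obtain Theorem~\ref{thm:quartic} as a parallel to Theorem~\ref{thm:general}, exploiting the fact that both are specialisations ($\om=1$ and $\om=0$) of the same trivariate series $\Qgf(t,\om,v)$ characterised earlier in the paper by a functional equation with a single catalytic variable~$x$. So the first step is to write down that characterisation at $\om=1$: a functional equation of the form $\Qgf_1(x)=\Phi\bigl(\Qgf_1(x),\Qgf_1(1),t,v,x\bigr)$ (possibly together with auxiliary unknown series, like the $\Rgf_1$ that appears in the statement), where $\Qgf_1(1)$ is, up to an additive/multiplicative constant, the series $\Qgf_1$ we want. The combinatorial input — that $\Qgf_1$ counts quartic Eulerian orientations by vertices and clockwise faces — is already packaged in that characterisation, so this part is bookkeeping.

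Next I would solve the one-catalytic-variable equation. The standard tool here is the kernel method / the theory of algebraic (or D-algebraic) solutions of equations with one catalytic variable: one looks for the value(s) $X\equiv X(t,v)$ of the catalytic variable that cancel the kernel (the coefficient of $\Qgf_1(x)$, or more precisely the relevant derivative in the quadratic/polynomial case), substitutes, and thereby eliminates the ``section'' $\Qgf_1(x)$ in favour of finitely many unknown one-variable series. Concretely I expect to introduce $\Rgf_1$ as (essentially) this canonical root $X$, or as a simple function of it, which is why $\Rgf_1$ is defined in the statement by the implicit equation~\eqref{t-R1-first}: that equation should drop out as ``the kernel vanishes at $x$ corresponding to $\Rgf_1$''. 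The binomial sum $\sum \frac1{n+1}\binom{2n}{n}\binom{2n+k}{k}\binom{3n+2k}{n+k} t^k(v-1)^k\Rgf_1^{n+1}$ has the shape of a diagonal/constant-term extraction, and I would derive it by recognising the generating function solving the kernel equation as a constant term in one auxiliary variable, then expanding; the analogue for $\om=0$ in Theorem~\ref{thm:general} (with $\binom{2n+k}{n}$ replacing $\binom{3n+2k}{n+k}$) tells me exactly what algebraic manipulation to imitate. Once $X=\Rgf_1$ is identified, plugging back into the functional equation and reading off the coefficient relation gives the closed form for $\Qgf_1$ stated in the theorem; checking the first few coefficients against a direct enumeration (as in Figure~\ref{fig:small}) is a sanity check, not part of the proof.

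For the claim that the coefficients of $\Rgf_1$ are polynomial in $v$, note that~\eqref{t-R1-first} has the form $t=\Rgf_1+(\text{higher order in }\Rgf_1\text{, with coefficients polynomial in }t,v)$, so Lagrange inversion (or a direct order-by-order solve) expresses $[t^m]\Rgf_1$ as a polynomial in $v$; the same argument applied to the displayed formula gives polynomiality for $\Qgf_1$. Finally, differential algebraicity: a series that is algebraic over $\mathbb{Q}(t,v,\Rgf_1)$ while $\Rgf_1$ itself is algebraic over $\mathbb{Q}(t,v)$ --- which is not literally the case here because of the infinite binomial sums, but the sums are D-finite in an auxiliary sense --- is D-algebraic; more robustly, one invokes the general principle (used for $\om=0$ in Theorem~\ref{thm:general}) that the solution of a polynomial equation with one catalytic variable whose coefficients are D-algebraic is itself D-algebraic, and then extracts the explicit orders ($3$ in $t$, $4$ in $v$) by elimination, e.g.\ differentiating the defining relations and computing a resultant or a Gr\"obner basis in a differential polynomial ring. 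I expect the main obstacle to be the middle step: correctly identifying which root of the kernel corresponds to $\Rgf_1$ and carrying out the constant-term/Lagrange-inversion computation so that precisely the binomial coefficient $\binom{3n+2k}{n+k}$ (rather than some neighbouring one) appears — the parallel with the $\om=0$ case is a strong guide, but the kernel at $\om=1$ is genuinely different and the combinatorics of the substitution must be redone rather than quoted.
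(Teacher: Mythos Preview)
Your high-level picture --- ``specialise the one-catalytic-variable characterisation at $\om=1$, solve, read off coefficients, then argue D-algebraicity'' --- matches the paper's strategy, and the parallel with $\om=0$ is indeed the right guide. But the middle step, which you flag as the obstacle, is misdiagnosed, and as written the proposal would not go through.

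The characterisation of $\Mnn(x)$ established in Section~\ref{sec:charac} is \emph{not} a polynomial functional equation with one catalytic variable of the Bousquet-M\'elou--Jehanne type. There is no kernel to cancel, and $\Rgf_1$ is not a distinguished value $X(t,v)$ of the catalytic variable. What Proposition~\ref{prop:M} actually says is that a certain ratio $\Znn(x,y)$ lies in $\GK[[x,y,t]]$, together with $[x^{-1}]\Mnn(x)=tv$. The key observation for $\om=1$ (and for $\om=0$) is that the modified ratio $\overline\Znn(x,y)$ is identically~$1$: equivalently, the expression~\eqref{symmetriceq} is symmetric in $x$ and~$y$. At $\om=1$ the first factor $(1-y-\om x)$ is already symmetric, so the remaining product
\[
  \bigl(y\Mnn(y)-t(v-1)\bigr)\bigl(1-y-\Mnn(y)\bigr),
\]
which does not involve $x$, must be \emph{independent of $y$} --- call this constant $\Rgf_1$. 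That is the quadratic~\eqref{omis1eqforH}, and $\Rgf_1$ is simply $\Fnn(0)$, not a kernel root. From there one solves the quadratic for $\Mnn(y)$ (choosing the branch with $\Mnn=\LandauO(t)$), expands the square root as a double binomial series in $\Rgf_1$ and $t(v-1)$, and the initial condition $[y^{-1}]\Mnn(y)=tv$ becomes precisely~\eqref{t-R1-first}; the formula for $\Qgf_1$ then drops out of $[y^{-2}]\Mnn(y)$ via~\eqref{Q-M}. Uniqueness of $\Mnn$ (Proposition~\ref{prop:M}) closes the loop: once a series $M(y)$ built from any $R\in t\GK[[t]]$ via the quadratic satisfies $[y^{-1}]M(y)=tv$, it must coincide with $\Mnn$.

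Your D-algebraicity sketch is also off in one detail: the general black box for polynomial equations with one catalytic variable does not apply here, for the same reason. The paper instead argues directly (Section~\ref{sec:DA-om1}) from the implicit relation $t=\Omega(\Rgf_1,t(v-1))$ with $\Omega$ D-finite, via Proposition~\ref{prop:implicitDA} and closure under composition, and then differentiates and eliminates to obtain the orders $3$ (in $t$) and $4$ (in $v$).
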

The first coefficients of $\Rgf_1$ and $\Qgf$ are:
\begin{align*}
  \Rgf_1&=t -\left(2 v +1\right) t^{2}-2\left( v^{2}+4 v +1\right) t^{3}-\left(4 v^{3}+36 v^{2}+56 v +9\right) t^{4}
          +\LandauO(t^5),
  \\
  \Qgf_1&= v \left(v +3\right) t +v \left(v +6\right) \left(2 v +3\right) t^{2}
          +v \left(v +1\right) \left(5 v^{2}+61 v +135\right) t^{3}
          +\LandauO(t^4).
\end{align*}
The four quartic orientations with only one vertex are shown on the right of Figure~\ref{fig:small}. Again, the expression of $\Qgf_1$ simplifies  when $v=1$, this time to
  \[
    3 t^2(\Qgf_1+ 1)=t-\Rgf_1,
  \]
  which is the expression given in~\cite{mbm-aep1}. We will see that a natural counterpart 
  for $v$ generic is
  \beq\label{Q1-alt}
  3 t^2(\Qgf_1+ v)=t-\Rgf_1 + \sum_{n,k\ge 0} \frac 1{n+1}{2n\choose n}{2n+k\choose k}{3n+2k+1\choose n+k+1} t^{k+1} (v-1)^{k+1}\, \Rgf_1^{n+1}.
  \eeq
  We give in Proposition~\ref{prop:trees1} a simple combinatorial interpretation in terms of trees of the series $t-\Rgf_1$ and $t^2(\Qgf_1+v)$. A bijection remains to be found.

\smallskip

Our third main  result (case $v=1$) involves the following (slightly modified) theta function:
\begin{align}
  {\theta(z,q)}\equiv\theta(z)&:= \sum_{n\ge 0} (-1)^n q^{n(n+1)/2} \sin (2n+1)z \label{theta-def-init}
  \\
           &= \sin z \prod_{n\ge 1} (1-q^n) \left(1-q^n e^{-2iz}\right) \left(1-q^n e^{2iz}\right). \nonumber
\end{align}
It is convenient to   write $\omega=-2\cos(2\alpha)$, as was done in earlier papers~\cite{elvey-zinn,kostov}. This does not mean that $\om$ takes some particular value between $-2$ and $2$. The variable $\om$ remains an indeterminate, but we exploit the fact that any rational function of $\cos \alpha$ and $\sin \alpha$ that is an even function of $\al$ and has $\pi$ as a period is in fact a rational function in $\om$.

\begin{Theorem}[{\bf Thms. 1.1 and 6.1 in~\cite{elvey-zinn}}]
  \label{thm:allomega}
  Write $\omega=-2\cos(2\alpha)$ and   let $q(t,\omega)\equiv q$
  be the unique formal power series in $t$ with constant
  term $0$ satisfying
  \beq\label{t-q}
  t=  \frac{\cos\alpha}{64\sin^3\!\alpha}
  \left(
    \frac{\theta''(\alpha)}{\theta'(\alpha)} -\frac{\theta(\alpha)\theta^{(3)}(\alpha)}{\theta'(\alpha)^2}
  \right).
  \eeq
  Moreover, define the series $\widetilde \Rgf$ 
  by
  \[
    \widetilde \Rgf
    =\frac{\cos^2\alpha}{96\sin^4\! \alpha}\,
    \frac{\theta(\alpha)^2}{\theta'(\alpha)^2}
    \left(
      \frac{\theta^{(3)}(0)}{\theta'(0)}
      -\frac{\theta^{(3)}(\alpha)}{\theta'(\alpha)}\right).
  \]
  Then the generating function of  rooted  planar quartic Eulerian orientations, counted by
  vertices, with a weight $\omega$ per alternating vertex is
  \[
    \widetilde\Qgf
    = \frac{t-\widetilde \Rgf}{(\omega+2)t^2}-1 .
  \]
  The series $\widetilde\Qgf$ is differentially algebraic in $t$ and $\om$. Moreover, when $\alpha \in \pi\qs$ and $\om\not\in\{2,-2\}$, one can write $t$ as a D-finite series in $\widetilde \Rgf$.
\end{Theorem}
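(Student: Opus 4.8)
The plan is to derive Theorem~\ref{thm:allomega} from the compact, single-catalytic-variable characterisation of $\Qgf(t,\om,v)$ established above, read at $v=1$. I expect that specialisation to produce a functional equation for $\Qgf(t,\om,1)$ (together with the auxiliary series of the compact characterisation) in the one catalytic variable $x$, governed by an explicit kernel. Rather than guessing a solution and checking it afterwards, as in~\cite{mbm-aep1,elvey-zinn}, I would look for a \emph{uniformisation}: a formal power series $x=x(z)$, whose coefficients (in $t$) are trigonometric polynomials in $z$, that turns the catalytic variable, the kernel and the auxiliary unknowns into rational expressions in the theta quotients $\theta^{(j)}(z)/\theta^{(k)}(z)$ and in their values at $z=0$ and $z=\alpha$, where $\om=-2\cos2\alpha$. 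The substitution $\om=-2\cos2\alpha$ is exactly what keeps all these quantities inside $\qs(\om)[[t]]$, since every series that occurs is an even, $\pi$-periodic rational function of $\cos\alpha$ and $\sin\alpha$.

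Everything must be set up \emph{formally} --- this being the whole point of the re-derivation. I would treat the nome $q$ as a formal indeterminate, define $\theta(z)$ by the $q$-series~\eqref{theta-def-init} (so that $\theta(z)$ is a formal power series in $q$ whose coefficients are trigonometric polynomials in $z$, with no convergence issue), and use only the formal identities obeyed by $\theta$: the quasi-periodicities $\theta(z+\pi)=-\theta(z)$ and $\theta(z+\pi\tau)=-q^{-1/2}e^{-2iz}\theta(z)$, the parity $\theta(-z)=-\theta(z)$, the addition formula, and the heat-type identity relating $\partial_\tau\theta$ to $\partial_z^2\theta$. A first, easy step is to check that~\eqref{t-q} really does define $q$, hence $\widetilde\Rgf$, as a well-defined formal power series in $t$ with zero constant term: at $q=0$ one has $\theta(z)=\sin z$, the bracket in~\eqref{t-q} vanishes and has a simple zero in $q$, so the formal implicit function theorem gives $q=\LandauO(t)$ and $\widetilde\Rgf=t+\LandauO(t^2)$; plugging back one gets $\widetilde\Qgf\in t\,\qs[\om][[t]]$, whose first coefficients one cross-checks against Figure~\ref{fig:small} and against~\eqref{t-R1-first} at $v=1$.

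The heart of the proof is then a \emph{verification}: under the uniformisation the kernel factors with the expected zeros, and the relations the kernel method produces by substituting those zeros become \emph{identities} once $t$ and $\widetilde\Rgf$ are given their stated expressions in $q$ and $\alpha$. Concretely, after clearing denominators, this reduces to a short list of polynomial identities between theta quotients, which should follow from the addition formula and the quasi-periodicities. I expect this step --- proving that the closed form of the theorem actually solves the functional equation --- to be the main obstacle, since it is here that all the structure of the six-vertex model is concentrated, and precisely this is what the earlier treatments reached only by guess-and-check. A related technical point is to carry the $z$-substitution consistently with the $t$-grading throughout, so that no analytic continuation and no one-cut assumption ever enter.

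For the remaining two assertions: $\theta(z)$ (at fixed $q$) is algebraically tied to the Weierstrass function $\wp(z)$ via $(\wp')^2=4\wp^3-g_2\wp-g_3$, so $\theta$ together with its $z$-derivatives satisfies an algebraic ODE in $z$; by the heat identity, differentiation with respect to $q$ reduces again to $z$-derivatives of theta quotients, so $t=t(q,\alpha)$ and $\widetilde\Rgf=\widetilde\Rgf(q,\alpha)$ lie in a finitely generated differential ring. Eliminating $q$ then yields nonlinear differential equations satisfied by $t$ --- first in $\alpha$, hence (via $\om=-2\cos2\alpha$) in $\om$, and trivially in $t$ itself --- which gives the differential algebraicity of $\widetilde\Qgf=\frac{t-\widetilde\Rgf}{(\om+2)t^2}-1$. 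Finally, when $\alpha=\pi p/p'$ with $p/p'\in\qs$, the root of unity $e^{2i\alpha}$ turns the theta quotients at $z=\alpha$ and $z=0$ into eta-quotients, hence into modular forms for a congruence subgroup; consequently $\widetilde\Rgf$ becomes algebraic over a Hauptmodul of that group, while $t$, built from logarithmic $\alpha$-derivatives of such modular forms, is a quasimodular-type quantity and hence D-finite over the same Hauptmodul. Since D-finiteness is preserved under the algebraic substitution eliminating the Hauptmodul in favour of $\widetilde\Rgf$, one obtains $t$ as a D-finite series in $\widetilde\Rgf$ --- the hypothesis $\om\notin\{2,-2\}$ serving to keep $\sin\alpha$ and $\cos\alpha$ away from $0$, i.e.\ to keep all these objects non-degenerate.
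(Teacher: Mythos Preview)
Your plan for the main enumeration formula is essentially the paper's: start from the one-catalytic characterisation at $v=1$ (Proposition~\ref{prop:F}), look for a parametrisation $x=\chi(z)$ that linearises the involution $\Mnn$ to a shift (the paper's Ansatz~\eqref{ansatz}), solve the resulting second-order difference equation in theta functions, and then read off $t$ and $\widetilde\Rgf$ from the conditions $[x^{-1}]\Mnn(x)=t$ and $\widetilde\Rgf=\Fnn(0)$. Where you stay vague is precisely where the paper does the real work: to keep everything in formal power series, the paper introduces an intermediate variable $u$ (so three levels: $z$, $u$, $x$), builds explicit series $\Trm(u)$, $\Xrm(u)$, $\Xrm^\pm(u)$ in $\GK[[u,q]]$ (Section~\ref{sec:ansatz-rigorous}), and checks the needed shift identities term by term (Lemmas~\ref{lem:shiftT}--\ref{lem:shiftX}). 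The ``short list of polynomial identities between theta quotients'' you anticipate is exactly the content of Appendix~\ref{app:xm1}, and it does reduce to a single elliptic-function identity proved via double periodicity --- so your instinct is right, but the formal bookkeeping around it (the $u$-level) is not optional.

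For the last two clauses you diverge from the paper. Your D-algebraicity argument (Weierstrass $\wp$ plus the heat equation to show $\theta$ is D-algebraic, then closure properties) is exactly what the paper uses in Section~\ref{sec:DA}. But for the D-finiteness of $t$ in $\widetilde\Rgf$ at $\alpha\in\pi\qs$, you invoke modularity and eta-quotients --- this is the route of~\cite{elvey-zinn}, and it works, but the paper gives a different and more self-contained proof (Section~\ref{sec:Tutte}): it constructs a second \emph{invariant} $B(x)$ (a product over the $n$th roots of unity when $e^{2i\alpha}$ has order $n$), uses an invariant lemma to force a polynomial relation between $B(x)$ and $A(x)=x^2-x-\Fnn(x)/x$, and thereby obtains a polynomial equation in one catalytic variable for $\Fnn(x)$ over $\cs(x,\Rgf)$. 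Algebraicity of $\Fnn(x)$ in $(x,\Rgf)$ then follows from the general theory of such equations, and $t=[x^{-1}]\Mnn(x)$ is D-finite in $\Rgf$ as a consequence. This Tutte-style argument avoids modular machinery entirely and connects the problem to the classical toolkit for catalytic equations; your modular argument, by contrast, gives more structural information (level, weight) but relies on analytic identities you would still need to pin down.
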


The series $q$, $\widetilde \Rgf$ and $\widetilde\Qgf$ start as follows:
\begin{align}
  q&=t+\left(6+6 \omega \right) t^2 +\left(45 \omega^{2}+84 \omega +48\right) t^{3}+\left(378 \omega^{3}+998 \omega^{2}+1076 \omega +436\right) t^{4}+\LandauO(t^5), \nonumber\\
  \widetilde \Rgf &=t -\left(\omega+2 \right) t^{2}-2 \left(\omega +2\right) \left(1+\omega \right) t^{3}-\left(\omega +2\right) \left(9 \omega^{2}+16 \omega +10\right) t^{4}+\LandauO(t^5), \label{R-exp}
  \\
  \widetilde\Qgf
   & =\left(2+2 \omega \right) t +\left(9 \omega^{2}+16 \omega +10\right) t^{2}+\left(54 \omega^{3}+132 \omega^{2}+150 \omega +66\right) t^{3} +\LandauO(t^4).\nonumber
\end{align}

Again, the coefficient of $t$ in $\widetilde \Qgf$ is consistent with Figure~\ref{fig:small}. Note that the series $\widetilde \Qgf$ of the latter theorem must coincide, when  $\om=1$, with the series $\Qgf_1$ of Theorem~\ref{thm:quartic} taken when $v=1$. This is far from obvious, but was proved in~\cite[Sec.~7]{elvey-zinn}. It was also proved there that the series~$\widetilde \Qgf /2$  coincides, when  $\om=0$, with the series $\Ggf$ of Theorem~\ref{thm:general}, taken when $v=1$. A bijective explanation is provided in Section~\ref{sec:def-orientations}.
Accordingly,
the series $\widetilde \Rgf$ coincides with $\Rgf_0$ when $\om=0$ and $v=1$, and with $\Rgf_1$ when $\om=1$ and $v=1$.

\medskip
The above three lines of  the $(\om,v)$-plane, namely $\om=0$, $\om=1$ and $v=1$,  are  so far the only cases   where the solution of our three-variate problem is complete. We make some progress extending Theorem~\ref{thm:allomega} to general $v$ in Appendix~\ref{app:complex}, using complex analysis. However we are unable to complete the solution due to the non-linearity of a certain equation when $v\neq 1$ (see Remark \ref{remark:v_not_1}). As already mentioned, we derive these solutions from a characterisation of the trivariate series $\Qgf\equiv\Qgf(t,\om,v)$ counting quartic Eulerian orientations. Let us conclude this introduction by describing this characterisation,  which  is, as far as we know, of a new type.

Recall that, given a ring~$A$ and a variable $x$, the ring of \fps\ in $x$ (resp. Laurent series in $x$) with coefficients in~$A$ is denoted $A[[x]]$ (resp. $A((x))$).

\begin{Proposition}\label{prop:M-charac} 
  There exists a unique series in $t\qs(\om,v)((x))[[t]]$, denoted $\Mnn(x)$, such that:
  \begin{itemize}
  \item the coefficient of $x^{-1}$ in $\Mnn(x)$ equals $tv$,
  \item the series
    \[
      \Fnn(x):=(x\Mnn(x)-t(v-1))(1-\om x-\Mnn(x))
    \]
    belongs to $\qs(\om,v)[[x,t]]$ (that is, has no pole at $x=0$),
  \item finally, $\Mnn(\Mnn(x))=x$. 
  \end{itemize}
  The series
  \beq\label{Q-M}
  \Qgf:=\frac{1}{t^{2}}[x^{-2}]\Mnn(x) -v
  \eeq
  is the \gf\ of quartic Eulerian orientations, counted by vertices (variable $t$), alternating vertices (variable $\om$) and clockwise oriented faces (variable $v)$.
\end{Proposition}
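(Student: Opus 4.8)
The statement asserts existence and uniqueness of a series $\Mnn(x)$ with three properties, and then identifies a specific coefficient extraction of it with the combinatorial series $\Qgf$. My strategy would be to obtain this proposition as the *culmination* of the functional-equation analysis announced in the introduction: first derive, from the recursive constructions of quartic Eulerian orientations, the system of functional equations in the catalytic variables $x$ and $y$ for the relevant generating functions; then eliminate one catalytic variable to reach a single-catalytic-variable characterisation; and finally repackage that characterisation in the involutive form $\Mnn(\Mnn(x))=x$ stated here. So the proof of Proposition~\ref{prop:M-charac} itself splits into an existence/uniqueness part (purely formal power series manipulation) and an identification part (matching with the combinatorics via the already-derived equations).

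**Existence and uniqueness of $\Mnn$.** I would set up $\Mnn(x)$ as an unknown in $\qs(\om,v)((x))[[t]]$, writing $\Mnn(x) = \sum_{j\ge 0} t^j m_j(x)$ with $m_j(x)\in\qs(\om,v)((x))$, and solve for the $m_j$ order by order in $t$. The boundary condition $[x^{-1}]\Mnn(x)=tv$ fixes part of $m_0$ and $m_1$. The key structural constraint is the involution $\Mnn(\Mnn(x))=x$: at lowest order one expects $m_0(x)$ to be a Möbius-type (or otherwise explicit) involution of $x$, and then linearising $\Mnn(\Mnn(x))=x$ around $t=0$ should give, at each order $j\ge 1$, a linear equation expressing $m_j$ in terms of $m_0,\dots,m_{j-1}$ up to an additive ambiguity; the pole-cancellation condition "$\Fnn(x)\in\qs(\om,v)[[x,t]]$" together with the prescribed coefficient of $x^{-1}$ is exactly what pins down that ambiguity uniquely. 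The main point to check carefully is that these two conditions together are neither over- nor under-determined at each order in $t$ — i.e. that the number of scalar constraints imposed by "$\Fnn$ has no pole at $x=0$'' matches the dimension of the kernel of the linearised involution operator. I expect the involution to be of the form $m_0(x)=$ (something like) $a/x + \dots$ so that conjugating by it sends poles at $0$ to poles at $\infty$ in a controlled way, making the bookkeeping tractable. This order-by-order argument is the routine-but-delicate core; it is where I'd spend the most care, and it is the part I flag as the \textbf{main obstacle}, since getting the linear-algebra count exactly right (especially the interplay between the $x^{-1}$-condition and the definition of $\Fnn$) is what makes "unique" true rather than merely "some solution exists."

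**Identification with $\Qgf$.** For the combinatorial identification I would start from the functional equation system for $\Qgf$ and its catalytic refinements established earlier in the paper, and exhibit an explicit series $\Mnn(x)$ built out of those generating functions — presumably $\Mnn(x)$ is (up to simple transformations) the generating function of quartic Eulerian orientations with a boundary, the catalytic variable $x$ recording a boundary statistic, so that $[x^{-2}]\Mnn(x)$ picks out a doubly-marked configuration whose count is $t^2(\Qgf+v)$. I would then verify that this explicitly-defined series satisfies the three bullet points: the $x^{-1}$-coefficient being $tv$ should be a small-size boundary case; membership of $\Fnn$ in $\qs(\om,v)[[x,t]]$ should be a restatement (after the change of unknowns $x\Mnn(x)-t(v-1)$ and $1-\om x-\Mnn(x)$) of the "no pole" / kernel condition of the reduced functional equation; and the involution $\Mnn(\Mnn(x))=x$ should reflect a symmetry of the boundary construction (an involution on rooted boundary configurations, or a duality exchanging the two natural boundary parameters). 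By the uniqueness established in the first part, this combinatorial series *is* $\Mnn(x)$, and then \eqref{Q-M} follows by reading off $[x^{-2}]$. The cleanest route is to make sure the reduced single-catalytic-variable functional equation proven earlier is literally equivalent to the conjunction of the three bullets, so that this proposition becomes a reformulation rather than a fresh derivation; the only genuinely new content beyond that equivalence is checking that the involution is the right way to encode the "kernel" condition, which again ties back to the obstacle noted above.
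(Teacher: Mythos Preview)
Your overall architecture --- derive a single-catalytic characterisation from the two-variable functional equations, then show Proposition~\ref{prop:M-charac} is a reformulation of it --- matches the paper exactly. The paper first proves Proposition~\ref{prop:M} (a positivity condition on a ratio $\Znn(x,y)$), recasts it as Proposition~\ref{prop:F} (regularity of $\Fnn$ together with the kernel identity $\Fnn(\Mnn(x))=(x\Mnn(x)-t(v-1))(1-x-\omega\Mnn(x))$), and then derives Proposition~\ref{prop:M-charac} from that. The identification of $[x^{-2}]\Mnn(x)$ with $t^2(\Qgf+v)$ comes, as you guess, straight from the combinatorial definition~\eqref{M-def} of $\Mnn$ in terms of patches and D-patches.

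The concrete mechanism you propose for existence and uniqueness, however, would not work as written. You expand $\Mnn(x)=\sum_{j\ge0}t^j m_j(x)$ and expect $m_0$ to be a M\"obius-type involution of $x$; but the statement places $\Mnn$ in $t\GK((x))[[t]]$, so $m_0=0$, and the first nonzero coefficient $m_1(x)=v/x+1/(1-x)$ is not an involution either. More seriously, the composition $\Mnn(\Mnn(x))$ is incompatible with a pure $t$-grading: since $\Mnn(x)$ is a multiple of $t$ with a pole at $x=0$, plugging it into the $x^{-k}$ part of $\Mnn$ produces $t^{-k}$ factors, so a fixed $t$-order of $\Mnn(\Mnn(x))$ receives contributions from arbitrarily many $m_j$'s and your proposed ``linear equation for $m_j$ in terms of $m_0,\dots,m_{j-1}$'' never materialises. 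The paper handles this by showing from~\eqref{MF-z} that $\Mnn(x)\in(t/x)\GK[[x,t/x]]$ and $t/\Mnn(x)\in x\GK[[x,t/x]]$; this is the ring in which the substitution is well defined, and there the leading behaviour $\Mnn(x)\sim tv/x$ self-composes to $x$. The involution itself is then proved algebraically, not bijectively: evaluating $\Fnn$ at $\Mnn(x)$ two ways (once via the kernel identity of Proposition~\ref{prop:F}, once via the definition of $\Fnn$) yields a quadratic in $\Mnn^{(2)}(x)$ with roots $x$ and $1-\omega\Mnn(x)+t(v-1)/\Mnn(x)-x$, and a leading-order comparison in $(x,t/x)$ excludes the second. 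Uniqueness is likewise not obtained by the constraint count you sketch, but through the explicit recursion~\eqref{Mnn-rec}, extracted by taking $[y^{-1}]$ of $\log\overline\Znn(x,y)$, which determines each $t$-coefficient of $\Mnn(x)$ from the earlier ones.
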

The series $\Mnn(x)$ 
and $\Qgf$ start as follows:
\begin{align*}
  \Mnn(x)&=\left(\frac{v}{x}+\frac{1}{1-x}\right) t
           +\left(\frac{v}{x^{2}}+\frac{\omega  v +1}{\left(1-x \right)^{2}}+\frac{\omega}{\left(1-x \right)^{3}}\right) t^{2}
           +\LandauO(t^3),
  \\
  \Qgf&= v \left(\omega  v +\omega +2\right) t
        +  v \left(2 \omega^{2} v^{2}+5 \omega^{2} v +2 \omega^{2}+8 \omega  v +8 \omega +2 v +8\right) t^{2}+\LandauO(t^{3}).
\end{align*}
As we will see in Section~\ref{sec:charac}, the series $\Mnn(x)$ has a combinatorial interpretation and thus nonnegative coefficients (once expanded in $x$).
Its coefficients are in fact polynomials in $1/x$ and $1/(1-x)$. The series $\Rgf:=\Fnn(0)$ is the one that occurs (as $\Rgf_0$, $\Rgf_1$, and $\widetilde\Rgf$) in Theorems~\ref{thm:general},~\ref{thm:quartic} and~\ref{thm:allomega}. For generic values of $v$ and $\om$, it starts as follows:
  \beq\label{F0-ser}
    \Fnn(0)=t -\left(\omega  v +v +1\right) t^{2}-\left(\omega +1\right) \left(\omega  \,v^{2}+\omega  v +3 v +1\right) t^{3}+\LandauO \left(t^{4}\right).
  \eeq

We show that $t-\Fnn(0)$ has non-negative coefficients in Remark~\ref{rem:t-F0}.

\medskip
\noindent {\bf Outline of the paper.} We begin in Section~\ref{sec:prelim} with preliminary definitions on maps, orientations, and generating functions. We introduce several families of maps of interest, and recall the properties of the Ambj\o rn and Budd bijection~\cite{ambjorn-budd}. In Section~\ref{sec:funceq} we establish, through combinatorial constructions, various functional equations for the families of maps introduced in Section~\ref{sec:prelim}. They involve, in addition of the variables $t$, $v$ and $\om$, two additional ``catalytic'' variables $x$ and~$y$. These equations are implemented in an accompanying {\sc Maple} session available in~\cite{bmep-ref-arxiv}.
In Section~\ref{sec:charac} we derive from these equations the characterisation of $\Qgf$ given in Proposition~\ref{prop:M-charac} above, which only involves \emm one, catalytic variable. Based on this characterisation, Section~\ref{sec:01}  solves the cases $\om=0$ and $\om=1$, while Section~\ref{sec:six} solves the case $v=1$.
The latter solution is extended in Appendix~\ref{app:complex} where we establish, 
based on a complex analytic argument, properties of the function that should replace the theta function of Theorem~\ref{thm:allomega} in the general case $v\neq 1$. In Section~\ref{sec:Tutte} we give an alternative solution for the doubly specialised case $v=1$ and $\om=-2\cos(k\pi/m)$ (with $\om \neq -2,2$) and prove in particular that the above series $\Mnn(x)$ and $\Fnn(x)$ are then algebraic in~$x$ and $\Rgf:=\Fnn(0)$. This also proves the last statement of Theorem~\ref{thm:allomega}. In Section~\ref{sec:DA} we prove D-algebraicity in the three cases $\om=0$, $\om=1$ and $v=1$. Finally, in Section~\ref{sec:combin} we hint at what could be more combinatorial proofs of our results. In particular, we describe two simple families of trees that are counted by the series of Theorems~\ref{thm:general} and~\ref{thm:quartic}.
Additionally, in the special case where $\om=0$ and $v=1$, we re-derive the characterisation of $\Mnn(x)$ obtained in Section~\ref{sec:charac} using purely combinatorial arguments. We conclude in Section~\ref{sec:final} with final comments. 
 
\section{Preliminaries}\label{sec:prelim}
\subsection{Planar maps}\label{sec:defs}
%
A \emph{planar map} is a proper
embedding of a connected planar graph in the
oriented sphere, considered up to orientation preserving
homeomorphism. Loops and multiple edges are allowed
(Figure~\ref{fig:example-map}). The \emph{faces} of a map are the
connected components of  its complement. The numbers of
vertices, edges and faces of a planar map $M$, denoted by $\vv(M)$,
$\ee(M)$ and $\ff(M)$,  are related by Euler's relation
$\vv(M)+\ff(M)=\ee(M)+2$.
The \emph{degree} of a vertex or face is the number
of edges incident to it, counted with multiplicity. A \emph{corner} is
a sector delimited by two consecutive edges around a vertex;
hence  a vertex or face of degree $k$ {is incident to}  $k$ corners. 
The \emph{dual} of a
map $M$, denoted $M^*$, is the map obtained by placing a 
vertex of $M^*$ in each face of $M$ and an edge of $M^*$ across each
edge of $M$; see Figure~\ref{fig:example-map}, right. A map is 
\emph{quartic} if every vertex has degree~4. Duality transforms
quartic maps  into \emm quadrangulations,, that is, maps in which every face has
degree~4. A planar map  is \emm Eulerian, if every vertex has 
even degree. Its dual, with even face degrees, is then \emm
bipartite., We call  a face
of degree~2 (resp. 4) a \emm digon, (resp. \emm quadrangle,).

\begin{figure}[h]
  \centering
  \includegraphics{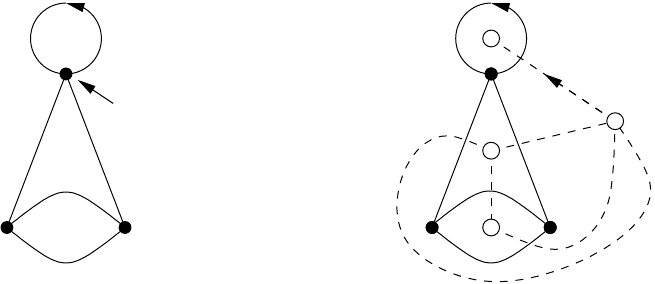}
  \caption{Left: a rooted planar map, with the root edge and root corner
    shown. Right: the dual map, in dashed edges.}
  \label{fig:example-map}
\end{figure}

For counting purposes it is convenient to consider \emm rooted, maps. 
A map is rooted by choosing an edge, called the root edge, and
orienting it. The starting point of this oriented edge is then the
\emm root vertex,, the other endpoint is the \emm co-root vertex,. The
face to the right of the root edge is the \emm
root face,. The face
to the left of the root edge is the \emm co-root face,. Equivalently, one can root the map by selecting a
corner. The correspondence between these two rooting conventions is
that the oriented root edge follows the
root corner in counterclockwise 
order around the root vertex.
In figures, we  usually choose the root face
as the infinite face (Figure~\ref{fig:example-map}). 
This explains why we often call the root face the \emm outer face,,
its degree the \emm outer degree, (denoted {$\od(M)$}), and its edges the \emm outer edges,. The other faces
are called \emm inner faces,. Similarly, we call the corners of the
outer face {\em outer corners} and all other corners {\em inner
  corners}.

From now on, every {map}
is \emph{planar} and \emph{rooted}, and these precisions will
often be omitted. Our convention for rooting the dual of a map is
illustrated on the right of Figure~\ref{fig:example-map}. Note that it makes duality of rooted
maps a transformation of order 4 rather than 2. By convention,
we include among rooted planar maps the \emph{atomic map}
having one vertex and no edge.

\subsection{Orientations and labelled maps}
\label{sec:def-orientations}

An \emph{Eulerian  orientation} is a (rooted, planar) map in
which all edges are oriented, in such a way that at each vertex,
the in- and out-degrees coincide (Figure~\ref{fig:defs}, right). An \emph{Eulerian partial orientation} is a  map in
which \emm some, edges are oriented, with the same local condition. Note that
the underlying map of an Eulerian orientation must be Eulerian but this need not be the case for an Eulerian partial orientation. As in~\cite{mbm-aep1}, the orientation chosen for the root edge for an Eulerian orientation is required to be  consistent with its
orientation coming from the rooting, however we do not make this requirement for Eulerian partial orientations. As a result, for any fixed number of edges, the number of Eulerian partial orientations with no undirected edges is twice the number of Eulerian orientations.  A vertex is \emm alternating, if the in/out orientations of the incident edges that are oriented alternate. A face is \emm oriented clockwise, if all incident edges that are oriented are clockwise.

We find it convenient to work with
duals of Eulerian (partial) orientations, which turn out to be equivalent to
certain \emm (weakly) labelled maps,.

\begin{Definition}\label{def:labelled-map}
  A \emm labelled map, is a rooted planar map with integer labels on
  its vertices,  such that  adjacent labels differ by $\pm1$.  Such a map is necessarily
  bipartite. We also consider the atomic map, with a single unlabelled vertex,
  to be a labelled map. We say that the map is \emm rooted from $\ell$ to $\ell'$, if $\ell$ (resp. $\ell'$) is the label of the root (resp. co-root) vertex.

  A \emm weakly labelled map, is a rooted planar map with integer labels on
  its vertices,  such that  adjacent labels differ by $0$, $\pm1$. An edge is \emm monochromatic, if the two incident   labels are equal. A face is \emm bicoloured, if it contains  only two distinct labels (possibly repeated). A vertex is a \emm local minimum, if no adjacent vertex has a smaller label.
\end{Definition}
A labelled map is shown in Figure~\ref{fig:labelled}. Note that in~\cite{mbm-aep1}, labelled maps were, by default, rooted from $0$ to $1$.

\begin{figure}[htb]
  \centering
  \scalebox{0.9}{\input{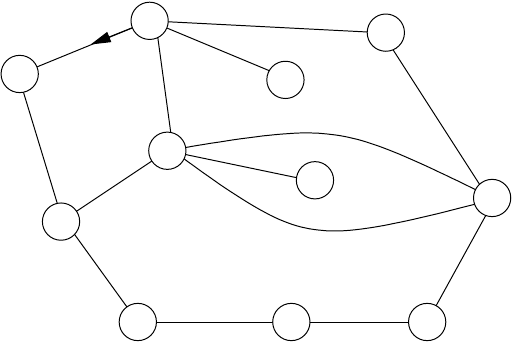_t}}
  \caption{A labelled map, rooted from $0$ to $1$.}
  \label{fig:labelled}
\end{figure}

\begin{Lemma}\label{lem:duality}
  The duality transformation   can be extended into  a bijection $\Delta$ between Eulerian orientations
  and labelled maps rooted from $0$ to $1$, and   more generally, between Eulerian partial orientations and weakly labelled maps rooted from $0$ to $-1, 0$ or $1$.
  Classically, it transforms edges into edges, vertices into faces and vice-versa, while preserving their degrees:   if $\Delta(O)=M$,
  \[
    \ee(M)=\ee(O), \quad \ff(M)=\vv(O), \quad \vv(M)= \ff(O).
  \]
  Moreover,  the number of monochromatic edges, bicoloured faces, and local
  minima of $M$ can also be traced through the relations
  \[   
    \ee_{\mathrm{mon}}(M)=  \ee_\circ(O), \quad \ff_{\mathrm{bic}}(M)= \vv_{\mathrm{alt}}(O), \quad \vv_{\min}(M)= \ff_{\mathrm{clock}}(O),
  \]
  involving the number of  undirected edges, of alternating vertices  and of clockwise 
  oriented faces of $O$. We hope our notation to be self-explanatory.
\end{Lemma}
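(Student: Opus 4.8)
The plan is to realise $\Delta$ as a \emph{height-function lift} of ordinary planar duality and to read the three refined statistics off the local geometry of this lift. First I would describe the map. Given an Eulerian partial orientation $O$, set $M:=O^{*}$, so that vertices of $M$ are faces of $O$ and vice versa, and recall that, with the rooting convention of Figure~\ref{fig:example-map}, duality of rooted maps is an order-$4$ operation identifying the root (resp.\ co-root) face of $O$ with the root (resp.\ co-root) vertex of $M$. I then equip the vertices of $M$ with a labelling by a discrete potential: normalise the root vertex of $M$ to have label $0$, and declare that crossing an edge $e$ of $O$ (i.e.\ traversing the corresponding edge of $M$) changes the label by $+1$ when $e$ is oriented and is crossed from its right side to its left side, by $-1$ in the opposite case, and by $0$ when $e$ is undirected. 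The only point to verify is well-definedness of this potential, i.e.\ that the total increment around every cycle of $M$ vanishes; since the boundary cycles of the faces of $M$ generate the cycle space, it suffices to check it around the boundary of the face of $M$ attached to each vertex $v$ of $O$, where the increment equals $(\text{number of edges entering }v)-(\text{number of edges leaving }v)=0$ by the Eulerian balance condition. By construction adjacent labels of $M$ then differ by $0$ or $\pm1$ (and by $\pm1$ exactly when the separating edge of $O$ is oriented), so $M$ is weakly labelled; it is a labelled map in the sense of Definition~\ref{def:labelled-map} when $O$ is a full Eulerian orientation. Since the root face of $O$ lies to the right of its root edge and the co-root face to the left, the chosen sign convention gives the co-root vertex of $M$ the label $+1$ in the Eulerian case, and a label in $\{-1,0,1\}$ in the partial case (namely $0$ precisely when the root edge is undirected). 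I set $\Delta(O):=M$, with the atomic orientation sent to the atomic labelled map by convention.

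For the inverse I would start from a (weakly) labelled map $M$ normalised as above, put $O:=M^{*}$, and orient each edge of $O$ according to the two labels $\ell,\ell'$ of $M$ that it separates: leave it undirected if $\ell=\ell'$, and otherwise direct it so that crossing it from right to left increases the label. Around each vertex $v$ of $O$, i.e.\ around each face of $M$, the incident labels form a closed walk on $\zs$ with steps in $\{0,\pm1\}$, hence with equally many $+1$ and $-1$ steps; back in $O$ this says $v$ has as many incoming as outgoing edges, so $O$ is an Eulerian partial orientation, and an Eulerian orientation when $M$ is a labelled map (no $0$-step, hence no undirected edge). The rooting conventions match, and the two constructions are visibly mutually inverse, so $\Delta$ is a bijection. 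The relations $\ee(M)=\ee(O)$, $\ff(M)=\vv(O)$, $\vv(M)=\ff(O)$ and the degree preservation are then just duality: the edges of $M$ incident to a vertex are precisely the edges of $O$ bounding the corresponding face.

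It then remains to match the three refined statistics, which I would do directly from the crossing rule. The identity $\ee_{\mathrm{mon}}(M)=\ee_\circ(O)$ is immediate, a monochromatic edge of $M$ being exactly the dual of an undirected edge of $O$. For $\ff_{\mathrm{bic}}(M)=\vv_{\mathrm{alt}}(O)$, I look at the face of $M$ carried by a vertex $v$ of $O$: its incident labels, read cyclically, form a closed $\{0,\pm1\}$-walk, and the number of distinct labels around the face is one more than the range of the partial sums of this walk. That range is at most $1$ iff, after deleting the $0$-steps, the $+1$- and $-1$-steps strictly alternate, which is precisely the statement that the oriented edges around $v$ alternate in/out in cyclic order, i.e.\ that $v$ is alternating (with the degenerate monochromatic face, where $v$ has no oriented edge, counted as alternating). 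Finally, for $\vv_{\min}(M)=\ff_{\mathrm{clock}}(O)$: a vertex $u$ of $M$ is a local minimum iff no neighbour has a strictly smaller label, which by the crossing rule means that the face $f:=u^{*}$ of $O$ lies on the smaller-label side of each of its oriented boundary edges; with the convention fixed above, that side is exactly the side along which the edge runs clockwise around $f$, so the condition says that every oriented boundary edge of $f$ is clockwise, i.e.\ that $f$ is oriented clockwise. This gives the lemma.

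The genuinely delicate part will be the convention bookkeeping: choosing one sign convention in the crossing rule that simultaneously forces the root/co-root labels to be $0$ and $+1$ (resp.\ to lie in $\{-1,0,1\}$) \emph{and} makes the local-minimum/clockwise-face dictionary come out in the correct, non-reversed sense; together with the two small wrinkles in the partial case, namely the interleaving of $0$-steps with $\pm1$-steps in the bicoloured/alternating argument and the degenerate monochromatic face. The rest is a routine unwinding of duality, and the whole statement is essentially the extension to partial orientations of the correspondence already used (for maps rooted from $0$ to $1$) in~\cite{mbm-aep1}.
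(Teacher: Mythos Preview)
Your proposal is correct and follows the same height-function construction that the paper sketches (via Figure~\ref{fig:duality} and the accompanying paragraph): label the faces of $O$ by the crossing rule and take the dual. The paper does not spell out the verification of the three refined statistics, so your detailed treatment of well-definedness, of the bicoloured/alternating correspondence via the range of the cyclic $\{0,\pm1\}$-walk, and of the local-minimum/clockwise-face correspondence via the right-side convention, is a welcome and faithful expansion of what the paper leaves implicit.
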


The map $\Delta(O)$ is obtained by labelling the faces of $O$ as described in
Figure~\ref{fig:duality}, and then taking the classical dual. In other words,  a partial Eulerian orientation of
edges of a map gives a height function on its faces, or equivalently,
on the vertices of its dual. Height functions on regular grids, like the  square lattice, are much studied as models of  discrete random surfaces, expected to  converge to the Gaussian free field~\cite{chandgotia,glazman-manolescu,duminil2024delocalization}.
This construction was already used in the case of  Eulerian orientations (all edges being oriented)
in~\cite[Prop.~2.1]{elvey-guttmann17} and in~\cite{mbm-aep1}.

\begin{figure}[htb]
  \centering
  \scalebox{0.9}{\input{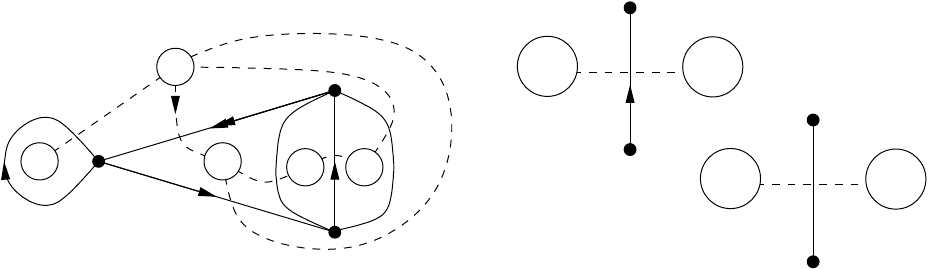_t}}
  \caption{A partial Eulerian orientation (solid edges) and the corresponding
    dual weakly labelled map (dashed edges). The labelling rule is shown on
    the right.}     
  \label{fig:duality}
\end{figure}

Another, more subtle   bijection, due to Ambj\o rn and Budd~\cite{ambjorn-budd}, plays an important role in this work. It relates labelled quadrangulations (equivalently, Eulerian orientations of quartic maps) to weakly labelled maps. It creates an edge in each face, as described in Figure~\ref{fig:AB}. The following statement is Theorem~1 in~\cite{ambjorn-budd}, restricted to  labelled quadrangulations rooted from $0$ to $1$.

\begin{figure}[htb]
  \centering
  \scalebox{0.8}{\input{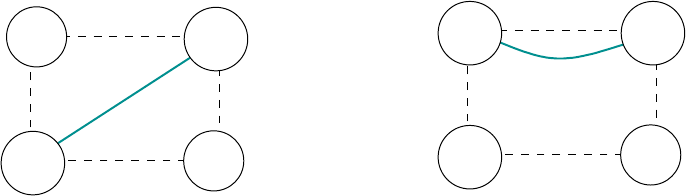_t}}
  \caption{The Ambj\o rn and Budd construction.}     
  \label{fig:AB}
\end{figure}

\begin{Proposition} \label{prop:bij}
  There exists a bijection $\Phi$ {that sends} labelled quadrangulations rooted from $0$ to $1$ to  weakly labelled   maps 
  with root vertex labelled $1$. Moreover, if $\Phi(Q)=M$, then the
  number of edges, monochromatic edges, and faces in $M$ are given by
  \[
    \ee(M)=\ff(Q),\qquad \ee_{\mathrm{mon}}(M)=\ff_{\mathrm{bic}}(Q), \qquad \ff(M)=\vv_{\min}(Q),
  \]
  where $\vv_{\min}(Q)$ denotes the number of local minima in $Q$.
\end{Proposition}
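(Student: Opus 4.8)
The plan is to obtain Proposition~\ref{prop:bij} as the specialisation of \cite[Theorem~1]{ambjorn-budd} to the rooting convention used here, so that the fact that $\Phi$ is a bijection, together with the correspondences between edges, faces and the refined statistics, is essentially quoted; the work left to do by hand is to match up the normalisations of the root and to observe that the statistics of~\cite{ambjorn-budd} restrict to ours. I would begin by recalling the map $\Phi$, as depicted in Figure~\ref{fig:AB}: around any face of a labelled quadrangulation $Q$ the four corners carry, up to cyclic rotation, labels of one of the two patterns $(\ell,\ell+1,\ell,\ell+1)$ or $(\ell,\ell+1,\ell+2,\ell+1)$ (the two sign sequences of the four $\pm1$ steps around the face); in each face one inserts a single new edge whose endpoints are prescribed by this pattern, then deletes every original edge of $Q$, deletes every vertex left isolated, keeps the plane embedding and the labels of the surviving vertices, and roots the resulting map $M$ at the corner inherited from the root of $Q$.

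By \cite[Thm.~1]{ambjorn-budd}, this $\Phi$ is a bijection between labelled quadrangulations and weakly labelled maps; it creates exactly one edge per face of $Q$, so that $\ee(M)=\ff(Q)$ and these are all the edges of $M$; and the surviving vertices of $M$ are precisely the vertices of $Q$ that are not local minima, each keeping its label (hence adjacent labels of $M$ differ by $0$ or $\pm1$). To pin down the rooting in our special case: since $Q$ is rooted from $0$ to $1$, its root edge runs from a vertex labelled $0$ to one labelled $1$, and reading the local rule of Figure~\ref{fig:AB} in the root face shows that the new edge placed there is incident, on the side prescribed by the rooting rule of~\cite{ambjorn-budd}, to a vertex labelled $1$, which becomes the root vertex of $M$. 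Conversely, given a weakly labelled map $M$ with root vertex labelled $1$, applying the inverse bijection of~\cite{ambjorn-budd} returns a labelled quadrangulation that one checks to be rooted from $0$ to $1$. This identifies the two classes of objects in the statement.

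It remains to transfer the two finer statistics. For $\ee_{\mathrm{mon}}(M)=\ff_{\mathrm{bic}}(Q)$, I would note from the two cases of the local rule that the new edge inserted in a face $f$ joins two vertices with equal labels exactly when $f$ carries only two distinct labels, i.e. when $f$ is bicoloured (in the three-label pattern $(\ell,\ell+1,\ell+2,\ell+1)$ the edge necessarily joins labels differing by $1$); since the edges of $M$ are in bijection with the faces of $Q$, summing over faces gives the identity. For $\ff(M)=\vv_{\min}(Q)$, one can either quote it from~\cite{ambjorn-budd} or re-derive it from the structure above: combining $\ee(M)=\ff(Q)$, $\vv(M)=\vv(Q)-\vv_{\min}(Q)$, Euler's relation for $M$, and the relations $\ee(Q)=2\ff(Q)$, $\vv(Q)=\ff(Q)+2$ valid for a quadrangulation, one gets $\ff(M)=\ee(M)-\vv(M)+2=\vv(Q)-\vv(M)=\vv_{\min}(Q)$.

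The main obstacle is not any single computation but the bookkeeping of the rooting: one has to verify carefully that ``rooted from $0$ to $1$'' on the quadrangulation side corresponds exactly to ``root vertex labelled $1$'' on the map side, with no off-by-one or parity shift hidden in the orientation and corner conventions of Figure~\ref{fig:AB} and of~\cite{ambjorn-budd}, and that the erased-vertex versus face correspondence holds with multiplicity exactly one. Beyond that, everything reduces either to the cited theorem or to a check on the two local label patterns.
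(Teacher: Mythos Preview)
Your proposal is correct and takes the same approach as the paper: the paper simply states that this proposition is Theorem~1 of~\cite{ambjorn-budd} restricted to labelled quadrangulations rooted from $0$ to $1$, with no further proof. Your write-up is in fact more detailed than the paper's, since you spell out the verification of the monochromatic-edge/bicoloured-face correspondence from the local rule and recover $\ff(M)=\vv_{\min}(Q)$ via Euler's relation, whereas the paper leaves all of this implicit in the citation.
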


Note that a simple shift in the labels transforms the weakly labelled maps rooted at $1$ of this proposition into those rooted at $0$ in Lemma~\ref{lem:duality}.

If $Q$ is a \emm colourful,\, labelled quadrangulation, i.e., a quadrangulation in which every face contains three distinct labels, then $\Phi(Q)$ is a labelled map, rooted either from $1$ to $2$, or from $1$ to~$0$. Each of these two classes is in bijection with general Eulerian orientations, by duality.

\begin{Corollary}\label{cor:AB}
  There is a $2$-to-$1$ correspondence $\overline\Phi$ between colourful labelled quadrangulations~$Q$  rooted from $0$ to $1$ and general Eulerian orientations $M$, such that, if $\overline\Phi(Q)=M$,
  \[
    \ee(M)=\ff(Q),  \qquad \vv(M)=\vv_{\min}(Q).
  \]
\end{Corollary}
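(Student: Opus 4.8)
The plan is to assemble $\overline\Phi$ from two bijections already in hand: the Ambj\o rn--Budd bijection $\Phi$ of Proposition~\ref{prop:bij} and the duality bijection $\Delta$ of Lemma~\ref{lem:duality}, glued by an elementary relabelling. The key preliminary observation is that, in a labelled quadrangulation $Q$ (where adjacent labels differ by $\pm1$, so there are no monochromatic edges), the four labels read around a face have steps $\pm1$ summing to $0$, hence comprise either exactly two distinct values (the face is bicoloured) or exactly three (the face is colourful), four distinct values being impossible. Consequently $Q$ is colourful precisely when $\ff_{\mathrm{bic}}(Q)=0$, and by Proposition~\ref{prop:bij} this is equivalent to $\ee_{\mathrm{mon}}(\Phi(Q))=0$, i.e.\ to $\Phi(Q)$ being a genuine labelled map (no monochromatic edge). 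Since $\Phi(Q)$ always has its root vertex labelled $1$ and a labelled map is bipartite with alternating label parities, its co-root vertex is then labelled $0$ or $2$.

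First I would record that $\Phi$ thus restricts to a bijection between colourful labelled quadrangulations rooted from $0$ to $1$ and the disjoint union of the labelled maps rooted from $1$ to $0$ and those rooted from $1$ to $2$, with $\ee(\Phi(Q))=\ff(Q)$ and $\ff(\Phi(Q))=\vv_{\min}(Q)$ throughout. Next I would identify these two families with the single family of labelled maps rooted from $0$ to $1$: subtracting $1$ from every label carries a labelled map rooted from $1$ to $2$ to one rooted from $0$ to $1$, while $L\mapsto 1-L$ (negate all labels, then add $1$) carries a labelled map rooted from $1$ to $0$ to one rooted from $0$ to $1$. Both operations leave the underlying rooted map untouched, hence preserve $\ee$, $\vv$ and $\ff$, and are clearly invertible; so every labelled map $L$ rooted from $0$ to $1$ arises from exactly two labelled maps rooted at $1$, namely $L+1$ and $1-L$, which are distinct because $L$ takes the value $1$ somewhere whereas $1-L$ does not (its root vertex being labelled $0$).

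Finally I would set $\overline\Phi(Q):=\Delta^{-1}(L)$, where $L$ is the labelled map rooted from $0$ to $1$ produced from $\Phi(Q)$ by the relabelling above, and check that this is the composition of a bijection, a $2$-to-$1$ surjection, and a bijection, hence a $2$-to-$1$ correspondence from colourful labelled quadrangulations rooted from $0$ to $1$ onto (general) Eulerian orientations. Tracking the parameters through Lemma~\ref{lem:duality}, which gives $\ee(\Delta^{-1}(L))=\ee(L)$ and $\vv(\Delta^{-1}(L))=\ff(L)$, then yields $\ee(M)=\ee(L)=\ff(Q)$ and $\vv(M)=\ff(L)=\vv_{\min}(Q)$, as claimed; the atomic map is the single degenerate case and is dispatched by convention. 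I expect the only real subtlety to be the root-label bookkeeping — in particular making sure that the fibre over each Eulerian orientation has size exactly $2$ (neither $1$ nor $4$), which is precisely the distinctness of $L+1$ and $1-L$ noted above.
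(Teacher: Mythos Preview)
Your proof is correct and follows essentially the same route as the paper's (very terse) argument, which is the paragraph immediately preceding the Corollary: colourful means $\ff_{\mathrm{bic}}(Q)=0$, hence $\Phi(Q)$ has no monochromatic edges and is a genuine labelled map rooted from $1$ to either $0$ or $2$, and each of these two classes is in bijection with Eulerian orientations via duality. One small wording slip: your distinctness check for $L+1$ and $1-L$ is garbled (the map $1-L$ \emph{does} carry the label $1$, at its root vertex); the clean reason they differ is simply that their co-root labels are $2$ and $0$ respectively.
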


On one occasion (proof of Proposition~\ref{prop:symmetry}) we will use a slight generalization of the bijection of Proposition~\ref{prop:bij}, which  applies to labelled maps in which all faces are quadrangles, except for the root face which has arbitrary (even) degree $2d$. It is a special case of a much more general construction that applies to all labelled maps~\cite[Sec.~2.2]{bouttier-fusy-guitter}.

The following table, illustrated by Figure~\ref{fig:bijections}, summarises the correspondence between statistics of interest under the duality map and the Ambj\o rn-Budd bijection. We will work mostly with labelled quadrangulations (column 2), and then state our results in terms of orientations using this dictionary.

\def\MC{\multicolumn{1}{c}{}}
\def\MCdl{\multicolumn{2}{c}{\ \hskip -9mm $\stackrel{\text{duality}}{\curvearrowleftright}$}}
\def\MCdr{\multicolumn{2}{c}{\ \hskip -1mm $\stackrel{\text{duality}}{\curvearrowleftright}$}}
\def\MCab{\multicolumn{2}{c}{\ \hskip 2mm $\stackrel{\text{Ambj\o rn-Budd}}{\curvearrowleftright}$}}

\medskip

\begin{table}[h]
  \centering
  \begin{tabular}{|c|c|c|c|c|c}
    \MC &  \MCab & \MC\\
    [\dimexpr -\normalbaselineskip-4pt]
    \MCdl  & \MCdr \\
    \hline
    quartic       &labelled        &weakly labelled &  Eulerian partial & variable\\
    orientations  &quadrangulations&maps            &orientations&\\
    \hline \hline
    vertices  &faces & edges & edges & $t$\\
    \hline
    alternating &bicoloured & monochromatic& undirected& $\om$\\
    vertices  &faces & edges&edges&\\
    \hline
    clockwise  & local & faces & vertices&$v$\\
    faces         & minima&&&\\
    \hline
  \end{tabular}
  \medskip
  \caption{Parameter correspondences between orientations and labelled maps. For the objects to be in bijection, we restrict to labelled quadrangulations rooted from $0$ to $1$ and weakly labelled maps rooted from $0$ to $-1$, $0$ or $1$.}
  \label{tab:bij}
\end{table}

\begin{figure}[htb]
  \centering
  \scalebox{0.4}{\input{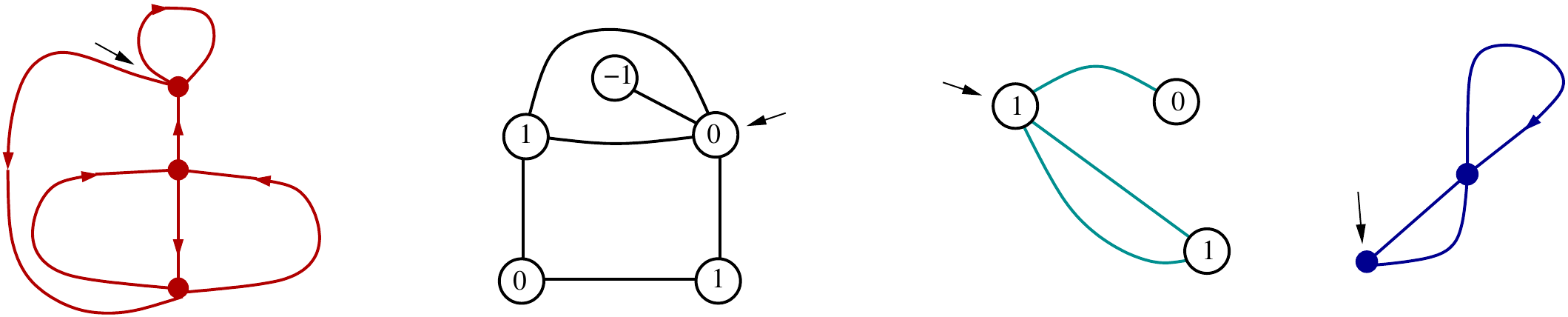_t}}
  \caption{A quartic Eulerian orientation and the corresponding labelled quadrangulation, weakly labelled map and  Eulerian partial orientation. The root corners are indicated by black arrows.}     
  \label{fig:bijections}
\end{figure}

\medskip

  \begin{Remark}[{\bf 3-coloured quadrangulations}]
  As observed in~\cite[Sec.~2.2]{mbm-aep1}, there is a simple bijection between labelled quadrangulations and quadrangulations equipped with a proper $3$-colouring of the vertices. It simply consists in taking the labels modulo $3$. Hence our results can also be stated in these terms. In particular, the specialization $\Qgf(t,\om, 1)$ counts $3$-coloured quadrangulations by faces and bicoloured faces (the colours of the root edge being fixed). Even at $\om=1$, this series is transcendental, as proved in~\cite{mbm-aep1}. This contrasts with the classes of $3$-coloured triangulations and $3$-coloured general maps, which have algebraic \gfs~\cite{bernardi-mbm-alg}.
\end{Remark}

\begin{Remark} We note that labelled quadrangulations with no bicoloured faces (i.e., $\omega=0$) are in bijection with rectilinear disks recently studied by  Budd \cite{budd-flat}. 
\end{Remark}

\subsection{Patches}
\label{sec:patches}

We first recall the definitions of three families of labelled maps already used in~\cite{mbm-aep1}.

\begin{Definition}\label{def:patches}
  A {\em patch} is a labelled map, rooted from $0$ to $1$, in which each inner face has degree~$4$,
  and the vertices around the outer face are alternately labelled $0$ and
  $1$. We include the atomic map in the set of patches. For $\ell \in \zs$, an \emm $\ell$-patch, is a labelled map obtained by adding $\ell$ to all labels of a patch.

  A \emm C-patch, is a patch satisfying two additional
  conditions:  all neighbours of the root vertex are labelled
  $1$, and the root corner is the only outer corner at the root
  vertex. By convention, the atomic patch is \emm not, a C-patch.

  {\em D-patches} resemble patches but may include digons.  More
  precisely, a D-patch is a labelled map, rooted from $0$ to $1$, in which each inner face 
  has degree $2$ or $4$, those of degree $2$ being incident to
  the root vertex, and the vertices around the outer face
  are alternately labelled $0$ and~$1$. We also require that all
  neighbours of the root vertex are labelled~$1$. We include the atomic map in the set of D-patches.
\end{Definition}

We now introduce a new family of labelled maps, which are reminiscent of lattices or maps with \emm Dobrushin boundary conditions, considered in the Ising model~\cite{chen-turunen1,eynard-book,velenik}.

\begin{Definition}\label{def:E-patches}
  An \emm E-patch, is a {labelled map} such that each inner face has degree $4$,  the outer face has degree say $2m$, and there exists $k \in \llbracket 0, m\rrbracket$
  such that the  outer corners, taken  in counterclockwise order starting at the root corner,  have labels $0, \ell_{1}, 0, \ell_3, \ldots,0, \ell_{2m-1}$
  where $\ell_{2j+1}=1$ for $0\leq j< k$ and 
  $\ell_{2j+1}=-1$ for $k\leq j<m$. We include the atomic map in the set of E-patches.
\end{Definition}
If an E-patch satisfies $k=m$ in this definition then it is a patch, while if it satisfies $k=0$ then it is, up to a shift of the root edge, a $(-1)$-patch.

\subsection{Formal power series}
\label{sec:fps}

Let $A$ be a commutative ring and $x$ an indeterminate. We denote by
$A[x]$ (resp. $A[[x]]$) the ring of polynomials (resp. \fps) in $x$
with coefficients in $A$. If $A$ is a field,  then $A(x)$ denotes the field
of rational functions in $x$, and $A((x))$ the field of Laurent series in $x$, that is, series of the form
\[ \sum_{n \ge n_0} a_n x^n,
\]
with $n_0\in \zs$ and $a_n\in A$.  The coefficient of $x^n$ in a   series $F(x)$ is denoted by $[x^n]F(x)$.

This notation is generalised to polynomials, fractions
and series in several indeterminates. For instance, the  \gf\ of
Eulerian orientations, counted by edges (variable $t$) and vertices
(variable $v$)  belongs to
$\qs[v][[t]]$.
If $F(x,x_1, \ldots, x_d)$ is a series in the $x_i$'s whose 
coefficients are Laurent series in $x$, say
\[
  F(x,x_1, \ldots, x_d)= \sum_{i_1, \ldots, i_d} x_1^{i_1} \cdots
  x_d^{i_d}
  \sum_{n \ge n_0(i_1, \ldots, i_d)} a(n, i_1, \ldots, i_d) x^n,
\]
then we define the \emm non-negative part of $F$ in $x$, as the
following \fps\ in $x, x_1, \ldots, x_d$:
\[
  [x^{\ge 0}]F(x,x_1, \ldots, x_d)= \sum_{i_1, \ldots, i_d} x_1^{i_1} \cdots
  x_d^{i_d}
  \sum_{n \ge 0} a(n, i_1, \ldots, i_d) x^n.
\]
We define similarly the \emm positive part, of $F$ in $x$, denoted
$[x^{>0}]F$. We will use, more generally, the notation $[x^{>i}]F$ for any integer $i$ when extracting coefficients with a power of $x$ larger than $i$.

If $A$ is a field,  a power series $F(x) \in A[[x]]$
is \emm algebraic, (over $A(x)$) if it satisfies a
non-trivial polynomial equation $P(x, F(x))=0$ with coefficients in
$A$. It is \emm differentially algebraic, (or \emm D-algebraic,) if it satisfies a non-trivial polynomial
differential equation $P(x, F(x), F'(x), \ldots, F^{(k)}(x))\\ =0$ with
coefficients in $A$. It is \emm D-finite, if it satisfies a \emm
linear, differential equation with coefficients  in $A(x)$. For
multivariate series, D-finiteness and D-algebraicity require the
existence of a differential equation \emm in each variable,.  {We refer to~\cite{Li88,lipshitz-df} for
  general results on D-finite series, and to~\cite[Sec.~6.1]{BeBMRa17} for
  D-algebraic series.}

\subsection{Four generating functions}
\label{sec:GF}
We will count the four families of labelled maps from Section~\ref{sec:patches} (patches, C-patches, D-patches and E-patches) according to three main statistics corresponding to the second column of Table~\ref{tab:bij}:
\begin{itemize}
\item the number of inner quadrangles (variable $t$)
\item the number of inner bicoloured quadrangles (variable $\om$)
\item the number of vertices that are local minima, including the unique vertex in the case of the atomic map (variable $v$).
\end{itemize}
Moreover, we will also record, depending on the class of maps under consideration, one or two additional parameters, which are required to write functional equations (such parameters are sometimes called \emm catalytic,). More precisely, for all types of patches defined above, a variable $y$ records the number of outer corners labelled $1$. For patches, C-patches and D-patches this is also the number of outer corners labelled $0$, or equivalently half the outer degree. Finally, another variable $x$ records either the degree of the root vertex (for C-patches)  or the number of
inner digons (for D-patches), or the number of outer corners labelled $-1$ (for E-patches).

We denote by $\Pgf(y)$, $\Cgf(x,y)$, $\Dgf(x,y)$ and $\Egf(x,y)$ the corresponding \gfs. Despite the notation, these also depend on $t$, $\om$, and $v$. More precisely, if we denote by $\GK$ the field $\qs(\om,v)$ of rational functions in $\om$ and $v$, then
\begin{itemize}
\item $\Pgf(y)$ belongs to $\GK[[y,t]]$,
\item $\Cgf(x,y)$ belongs to $\GK[x][[y,t]]$,
\item $\Dgf(x,y)$ and $\Egf(x,y)$
  belong to $\GK[[x,y,t]]$.
\end{itemize}
This is justified in~\cite[Sec.~3]{mbm-aep1} for the first three series. For the fourth one, namely $\Egf(x,y)$, we simply note that there are finitely many labelled maps with $n$ inner faces, all of degree $4$, and an outer face of bounded degree. The outer degree is twice the sum of the exponents of $x$ and~$y$ in the series $\Egf(x,y)$. Finally, note that we could replace the field $\GK=\qs(\om,v)$ above by the polynomial ring $\qs[\om,v]$ (or even $\mathbb{Z}[\om,v]$). 

\section{ Functional equations}
\label{sec:funceq}

We first introduce an important decomposition of labelled maps,
namely the extraction and contraction of a \emm subpatch,. This type of decomposition already appears in~\cite{elvey-guttmann17} and~\cite{mbm-aep1}, but  we use it on different maps here.

\subsection{Subpatch extraction}

\begin{Definition} \label{def:sub}
  Let $M$ be a  labelled map with root vertex $v$ labelled $\ell$ and root corner $c$.
  Let $M'$ be the maximal (connected)  submap of $M$ that contains  $v$ and consists of vertices
  labelled $\ell$ or less. Let $S$ be the submap of $M$ {that} contains $M'$
  and all edges and vertices within its boundary (assuming the
  root face of $M$ is drawn as the infinite face). The map $S$ is the {\em 
    subpatch} of $M$. We root it at the corner inherited from $c$.
 
  We define the {\em contracted map} $C$ of $M$ to be the map remaining from $M$ when all of $S$ is contracted to a single vertex labelled $\ell$. The root corner of $C$ is inherited from $c$ (see Figure~\ref{fig:minus_subpatch_and_contraction}).
\end{Definition}

\begin{figure}[htb]
  \centering
  \scalebox{0.7}{\includegraphics{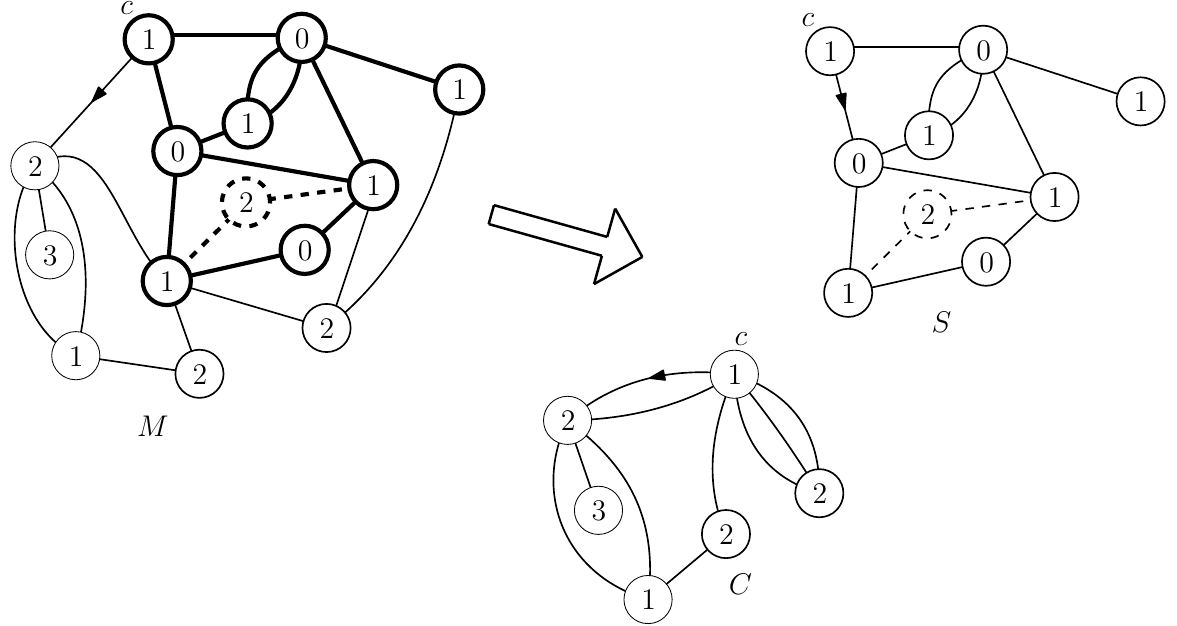}} 
  \caption{A labelled map $M$ with root corner $c$ labelled $\ell=1$, along with the corresponding 
    subpatch $S$ and contracted map $C$. The map $M'$ is formed by the solid edges of $S$.}
  \label{fig:minus_subpatch_and_contraction}
\end{figure}

A very special case is when $v$ is a local minimum, that is, when all its neighbours have label $\ell+1$: then $S$ is the atomic map, and $C$ is $M$. Note that we use the word \emm subpatch, for $S$ even though $S$ may not be a patch in the sense of Definition~\ref{def:patches}. 

We will apply the subpatch extraction/contraction to maps $M$ satisfying additional properties, under which the correspondence $M \mapsto (S,C)$ becomes one-to-one. For the moment, let us study some properties of this transformation.

\begin{Proposition}\label{prop:subpatch}
  Let $M$ be a labelled map  in which  all inner faces are either quadrangles or digons, and  all inner digons lie in the subpatch $S$ of $M$. 
  Assume moreover that the root vertex~$v$ has label $\ell$, and that the outer corners satisfy the (shifted) Dobrushin condition: starting from the root corner $c$ and moving in counterclockwise order around $M$, the corners have labels $\ell, \ell_1, \ell, \ell_3, \ldots , \ell, \ell_{2m-1}$ where $\ell_{2j+1}=\ell+1$ for $0\le j <k$ and $\ell_{2j+1}=\ell-1$ for $k\le j <m$, for some $k\in \llbracket 0,m\rrbracket$.

  Then all inner faces of the subpatch $S$ and of the contracted map $C$ have degree $2$ or $4$. All digons of $C$ are incident to the root vertex. The outer corners of $S$ are labelled alternately $\ell$ and $\ell -1$, with $\ell$ at the root corner. The outer corners of  $C$ are labelled alternately $\ell$ and $\ell +1$, with~$\ell$ at the root corner. The root vertex of $C$ is only adjacent to vertices labelled $\ell+1$.

  Moreover, the number of inner digons, of inner quadrangles, and of bicoloured inner quadrangles of $M$  are given by
  \[
    \id(M)=\id(S), \quad    \iq(M)=\iq(S)+\iq(C) + \id(C),  \quad \iq_{\mathrm{bic}}(M)= \iq_{\mathrm{bic}}(S)+\iq_{\mathrm{bic}}(C),
  \]
  the number of  local minima of $M$ is
  \[
    \quad \vv_{\min}(M)= \vv_{\min}(S)+\vv_{\min}(C)-1,
  \]
  and finally the numbers of outer corners labelled $\ell+1$ and $\ell-1$ in $M$ are 
  \[
    \oc_{\ell+1}(M) =  \oc_{\ell+1}(C),\quad \oc_{\ell-1}(M)=\oc_{\ell-1}(S)-\id(C).
  \]
  Again, we hope the notation to be self-explanatory.
\end{Proposition}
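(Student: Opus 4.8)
The plan is to first pin down the structure of $S$ and of the contracted map $C$, and only then read off the five statistics, which is then essentially bookkeeping. By construction $S$ is a topological disk whose outer face coincides with the outer face of $M'$, so its inner faces are exactly the faces of $M$ lying strictly inside that disk, and $C$ is obtained by collapsing this disk to a single vertex, which I will call $p$.

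The heart of the matter is the claim that the contour of $S$ (equivalently, the boundary of the outer face of $M'$) is a closed walk whose labels read $\ell,\ell-1,\ell,\ell-1,\dots$ from the root corner. Since $M$ is bipartite this walk already alternates in parity, and all its vertices belong to $M'$, hence have label $\le\ell$; so it suffices to rule out a contour vertex $u$ with label $\le\ell-2$. If such a $u$ existed, every neighbour of $u$ would have label $\le\ell-1$, hence lie in $M'$, so $u$ would have no neighbour outside the disk. Pick a corner of $u$ in the outer face of $S$ (there is one, as $u$ is on the contour) and let $f$ be the face of $M$ containing it. If $f$ were the outer face of $M$, the label of $u$ would violate the Dobrushin condition; if $f$ were a digon, it would be an inner digon of $M$ lying outside $S$, against the hypothesis; so $f$ is an inner quadrangle, with vertices $u,a,b,c$ in cyclic order, in which $a,c$ have label $\le\ell-1$ and $b$ has label $\le\ell$. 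Then all four vertices of $f$ lie in $M'$, all four edges of $f$ lie in $M'$ by maximality of $M'$, so $f$ is a bounded face of $M'$ — that is, $f$ lies inside the disk, a contradiction. All three hypotheses (quadrangles or digons, digons inside $S$, Dobrushin boundary) are used exactly here, one for each possibility for $f$.

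The remaining structural claims follow. A contour vertex labelled $\ell-1$ has, by the same reasoning, all neighbours in $M'$ and none outside the disk, so every neighbour of $p$ in $C$ comes from a contour vertex labelled $\ell$ and is therefore labelled $\ell+1$ (the option $\ell-1$ would again place it in $M'$). The inner faces of $S$ are quadrangles or digons because they are inner faces of $M$. A face $f$ of $M$ outside the disk is a quadrangle (no digon of $M$ lies outside $S$), and collapsing the disk gives $f$ degree $4-j$ in $C$, where $j$ is the number of edges of $f$ on the contour; since $C$ is again a bipartite labelled map it has no triangle, forcing $j\in\{0,2\}$, and when $j=2$ the two contour edges must be consecutive (otherwise all four vertices of $f$ would again lie in $M'$ and $f$ would be a bounded face of $M'$). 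Thus a quadrangle of $M$ outside the disk stays a quadrangle of $C$ unless two consecutive of its edges, say $x_1x_2$ and $x_2x_3$, lie on the contour, in which case it becomes a digon of $C$; these are the only digons of $C$, each incident to $p$, and the shared vertex $x_2$ is necessarily labelled $\ell-1$ (labelling it $\ell$ would once more force all four edges of $f$ into $M'$). Finally the outer face of $C$ is the outer face of $M$ with its $S$-vertices merged into $p$, which yields the alternating $\ell,\ell+1$ pattern with $\ell$ at the root corner, using that the label-$(\ell+1)$ (resp.\ label-$(\ell-1)$) outer corners of $M$ sit on vertices labelled $\ell+1$ (resp.\ $\ell-1$) and that the latter lie in $S$.

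The counting relations are then bookkeeping. Every inner face of $M$ lies inside the disk, where it is an inner face of $S$, or outside it, where it is a quadrangle that becomes a quadrangle or a digon of $C$; since all inner digons of $M$ lie in $S$ this gives $\id(M)=\id(S)$ and $\iq(M)=\iq(S)+\iq(C)+\id(C)$, and since the collapse does not alter the label multiset of a quadrangle that stays a quadrangle or becomes a digon, bicolouredness is preserved and $\iq_{\mathrm{bic}}(M)=\iq_{\mathrm{bic}}(S)+\iq_{\mathrm{bic}}(C)$. For the boundary, $\oc_{\ell+1}(M)=\oc_{\ell+1}(C)$ because the label-$(\ell+1)$ outer corners of $M$ sit on vertices outside $S$ while $p$ contributes no new such corner, and $\oc_{\ell-1}(S)=\oc_{\ell-1}(M)+\id(C)$ because the label-$(\ell-1)$ outer corners of $S$ are those of $M$ together with exactly one extra corner, the corner at $x_2$, for each quadrangle-turned-digon, i.e.\ for each digon of $C$. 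For the local minima, a vertex of $S$ is a local minimum of $M$ iff it is one of $S$, a vertex of $C$ other than $p$ is a local minimum of $M$ iff it is one of $C$, and $p$ is always a local minimum of $C$ since all its neighbours are labelled $\ell+1$; summing and discarding the over-counted vertex $p$ gives $\vv_{\min}(M)=\vv_{\min}(S)+\vv_{\min}(C)-1$. The step I expect to be the real obstacle is not any of these arguments in the generic case, but making the topological bookkeeping rigorous when the contour of $S$ is not a simple cycle: vertices, edges and corners along it must then be counted with multiplicity, a face outside the disk may meet the contour at several non-consecutive vertices, and one must exclude — again via the ``$f$ would be a bounded face of $M'$'' device and the bipartiteness of $C$ — the configurations that would turn one quadrangle into two digons or into a triangle; the degenerate case where $v$ is a local minimum (so $S$ is atomic and $C=M$) should also be treated separately.
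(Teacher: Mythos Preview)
Your proof is correct and follows essentially the same route as the paper: the same case analysis on the face $f$ incident to a boundary vertex of $S$ establishes the $\ell/(\ell-1)$ alternation on $\partial S$, from which the structure of $C$ and all the counting identities follow by the same bookkeeping. One small slip to fix: in the $\iq_{\mathrm{bic}}$ step, the label multiset of a quadrangle that becomes a digon is \emph{not} preserved (it passes from $\{\ell,\ell-1,\ell,\ell+1\}$ to $\{\ell,\ell+1\}$); the correct argument---for which you already have every ingredient, since you identified the labels $\ell,\ell-1,\ell,\ell+1$ around such a face---is that these quadrangles carry three distinct labels and hence were never bicoloured in $M$, so they contribute to neither side.
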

\begin{proof}
  With use the notation of Definition~\ref{def:sub}. It follows immediately from the definition of $M'$ and $S$ that they share
  the same outer face. Moreover, all inner faces of $S$ are also inner faces
  of~$M$. Hence they have degree $2$ or $4$. All outer vertices of $S$ must also be vertices of
  $M'$, so their labels are no more than $\ell$, by definition of $M'$.

  Let us prove that  the outer labels of $S$
  cannot be less than  $\ell-1$. For any outer vertex $u$ of
  $S$, there is some face $f$ of $M$,  containing $u$, that is not
  a face of $S$. If $f$ is
  the outer face of $M$, then by the Dobrushin assumption, the label of $u$ is $\ell$ or $\ell-1$. Otherwise $f$ is an inner quadrangle of~$M$,
  so it must contain a vertex $u'$
  with label at least $\ell+1$ (otherwise $f$ would be contained in~$S$). Since
  $u$ and $u'$ are both incident to face $f$,  and $u$ has  label at most $\ell$, this  label can only be $\ell$ or~$\ell-1$. Hence the outer
  vertices of $S$ are  labelled $\ell$ or $\ell-1$. By definition of labelled maps, the labels alternate on the outer face of $S$.

  Now, every edge in $M$ that connects a vertex in $S$
  to a vertex not in $S$ must have endpoints labelled $\ell$ (in $S$) and $\ell+1$
  (not in $S$).
  When contracting $S$ into a single (root) vertex, all neighbours of this vertex thus have label $\ell+1$.  Observe that all outer vertices of $M$ labelled $\ell-1$ belong to the subpatch $S$, and end up being contracted in $C$. Moreover, all  outer vertices of $C$  were outer vertices in $M$. They thus have label $\ell$ or $\ell+1$, alternately.

  Let us now discuss the inner face degrees in $C$. By assumption, before the contraction of $S$ they are all equal to $4$. When $S$ is contracted into a single vertex, the inner quadrangles of $M\setminus S$ that contain an outer corner of $S$ labelled $\ell-1$ become digons. Before the contraction they have labels $\ell, \ell-1, \ell, \ell+1$, and the vertex labelled $\ell+1$ is the only one that is not in $S$. Conversely, any outer corner of $S$ that has label $\ell-1$ and is not an outer corner of $M$ gives rise to a digon of $C$.

  Let us now examine the quantitative statements of the proposition. The first three follow from the the above discussion on face degrees, since the quadrangles that get contracted into a digon are not bicoloured. For the fourth one, we observe that the root vertex of $C$, which results from the  contraction of $S$, is a local minimum in $C$, but not in $M$, unless $S$ is the atomic map. But in this case (and in this case only), this vertex is counted as a minimum of $S$, and the result holds as well. The fifth statement results from the discussion on the outer corners of $C$. The final one translates the fact that all outer corners labelled $\ell-1$ in $S$ either come from an outer corner of $M$ or give rise to a digon in $C$.
\end{proof}

\begin{Proposition}\label{prop:subpatch-bij} 
  The transformation $M \mapsto (S,C)$ of Definition~\ref{def:sub}, restricted to labelled maps satisfying the conditions of Proposition~\ref{prop:subpatch} (first paragraph), forms a bijection from these maps to pairs $(S,C)$  of labelled maps satisfying the conditions of Proposition~\ref{prop:subpatch} (second paragraph), and the additional condition $\oc_{\ell-1}(S)\ge \id(C)$. 
\end{Proposition}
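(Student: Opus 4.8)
The plan is to check that the image of $M\mapsto(S,C)$ lies in the claimed target set, and then to build an explicit two-sided inverse. The first point is immediate: if $M$ satisfies the first-paragraph hypotheses of Proposition~\ref{prop:subpatch}, then its second paragraph gives all the structural conditions required of $(S,C)$, and the inequality $\oc_{\ell-1}(S)\ge \id(C)$ follows from the identity $\oc_{\ell-1}(M)=\oc_{\ell-1}(S)-\id(C)$ of that proposition together with $\oc_{\ell-1}(M)\ge 0$. So everything reduces to producing the inverse.

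For the inverse, I would start from a pair $(S,C)$ in the target set and reconstruct $M$ by ``expanding'' the root vertex $r$ of $C$ back into $S$. If $S$ is the atomic map, then $\oc_{\ell-1}(S)=0$ forces $\id(C)=0$ and I set $M:=C$ (whose root vertex is then a local minimum, as needed). Otherwise the boundary of the outer face of $S$ is a closed walk of length $2p$ with $p=\oc_\ell(S)=\oc_{\ell-1}(S)\ge 1$, reading $\ell,\ell-1,\ell,\ell-1,\dots$ from the root corner, and the rotation of $C$ around $r$, read counterclockwise from the root corner, is a cyclic word made of the root corner (in the outer face), the $\deg_C(r)$ edges out of $r$ towards vertices labelled $\ell+1$, and the inner-face corners at $r$, exactly $\id(C)$ of which are ``digon corners''. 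I would then form $M$ by replacing $r$ with $S$: cut the rotation at $r$ along its digon corners and its root corner; distribute the resulting pieces, in cyclic order and anchored at the root corners, among the $p$ outer corners of $S$ labelled $\ell$; reopen each matched digon of $C$ into a quadrangle $(\ell-1,\ell,\ell+1,\ell)$ straddling an outer corner of $S$ labelled $\ell-1$ and its two neighbours; and declare the $p-\id(C)$ unmatched $\ell-1$-corners of $S$ to be outer corners of $M$. The claim to establish is that this interleaving of the two cyclic structures --- the rotation at $r$ in $C$ and the outer boundary walk of $S$ --- is forced: there is a unique planar way to merge them so that the root corners correspond, so no choice enters.

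Granting that, I would verify that $M$ meets the first-paragraph hypotheses of Proposition~\ref{prop:subpatch}: its inner faces are quadrangles and digons, the only digons being those already present in $S$; all inner digons lie in the subpatch, which is $S$ itself, because every vertex introduced during the expansion is labelled $\ell+1$, so the maximal connected submap through labels $\le\ell$ containing the root vertex is exactly $S$; the root vertex is labelled $\ell$; and the outer corners of $M$ satisfy the shifted Dobrushin condition, the $\ell+1$-corners coming from the outer boundary of $C$ and the $\ell-1$-corners being the $p-\id(C)\ge 0$ unmatched ones --- which is exactly where the hypothesis $\oc_{\ell-1}(S)\ge\id(C)$ is used. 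The two composites are then identities: applying the subpatch extraction to the reconstructed $M$ returns $(S,C)$ by the description of its subpatch just given; conversely, for $M$ satisfying the first-paragraph hypotheses, the way $S$ sits inside $M$ --- the cyclic list of edges, quadrangles and outer-face corners met while walking around the boundary of $S$ --- is precisely what the rotation at $r$ together with the digons of $C$ records, so the reconstruction recovers $M$.

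The hard part will be the canonicity claim of the second paragraph. One must follow the rotation system around the contracted vertex closely, using the $\ell,\ell-1$ alternation along the boundary of $S$, the identification (from the proof of Proposition~\ref{prop:subpatch}) of the digons of $C$ with exactly the quadrangles of $M$ that straddled an interior $\ell-1$-corner of that boundary, and the root corners as anchors, while also being careful with the vertices and corners that may occur with multiplicity on the relevant boundary walks.
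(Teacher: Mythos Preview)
Your approach coincides with the paper's: expand the root vertex $r$ of $C$ back into $S$ by redistributing the edges incident to $r$ among the outer $\ell$-corners of $S$, moving to the next such corner exactly at each inner digon of $C$, anchored at the root corner. The verification steps you list (image lands in the target, the reconstructed map satisfies the first-paragraph hypotheses, the two composites are identities) are also the paper's three points.

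Where your write-up stays vague, however, is precisely the place where the paper is sharp, and you should make it explicit rather than deferring it as ``the hard part''. Label the $\ell-1$ outer corners of $S$ as $c_1,\ldots,c_p$ in counterclockwise order from the root corner. The claim is that, for a given $M$, the ones that sit in inner quadrangles of $M\setminus S$ (and hence become digons of $C$) are exactly the \emph{first} $d$ of them, $c_1,\ldots,c_d$, while $c_{d+1},\ldots,c_p$ are the $\ell-1$ outer corners of $M$. This is where the Dobrushin hypothesis on $M$ actually does its work: since around $M$ the $\ell+1$ outer corners come first and the $\ell-1$ outer corners come last, the portion of $\partial S$ that lies on $\partial M$ (which carries the $\ell-1$'s) is a terminal arc of the counterclockwise walk around~$S$. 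Once you state this, your ``canonicity'' claim becomes immediate --- the anchored cyclic distribution you describe is forced to match digons with $c_1,\ldots,c_d$ --- and the composite ``extract then reconstruct'' is visibly the identity on $M$. Without this observation, your sentence ``there is a unique planar way to merge them so that the root corners correspond'' is not yet justified: planarity and the anchoring alone fix the \emph{reconstruction}, but it is the Dobrushin condition that tells you this particular reconstruction reproduces the \emph{original} $M$ rather than a different map with the same $(S,C)$.
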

\begin{proof}
  We will argue that:
  \begin{enumerate}
  \item   we can reconstruct $M$ from its subpatch $S$ and contracted map $C$,
  \item  this construction, applied to any pair $(S,C)$ satisfying the above conditions, gives a labelled map $M$ that also satisfies the conditions of the proposition,
  \item  finally, the subpatch and contracted map of  $M$ are indeed $S$ and $C$.
  \end{enumerate}
  Let us begin with  the first point, which is the key one. There are two special cases: if $M$ has no outer corner labelled $\ell+1$, then $S=M$ and $C$ is the atomic map; if the root vertex of $M$ is only adjacent to vertices labelled $\ell+1$, then $S$ is the atomic map and $C=M$. These are the only cases where $C$ or $S$ is atomic.

  So let us assume that neither $C$ nor $S$ is atomic. Let $d$ be the number of inner digons of $C$. Proposition~\ref{prop:subpatch} tells us that $d$ is not larger than the number of outer corners labelled $\ell-1$ in $S$. Observe that if we label these outer corners $c_1, \ldots, c_j$, 
  in counterclockwise order starting from the root, then those that belong to an inner quadrangle of $M\setminus S$ (and give rise to a digon of $C$) are $c_1, \ldots, c_d$. This observation tells us how to reconstruct $M$ from $S$ and $C$. Let us still call~$v$ the root vertex of $C$.  Let $e_1, e_2, \ldots, e_k$ be the 
  edges attached to $v$ in $C$, in counterclockwise order around $v$,
  starting from the root corner (Figure~\ref{fig:decontract}).
  We now  erase the vertex $v$ from $C$, so that
  the 
  edges $e_i$ are dangling. We connect them to the 
  outer corners of $S$ labelled $\ell$ in the following way: we first attach $e_1$ to
  the root corner of $S$, and then proceed  counterclockwise around
  $S$, connecting $e_{i+1}$ to the next corner of $S$ labelled
  $\ell$ if  $e_i$ and $e_{i+1}$ form  an inner digon of $C$, and to
  the same corner as $e_i$ otherwise. We thus reconstruct $M$ from $S$ and $C$, proving Point (1).

  Let us address Point (2). We start from $S$ and $C$ satisfying the conditions of the proposition, and try to apply the above construction. The condition $\oc_{\ell-1}(S)\ge \id(C)$ implies that we will not run out of outer corners labelled $\ell-1$ in $S$, hence the construction terminates and gives a map $M$. Moreover, all inner digons of $C$  give rise to quadrangles, so all inner faces of $M\setminus S$ are quadrangles. The other inner faces (of $C$ and $S$) keep their degrees. Since all neighbours of the root vertex of $C$ were labelled $\ell+1$, the only new edges that we have created have label $\ell+1$ (on the $C$ side) and $\ell$ (on the $S$ side), so that $M$ is indeed a labelled map. Finally, the outer corners of $M$ that have label $\ell-1$ are, with the above notation, the outer corners $c_{d+1}, \ldots, c_j$ of $S$. They are the last outer corners with label $\ell\pm 1$ around $M$ in counterclockwise order. The other such corners were outer corners of $C$ and have label $\ell+1$. This proves that the shifted Dobrushin condition holds.

  Point (3) should now be clear, because in the construction of $M$, we have separated $S$ from~$C$ by edges labelled $(\ell, \ell+1)$.
\end{proof}

\begin{figure}[htb]
  \centering
  \scalebox{0.7}{\includegraphics{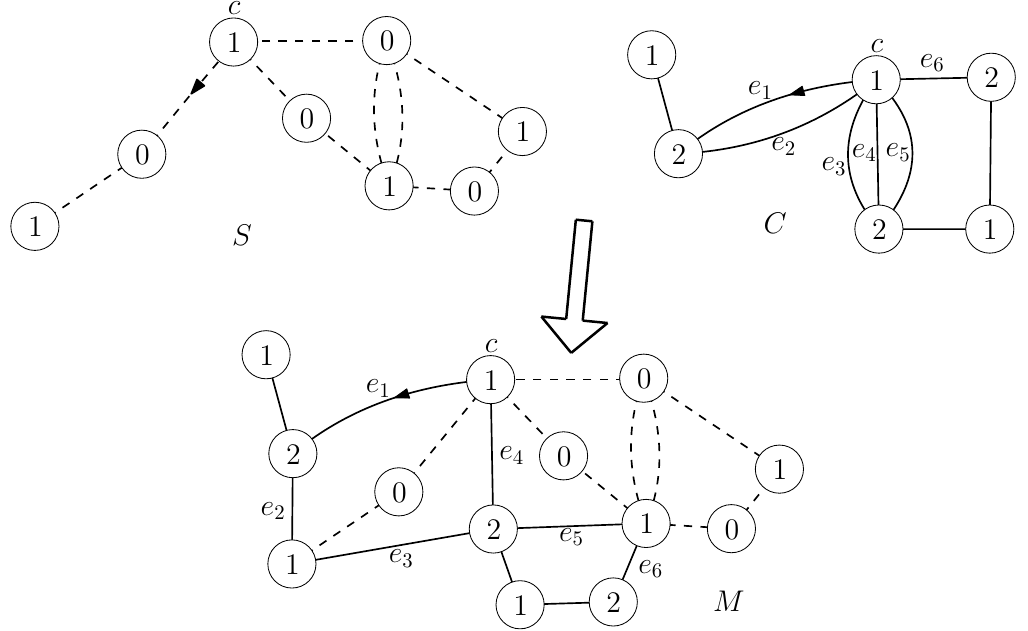}}
  \caption{How to reconstruct the labelled map $M$ from the
     subpatch $S$ (dashed edges) and the contracted map
    $C$. Here, the label of the root vertex $v$ is $\ell=1$, the subpatch~$S$ has outer degree $2j=10$, there are $d=3$ inner digons in $C$, and the root vertex  of $C$  has degree $k=5$.}
  \label{fig:decontract}
\end{figure}

\subsection{Refinement of earlier equations}
Here, we examine to what extent the system of four equations established in~\cite[Thm.~3.1]{mbm-aep1} for $\om=v=1$ can  be refined so as to include the variables $\om$ and $v$, that is, the number of bicoloured faces and local minima. It turns out that only the final equation, giving the coefficient of $y$ in $\Dgf(x,y)$ must be significantly modified.  We will only use one equation of this system,
but we found it natural to discuss to what extent this earlier approach generalises.

\begin{Proposition}\label{prop:early-systemQ}
  The \gfs\ $\Pgf(y)$,
  $\Cgf(x,y)$ and $\Dgf(x,y)$ defined in Section~\ref{sec:GF} satisfy the following system of equations:
  \begin{align}
    \Pgf(y)&=v-1+\frac{1}{yv}[x^1]\Cgf(x,y),\nonumber\\
    \Dgf(x,y)&=\frac{v}{1-\frac 1 v \Cgf\left(\frac{1}{1-x},y\right)},\nonumber\\
    \Dgf(x,y)&=v+ {y}\, [x^{\geq0}]\left(\Dgf(x,y)\left(\frac 1 v [y^{1}]\Dgf(x,y)+\frac{1}{x}\Pgf\left(\frac{t}{x}\right)\right)\right),\label{eq:DP1}
  \end{align}
  with the initial condition
  \[
    (1-x)  [y^{1}]\Dgf(x,y)= v+\om t [y^2] \Dgf(x,y)+ \frac t v [y^1]\left( \Dgf(x,y)[z^1]\Dgf\!\left(\frac t y,z\right)\right).
  \]
  Moreover, these four equations have a unique solution if we require that $\Pgf$, $\Cgf$ and $\Dgf$ belong  respectively to $\GK[[y,t]]$,
  $\GK[x][[y,t]]$ and $\GK[[x,y,t]]$, where we recall that $\GK=\qs(\om, v)$.

  The \gf\ $\Qgf\equiv\Qgf(t,\om,v)$ that counts (by faces, bicoloured faces and local minima) labelled quadrangulations rooted from $0$ to $1$ is
  \beq\label{Q-P-sol}
  \Qgf=[y^1]\Pgf(y)-v.
  \eeq
  This is also the \gf\ of quartic Eulerian orientations, counted by vertices ($t$), alternating vertices ($\om$) and clockwise faces ($v$).
\end{Proposition}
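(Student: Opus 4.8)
The argument has three parts: deriving the four displayed equations by combinatorial decompositions, proving that the system has a unique solution in the announced rings, and identifying $[y^1]\Pgf(y)-v$. All the decompositions take place in the world of labelled maps, and the one tool used throughout is the subpatch extraction of Definition~\ref{def:sub}, whose output is controlled by Propositions~\ref{prop:subpatch} and~\ref{prop:subpatch-bij}: these guarantee that the pieces cut out along the way are again patches, C-patches or D-patches, and they record exactly how the statistics $t$ (inner quadrangles), $\om$ (bicoloured inner quadrangles) and $v$ (local minima) split. The catalytic substitutions and the extraction operators are pure bookkeeping: $x\mapsto\frac1{1-x}$ inserts an arbitrary run of digons along a boundary; replacing $y$ by $\frac tx$ or $\frac ty$ contracts a hanging subpatch, turning each of its pairs of boundary corners into one genuine inner quadrangle (hence a power of $t$) while lowering the degree in $x$ or $y$ accordingly; the operators $[x^{\ge0}]$, $[x^1]$, $[y^k]$ select the relevant ranges; and the rational prefactors $\frac1v$ and $v$ correct for the local minimum created or absorbed when a subpatch is glued or contracted.

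The first three equations are, up to the insertion of $\om$ and $v$, those of~\cite[Thm.~3.1]{mbm-aep1}, re-derived here in refined form. The equation for $\Pgf$ comes from the bijection between a patch and a C-patch whose root vertex has degree~$1$, obtained by deleting the pendant root edge and shifting all labels by $-1$; the operator $\frac1{yv}[x^1]$ selects these C-patches and repairs the boundary-corner and local-minimum counts, while the term $v-1$ bundles the atomic patch with the degenerate case of this bijection. The identity $\Dgf(x,y)=\frac{v}{1-\frac1v\Cgf(\frac1{1-x},y)}$ expresses a D-patch as a sequence of blocks attached at the root vertex, each block a C-patch with an arbitrary run of digons inserted around its root corner; the shared root vertex, a local minimum, is responsible for the overall factor $v$ and for the division by $v$ inside the sequence. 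Equation~\eqref{eq:DP1} is obtained by peeling the outer boundary of a D-patch: the face incident to the last boundary corner is removed, the remainder being a D-patch carrying a possibly nontrivial hanging subpatch, and contracting that subpatch via Proposition~\ref{prop:subpatch-bij} produces the term $\frac1x\Pgf(\frac tx)$, while $[x^{\ge0}]$ discards the contributions in which the contracted subpatch would exceed the available digons.

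\textbf{The new equation and the main obstacle.} The genuinely new ingredient is the initial condition, i.e.\ the equation for $[y^1]\Dgf(x,y)$, and this is where $\om$ enters. Starting from a D-patch whose outer face is a digon with vertices labelled $0$ and $1$, peel the boundary edge opposite the root edge. The face on its other side is a digon — the base case, contributing the summand $v$ through the single-edge D-patch — or a quadrangle, and in the latter case one splits according to whether that quadrangle is bicoloured: if it is, its removal leaves a D-patch of outer degree~$4$, which accounts for the term $\om t\,[y^2]\Dgf(x,y)$; if it is not, a subpatch is exposed at the root, and subpatch extraction turns its removal into the term $\frac tv\,[y^1]\!\left(\Dgf(x,y)\,[z^1]\Dgf(\tfrac ty,z)\right)$, where $[z^1]$ together with the substitution $y\mapsto\frac ty$ encodes the re-rooting and contraction of the subpatch and $\frac tv$ the new quadrangle and the absorbed minimum. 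The factor $1-x$ on the left strips off the leading run of inner digons of a general D-patch of outer degree~$2$. I expect this peeling step — disentangling the bicoloured and non-bicoloured cases and getting every prefactor and substitution right — to be the main technical obstacle; the $\om=v=1$ version in~\cite{mbm-aep1} is visibly simpler.

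\textbf{Uniqueness and the identity for $\Qgf$.} For uniqueness, one orders monomials by a weight combining the $t$-degree with the degrees in the catalytic variables and argues that each equation, read as a recursion, expresses a coefficient of given weight in terms of strictly smaller ones; the only delicate point is that the substitutions $y\mapsto\frac tx$ and $y\mapsto\frac ty$ create negative powers of $x$ and $y$, which is harmless because each is followed by an extraction operator and because $\Cgf$ is polynomial in $x$, so only finitely many coefficients, all of strictly smaller weight, intervene at each step; hence the solution in $\GK[[y,t]]\times\GK[x][[y,t]]\times\GK[[x,y,t]]$ is unique. Finally, $[y^1]\Pgf(y)$ is the generating function of patches of outer degree~$2$. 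Exactly one such patch has no inner face — the single edge from the root vertex labelled $0$ to the co-root vertex labelled $1$, a local minimum of weight $v$ — and it supplies the constant term $v$. Every other patch $P$ of outer degree~$2$ has its outer digon bounded by the root edge and a second edge $e'$, across which lies an inner quadrangle; erasing $e'$ merges that quadrangle with the outer digon into a quadrangular outer face and produces a labelled quadrangulation $Q$ rooted from $0$ to~$1$. This is a bijection (the inverse adds an edge parallel to the root edge inside the root face of $Q$), and it preserves the three statistics: no vertex or adjacency is altered, so local minima are preserved; the erased edge lies strictly inside an inner quadrangle of $P$, so the number of faces is unchanged and the new outer face carries the same label set as that quadrangle, hence is bicoloured exactly when it was. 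Therefore $[y^1]\Pgf(y)-v$ counts labelled quadrangulations rooted from $0$ to $1$ by faces, bicoloured faces and local minima, which by the duality bijection of Lemma~\ref{lem:duality} is the generating function of quartic Eulerian orientations by vertices, alternating vertices and clockwise faces; this proves~\eqref{Q-P-sol}.
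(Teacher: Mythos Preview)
Your overall strategy matches the paper's, and your bijection for $\Qgf=[y^1]\Pgf(y)-v$ is correct and more explicit than what the paper writes out. For the first three equations the paper simply says that the proofs ``mimic'' those of~\cite{mbm-aep1}; the one subtlety it singles out, which you do not mention, is that the minus-patches appearing in the proof of~\eqref{eq:DP1} must be described via the shift $\ell\mapsto\ell-1$ together with a root shift (rather than the sign flip $\ell\mapsto-\ell$ used in~\cite{mbm-aep1}) so that the local-minimum count is preserved.

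The substantive gap is the initial condition. The paper does \emph{not} derive it by a direct decomposition: it obtains it as the coefficient of~$y^1$ in the more general equation~\eqref{D-eq-new}, proved in Proposition~\ref{prop:new-systemQ} by a five-case analysis of the co-root face of an \emph{arbitrary} D-patch. You instead peel the non-root outer edge~$e'$ of a D-patch of outer degree~$2$. That alternative decomposition can in principle be made to give the same equation, but your account of it contains errors. The summand~$v$ does not come from a ``digon'' case: it is the single-edge D-patch, which has no inner face at all. The genuine digon case (inner face across~$e'$ of degree~$2$) contributes $x\,[y^1]\Dgf(x,y)$, and moving this term to the left is what produces the factor~$(1-x)$ --- not ``stripping a run of digons'', which would correspond to a factor $\frac{1}{1-x}$. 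Finally, in the non-bicoloured-quadrangle case you write ``a subpatch is exposed at the root'', but the root vertex of a D-patch has all neighbours labelled~$1$, so its subpatch is atomic; what is actually required is to delete~$e'$, re-root the resulting map at the label-$1$ corner adjacent to the newly exposed label-$2$ vertex, and only then apply Propositions~\ref{prop:subpatch}--\ref{prop:subpatch-bij}, since otherwise the Dobrushin hypothesis of Proposition~\ref{prop:subpatch} is not met.
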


\begin{proof}[Proof of Proposition~\ref{prop:early-systemQ}]
  The proof of uniqueness follows the same lines as in~\cite[Thm.~3.1]{mbm-aep1}. More precisely, we prove that one can compute by induction on $N\ge0$ the coefficients of all monomials $y^j t^n$ in $\Pgf$ (resp. $\Cgf$, $\Dgf$) such that  $j+n<N$ (resp. $j+n\le N$ for $\Cgf$ and $\Dgf$). For instance, when $N=0$ there is nothing to say about $\Pgf$, the third equation gives $[y^0]\Dgf=v$ (so that in particular, $[y^0t^0]\Dgf=v$), and then the second equation gives $[y^0]\Cgf=0$.

  It must be noted that since $\Dgf(x,y)=v+\LandauO(y)$,  the third equation gives a tautology if we extract the coefficient of $y$. This is why the initial condition is needed.

  Now let us prove that the series  $\Pgf(y)$, $\Cgf(x,y)$ and $\Dgf(x,y)$ defined in Section~\ref{sec:GF} satisfy the system.
  The proof of the first three equations mimics what was done in~\cite{mbm-aep1}. The only slight difference arises in the proof of the third equation, where we have to count the so-called \emm minus-patches,,  defined in~\cite{mbm-aep1} as the maps obtained by replacing each label $\ell$ by $-\ell$ in a patch.  This transformation does not preserve the number of local minima, but we can alternatively describe   minus-patches as the maps obtained by replacing each label $\ell$ by $\ell-1$ in a  patch, and moving the root edge one step in clockwise order around the outer face (see Figure~\ref{fig:patch_minuspatch}). This description proves that the \gf\ of minus-patches is $\Pgf(y)$. 

  Finally, the fourth equation giving the initial condition $[y^1]\Dgf(x,y)$ is \emm not, a refinement of the initial condition used in~\cite{mbm-aep1}, which  seems to be incompatible  with both $\om$ and $v$. The new initial condition is obtained by extracting the coefficient of $y^1$ in a new equation that we establish below, namely~\eqref{D-eq-new}.
\end{proof}

\begin{figure}[ht]
  \setlength{\captionindent}{0pt}
  \includegraphics[scale=0.7]  {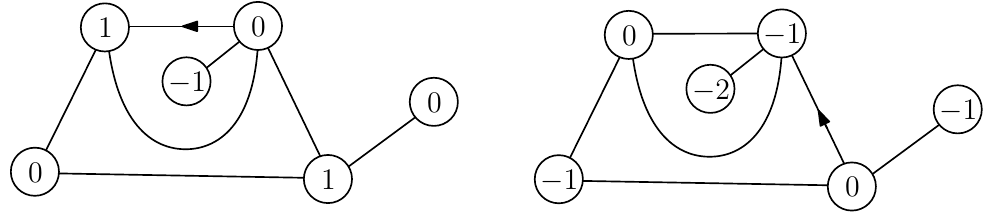} 
  \caption{A patch (left) and the corresponding minus-patch (right).}
   \label{fig:patch_minuspatch}
\end{figure}

\begin{Remark}\label{rem:ringD}
  Recall that $\Cgf(x,y)$ lies  in $\GK[x][[y,t]]$.  It follows from the second equation above that 
  $\Dgf(x,y)$ belongs to $\GK[1/(1-x)][[y,t]]$, while we had so far identified it as  a series of the larger ring $\GK[[x,y,t]]$.
\end{Remark}

\subsection{A smaller system}
Here we give a second equation between $\Pgf$ and $\Dgf$, which, combined with~\eqref{eq:DP1}, allows us to characterise the series~$\Qgf$ in terms of these two series only.

\begin{Proposition}\label{prop:new-systemQ}
  The above defined series $\Pgf(y)$ and $\Dgf(x,y)$ are related by the following equation:
 \beq \label{D-eq-new}
    (1-x) (\Dgf(x,y)-v)=  y\Dgf(x,y) (\Pgf(y)+1-v) +\om \frac ty [y^{>1}] \Dgf(x,y)+ \frac t v [y^{>0}]\left( \Dgf(x,y) [z^1]\Dgf\!\left(\frac t y,z\right)\right). 
  \eeq
  Moreover, this equation, combined  with~\eqref{eq:DP1} and the initial condition $\Pgf(0)=v$, defines $\Pgf(y)$ and $\Dgf(x,y)$ uniquely in $\GK[[y,t]]$ and $\GK[[x,y,t]]$, respectively.

  Recall that the \gf\ of quartic Eulerian orientations is
  $
  \Qgf=[y^1]\Pgf(y)-v.
  $
\end{Proposition}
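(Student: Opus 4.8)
The plan is to establish the new equation~\eqref{D-eq-new} by a combinatorial decomposition of D-patches, and then treat the uniqueness statement by a standard coefficient-extraction/induction argument. For the combinatorial part, I would start from a D-patch $M$ (a labelled map rooted from $0$ to $1$, inner faces of degree $2$ or $4$, digons incident to the root vertex, root vertex only adjacent to vertices labelled $1$), and perform subpatch extraction and contraction in the sense of Definition~\ref{def:sub}. By Propositions~\ref{prop:subpatch} and~\ref{prop:subpatch-bij}, this sets up a bijection between D-patches and pairs $(S,C)$ where $S$ is essentially a $(-1)$-shifted patch glued along its label-$1$ corners and $C$ is again a D-patch-like object, with the digon count of $C$ bounded by the number of relevant outer corners of $S$. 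The three contributions on the right of~\eqref{D-eq-new} should then correspond to: (i) the ``trivial'' case where the root vertex is a local minimum, contributing the main term $y\Dgf(x,y)(\Pgf(y)+1-v)$ after accounting for how the $y$-variable (outer corners labelled $1$) and the $v$-shift interact with $\Pgf$; (ii) the creation of a new bicoloured quadrangle adjacent to the root, giving the $\om \frac t y [y^{>1}]\Dgf$ term; and (iii) the case where a subpatch on the ``$t/y$'' side is glued in, producing the convolution term $\frac t v [y^{>0}](\Dgf(x,y)[z^1]\Dgf(t/y,z))$. The factor $(1-x)$ on the left reflects, as in~\eqref{eq:DP1} and its initial condition, the book-keeping of inner digons via the variable $x$.

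The key step — and the one I expect to be the main obstacle — is to correctly track the three statistics $t,\om,v$ (inner quadrangles, bicoloured quadrangles, local minima) and the catalytic variables $x,y$ simultaneously through the decomposition, in particular verifying that the term recording bicoloured quadrangles is exactly $\om\frac ty[y^{>1}]\Dgf(x,y)$ and not something with a spurious $v$ or $x$ dependence. The subtlety is that the earlier initial condition from~\cite{mbm-aep1} ``seems to be incompatible with both $\om$ and $v$'' (as noted after Proposition~\ref{prop:early-systemQ}), so the bookkeeping here genuinely differs from the earlier paper; one must be careful that contracting a subpatch turns certain quadrangles into digons (hence not bicoloured, per Proposition~\ref{prop:subpatch}) and that the ``minus-patch = shift-by-$(-1)$ plus root move'' reinterpretation used in the proof of Proposition~\ref{prop:early-systemQ} is applied consistently so that the relevant generating function is $\Pgf(y)$ rather than its minus-variant. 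Extracting the coefficient of $y^1$ in~\eqref{D-eq-new} should reproduce the initial condition stated in Proposition~\ref{prop:early-systemQ}, which gives a useful consistency check.

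For the uniqueness statement, I would argue as in the proof of Proposition~\ref{prop:early-systemQ}: one shows by induction on $N$ that the coefficients of $y^jt^n$ with $j+n\le N$ in $\Dgf(x,y)$ (and with $j+n<N$, say, in $\Pgf(y)$) are determined. The base case uses $\Pgf(0)=v$ together with~\eqref{eq:DP1} to get $[y^0]\Dgf=v$. For the inductive step, note that in~\eqref{D-eq-new} the right-hand side at order $y^{j}t^{n}$ involves only $\Dgf$-coefficients of strictly smaller total degree (because of the explicit factors of $y$ and $t$, and because $[y^{>1}]$ and $[y^{>0}]$ and the substitution $y\mapsto t/y$ all shift degrees favourably), so~\eqref{D-eq-new} together with~\eqref{eq:DP1} lets one solve for the next batch of coefficients of $\Dgf$, and then~\eqref{eq:DP1} (first equation) recovers the corresponding coefficients of $\Pgf$. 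The final sentence, $\Qgf=[y^1]\Pgf(y)-v$, is simply carried over from Proposition~\ref{prop:early-systemQ} (equation~\eqref{Q-P-sol}) and its combinatorial interpretation via the duality of Lemma~\ref{lem:duality}, so nothing new needs to be proved there.
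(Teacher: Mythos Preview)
Your combinatorial strategy has a genuine gap. You propose to apply subpatch extraction (Definition~\ref{def:sub}) directly to a D-patch $M$, but this is trivial: by definition of a D-patch, the root vertex has label $0$ and \emph{all} its neighbours are labelled $1$, so it is always a local minimum, the subpatch $S$ is always atomic, and the contracted map $C$ is $M$ itself. No decomposition results.

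The paper's actual argument is a root-edge deletion, with a five-way case analysis on the co-root face $f$ (see Figure~\ref{fig:newDproof_cases}):
\begin{itemize}
\item $D$ atomic: contributes $v$.
\item The root edge is a bridge: removing it gives a D-patch and a patch, contributing $y\Dgf(x,y)(\Pgf(y)+1-v)$. (Your interpretation of this term as ``the root vertex is a local minimum'' is incorrect; that condition holds for every D-patch.)
\item $f$ is an inner digon: removing the root edge gives a D-patch with one fewer digon, contributing $x(\Dgf(x,y)-v)$. This is precisely where the $(1-x)$ factor comes from, not a vague ``book-keeping'' of digons.
\item $f$ is a bicoloured inner quadrangle (labels $0,1,0,1$): removing the root edge gives a D-patch with outer degree $\ge 4$, contributing $\om\frac{t}{y}[y^{>1}]\Dgf(x,y)$.
\item $f$ is a non-bicoloured inner quadrangle (labels $0,1,2,1$): only here does subpatch extraction enter, applied to the map $D'$ obtained after deleting the root edge and re-rooting. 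Propositions~\ref{prop:subpatch} and~\ref{prop:subpatch-bij} then yield the convolution term $\frac{t}{v}[y^{>0}]\big(\Dgf(x,y)[z^1]\Dgf(t/y,z)\big)$.
\end{itemize}
Your three-term reading also misses the first and third cases entirely.

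Your uniqueness sketch is in the right spirit but too optimistic about~\eqref{D-eq-new} alone determining the next $\Dgf$-coefficients. The paper's induction uses~\eqref{eq:DP1} to get $d_{j,n}$ for $j\ge 2$, then~\eqref{D-eq-new} at $y^1$ to get $d_{1,n}$, and finally~\eqref{D-eq-new} at higher $y^{j}$ to recover the $p_{j,n}$; the interplay between the two equations is essential, not incidental.
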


\begin{figure}[ht]
  \setlength{\captionindent}{0pt}
  \includegraphics[scale=0.7]  {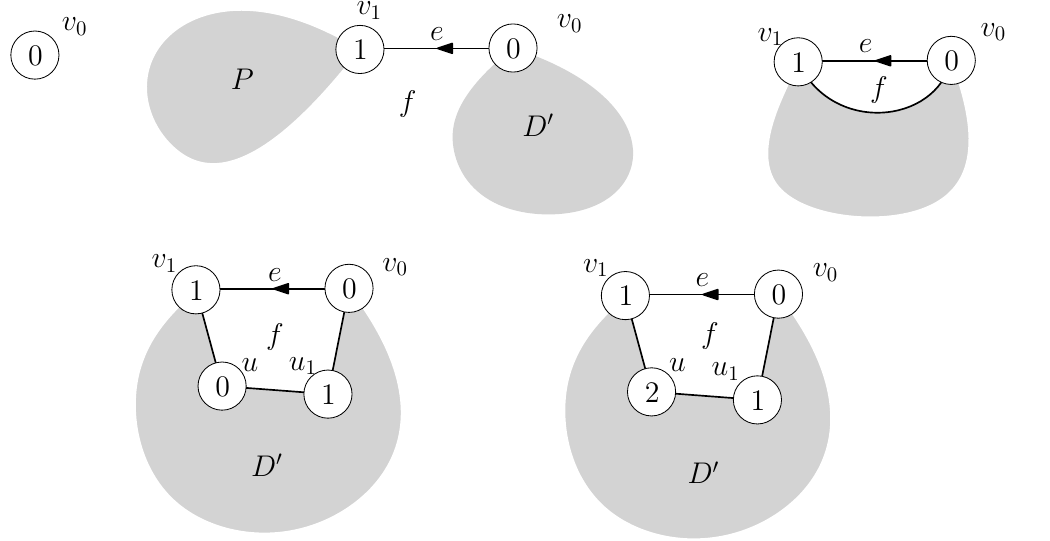} 
  \caption{The five different types of D-patches.
     In the last two cases
     vertices $v_{0},v_{1},u,u_{1}$ with the same label may coincide.}
   \label{fig:newDproof_cases}
\end{figure}

\begin{proof}
  We will start by proving the equation. Let $D$ be any D-patch. We will consider five cases, illustrated in
  Figure~\ref{fig:newDproof_cases}. In the first case,~$D$ is atomic, 
  and contributes $v$ to $\Dgf(x,y)$. For the other cases, let
  $v_{0}$, $v_{1}$, $e$ and $f$ be the root vertex, co-root vertex, root edge and co-root face of~$D$.

  In the second case  $e$ is a bridge, which means that $f$ is the outer face. Let $P$ and $D'$ be the two connected components remaining when $e$ is removed from $D$, with $D'$ containing $v_{0}$. We root $P$ at the edge toward $v_{1}$ following $e$ in clockwise order around $v_{1}$, and we root $D'$ away from~$v_{0}$ at the edge following $e$ in counterclockwise order around $v_{0}$. Then $D'$ is any D-patch and $P$ is any patch. The local minima of $D$ are exactly the local minima of $D'$ and $P$, except when $P$ is the atomic patch, in which case $v_{1}$ is a local minima in $P$ but not $D$. Hence the contribution from this case is
  \[
    y\Dgf(x,y)(\Pgf(y)+1-v).
  \]

  In the third case, $f$ is an inner digon. Removing the root edge from $D$ yields a D-patch with one fewer digon but all other parameters unchanged. The only restriction on this new D-patch is that it cannot be the atomic patch. Hence this case contributes
  \[
    x(\Dgf(x,y)-v).
  \]

  In the final two cases, $f$ is an inner quadrangle of $D$. Let $v_{0}$, $v_{1}$, $u$ and $u_{1}$ be the vertices around~$f$ in counterclockwise order. Since $D$ is a D-patch, $v_{0}$ is only adjacent to vertices labelled~$1$, so $u_{1}$ must be labelled $1$. Hence     $u$ is labelled either $0$ or $2$. 

  In the fourth case, {where $u$ is labelled $0$,} let $D'$ be the map remaining when $e$ is removed from~$D$, rooted at the edge of $f$ from $v_{0}$ to $u_{1}$. Since $u$ is labelled $0$, the map $D'$ is a D-patch. Clearly $D'$  has outer degree at least $4$, but subject to this restriction, $D'$ can be any D-patch. Conversely, if we reconstruct $D$ from $D'$,
   one inner (bicoloured) quadrangle is added,
   the outer degree decreases by $2$, while the number of local minima and the number of inner digons does not change. Hence the contribution from this case is
  \[
    \om \frac{t}{y}(\Dgf(x,y)-y[y^1]\Dgf(x,y)-v) = \om \frac{t}{y} [y^{>1}] \Dgf(x,y) .
  \]

  In the fifth and final case, {where $u$ is labelled $2$,}  let $D'$ be the map remaining when $e$ is removed from~$D$, rooted at the edge of $f$ from $u_{1}$ to $u$.
  The outer face of $D'\equiv M$ satisfies the shifted Dobrushin condition of Proposition~\ref{prop:subpatch}, with $\ell=1$, $\oc_{\ell+1}(D')=1$ and $\oc_{\ell-1}(D')\ge 1$. We  apply to $D'$ the construction from Definition~\ref{def:sub}. Given that all inner digons of $D$ are incident to~$v_0$, they lie in the subpatch~$S$ of $D'$, as required in Proposition~\ref{prop:subpatch}. This proposition tells us that the submap $S$, once we have moved the root corner one step back in clockwise order, has outer corners labelled $0$ and $1$ alternately. Moreover, its inner faces have degree $4$ and~$2$. Now, due to the properties of $D$, all inner digons of $S$ are incident to the root vertex, and all neighbours of this root have label $1$. Hence $S$ (with shifted root corner) is a D-patch. By Proposition~\ref{prop:subpatch}, the contracted map $C$ has outer degree $2$, and, upon subtracting $1$ to all labels, is D-patch. Conversely, starting from a D-patch $S$ and another D-patch $C$ with outer degree $2$, such that $\id(C)<\oc_0(S)$ (the inequality is strict because we want $D'$ to have at least one outer corner labelled $0$),  the construction of Proposition~\ref{prop:subpatch-bij} first gives a labelled map $D'$, and, after adding a new root edge $e$ (thereby creating a new inner quadrangle that contains the only outer corner of $D'$ labelled $2$), one obtains a D-patch $D$ of the fifth type. The statistics of $D'$ (and then $D$) are derived from Proposition~\ref{prop:subpatch}, and this gives the \gf\ of the fifth case as
  \[
    \frac{t}{v}[y^{>0}]\left(\Dgf\left(x,y\right)[z^1]\Dgf\left(\frac{t}{y},z\right)\right).
  \]
  Adding the contributions from the five cases yields~\eqref{D-eq-new}.

  \medskip  Let us finally prove uniqueness of the solution. For $j, n \ge 0$, let us denote by $p_{j,n}$ (resp. $d_{j,n}$) the coefficient of $y^jt^n$ in $\Pgf(y)$ (resp. $\Dgf(x,y)$). Note that $d_{j,n}$ is a \fps\ in $x$. We will prove by   induction on $N\ge 0$ that
  \begin{itemize}
  \item $p_{j,n}$ is completely determined for $j+n<N$,
  \item $d_{j,n}$ is completely determined for $j+n \le N$.
  \end{itemize}
  When $N=0$, there is nothing to prove for $\Pgf$. From~\eqref{eq:DP1}, we know that  $\Dgf-v$ is a multiple of $y$. That is, not
  only $d_{0,0}=v$, but in addition $d_{0,n}=0$ for  $n\ge 1$. 
  Now assume that the induction hypothesis holds for some $N\ge 0$, and let us prove it for $N+1$. 

  We begin with the series $\Dgf$. Of course it suffices to
  determine the coefficients $d_{j,n}$ for $j+n=N+1$. We have already explained that
  $d_{0,N+1}=0$, so we take $j\ge 1$. The equation~\eqref{eq:DP1}
  expresses $d_{j,N+1-j}$ in terms of the series $d_{j-1,m}$ (for $m\le
  N+1-j$), $p_{k, \ell}$ (for $k+\ell \le N+1-j$) and $d_{1,m}$ (for
  $  m\le N+1-j$). If $j\ge 2$, these series are known, by the induction
  hypothesis, and thus $d_{j,N+1-j}$ is completely determined for $j\ge 2$. For $j=1$, we will determine $d_{j,N+1-j}=d_{1,N}$ using~\eqref{D-eq-new}.
 
  Let us  extract the coefficient of $y^1 t^N$ in~\eqref{D-eq-new} or, equivalently, the coefficient of~$t^N$ in the fourth equation of Proposition~\ref{prop:early-systemQ}. On the left-hand side we obtain  $(1-x)d_{1,N}$, which is the series that we wish to determine. On the right-hand side we first have  $\delta_{N,0}v$,
    then the second term gives $\om d_{2,N-1}$ (which we have
    determined using~\eqref{eq:DP1}). For the third term, recall that  $d_{1,n}$ is a series in $x$, say $d_{1,n}=\sum_{k\ge 0} d_{k,1,n} x^k$. Extracting the coefficient of $y^1 t^N$ gives, up to a factor $1/v$, a sum of terms $d_{j,m}d_{k,1,n}$ for $j=k+1$ and $m+k+n=N-1$, that is, $m+j+n=N$. Hence $j+m\le N$ and
    $1+n \le N$, so all these numbers $d_{j,m}$ and $d_{k,1,n}$ are
    known by the induction hypothesis. Hence we now know $d_{1,N}$ as well, and hence all coefficients $d_{j,n}$ for $j+n \le N+1$.

  Next, consider the series $\Pgf$ and the coefficient $p_{j,N-j}$. The initial condition $\Pgf(0)=v$  tells us that $p_{0,n}= v\delta_{n,0}$, so let us assume that $j\ge 1$. Let us extract from~\eqref{D-eq-new} the coefficient of $y^{j+1}t^{N-j}$. The left-hand side gives $(1-x)d_{j+1,N-j}$, which is known. On the right-hand side, the first term gives
    \[
      d_{j,N-j}+ v p_{j,N-j}+ \sum_{i+k=j, m+n=N-j}d_{i,m} p_{k,n},
    \]
    where $i$ and $k$ in the sum are non-zero. The only unknown coefficient here is  $p_{j,N-j}$, which we want to determine. The second term on the right-hand side involves only $d_{j+2,N-j-1}$, which is known. Finally, for the last term in the right-hand side, we get again a sum of terms $d_{i,m}d_{k,1,n}$ for $i=k+j+1$ and $m+k+n=N-j-1$, that is, $m+i+n=N$. All these terms are known by the induction hypothesis.
   This concludes our induction.
\end{proof}

In the following lemma, we use a decomposition similar to the one of Figure~\ref{fig:newDproof_cases},
starting with a patch rather than a D-patch, to deduce an equation between $\Pgf$, $\Dgf$ and the {\gf}~$\Egf$ of E-patches (see Definition~\ref{def:E-patches}). 

\begin{Lemma}\label{NewPequation}
  The generating functions $\Pgf(y)$, $\Dgf(x,y)$ and $\Egf(x,y)$ satisfy the equation
  \[
    \Pgf(y)=v+y\Pgf(y)(\Pgf(y)+1-v)
    +\frac{\om t}{y}[y^{>1}] \Pgf(y)
     +t[y^{>0}x^{1}]\left(\Egf(x,y)+\Egf(y,x)\right).
  \]
\end{Lemma}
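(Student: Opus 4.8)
The plan is to prove the equation by a combinatorial case analysis on a patch $P$, running in parallel with the five-case decomposition used in the proof of Proposition~\ref{prop:new-systemQ} for D-patches. Write $v_0$, $v_1$, $e$ and $f$ for the root vertex (labelled $0$), the co-root vertex (labelled $1$), the root edge and the co-root face of $P$. Since a patch has no inner digons, there is no analogue of the digon case of Proposition~\ref{prop:new-systemQ}; this is exactly why the left-hand side here is $\Pgf(y)-v$ rather than $(1-x)(\Dgf(x,y)-v)$. First I would dispose of the three ``easy'' cases. If $P$ is atomic it contributes $v$. If $e$ is a bridge (so $f$ is the outer face), deleting $e$ splits $P$ into two patches, and, exactly as in Proposition~\ref{prop:new-systemQ}, correcting the local-minimum count of the piece containing $v_1$ when it is atomic yields the contribution $y\Pgf(y)(\Pgf(y)+1-v)$. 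If $f$ is an inner \emph{bicoloured} quadrangle (labels $0,1,0,1$), deleting $e$ leaves a patch of outer degree at least $4$ with all other statistics unchanged, giving $\frac{\om t}{y}[y^{>1}]\Pgf(y)$.

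The remaining case, and the only genuinely new one, is when $f$ is an inner \emph{colourful} quadrangle. Writing $v_0,v_1,u,u_1$ for its vertices in counterclockwise order, one has $e=v_0v_1$, and colourfulness forces either $u_1$ labelled $-1$ (labels $0,1,0,-1$) or $u$ labelled $2$ (labels $0,1,2,1$). In both sub-cases, deleting $e$ produces a labelled map whose outer boundary satisfies a shifted Dobrushin condition, so I would invoke Propositions~\ref{prop:subpatch} and~\ref{prop:subpatch-bij}: after re-rooting --- in the sub-case $u_1=-1$, at the $0$-labelled vertex following the unique $(-1)$-corner in counterclockwise order --- the map $P\setminus e$ becomes an E-patch (Definition~\ref{def:E-patches}), the subpatch/contracted-map data degenerating to a single distinguished corner. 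When $u_1=-1$ this E-patch has exactly one corner labelled $-1$ and at least one corner labelled $1$; reinstating $e$ recreates $f$, which is colourful and hence \emph{not} bicoloured, so this sub-case contributes $t\,[y^{>0}x^1]\Egf(x,y)$. The sub-case $u=2$ is the mirror image of the first: negating all labels --- as in the treatment of minus-patches in the proof of Proposition~\ref{prop:early-systemQ} --- turns it into the $u_1=-1$ construction for the reflected object, and it contributes $t\,[y^{>0}x^1]\Egf(y,x)$. A key point is that in neither colourful sub-case is a local minimum created or destroyed (in particular $v_0$ already has the neighbour $u_1$ labelled $-1$ in the first sub-case), which is why the E-patch term carries no factor $1/v$, in contrast with~\eqref{D-eq-new}. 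Summing the contributions of all cases gives the stated identity.

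The routine part is checking that each of these maps $P\mapsto(\text{pair of smaller objects})$ is a bijection onto the announced set and that the weights $t,\om,v,x,y$ are tracked correctly; this mimics the verifications in Propositions~\ref{prop:subpatch}--\ref{prop:subpatch-bij} and in the proof of Proposition~\ref{prop:new-systemQ}. I expect the main obstacle to be the colourful inner-quadrangle case: one must pin down precisely how the subpatch extraction of a Dobrushin-bounded map degenerates here to ``an E-patch with exactly one corner labelled $-1$ (resp. $+1$)'', verify that the re-rooting indeed lands in the family of Definition~\ref{def:E-patches} and that the distinguished corner is counted with multiplicity one (hence the $[x^1]$), and make the mirror symmetry between the two sub-cases precise enough to produce $\Egf(x,y)+\Egf(y,x)$ rather than $2\Egf(x,y)$.
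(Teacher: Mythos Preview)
Your overall case analysis matches the paper's proof exactly: atomic, bridge, inner bicoloured quadrangle, and inner colourful quadrangle, with the correct contributions in each case. Two points, however, deserve correction.

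First, in the colourful sub-cases you propose to ``invoke Propositions~\ref{prop:subpatch} and~\ref{prop:subpatch-bij}'' and speak of the subpatch/contracted-map data ``degenerating to a single distinguished corner''. This detour is unnecessary and somewhat misleading: the paper does \emph{not} apply subpatch extraction here. After deleting $e$ and re-rooting (at the edge $u\to v_1$ of $f$ in the $0,1,0,-1$ sub-case), the map $P\setminus e$ is \emph{directly} an E-patch in the sense of Definition~\ref{def:E-patches}, with exactly one outer corner labelled $-1$ and at least one labelled~$1$. No decomposition into $(S,C)$ is involved.

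Second, and more importantly, your treatment of the $0,1,2,1$ sub-case via ``negating all labels'' and reflection is flawed for the variable~$v$: negation swaps local minima and local maxima, exactly the pitfall flagged in the proof of Proposition~\ref{prop:early-systemQ} that you cite. The paper instead roots $P\setminus e$ at the edge $u_1\to u$ of $f$ and \emph{subtracts $1$} from all labels. This shift preserves local minima and yields an E-patch with exactly one outer corner labelled~$1$ and at least one labelled~$-1$; since the corners labelled~$-1$ in this E-patch correspond to the corners labelled~$1$ in the original patch~$P$, the roles of $x$ and $y$ swap, giving $t[y^{>0}x^1]\Egf(y,x)$. Your stated contribution is correct, but the negation-and-reflection justification would not establish it.
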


\begin{Remark}
  In the next subsection   we will express $\Egf$ in terms of $\Pgf$ and $\Dgf$ (see Proposition~\ref{prop:symmetry}). This will allow us to rewrite the above equation in terms of two series only:
  \beq\label{PD-add}
  \Pgf(y)=v+y\Pgf(y)(\Pgf(y)+1-v)
  +\frac{\om t}{y}[y^{>1}] \Pgf(y)
  +\frac {2t} v [y^{>0}]\left(\Pgf(y) [z^1] \Dgf\!\left( \frac t y, z\right)\right).
  \eeq
\end{Remark}

\begin{proof}
We   apply the same case analysis   to a patch $P$
  that  we applied to $D$ in the proof of~\eqref{D-eq-new}  (see Figure~\ref{fig:newDproof_cases}).  There are three differences: 
  \begin{itemize}
  \item Case 3 does not occur as now the face $f$ cannot be a digon,
  \item In each case the map $P'$ obtained by deleting $e$ 
    does not contain any inner digon, 
  \item In each case the vertices adjacent to the root vertex, apart from those on the outer face, may be labelled $1$ or $-1$. In particular, in the final case, the face $f$ could also be labelled $0,1,0,-1$, in counterclockwise order.
  \end{itemize} 
  The fact that the third case does not occur simply means that we do not have any counterpart to the term $x(\Dgf(x,y)-v)$, which appears on the left-hand side of~\eqref{D-eq-new}. The second and third differences explain why  the series $\Dgf(x,y)$ that occurs in~\eqref{D-eq-new} for the description of Cases $2$ and~$4$ is replaced by $\Pgf(y)$ above.

  The  final difference is in the fifth case,  where the inner quadrangle  $f$ to the left of the root edge $e$ can be labelled  $0,1,2,1$ or $0,1,0,-1$, in counterclockwise order. If it is labelled $0,1,0,-1$, then let us root $P'$ 
 at the edge of $f$ that goes from $u$ to $v_1$ (in the notation of Figure~\ref{fig:newDproof_cases}). Then~$P'$ can be any E-patch with one outer corner labelled $-1$ and at least one outer corner labelled~$1$. Hence the contribution from this subcase is $t[y^{>0}x^{1}]\Egf(x,y)$.
   If $f$ is labelled $0,1,2,1$, then we root $P'$ at the edge of $f$ that goes from $u_1$ to $u$, and then we subtract $1$ from all labels. This gives an E-patch with exactly one outer corner labelled $1$, and at least one outer corner labelled~$-1$. Moreover, the number of outer corners labelled $-1$ in this E-patch, which is recorded by the first variable in $\Egf(\cdot, \cdot)$, gives the number of outer corners labelled $0$ in the original patch $P$, or equivalently the number of outer corners labelled $1$ in $P$. Since we want to record them with the variable $y$ rather than $x$, the contribution of this final subcase  is $t[y^{>0}x^{1}]\Egf(y,x)$.
\end{proof}

\subsection{A Dobrushin-like class of labelled maps}
Let us now consider the family of E-patches, defined in Definition~\ref{def:E-patches}, which did not appear in our earlier paper. We will express its
\gf\ $\Egf$ in terms of $\Pgf$ and $\Dgf$.

\begin{Proposition}\label{prop:symmetry}
  The \gf\ $\Egf(x,y)$ that counts E-patches is given by
  \beq\label{E-def}
  \Egf(x,y) = \frac 1 v [x^{\ge 0}] \left (\Dgf\left(\frac t x, y \right) \Pgf(x)\right).
  \eeq
  Moreover the series $\Egf(x,y)$  is symmetric in $x$ and $y$.
\end{Proposition}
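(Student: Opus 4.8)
The plan is to prove the two statements in turn: first the explicit formula $(\ref{E-def})$ for $\Egf(x,y)$ via a direct decomposition of E-patches, and then deduce the symmetry of $\Egf(x,y)$ as a non-obvious consequence of that formula combined with the functional equations we have already established.

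\emph{Step 1: establishing $(\ref{E-def})$.} Let $E$ be an E-patch, with outer face of degree $2m$ and Dobrushin parameter $k\in\llbracket 0,m\rrbracket$, so that the outer corners read $0,\ell_1,0,\ell_3,\ldots$ with $\ell_{2j+1}=1$ for $j<k$ and $\ell_{2j+1}=-1$ for $j\ge k$. The idea is to apply the subpatch extraction/contraction of Definition~\ref{def:sub} at the root vertex $v$ (labelled $0$). Since all inner faces are quadrangles (no digons at all), the hypotheses of Proposition~\ref{prop:subpatch} are satisfied with $\ell=0$, and the subpatch $S$ and contracted map $C$ inherit the structure described there: $S$ has outer corners alternately $0$ and $-1$ with $0$ at the root, all inner faces quadrangles, so after shifting the root corner one step $S$ becomes (up to a global shift by $+1$, or rather a relabelling $\ell\mapsto -\ell$ followed by the identification with minus-patches as in the proof of Proposition~\ref{prop:early-systemQ}) exactly a patch; and $C$ has outer corners alternately $0$ and $+1$, all inner faces quadrangles with at most the digons created by the $-1$-corners of $S$ that were interior — but here one must check that $C$ is in fact a $D$-patch or, more precisely, read off which series counts it. The variable $x$ records the number of outer corners of $E$ labelled $-1$, i.e.\ $\oc_{-1}(E)=\oc_{-1}(S)-\id(C)$ by the last identity of Proposition~\ref{prop:subpatch}; the variable $y$ records the outer corners labelled $1$, which by $\oc_{1}(E)=\oc_{1}(C)$ live entirely on the $C$-side. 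Tracking the quadrangle count $\iq(E)=\iq(S)+\iq(C)+\id(C)$, the bicoloured count, and $\vv_{\min}(E)=\vv_{\min}(S)+\vv_{\min}(C)-1$, and using Proposition~\ref{prop:subpatch-bij} to see the correspondence is a bijection under $\oc_{-1}(S)\ge \id(C)$, one assembles the generating function as a product: the $S$-part contributes $\Pgf(x)$ (the minus-patch/patch series in the variable $x$ tracking its $(-1)$-corners), the $C$-part contributes $\Dgf$ evaluated suitably, and the constraint together with the $\id(C)$ shift in the exponent of $x$ produces the substitution $x\mapsto t/x$ inside $\Dgf$ and the non-negative-part extraction $[x^{\ge 0}]$, with the overall $1/v$ correcting the double-counted shared root minimum. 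This yields $(\ref{E-def})$.

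\emph{Step 2: symmetry.} The formula $(\ref{E-def})$ is manifestly \emph{not} symmetric in $x$ and $y$, so symmetry must come from an identity between $\Pgf$ and $\Dgf$. The natural route is to combine Lemma~\ref{NewPequation} — which expresses $\Pgf(y)$ in terms of $[y^{>0}x^1](\Egf(x,y)+\Egf(y,x))$ — with equation $(\ref{D-eq-new})$ and the second line of $(\ref{eq:DP1})$. Concretely, substitute $(\ref{E-def})$ (and its $x\leftrightarrow y$ swap) into Lemma~\ref{NewPequation}; using $\Pgf(x)=v+\mathcal{O}(x)$ and the relation $\Dgf(x,y)=v/(1-\tfrac1v\Cgf(\tfrac1{1-x},y))$ from $(\ref{eq:DP1})$, the coefficient extraction $[x^1]$ applied to $\Dgf(t/x,y)\Pgf(x)$ and to $\Dgf(t/x,x)\Pgf(y)\cdots$ should collapse, after the dust settles, to the right-hand side of $(\ref{PD-add})$, i.e.\ $\tfrac{2t}{v}[y^{>0}](\Pgf(y)[z^1]\Dgf(t/y,z))$. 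Matching $(\ref{PD-add})$ against the already-proved $(\ref{D-eq-new})$ specialised appropriately then forces an identity that, read back through $(\ref{E-def})$, says $\Egf(x,y)=\Egf(y,x)$. Alternatively — and this may be cleaner — one shows directly that the \emph{difference} $\Egf(x,y)-\Egf(y,x)$ satisfies a linear functional equation with a unique solution $0$, by checking its lowest-order terms vanish and that $(\ref{E-def})$ together with the characterising system propagates the vanishing by induction on total degree in $t$.

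\emph{Main obstacle.} The genuinely delicate point is Step 1: getting the substitution $x\mapsto t/x$ and the $[x^{\ge0}]$-extraction to emerge correctly from Proposition~\ref{prop:subpatch-bij}. One has to be careful that the exponent of $x$ in the final series is $\oc_{-1}(S)-\id(C)$ rather than $\oc_{-1}(S)$, that the constraint $\oc_{-1}(S)\ge\id(C)$ is exactly what the non-negative part enforces after the $t/x$ substitution sends $\id(C)$ digons (each weighted $t$) against the $x$-powers of $S$, and that the root-minimum bookkeeping ($-1$ in $\vv_{\min}$, hence the $1/v$) is consistent with the convention that the atomic patch has $v$-weight $v$. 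The symmetry in Step 2, while surprising, is then a formal consequence of the system's uniqueness and should not present a structural difficulty once the bookkeeping of Step 1 is pinned down.
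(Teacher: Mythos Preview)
Your Step~1 is correct and is exactly what the paper does: apply the subpatch extraction/contraction at the root (Propositions~\ref{prop:subpatch} and~\ref{prop:subpatch-bij}) with $\ell=0$, identify $S$ with a $(-1)$-patch counted by $\Pgf(x)$ and $C$ with a D-patch counted by $\Dgf(\cdot,y)$, and translate the constraint $\id(C)\le\oc_{-1}(S)$ together with the statistics of Proposition~\ref{prop:subpatch} into the substitution $x\mapsto t/x$, the extraction $[x^{\ge0}]$, and the factor $1/v$.

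Your Step~2, however, has a genuine gap. The equation~$(\ref{PD-add})$ is \emph{derived from} the symmetry of $\Egf$ (as the Remark immediately after Lemma~\ref{NewPequation} says explicitly), so invoking it to prove symmetry is circular. More fundamentally, Lemma~\ref{NewPequation} only involves the \emph{symmetrisation} $\Egf(x,y)+\Egf(y,x)$ and therefore carries no information whatsoever about the difference $\Egf(x,y)-\Egf(y,x)$. Your alternative route --- showing this difference satisfies a linear functional equation with unique solution~$0$ --- does not get off the ground either: once $(\ref{E-def})$ is established, $\Egf(x,y)$ is an explicit functional of $\Pgf$ and $\Dgf$, and the characterising system of Propositions~\ref{prop:early-systemQ}--\ref{prop:new-systemQ} provides no recursion in which the antisymmetric part of $\Egf$ appears homogeneously. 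There is nothing to induct on.

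The paper's proof of symmetry is entirely different and bijective: it extends the Ambj\o rn--Budd construction of Proposition~\ref{prop:bij} to E-patches (placing a new unlabelled root vertex $u$ in the outer face and joining it to certain boundary corners, as in Figure~\ref{fig:AB-extended}), yielding weakly labelled maps whose root is incident, in clockwise order, to a run of vertices labelled~$1$ followed by a run labelled~$0$. On these objects the involution $\ell\mapsto 1-\ell$ combined with a reflection swaps the roles of~$x$ and~$y$ while preserving the face count (hence the $v$-weight). The Remark following the paper's proof underscores that the naive sign-flip $\ell\mapsto-\ell$ on E-patches themselves works only at $v=1$, since it does not preserve local minima; for general~$v$ the passage through Ambj\o rn--Budd is essential. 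So the symmetry is a genuinely new combinatorial input, not an algebraic consequence of the system already in hand --- contrary to your own assessment, Step~2 is the hard part, not Step~1.
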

\begin{Remark}
  We recall that $\Pgf(x)$ belongs to $\GK[[x,t]]$ while $\Dgf(x,y)$ belongs to $\GK[[x,y,t]]$. The extraction of the non-negative part in~\eqref{E-def} thus yields a series in $\GK[[x,y,t]]$, which is indeed the nature of $\Egf(x,y)$, as argued in Section~\ref{sec:GF}.
\end{Remark}

\begin{proof}[Proof of Proposition~\ref{prop:symmetry}]
 We can apply to any E-patch $M$ the subpatch extraction/contraction of Definition~\ref{def:sub}, with $\ell=0$. Proposition~\ref{prop:subpatch-bij} then implies that the set of E-patches is in bijection with the set of pairs $(S,C)$ consisting of a patch $S$ (with labels shifted by $-1$) and a D-patch $C$ satisfying $\id(C)\leq \oc_{-1}(S)$. Proposition~\ref{prop:subpatch} then gives the desired expression of the \gf\ $\Egf(x,y)$.

\begin{figure}[ht]
   \scalebox{0.67}{\input{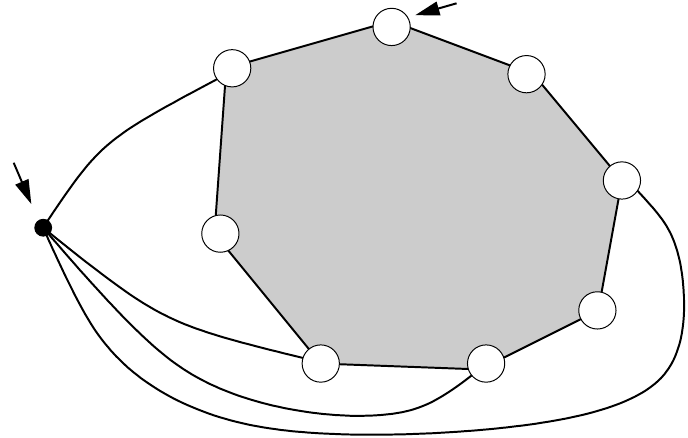_t}}
  \caption{Adapting the Ambj\o rn-Budd transformation to the outer face of an E-patch.}
    \label{fig:AB-extended}
\end{figure}

  Let us now prove that $\Egf(x,y)=\Egf(y,x)$. Here we need an extension to E-patches of the Ambj\o rn-Budd bijection $\Phi$, schematised by Figure~\ref{fig:AB}, which is so far defined for labelled quadrangulations (rooted from $0$ to $1$) only. Given an E-patch $E$, we construct $M:=\Phi(E)$ by creating one new vertex and a collection of edges, as follows:
    \begin{itemize}
    \item we create an edge in each finite face of $E$     by applying the construction of Figure~\ref{fig:AB},
    \item we create a new unlabelled vertex $u$ in the infinite face of $E$ (Figure~\ref{fig:AB-extended}), which we join by an edge to all outer corners of $E$ labelled $1$, and to all outer corners labelled $0$ that are followed, in counterclockwise order around $E$, by a corner labelled $-1$,
       \item we  erase all edges of $E$, as well as its local minima (they have not been connected to any edge), and we root $M$ at $u$ so that the root face of $M$ contains the root corner of $E$.
    \end{itemize}
    This is a special case of the bijection of~\cite[Sec.~2.2]{bouttier-fusy-guitter}. It sends E-patches to rooted planar maps in which all vertices, apart from the root vertex $u$, are labelled, in such a way that
    \begin{itemize}
    \item   labels differ by $0$, $\pm1$ along edges,
      \item the root vertex $u$ is incident, in clockwise order starting from the root corner, to a (possibly empty) sequence of vertices labelled $1$, then a (possibly empty) sequence of vertices labelled $0$. 
    \end{itemize}
  The series $E(x,y)$ counts these maps by edges not attached to $u$ (variable $t$), monochromatic edges ($\om$), faces ($v$), edges with endpoint $0$ attached to $u$ ($x$) and edges of endpoint $1$ attached to $u$ ($y$). Then the symmetry in $x$ and $y$ can be seen by taking one of these maps, replacing each label $\ell$ with {$1-\ell$} and finally reflecting the map. 
\end{proof}

\begin{Remark}
  At $v=1$ there is a more direct way to see the symmetry of $\Egf$: simply take an E-patch $E$ and replace each label $\ell$ with $-\ell$ (and change the root in some canonical way). This does not work for general $v$, however, as this transformation does not preserve the number of local minima.
\end{Remark}

\begin{Remark}
  In light of the previous remark, we observe a surprising symmetry between local maxima and local minima, which we describe below. Fix a sequence $L=\ell_{0},\ell_{1},\ldots,\ell_{2k}$ with $\ell_{j+1}-\ell_{j}=\pm1$ and $\ell_{2k}=\ell_{0}$. Denote by $\Egf_{L}(t,\omega,v_{1},v_{2})$ the generating function of
  labelled maps with outer degree $2k$ whose outer corners are labelled $\ell_{0},\ell_{1},\ldots,\ell_{2k}$ in counterclockwise order and whose inner faces are quadrangles, counted according to the following statistics:
  \begin{itemize}
  \item the number of inner quadrangles (variable $t$),
  \item the number of inner bicoloured quadrangles (variable $\omega$),
  \item the number of vertices that are local minima (variable $v_{1}$),
  \item the number of vertices that are local maxima (variable $v_{2}$).
  \end{itemize}
  The sign flip used in the remark above combined with $\Egf(x,y)=\Egf(y,x)$ implies that $\Egf_{L}(t,\omega,v,1)=\Egf_{L}(t,\omega,1,v)$ when $L$ is a Dobrushin sequence as in Definition~\ref{def:E-patches}.
  Empirically we observe a much stronger symmetry, namely that for any sequence $L$ we have $\Egf_{L}(t,\omega,v_{1},v_{2})=\Egf_{L}(t,\omega,v_{2},v_{1})$, however we do not have a proof. We have checked by brute force that $[t^n]\Egf_{L}(t,\omega,v_{1},v_{2})=[t^n]\Egf_{L}(t,\omega,v_{2},v_{1})$ whenever $n+k\leq 8$.
\end{Remark}

\subsection{A change of variables}
As in~\cite{mbm-aep1}, the form of our functional equations suggests certain changes of variables. We thus introduce
\beq\label{change}
\Pnn(y):=t\Pgf(yt), 
\quad \Dnn(x,y):=\frac{1}{v}\Dgf(x,yt), \quad
\Enn(x,y):= t \Egf(xt,yt).
\eeq 
These three series belong respectively to $\GK[y][[t]]$, 
$\GK[y][[x,t]]$ (and more precisely to $\GK[y, 1/(1-x)][[t]]$, see Remark~\ref{rem:ringD}), and finally $\GK[x,y][[t]]$. Moreover, $\Enn(x,y)$ is symmetric in $x$ and $y$.  The equations of Propositions~\ref{prop:new-systemQ} (including~\eqref{eq:DP1}) and~\ref{prop:symmetry} read
\allowdisplaybreaks
\begin{align}
  \Dnn(x,y) &= 1+ y [x^{\ge 0}] \left( \Dnn(x,y) \left( \frac 1 x \Pnn\left(\frac 1 x\right)+ [y^1] \Dnn(x,y)\right)\right), \label{Dnn-short}\\
  (1-x) \left( \Dnn(x,y)-1\right) & = y \Dnn(x,y) \left( \Pnn(y) +t-tv\right) +  [y^{>0}] \left( \Dnn(x,y) \left(  \frac \om y+ [z^1] \Dnn\!\left( \frac 1 y, z\right)\right)\right),\label{Dnn-long}
  \\                                    
  \label{Enn-def}
  \Enn(x,y)&=  [x^{\ge 0}] \left (\Dnn\left(\frac 1 x, y \right) \Pnn(x)\right).
\end{align}
Equation~\eqref{PD-add} rewrites as
\begin{equation}\label{Pnn}
  \Pnn(y)=tv+y\Pnn(y) \left( \Pnn(y)-t(v-1)\right)+\frac \om y [y^{>1}] \Pnn(y) + 2 [y^{>0}] \left( \Pnn(y) [z^1]\Dnn\!\left( \frac {1} y, z\right)\right).
\end{equation}

We note that the \gf\ of patches is $\Pgf(y)=\Egf(0,y)$. Equivalently, we have  $\Pnn(y)=\Enn(0,y)$, and Eq.~\eqref{Enn-def} gives yet another expression of $\Pnn(y)$:
\[
  \Pnn(y)= 
  [x^0] \left( \Dnn\!\left(\frac 1 x,y\right)\Pnn(x)\right).
\]
We will use  in Section~\ref{sec:six} the following byproduct:
\beq\label{P1-prod}
[y^1]\Pnn(y)= 
[x^0y^1] \left( \Dnn\!\left(\frac 1 x,y\right)\Pnn(x)\right).
\eeq

Finally, recall that the \gf\ of quartic Eulerian orientations that we want to determine, given by~\eqref{Q-P-sol}, is 
\beq\label{Q-nn}
\Qgf= \frac 1 {t^2} [y^1] \Pnn(y) -v.
\eeq

\section{A new characterisation of the series $\bm{\Qgf(t,\omega,v)}$}
\label{sec:charac}

In this section, we derive from the functional equations of the previous section  a new characterisation of the series $\Qgf\equiv\Qgf(t, \om, v)$, which will be the starting point of the proofs of Theorems~\ref{thm:general} to~\ref{thm:allomega}.
This characterisation is simpler than the functional equations in that it involves series with a single ``catalytic'' variable, $x$. The main series \emm mixes, positive and negative powers of the variable~$x$ (hence the notation $\Mnn$), and is defined by:
\beq\label{M-def}
\Mnn(x)=\frac{1}{x}\Pnn\left(\frac{1}{x}\right)+[y^{1}]\Dnn(x,y).
\eeq
This  is an element of $\GK[ \frac 1 x , \frac 1{1-x}][[t]]$, which we can see as an element of $\GK((x))[[t]]$, that is, a series in~$t$ whose coefficients are Laurent series in $x$ (themselves with coefficients in $\qs[\om,v]$). Note also that $\Mnn$ has no constant term in $t$. The first two coefficients read
\beq\label{M-expansion}
\Mnn(x)= \left(\frac{v}{x}+\frac{1}{1-x}\right) t +
\left(\frac{v}{x^{2}}+\frac{1+\omega  v }{\left(1-x \right)^{2}}+\frac{\omega}{\left(1-x \right)^{3}}\right) t^{2}+ \LandauO(t^3).
\eeq
We actually give two equivalent formulations of our
characterisation of $\Mnn(x)$: the first one (Proposition~\ref{prop:M})   will be used in Section~\ref{sec:01} to
solve the cases $\om=0$ and $\om=1$, while the second
(Proposition~\ref{prop:F}) will be used in Section~\ref{sec:six} to
solve the case $v=1$. Finally, Proposition~\ref{prop:M-charac} in the
introduction is a mere reformulation of Proposition~\ref{prop:F}, and will be used in Appendix~\ref{app:complex}.

\begin{Proposition}\label{prop:M} The series
  \[
    \Znn(x,y):=  \frac{\left ( 1- \frac 1 x \Mnn(y)\right)\left(1-x-{\om}{y}+\frac t y (v-1)-\Mnn(y)\right)}
    {\left ( 1- \frac 1 y \Mnn(x)\right)\left(1-y-{\om}{x}+\frac t x (v-1)-\Mnn(x)\right)},
  \]
  which belongs \emm  a priori, to $\GK[x,\frac 1 x, \frac 1 {1-x}, y, \frac 1 y, \frac 1 {1-y}, \frac 1 {1-y-{\om}{x}}][[t]]$, actually lies in the smaller ring $\GK[x, \frac 1 {1-x}, y, \frac 1 {1-y}, \frac 1 {1-y-{\om}{x}}][[t]]$. That is, expanding $ \Znn(x,y)$ in $t$, then in $x$ and $y$ (in any order), yields a series of $\GK[[x,y,t]]$.

  Moreover, this property, combined with the initial condition $[x^{-1}]\Mnn(x)=tv$,
  defines $\Mnn(x)$ uniquely in the ring $t\GK((x))[[t]]$.

  Finally, the \gf\ of quartic Eulerian orientations
  can be recovered from $\Mnn$ by
  \beq\label{Q-M}
  \Qgf=\frac{1}{t^{2}}[x^{-2}]\Mnn(x) -v.
  \eeq

  This characterisation of $\Mnn(x)$ and $\Qgf$ also holds for numeric values of $v$ and $\om$.
\end{Proposition}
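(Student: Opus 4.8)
# Proof proposal for Proposition~\ref{prop:M}

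\textbf{Overall strategy.} The plan is to build $\Mnn(x)$ from the known functional equations of Section~\ref{sec:funceq} — specifically \eqref{Dnn-short}, \eqref{Dnn-long} and \eqref{Pnn} — verify that the candidate $\Mnn(x)$ obtained from \eqref{M-def} satisfies the asserted regularity of $\Znn(x,y)$, and then establish the uniqueness statement by a coefficient-extraction induction in $t$. The final identity \eqref{Q-M} will follow from \eqref{Q-nn} together with the defining relation \eqref{M-def} evaluated at the appropriate power of $x$.

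\textbf{Step 1: Deriving a symmetric functional identity for $\Mnn$.} First I would combine \eqref{Dnn-short} and \eqref{Dnn-long} to produce an equation that no longer involves the operators $[x^{\ge 0}]$ and $[y^{>0}]$ explicitly, but rather expresses a suitable rational combination of $\Dnn(x,y)$, $\Pnn$ and $\Mnn$ as a genuine power series in $x,y,t$. The key observation is that \eqref{Dnn-short} says $\Dnn(x,y)-1-y\Dnn(x,y)\Mnn(x)$ equals $-y$ times the \emph{negative part} in $x$ of $\Dnn(x,y)\bigl(\tfrac1x\Pnn(\tfrac1x)+[y^1]\Dnn(x,y)\bigr)$; one then identifies this negative part by using \eqref{Dnn-long} to evaluate $\Dnn$ at the pole locations. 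The upshot should be a closed-form expression for $\Dnn(x,y)$ as a rational function of $\Mnn(x)$, $\Mnn(y)$, $\Pnn$, $x$, $y$, $t$. Imposing the symmetry / well-definedness of $\Dnn(x,y)$ then forces precisely the statement that $\Znn(x,y)$ — which is (up to the rational prefactors displayed) the ratio $\Dnn(x,y)/\Dnn(y,x)$-type object — has no spurious poles at $x=0$, $y=0$, or $1-y-\om x=0$: these poles are exactly the points where one of the factors $1-\tfrac1x\Mnn(y)$, $1-\tfrac1y\Mnn(x)$, or $1-y-\om x+\tfrac ty(v-1)-\Mnn(y)$ in numerator and denominator would otherwise be visible, and the equation forces them to cancel against the actual (pole-free in $x$) series $\Dnn$.

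\textbf{Step 2: Uniqueness.} For uniqueness in $t\GK((x))[[t]]$, I would argue by induction on the order $n$ in $t$. Write $\Mnn(x)=\sum_{n\ge 1}\Mnn_n(x)t^n$ with each $\Mnn_n(x)\in\GK((x))$. Suppose $\Mnn_1,\dots,\Mnn_{n-1}$ are determined. Extracting the coefficient of $t^n$ from the regularity condition on $\Znn(x,y)$ gives, after clearing denominators, a relation of the form: (some explicit Laurent polynomial in $x$, built from $\Mnn_1,\dots,\Mnn_{n-1}$) plus (a linear expression in $\Mnn_n(x)$ and $\Mnn_n(y)$) must, when expanded, lie in $\GK[[x,y]]$ — i.e.\ have no negative powers of $x$ or $y$. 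This kills all but finitely many coefficients of $\Mnn_n(x)$; combined with the normalisation $[x^{-1}]\Mnn_n(x)=v\delta_{n,1}$ (equivalently $[x^{-1}]\Mnn(x)=tv$) one pins down $\Mnn_n$ completely. The cleanest way to organise this is probably to exploit the involutive structure: the hypothesis $\Mnn(\Mnn(x))=x$ (in the Proposition~\ref{prop:M-charac} reformulation) or, here, the structure of $\Znn$ under swapping $x\leftrightarrow y$, shows that $\Mnn$ conjugates the two ``bad'' loci into each other, so the vanishing conditions at $x=0$ and at $y=0$ are equivalent and self-consistent.

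\textbf{Step 3: The formula for $\Qgf$ and the numeric specialisation.} By \eqref{M-def}, $[x^{-2}]\Mnn(x)=[x^{-2}]\bigl(\tfrac1x\Pnn(\tfrac1x)\bigr)=[x^1]\Pnn(x)\cdot$? — more carefully, $\tfrac1x\Pnn(\tfrac1x)=\sum_k [y^k]\Pnn(y)\, x^{-k-1}$, so $[x^{-2}]$ of it is $[y^1]\Pnn(y)$; the term $[y^1]\Dnn(x,y)$ contributes nothing to $x^{-2}$ since $\Dnn(x,y)\in\GK[1/(1-x)][[y,t]]$ has no negative powers of $x$. Hence $[x^{-2}]\Mnn(x)=[y^1]\Pnn(y)$, and \eqref{Q-nn} gives \eqref{Q-M} immediately. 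For the final sentence, I would note that all the manipulations above are valid coefficient-wise over $\qs[\om,v]$, hence specialise to any numeric choice of $(\om,v)$ for which the finitely many denominators appearing at each order in $t$ (powers of things like $1-y-\om x$, evaluated suitably) do not vanish identically — and since those denominators become invertible power series whenever the relevant constant term is nonzero, the characterisation persists.

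\textbf{Main obstacle.} The technical heart — and the step I expect to be genuinely delicate — is Step~1: correctly identifying the negative part in $x$ of the product in \eqref{Dnn-short} by substituting the ``kernel roots'' (the values of $x$ making $1-\tfrac1y\Mnn(x)$ or the companion factor vanish) into \eqref{Dnn-long}, and checking that this substitution is legitimate as an identity of formal power series (the roots in question being well-defined elements of $\GK((y))[[t]]$ or an algebraic extension thereof). Getting the bookkeeping of which factors are regular versus singular exactly right, so that the final object is precisely $\Znn(x,y)$ with the precise ring membership claimed, is where the real work lies; everything downstream (uniqueness, the $\Qgf$ formula) is then comparatively mechanical.
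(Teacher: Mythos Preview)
Your Step~3 (the formula for $\Qgf$) is correct and matches the paper's one-line argument. Step~2 (uniqueness) is in the right spirit, though the paper executes it via a specific and rather clever trick: it extracts the coefficient of $y^{-1}$ in $\frac{(1-x-\om y)}{(1-x)(1-\om y)}\log\overline\Znn(x,y)$, which after expanding the logarithm produces an explicit recursion (Eq.~\eqref{Mnn-rec}) for $\Mnn(x)$ in terms of lower-order coefficients. Your ``extract $t^n$ and see that only finitely many coefficients survive'' plan is plausible but would need substantially more detail to verify it actually closes.

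The real gap is in Step~1. The paper does \emph{not} proceed by ``substituting kernel roots'' into \eqref{Dnn-long} to identify the negative part. Instead, the crucial ingredient you are missing is the \emph{E-patch series} $\Enn(x,y)$ and its $x\leftrightarrow y$ symmetry (Proposition~\ref{prop:symmetry}). Concretely: combining \eqref{Dnn-short} with \eqref{Enn-def} (at $1/x$) gives
\[
\Dnn(x,y)\bigl(1-y\Mnn(x)\bigr)=1-\tfrac{y}{x}\Enn\!\left(\tfrac1x,y\right),
\]
while \eqref{Dnn-long} says that a certain product $\Dnn(x,1/y)\cdot(1-x-\om y+\tfrac{t(v-1)}{y}-\Mnn(y))$ has no negative powers in $x$ or $y$. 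Dividing the version of this statement at $(x,y)$ by the version at $(y,x)$, the $\Enn$ factor cancels \emph{because $\Enn$ is symmetric}, and what remains is exactly $\Znn(x,y)$. The symmetry of $\Enn$ is not algebraic bookkeeping: it is established combinatorially via an extension of the Ambj{\o}rn--Budd bijection to maps with a Dobrushin-type boundary (Proposition~\ref{prop:symmetry}), and there is no visible way to get the regularity of $\Znn$ without it. Your kernel-root idea is closer to how the paper proves the \emph{subsequent} Proposition~\ref{prop:F}, which takes Proposition~\ref{prop:M} as input; it does not stand on its own for Proposition~\ref{prop:M}.
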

The first terms of $\Znn$ are found to be
\beq\label{Z-expansion}
\Znn(x,y)= \frac{1- x-\omega y  }{1-y-\omega  x }
+\frac{\omega   \left(\omega -1\right)\left(x -y \right) }{\left(1-y-\omega  x  \right)^{2} \left(1-x \right) \left(1-y \right)} t
-\frac{\omega  \left(\omega -1\right) \left(x -y \right)\Pol(x,y)}{\left(1-x \right)^{3} \left(1-y \right)^{3} \left(1-y-\omega  x \right)^{3}} t^{2}+\LandauO(t^{3})
\eeq
for some polynomial $\Pol(x,y)$ in $\GK[x,y]$. Observe the factors $\om \left(\omega -1\right)$ in the second and third terms. We will prove  in Section~\ref{sec:01} that these factors persist in higher order terms, and use this to solve the cases $\omega=0$ and $\omega=1$ explicitly.

\begin{proof}
  Equations~\eqref{Dnn-short} and~\eqref{Enn-def} (the latter written at $1/x$ rather than $x$), combined with the definition~\eqref{M-def} of $\Mnn(x)$, give
  \[
    \Dnn(x,y)+\frac{y}{x}\Enn\!\left(\frac{1}{x},y\right)=1+y\,\Dnn(x,y)\Mnn(x).
  \]
Moreover, Equation~\eqref{Dnn-long} reads:
  \[
    (1-x) \Dnn(x,y)=1-x
    +[y^{>0}]\left(\Dnn(x,y)\left(\frac{\om}{y}+yt(1-v)+\Mnn\left(\frac{1}{y}\right)\right)\right).
  \]
  The first identity holds in $\GK[y]((x))[[t]]$, while the second involves series in the ring $\GK((1/y))[[x,t]]$. Both rings are subrings of $\GK((1/y))((x))[[t]]$.
  Equivalently,
  \[
    \Dnn(x,y)\left ( 1- y \Mnn(x)\right) = 1-\frac{y}{x}\Enn\!\left(\frac{1}{x},y\right)
  \]
  and
  \[
    [y^{>0}]\left[\Dnn(x,y)\left(1-x-\frac{\om}{y}-yt(1-v)-\Mnn\left(\frac{1}{y}\right)\right)\right]=0.
  \]
  Let us now replace $y$ by $1/y$ in these two equations:
  \[
    \Dnn\left(x,\frac 1 y\right)\left ( 1- \frac 1 y \Mnn(x)\right) = 1-\frac{1}{xy}\Enn\!\left(\frac{1}{x},\frac 1 y\right)
  \]
  and
  \[
    [y^{<0}]\left[\Dnn\left(x,\frac 1 y\right)\left(1-x-{\om}{y}-\frac t y (1-v)-\Mnn\left(y\right)\right)\right]=0,
  \]
  with all series considered as elements of $\GK((y))((x))[[t]]$. Note that there trivially holds
  \[
    [x^{<0}]\left[\Dnn\left(x,\frac 1 y\right)\left(1-x-{\om}{y}-\frac t y (1-v)-\Mnn\left(y\right)\right)\right]=0.
  \]
  In other words, the series
  \[
    \Dnn\left(x,\frac 1 y\right)\left(1-x-{\om}{y}-\frac t y (1-v)-\Mnn(y)\right)=
    \frac{\left(1-\frac{1}{xy}\Enn\left(\frac{1}{x},\frac 1 y\right)\right)\left(1-x-{\om}{y}-\frac t y (1-v)-\Mnn\left(y\right)\right)}{\left ( 1- \frac 1 y \Mnn(x)\right)}
  \]
  actually belongs to $\GK[[y]][[x]][[t]]=\GK[[x,y,t]]$. Note that all series involved here lie in $\GK[x,\frac 1 x, \frac 1 {1-x}, y, \frac 1 y, \frac 1 {1-y}][[t]]$, so that expanding them first in $x$ and then in $y$, or vice-versa, gives the same series. In particular, we can exchange $x$ and $y$ in the above  identity. Remembering that $\Enn(x,y)=t \Egf(xt,yt)$ is symmetric in $x$ and $y$ (Proposition~\ref{prop:symmetry}), we finally conclude that the ratio $\Znn(x,y)$ defined in the proposition,
   expanded in $\GK((y))((x))[[t]]$ (or in $\GK((x))((y))[[t]]$, as argued), contains no negative exponents in $x$ or $y$.

  \medskip

\noindent{\bf Initial condition and uniqueness.}  By definition~\eqref{M-def} of $\Mnn(x)$, the coefficient of $x^{-1}$ in this series is
  \[
    [x^{-1}]\Mnn(x)=\Pnn(0)=t\Pgf(0)=tv.
  \]
  We will now show that this condition, combined with the above property of $\Znn(x,y)$, 
  uniquely defines $\Mnn(x)$ as an element of $t\GK((x))[[t]]$. So let $M(x)$ be a series satisfying all these conditions, and consider the modified ratio
  \beq\label{Z-bar}
  \overline \Znn(x,y):= \frac{1-y-\om x}{1-x-\om y}\, \Znn(x,y),
  \eeq
 (with $\Mnn$ replaced by $M$), which expands as a series in $t$ with constant term $1$, and coefficients in $\GK[[x,y]]$.
  Then we have in particular the following equation:
  \[
    [y^{-1}]\frac{(1-x-\om y)}{(1-x)(1-\om y)}\log(\overline \Znn(x,y))=0.
  \]
  Using the expressions of $\overline \Znn$ and $\Znn$, and simplifying, yields the equation:
  \[
    0=[y^{-1}]\frac{(1-x-\om y)}{(1-x)(1-\om y)}\left(\log\left(1-\frac{1}{x}M(y)\right)
      +\log\left(1-\frac{t(1-v)+yM(y)}{y(1-x-\omega y)}\right)
      -\log\left(1-\frac{1}{y}M(x)\right)\right),
  \]
   because neither $\log(1-y-\om x)$, nor the second term involving $M(x)$ contribute to the coefficient of $y^{-1}$. 
  Now we write each term $\log(1-u)$ as the sum $-\sum_{n\ge 1} u^n/n$. The three terms obtained for $n=1$ contribute
  \begin{multline*}
    -  \frac{(1-x-\om y)}{(1-x)(1-\om y)}[y^{-1}]\left( \frac 1 x M(y) + \frac{t(1-v) +yM(y)}{y(1-x-\omega y)} - \frac 1 y M(x)\right)\\
    =[y^{-1}]\left(\frac{(1-x-\om y)M(x)}{y(1-x)(1-\om y) }
      -\frac{M(y)}{x\left(1-x \right) }
      -\frac{t\left(1-v\right) }{ y\left(1-x \right) \left(1- \omega y \right)}\right) \\
    \hskip 25mm = M(x) +\frac{t(xv-x-v)}{x(1-x)} \hskip 18mm \text{because} \ \  [y^{-1}]M(y)=tv \ \ \text{by assumption}.\hskip 20mm
  \end{multline*}
  Including now all values of $n$ gives:
  \begin{multline}\label{Mnn-rec}
    M(x)=\frac{t(x+v-xv)}{x(1-x)}+\sum_{n\ge2}\frac{x\om^{n-1}M(x)^{n}}{n(1-x)}\\
    +[y^{-1}]\sum_{n\ge2}\left(\frac{(1-x-\om y)M(y)^{n}}{nx^{n}(1-x)(1-\om y)}+\frac{\left(\frac{t(1-v)}{y}+M(y)\right)^{n}}{n(1-x)(1-\om y)(1-x-\om y)^{n-1}}\right).
  \end{multline}
  This equation, plus the fact that $M$ has, by assumption, no constant term in $t$,  recursively determines the coefficient of $t^n$ in $M(x)$. This proves the second statement of the proposition.

  \medskip
  Finally, the expression~\eqref{Q-M} of $\Qgf$ follows from its earlier expression~\eqref{Q-nn} in terms of $\Pnn$ and  the definition~\eqref{M-def} of $\Mnn$.
\end{proof}

\begin{Remark}
Equation~\eqref{Mnn-rec} gives a reasonably efficient algorithm for computing the coefficients of $\Mnn(x)$ and hence of $\Qgf$, which only requires handling series in \emm one, catalytic variable.
\end{Remark}

\medskip

We will reformulate the above characterisation of $\Mnn$ in terms of the following series:
\beq\label{FM}
\Fnn(x):=(x\Mnn(x)-t(v-1))(1-\om x-\Mnn(x)).
\eeq
A priori, this is a series in $t$ with coefficients in $\GK[x, 1/x, 1/(1-x)]$ (and no constant term in $t$), but in fact a stronger property holds.

\begin{Proposition}\label{prop:F}
  The series $\Fnn(x)$ belongs to $\GK[x, 1/(1-x)][[t]]$. 
  Moreover,
  \beq\label{kernel-F}
  \Fnn(\Mnn(x))= \left(x\Mnn(x)-{t(v-1)}\right)(1-x-\omega \Mnn(x)).
  \eeq
  The series $\Fnn(x)$ is the only element of $t\GK[[x,t]]$ such that the (unique) series $\Mnn(x)$ that  solves the quadratic equation~\eqref{FM} and 
  is a multiple of~$t$,  that is,
  \beq\label{MF-z}
  \Mnn(x)
=\frac{x(1-\om x) + t(v-1)}{2x} - \frac{x(1-\om x)-t(v-1)}{2x}  \sqrt {1- \frac{4x\Fnn(x)}{\left(x(1-\om x) -t(v-1)\right)^2}},
  \eeq
  also satisfies~\eqref{kernel-F} and the initial condition $[x^{-1}]\Mnn(x)=tv$.

  Again, this characterisation of $\Mnn(x)$ also holds for numeric values of $v$ and $\om$. 
\end{Proposition}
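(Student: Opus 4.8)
The plan is to deduce Proposition~\ref{prop:F} from Proposition~\ref{prop:M} and the identities obtained in its proof; the one genuinely new ingredient is a reorganisation of $\Znn$ that makes the quadratic \eqref{FM} visible. Write $\bar\Mnn(x):=1-\om x+\tfrac t x(v-1)-\Mnn(x)$, the ``conjugate root'' of \eqref{FM}: expanding \eqref{FM} shows $\Mnn(x)$ is a root of $P_x(w):=xw^{2}-(x(1-\om x)+t(v-1))w+\Fnn(x)+t(v-1)(1-\om x)$, and by Vieta the second root is $\bar\Mnn(x)$, with $\Mnn(x)\bar\Mnn(x)=\bigl(\Fnn(x)+t(v-1)(1-\om x)\bigr)/x$. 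Since the two factors of the numerator (resp.\ denominator) of $\Znn$ are $1-x-\om y+\tfrac t y(v-1)-\Mnn(y)=\bar\Mnn(y)-x$ and $1-\tfrac1x\Mnn(y)=\tfrac1x(x-\Mnn(y))$ (resp.\ the same with $x,y$ swapped), one gets
\[
\Znn(x,y)=\frac{y}{x}\cdot\frac{(x-\Mnn(y))(\bar\Mnn(y)-x)}{(y-\Mnn(x))(\bar\Mnn(x)-y)}=\frac{P_y(x)}{P_x(y)},\qquad P_y(x)=yx^{2}-(y(1-\om y)+t(v-1))x+\Fnn(y)+t(v-1)(1-\om y).
\]
This identity uses only Vieta's formulas, hence holds for \emph{any} $\Mnn$ produced from a series $\Fnn\in t\GK[[x,t]]$ through \eqref{MF-z}; I will read the three claims of the proposition off it.

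First I would prove $\Mnn(\Mnn(x))=x$, which gives \eqref{kernel-F} at once. By Proposition~\ref{prop:M}, $\Znn(x,y)$ is a power series in $y$, so — as $\Mnn(x)=\LandauO(t)$ — the substitution $y=\Mnn(x)$ is legitimate and $\Znn(x,\Mnn(x))$ is a well-defined series over the field $\GK((x))((t))$. Putting $y=\Mnn(x)$ in the identity $\Znn(x,y)\,(1-\tfrac1y\Mnn(x))(\bar\Mnn(x)-y)=(1-\tfrac1x\Mnn(y))(\bar\Mnn(y)-x)$ annihilates the factor $1-\tfrac1y\Mnn(x)$ on the left, so the right-hand side vanishes: $\bigl(1-\tfrac1x\Mnn(\Mnn(x))\bigr)\bigl(\bar\Mnn(\Mnn(x))-x\bigr)=0$. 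Here $\bar\Mnn(\Mnn(x))-x$ is visibly nonzero: using $\Mnn(x)=\tfrac{vt}{x}+\tfrac{t}{1-x}+\LandauO(t^{2})$, its lowest-order-in-$t$ term $1+\tfrac{v-1}{m_{1}(x)}-[t^{0}]\Mnn(\Mnn(x))$ (with $m_{1}(x)=\tfrac vx+\tfrac1{1-x}$) has constant term $1$ in $x$. Working in the integral domain $\GK((x))((t))$, the first factor must therefore vanish, i.e.\ $\Mnn(\Mnn(x))=x$. Substituting $x\mapsto\Mnn(x)$ in \eqref{FM} — legitimate since $\Mnn(x)=\LandauO(t)$ and $1/x$ enters \eqref{FM} only through the bounded combinations $x\Mnn(x)$ and $1-\om x-\Mnn(x)$ — and using $\Mnn(\Mnn(x))=x$ yields precisely \eqref{kernel-F}. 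The same computation, applied to any $\widehat\Mnn$ coming from a $\widehat\Fnn\in t\GK[[x,t]]$ via \eqref{MF-z} with $[x^{-1}]\widehat\Mnn=tv$, shows that \eqref{kernel-F} is \emph{equivalent} to $\widehat\Mnn\circ\widehat\Mnn=\mathrm{id}$ (the other root $\widehat\Mnn\circ\widehat\Mnn=\bar{\widehat\Mnn}$ is ruled out by comparing constant terms, the initial condition forcing $\widehat\Mnn(x)=\tfrac{vt}x+\LandauO(t,t/x)+\cdots$).

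Next, that $\Fnn(x)$ has no pole at $x=0$ — indeed that it lies in $\GK[x,\tfrac1{1-x}][[t]]$. I would use the decomposition $\Mnn(x)=\tfrac1x\Pnn(\tfrac1x)+[z^{1}]\Dnn(x,z)$ from \eqref{M-def}, in which the pole at $x=0$ is carried entirely by $\tfrac1x\Pnn(\tfrac1x)$ (the term $[z^{1}]\Dnn(x,z)$ lying in $\GK[\tfrac1{1-x}][[t]]$), substitute it into the expanded form $\Fnn(x)=-x\Mnn(x)^{2}+(x(1-\om x)+t(v-1))\Mnn(x)-t(v-1)(1-\om x)$ of \eqref{FM}, and check that the polar part in $x$ cancels. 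The cancellation is forced by the relations linking $\Pnn$ and $[z^{1}]\Dnn(\cdot,z)$ in \eqref{Dnn-short}, \eqref{Dnn-long} and \eqref{Pnn}; concretely, one can use the fact, established in the proof of Proposition~\ref{prop:M}, that $\Dnn(x,\tfrac1y)\bigl(\bar\Mnn(y)-x\bigr)\in\GK[[x,y,t]]$ and track the negative powers of $y$ there. \textbf{This is the step I expect to be the main obstacle}: unlike the involution, which drops out of the kernel-substitution trick, the absence of a pole at $x=0$ appears to demand a careful, not-quite-mechanical bookkeeping of the Laurent expansions, and is where the precise shape of the functional equations is really used.

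Finally, the characterisation (and the numeric-values addendum). The quadratic formula makes \eqref{FM} and \eqref{MF-z} mutually inverse, so $\Fnn\leftrightarrow\Mnn$ is a bijection; together with the previous two steps, the $\Fnn$ built from $\Pnn,\Dnn$ lies in $t\GK[[x,t]]$ (even in $\GK[x,\tfrac1{1-x}][[t]]$), satisfies \eqref{kernel-F}, and has $[x^{-1}]\Mnn=tv$, which gives existence. For uniqueness, let $\widehat\Fnn\in t\GK[[x,t]]$ satisfy the hypotheses; then \eqref{kernel-F} forces $\widehat\Mnn\circ\widehat\Mnn=\mathrm{id}$, and the rewriting $\widehat\Znn(x,y)=P_y(x)/P_x(y)$ above (now with $\widehat\Fnn$ in place of $\Fnn$), combined with $\widehat\Fnn$ having no pole at $x=0$, puts $\widehat\Znn$ in the ring of Proposition~\ref{prop:M}; since $[x^{-1}]\widehat\Mnn=tv$, the uniqueness clause of Proposition~\ref{prop:M} gives $\widehat\Mnn=\Mnn$, hence $\widehat\Fnn=\Fnn$. (Alternatively one may argue by induction on the $t$-degree, using \eqref{kernel-F}, \eqref{FM} and the initial condition to determine $[t^{n}]\Mnn$ from lower orders.) As the whole argument consists of formal manipulations in $t\GK((x))[[t]]$ and a single appeal to Proposition~\ref{prop:M}, which the paper already records as valid after specialising $v$ and $\om$ to numbers, the characterisations of $\Mnn$ and $\Qgf$ persist for numeric values of $v$ and $\om$.
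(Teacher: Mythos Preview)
Your reformulation $\Znn(x,y)=P_y(x)/P_x(y)$ is exactly the paper's identity~\eqref{Z-def} (up to a sign), and your derivation of~\eqref{kernel-F} via the involution $\Mnn\circ\Mnn=\mathrm{id}$ is correct --- the paper does the same kernel-method substitution but in the opposite order, obtaining~\eqref{kernel-F} directly by setting $y=\Mnn(x)$ in~\eqref{Z-def} (the right-hand side vanishes by~\eqref{FM}), and only later deducing the involution, in the proof of Proposition~\ref{prop:M-charac}. For the pole-freeness of $\Fnn$ at $x=0$, your plan is the right one; the paper carries it out by specialising~\eqref{Pnn} at $y=1/x$ and simplifying to the explicit formula~\eqref{F-expr}, which visibly lies in $\GK[x,\tfrac1{1-x}][[t]]$. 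So that step turns out to be a computation rather than an obstacle.

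The real gap is in your uniqueness argument. You assert that writing $\widehat\Znn=P_y(x)/P_x(y)$, ``combined with $\widehat\Fnn$ having no pole at $x=0$, puts $\widehat\Znn$ in the ring of Proposition~\ref{prop:M}''. This does not follow: the constant term in $t$ of the denominator $P_x(y)$ is $xy(y-1+\om x)$, so expanding $1/P_x(y)$ as a series in $t$ introduces arbitrarily high powers of $1/(xy)$, and nothing in the hypothesis $\widehat\Fnn\in\GK[[x,t]]$ alone forces these to cancel against the numerator. The paper supplies the missing mechanism. First,~\eqref{kernel-F} says precisely that the numerator $-P_y(x)$ vanishes at $y=\widehat\Mnn(x)$; dividing by $x(y-\widehat\Mnn(x))$ and then by the remaining factor $\bar{\widehat\Mnn}(x)-y$ of the denominator (invertible since its constant term in $(x,t/x)$ is $1-y$) places $\widehat\Znn$ in $\GK[[x,t/x,y]]$ --- no negative powers of $y$, but possibly of $x$. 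Second, the obvious symmetry $\widehat\Znn(x,y)=1/\widehat\Znn(y,x)$ lets one run the same argument with $x$ and $y$ swapped, giving $\widehat\Znn\in\GK[[y,t/y,x]]$; intersecting the two conclusions yields $\widehat\Znn\in\GK[[x,y,t]]$ as required. Your alternative induction on the $t$-degree is not obviously viable either: the involution constraint couples all orders through the composition $\widehat\Mnn\circ\widehat\Mnn$, and it is unclear that the initial condition alone pins down each successive coefficient.
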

\begin{proof}
  To prove the first statement, we get back to the definition of $\Mnn$ in terms of $\Pnn$ and $\Dnn$ (see~\eqref{M-def}) and use the functional equation~\eqref{Pnn}, taken at $y=1/x$. After a few reductions, we obtain
    \beq\label{F-expr}
      \Fnn(x)=t(1-\om x) -\om [x^{-1}] \Pnn\!\left( \frac 1 x \right) - 2 [x^{\ge 0}] \Pnn\!\left( \frac 1 x \right) \Dnn_1(x) + x\Dnn_1(x)\left(1-\om x - \Dnn_1(x)\right)+ t(v-1) \Dnn_1(x),
    \eeq
    where $\Dnn_1(x)$ stands for the coefficient of $y^1$ in $\Dnn(x,y)$. This proves that $\Fnn(x)$ contains no negative power of $x$. The fact that it is a multiple of~$t$ is already clear on its definition~\eqref{FM} in terms of $\Mnn$, since $\Mnn$ itself is a multiple of~$t$.

  To prove  the second statement, we note that the series $\Znn(x,y)$ of Proposition~\ref{prop:M} can be written
  \[ 
    \Big( (xy-t(v-1)) (1-x-\om y) -\Fnn(y)\Big)/  \Big( (xy-t(v-1)) (1-y-\om x) -\Fnn(x)\Big).
  \] 
  More precisely, the numerator of $\Znn(x,y)$ is the numerator of the above expression divided by~$xy$.
  This gives
  \beq\label{Z-def}
  (xy-t(v-1)) (1-x-\om y) -\Fnn(y)=  \Big( (xy-t(v-1)) (1-y-\om x) -\Fnn(x)\Big) \Znn(x,y),
  \eeq
  where we recall that $\Znn(x,y) \in \GK[[x,y,t]]$. Recall moreover that $\Fnn(y) \in \GK[[y,t]]$. Let us now replace $y$ by $\Mnn(x)$ in the above identity: since $\Mnn(x)$ is a multiple of~$t$, this substitution yields well defined series in $\GK((x))[[t]]$. By definition of $\Fnn(x)$, the right-hand side then vanishes. Hence the left-hand side vanishes too, and this gives~\eqref{kernel-F}. Note that this is an application of the so-called \emm kernel method,, see e.g.~\cite{hexacephale,bousquet-petkovsek-1,banderier-flajolet}.

  \medskip 
  Let us finally prove uniqueness.  Assume that a series $F(x)$ of $t\GK[[x,t]]$ satisfies the conditions stated in the lemma. We want to prove that the corresponding series $M(x)$ satisfies the conditions of Proposition~\ref{prop:M}, so that $M$, and thus $F$, are uniquely defined. Since the initial condition on $M$ holds by assumption, it suffices to prove that the series $Z(x,y)$ defined by~\eqref{Z-def} (with $\Fnn$ replaced by $F$ of course) belongs to $\GK[[x,y,t]]$. A priori, due to the form of the denominator of $Z(x,y)$, the coefficient of $t^n$ in this series is an element of $\GK[[x,y]]$, divided by $x^n y^n$.

   It follows from~\eqref{MF-z} that $M(x)$ is a series of $t/x\GK[[t/x,x]]$.  By assumption, the left-hand side of~\eqref{Z-def} vanishes at $y=M(x)$. Hence
  \[
    \frac{  (xy-t(v-1)) (1-x-\om y) -F(y)}{y-M(x)}
  \]
  expands as a \fps \  in $y$ with coefficients in $\GK[[x,t/x]]$. Since $F(y)$ is a multiple of~$t$, this still holds if we divide the above expression by $x$. Hence  the same holds for
  \[
    Z(x,y)=  \frac{  (y-\frac{t(v-1)}x) (1-x-\om y) -\frac 1 x F(y)}{(y-M(x))\left(1- y-\om x + \frac{t(v-1)} x -M(x)\right)},
  \]
  which thus belongs to $\GK[[x,t/x,y]]$. That is, the coefficient of $t^n$ in $Z(x,y)$ is a series of $\GK[[x,y]]$ divided by $x^n$.

  Let us also note that
  \[
    Z(x,y)=  \frac 1{Z(y,x)}.
  \]
  We have just proved that $Z(y,x)$ belongs to $\GK[[x,y,t/y]]$. Since the coefficient of $t^0$ in $Z(y,x)$  is $({1-y-\om x})/({1-x-\om y})$, the above formula shows that $Z(x,y)$ belongs to $\GK[[x,y,t/y]]$ as well. Hence the coefficient of $t^n$ in $Z(x,y)$ has no pole at $x=0$ either, and finally $Z(x,y)$ lies in $\GK[[x,y,t]]$. By Proposition~\ref{prop:M}, the series $M(x)$ coincides with $\Mnn(x)$ and hence  $F(x)$ coincides with $\Fnn(x)$.
\end{proof}

Let us now prove that the characterisation of $\Mnn(x)$ given in the introduction, namely Proposition~\ref{prop:M-charac}, is a reformulation of the above proposition.

\begin{proof}[Proof of Proposition~\ref{prop:M-charac}] The first two
  properties of Proposition~\ref{prop:M-charac} come from
  Proposition~\ref{prop:F}, while  the expression~\eqref{Q-M} of
  $\Qgf$ in terms of $\Mnn$ is borrowed from Proposition~\ref{prop:M}. Hence it suffices to show that $\Mnn^{(2)}(x):=\Mnn(\Mnn(x))$ is well defined, that $\Mnn^{(2)}(x)=x$, and that the conditions of Proposition~\ref{prop:M-charac} uniquely define $\Mnn(x)$. 
  
  Starting with the first two conditions of
  Proposition~\ref{prop:M-charac}, observe that, since $\Mnn(x)$ is a
  multiple of $t$, the same holds for $\Fnn(x)$. As
  already observed in the previous proof, the form~\eqref{MF-z} implies that
  $\Mnn(x)\in t/x \GK[[x,t/x]]$. Since moreover
  $[t^{1}x^{-1}]\Mnn(x)=v$, we have $t/\Mnn(x)\in x\GK[[x,t/x]]$. This implies that the substitution $\Mnn(\Mnn(x))=\Mnn^{(2)}(x)$ is well defined and lies in $x\GK[[x,t/x]]$. 

  We now use Eq.~\eqref{kernel-F} from  Proposition~\ref{prop:F}:
  \begin{align*}
    \Fnn(\Mnn(x))&= \left(x\Mnn(x)-{t(v-1)}\right)(1-x-\omega \Mnn(x))\\
                 &   = \left(\Mnn(x)\Mnn^{(2)}(x)-t(v-1)\right)\left(1-\om \Mnn(x)-\Mnn^{(2)}(x)\right),
  \end{align*}
  by definition of $\Fnn$. Comparing both expressions shows that
    \[
      \text{either} \qquad  \Mnn^{(2)}(x)=x \qquad \text{or} \qquad      x+ \Mnn^{(2)}(x)= 1-\om \Mnn(x)+ \frac{t(v-1)}{\Mnn(x)}.
    \]
    Let us think of the latter identity as relating series in $x$ and $t':=t/x$, and extract the coefficient of $x^0 t'^0$:  this gives $0=1$, a contradiction. We thus conclude that $\Mnn^{(2)}(x)=x$.

  It remains to show that the conditions of Proposition~\ref{prop:M-charac} uniquely define $\Mnn(x)$. 
  But they imply
  the conditions of Proposition~\ref{prop:F}, which we have proved to characterise  $\Fnn(x)$ and $\Mnn(x)$. 
\end{proof}

\begin{Remark}
    So far we have expressed the series $\Qgf$ in terms of the coefficient of $x^{-2}$ in $\Mnn(x)$:
    \[
      t^2(\Qgf+v)=[x^{1}]\Pnn(x) =[x^{-2}]\Mnn(x).
    \]
    There exists an alternative expression, which involves the constant terms of $\Fnn(x)$ and $\Mnn(x)$, and will be especially convenient in the case $v=1$ (Section~\ref{sec:six}). Namely,
    \beq
    \label{Q-alt}
    (\om+2) t^2 (\Qgf+v)= t- \Fnn(0) +t(v-1) [x^0]\Mnn(x).
    \eeq
    To prove this, we extract the coefficient of $x^0$ in~\eqref{F-expr}. This gives:
    \begin{align*}
      \Fnn(0)&= t-\om [x^1] \Pnn(x)-2 [x^0]\Pnn\!\left( \frac 1 x \right)\Dnn_1(x)+t(v-1)[x^0]\Dnn_1(x)\\
             &=t -(\omega+2) [x^1]\Pnn(x)+ t(v-1) [x^0]\Mnn(x),
    \end{align*}
    where we have used~\eqref{P1-prod} and then~\eqref{M-def}.
    This gives the desired expression~\eqref{Q-alt} of $\Qgf$.
  \end{Remark}

\begin{Remark}\label{rem:t-F0}
We will use~\eqref{Q-alt} to show that $t-\Fnn(0)$ has non-negative coefficients (see~\eqref{F0-ser}). By~\eqref{Q-P-sol},~\eqref{M-def} and~\eqref{change}, we have
\[
  t-\Fnn(0)=(\om+2) t^2 [y^1]\Pgf(y)+t^{2}(1/v-1) [y^1]\Dgf(0,y),
\]
so it suffices to show that the coefficients of $\Pgf(y)-\Dgf(0,y)$ are non-negative. Indeed, this is clear from Definition~\ref{def:patches} as $\Dgf(0,y)$ counts $D$-patches with no digons, which are patches whose root vertex is a local minimum, whereas $\Pgf(y)$ counts patches without this restriction.
\end{Remark}

\section{The cases $\bm {\omega=0}$ and $\bm{\omega=1}$}
\label{sec:01}
We will now solve the cases where the ratio $\Znn(x,y)$ of Proposition~\ref{prop:M} reduces to its constant term (in $t$), or equivalently, where the modified ratio $\overline \Znn(x,y)$ defined in~\eqref{Z-bar} equals $1$.  The first terms in the expansion~\eqref{Z-expansion} of $\Znn(x,y)$ show that this can only happen for $\om=0$ or $\om=1$. In this section, we prove that $\overline \Znn(x,y)$ is indeed $1$ for these two values of $\om$, and use this to solve these two cases completely. In the special case where $v=1$ we recover the main two results of~\cite{mbm-aep1}. In contrast to this earlier solution, our approach here does not require any guessing, whereas it was central in~\cite{mbm-aep1}.

Before we dig into this, observe that $\overline \Znn(x,y)=1$ means that
the following expression is symmetric in $x$ and $y$:
\beq\label{symmetriceq0}
(1-y-\om x)\left(1-\frac{1}{x}\Mnn(y)\right)\left(1-x-\omega y+\frac{t(v-1)}{y}-\Mnn(y)\right).
\eeq
Multiplying by $xy$ and cancelling some symmetric terms, we see that this is equivalent to the $x/y$-symmetry of 
\begin{equation}\label{symmetriceq}
  (1-y-\om x)\left(y\Mnn(y)-t(v-1)\right)\left(1-\om y-\Mnn(y)\right).
\end{equation}
Now only the first factor involves $x$.

In this section we denote by $\GK$ the field $\qs(v)$.

\subsection{Solution for $\bm{\om=0}$: proof of Theorem~\ref{thm:general}}
\label{sec:om0}

For $\om=0$, the  expression~\eqref{symmetriceq} does not depend on $x$. Hence it  is symmetric if and only if  it does not depend on $y$ either. Hence, in this case, we should have
\begin{equation}\label{omis0eqforH}
  (1-y)\left(y\Mnn(y)-t(v-1)\right)\left(1-\Mnn(y)\right)=\Rgf_0,
\end{equation}
for  some series  $\Rgf_0$ in $\GK[[t]]$.
Using the first terms in the expansion of $\Mnn(x)$ (see~\eqref{M-expansion} and our {\sc Maple} session~\cite{bmep-ref-arxiv}), we find that the series $\Rgf_0$ should start as follows:
\beq\label{Rgf-ser}
\Rgf_0= t -\left(1+v\right) t^{2}-\left(1+3 v\right) t^{3}+\LandauO(t^4).
\eeq
We observe that these first terms do not depend on $y$ indeed, which is encouraging.

Now let $R=\sum_{n\ge 1} r_n t^n$ be \emm any, series in $ t\GK[[t]]$. Consider the following quadratic equation in $M(y)$, derived from~\eqref{omis0eqforH}:
\beq\label{eq-M}
(1-y)\left(yM(y)-t(v-1)\right)\left(1-M(y)\right)=R.
\eeq
It admits two solutions. Both are \fps\ in $t$, but one of them has a non-zero constant term (more precisely, $1$), and, 
since our series $\Mnn(y)$ is a multiple of $t$, we are not interested in it. The other solution has constant term $0$ and can be written  as a series in $R$, $y$ and $t(v-1)$ as follows:
\begin{align}
  \label{My-exp}
  M(y)
  & = \frac{y+t(v-1)}{2y}- \frac{y-t(v-1)}{2y}\sqrt{1- \frac{4yR}{(1-y)(y-t(v-1))^2}}\\
      &=
        \frac{tv-t}{y}+\sum_{n,k,j\geq0}\frac{1}{n+1}{2n\choose n}{2n+k\choose k}{n+j\choose n}t^k(v-1)^k R^{n+1}y^{j-n-k-1}. \nonumber 
\end{align}
Since we have taken $R$ with no constant term,  this is a series of $t\GK((y))[[t]]$, and more precisely, a series of $t/y \GK[[y,t/y]]$.

Now, following the conditions of Proposition~\ref{prop:M}, we would like the coefficient of $y^{-1}$ in $M(y)$ to be $tv$. Equivalently,
\beq\label{t-R0}
t=\sum_{n,k\geq0}\frac{1}{n+1}{2n\choose n}{2n+k\choose k}{2n+k\choose n} t^k(v-1)^kR^{n+1}.
\eeq
But this condition defines a \emm unique, series $R$ with constant term $0$, which actually starts like the series $\Rgf_0$ (see~\eqref{Rgf-ser}).

Now let us fix $\Rgf_0\equiv R$ to be the unique solution of the above equation, and define $M(y)$ by~\eqref{My-exp}. We can now work our way backwards. Equation~\eqref{eq-M} holds, which means that Expressions~\eqref{symmetriceq} and~\eqref{symmetriceq0} (with $\Mnn(y)$ replaced by $M(y)$ and $\om$ by $0$) are symmetric in $x$ and $y$, so that the expression $\Znn(x,y)$ of Proposition~\ref{prop:M} is reduced to its constant term. In particular, it expands as a series of $\GK[[x,y]]$, and the first condition of  Proposition~\ref{prop:M} is satisfied by $M(y)$. By construction, $M(y)$  belongs to $\GK((y))[[t]]$, has no constant term in $t$, and  $[y^{-1}]M(y)=tv$. By Proposition~\ref{prop:M}, our series $M$ coincides with the series $\Mnn$ defined by~\eqref{M-def}, and we have solved the case $\om=0$.

According to Proposition~\ref{prop:M},  the series $\Qgf_0$ that counts quartic Eulerian orientations with no alternating vertex by vertices ($t)$ and clockwise faces ($v$) is given by
\begin{align}
  \Qgf_0&=-v+\frac{1}{t^2}[y^{-2}]M(y) \nonumber \\
        &  = -v+\frac{1}{t^2}\sum_{{n,k\geq0},{~n+k>0}}\frac{1}{n+1}{2n\choose n}{2n+k\choose k}{2n+k-1\choose n} t^k(v-1)^k \Rgf_0^{n+1}. \label{Q0-R}
\end{align}
Recall that by duality (Lemma~\ref{lem:duality}), this is also the \gf\ of \emm colourful, labelled quadrangulations rooted from $0$ to $1$, counted by faces and local minima. By the Ambj\o rn-Budd bijection (Corollary~\ref{cor:AB}), this is twice the \gf\ $\Ggf$ of Eulerian orientations counted by edges and vertices. This gives the expression of $\Ggf$ in Theorem~\ref{thm:general}.

We finally want to prove that  the series $\Rgf\equiv\Rgf_0$ is D-algebraic in $t$ and  $v$, and that the same holds for $\Qgf_0$ (and thus for $\Ggf=\Qgf_0/2$). The argument is a bit more involved than in~\cite{mbm-aep1}, because the equation~\eqref{t-R0} that defines $\Rgf$ is of the form
  \beq\label{t-Omega0}
  t=\Omega(\Rgf,t(v-1)),
  \eeq
  rather than $t=\Omega(\Rgf)$ in our previous paper. We refer to Section~\ref{sec:DA-om0} for the proof.
This completes the proof of Theorem~\ref{thm:general}. \qed

\begin{Remark}\label{rem:om0}
  Note that in this case ($\om=0$), the series $\Fnn(y)$ defined by~\eqref{FM} is simply $\Rgf_0/(1-y)$ (see~\eqref{omis0eqforH}), hence a very simple rational series (in $y$). In particular, the alternative form of $\Qgf_0$ given by~\eqref{Q-alt} specialises to~\eqref{Q0-alt}, where we recall that $2\Ggf=\Qgf_0$.
  \end{Remark}

 We now discuss the {dominant singularity}, or critical point,  of $\Qgf_0(t,v)$ for fixed $v\in(0,\infty)$.
 
 \begin{Prediction}[{\bf Critical point}]
   \label{pred:0}
 Fix $v\in(0,\infty)$. Let $y_{c}\in(1/4,1)$ be the unique value satisfying
\beq\label{yc-char}
\frac{2\pi v}{v-1}=
6\arctan(\sqrt{4y_{c}-1})
+\frac{(1-y_{c})\sqrt{4y_{c}-1}}{y_{c}(2y_{c}-1)}.
\eeq
Then the dominant singularity of $\Qgf_0(t,v)$, as a function of $t$, is
$t_{c}=\frac{y_{c}(2y_{c}-1)}{v-1}$. At this point, $\Rgf=y_{c}(1-y_{c})^3:=\Rgf_c$.
\end{Prediction}
Note that exactly one value $y_c\in(1/4,1)$ satisfies~\eqref{yc-char}, because the left-hand side of~\eqref{yc-char} decreases from $0$ to $-\infty$ for $v \in (0,1)$, and then from $+\infty$ to $2\pi$ for $v \in (1, +\infty)$, while the right-hand side behaves similarly in terms of $y$, with the discontinuity  at $y_c=1/2$. Of course we take $y_c=1/2$ when $v=1$, and in this limit our prediction coincides with the known value $t_c=1/(4\pi)$ (see~\cite{mbm-aep1}). We have also checked it numerically by estimating $t_c$ from the first $38$ coefficients of $\Qgf_0$; see Figure~\ref{fig:radius0}. 

\begin{figure}[htb] 
  \centering
  \includegraphics[width=40mm]{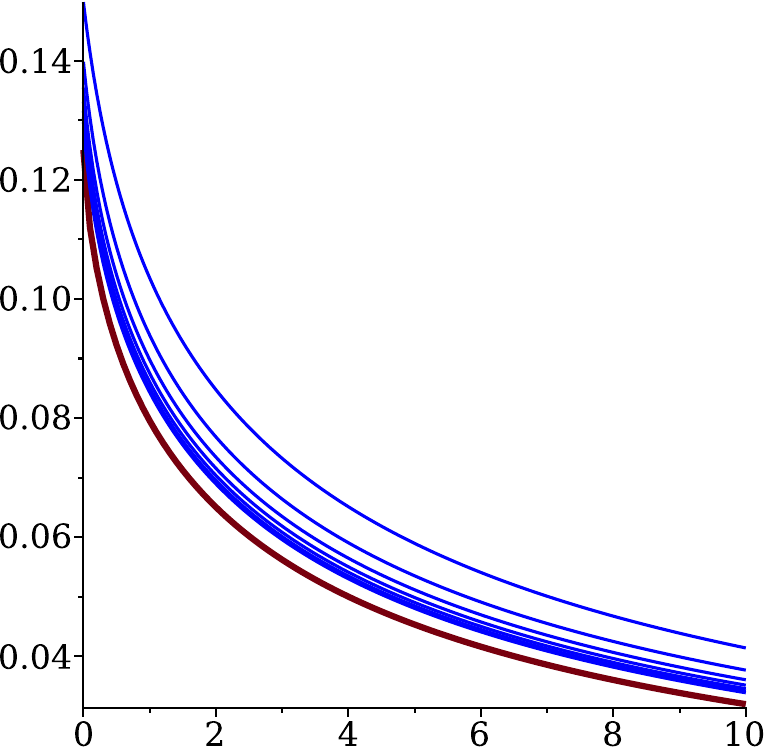}
  \caption{Bottom line: our prediction for the radius $t_c$ of $\Qgf_0$ in terms of $v$. Top lines: estimates of this radius obtained from ratios of the coefficients of $t^n$ and $t^{n+1}$ in $\Qgf_0$, for various values of $n\le 37$.}
  \label{fig:radius0}
\end{figure}

Our prediction is based on the assumption  that the discriminant occurring in $M(y)$ (see~\eqref{My-exp}) has a double root, denoted $y=y_{c}$, at the critical point $t_c$. This phenomenon is frequently observed in map enumeration, for instance when counting planar maps by edges and outer degree. This condition gives at once the above expressions of $t_c$ and $\Rgf_{c}$ in terms of 
$y_c$.

Another natural way that leads to this assumption is to consider the series $M(y)$ at small values of $t>0$, recalling that $\Rgf=t+\LandauO(t^2)$. The discriminant occurring in~\eqref{My-exp} is a rational function in $y$, of degree $3$, with $3$ positive roots $Y_\pm$ and $Y_1$ that start as follows:
\[
  Y_\pm = (1\pm \sqrt v)^2t +\LandauO(t^2), \qquad Y_1=1-4t +\LandauO(t^2).
\]
The discriminant is  non-positive on the cuts $[Y_-, Y_+]$ and $[Y_1, 1]$, and positive elsewhere. Recalling that $M(y)$ is a \fps\ in $t$ with coefficients in $\qs_{\ge 0}[v]((y))$ (that is, with non-negative coefficients), we see that for small $t>0$, this series converges absolutely in the annulus $\{z : Y_+ <|z|<Y_1\}$. In particular, the initial condition $tv=[y^{-1} ] M(y)$ reads
\beq\label{minus1}
tv= \frac{1}{2\pi i}\oint M(y) dy,
\eeq
for any circle in the annulus. Our prediction is that this annulus shrinks to a circle of radius $y_c$ when $t$ approaches its critical value $t_c$.
  
In order to determine $y_c$ (and thus $t_c$), we will use the limit $t\rightarrow t_c$ of~\eqref{minus1},
integrating over the circle of radius $y_c$.  So assume that our assumption holds true, for some $y_c\in (1/4,1)$. Then, for  $t=t_c$ and $|y|=y_c$,  the expression~\eqref{My-exp} of $M(y)$ rewrites as
\beq\label{M0-extended}
   M(y)  =
  \frac{y+y_c(2y_c-1)}{2y}- \frac{y-y_c(2y_c-1)}{2y}\sqrt{1- \frac{4yy_c(1-y_c)^3}{(1-y)(y-y_c(2y_c-1))^2}}.
  \eeq
  The right-hand side  is a meromorphic function of $y$ on the portion of the complex plane where   $\left|(1-y)(y-y_c(2y_c-1))^2\right|>4|y|y_c(1-y_c)^3$.
  This  portion has an unbounded component $\mathcal G(y_c)$, lying outside a pair of ``glasses'' meeting at the point $y_c$, as illustrated in Figure~\ref{fig:glasses} for various values of $y_c$ in $(1/4,1)$. The point at $y_c$ is what is left from the segment $[Y_+,Y_1]$ at $t=t_c$.  Also, for $y$ a large positive real in $\mathcal G(y_c)$, we can simplify the square root, so that the above function rewrites as:
  \beq\label{M0-simple}
  \frac{y+y_c(2y_c-1)}{2y}- \frac{y-y_c}{2y}\sqrt{\frac{y-(1-2y_c)^2}{{y-1}}}.
\eeq
 This  is meromorphic  on $\mathbb{C}\setminus[(1-2y_c)^2,1]$, using the principal value of the square root, and the cut $[(1-2y_c)^2,1]$ does not intersect $\mathcal G(y_c)$ (Figure~\ref{fig:glasses}).  Hence the above function must coincide with~\eqref{M0-extended} in $\mathcal G(y_c)$, and with $M(y)$ when $|y|=y_c$.
Moreover, it has an explicit anti-derivative, namely
\[
  \widehat M(y):=
   y_{c}(2y_{c}-1)\log\left(
    \frac{y-1+2y_c+(y-1) \sqrt{\frac{y-(2y_{c}-1)^{2}}{y-1}}}{2y_c}\right)
    -\frac{y-1}2 \left(\sqrt{\frac{y-(2y_{c}-1)^{2}}{y-1}}-1\right),
\]
where we define $\log(\rho e^{i\theta})=\log \rho + i\theta$ for $\rho>0$ and $\theta \in(0, 2\pi)$. This choice makes the above function $\widehat M(y)$ analytic on $\cs\setminus [0, +\infty)$.

\begin{figure}[bht]
  \centering
 \includegraphics[width=35mm]{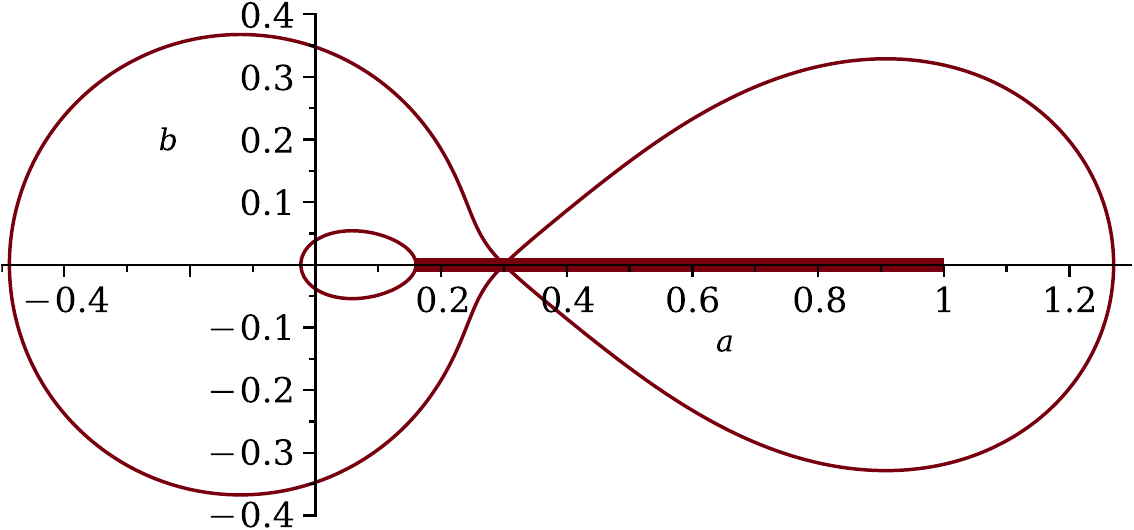}
 \includegraphics[width=35mm]{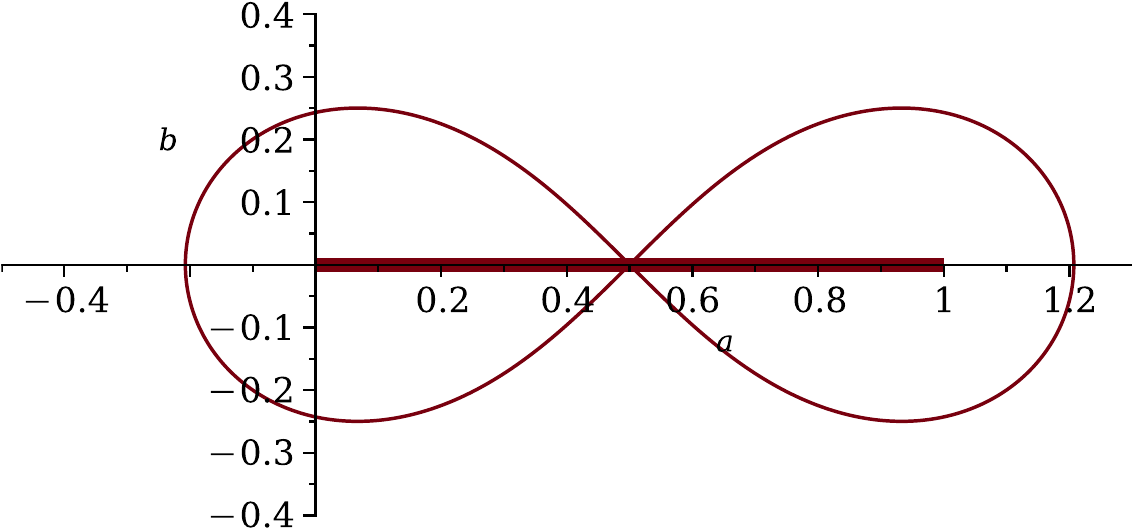}
\includegraphics[width=35mm]{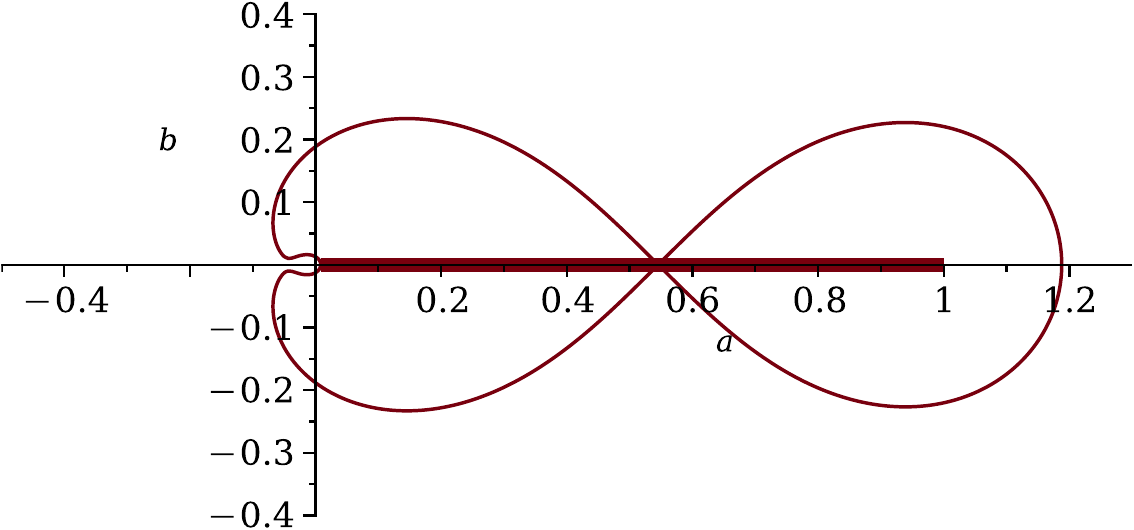}
\includegraphics[width=35mm]{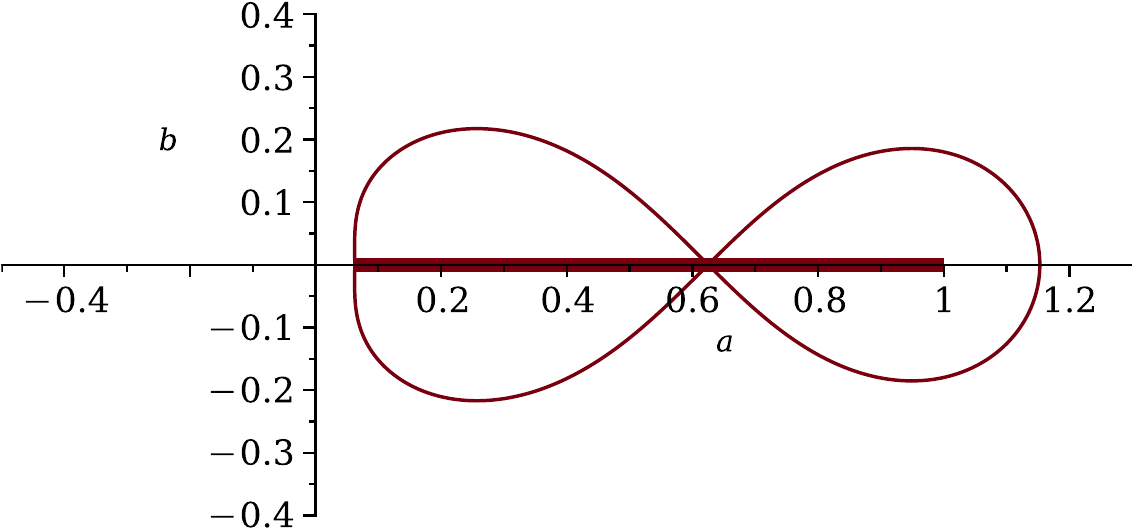}
  \caption{The function $M(y)$, for $t=t_c$, is analytic in the infinite region outside the curve. From left to right, $y_c=1/4+1/20, 1/2, 1/2+1/22, 1/2+1/8$. The cut of the function~\eqref{M0-simple} is also shown.}
  \label{fig:glasses}  
\end{figure}

We can then compute $t_{c}$ via the  integral~\eqref{minus1} taken on the circle of radius $y_{c}$ around 0: 
\begin{equation*}
  t_{c}v=y_c(2y_c-1) \frac v{v-1}
  =\frac{1}{2\pi i}\oint M(y) dy
  =\lim_{\epsilon\to 0^+}\frac{1}{2\pi i}\left(\widehat{M}(y_{c}-i\epsilon)-\widehat{M}(y_{c}+i\epsilon)\right).
\end{equation*}
 A careful expansion of $\widehat M(y)$ in the vicinity of $y_c$ yields~\eqref{yc-char}, using the fact that $\arccos \frac{1-3y_c}{2y_c^{3/2}}
=3\arctan(\sqrt{4y_{c}-1})$.

\subsection{Solution for $\bm{\om=1}$: proof of Theorem~\ref{thm:quartic}}
\label{sec:om1}
We now address the case $\om=1$, again with the hope that in this case $\overline\Znn(x,y)=1$.
Since the first factor of~\eqref{symmetriceq} is now symmetric in $x$ and $y$, this must be true of the product of the two factors involving $\Mnn(y)$.
This means that
\beq\label{omis1eqforH}
\left(y\Mnn(y)-t(v-1)\right)\left(1- y-\Mnn(y)\right)= \Rgf_1
\eeq
for  some series $\Rgf_1$ in $\GK[[t]]$.
Conversely, if this holds then~\eqref{symmetriceq} is $x/y$-symmetric. Using the first terms in the expansion of $\Mnn(x)$ (see~\eqref{M-expansion} and our {\sc Maple} session), we find that the series~$\Rgf_1$ should start as follows:
\[ 
  \Rgf_1= 
  t -\left(1+2 v\right) t^{2}-2\left( v^{2}+4v +1\right) t^{3}+\LandauO(t^4).
\]
Again, these first terms do not depend on $y$, which is encouraging.

The rest of the argument is very close to the case $\om=0$.  Let $R$ be \emm any, series in $ t\GK[[t]]$. The following quadratic equation in $M(y)$, derived from~\eqref{omis1eqforH},
\beq\label{eq-M1}
\left(yM (y)-t(v-1)\right)\left(1- y-M(y)\right)= R,
\eeq
admits two solutions. Both are \fps\ in $t$, but one of them has a non-zero constant term (more precisely, $1-y$), and
we are not interested in it. The other one has constant term $0$ and can be written as a series in $R$, $y$ and $t(v-1)$ as follows:
\begin{align}
  \label{My-exp1}
  M(y)
  &= \frac{y(1-y)+t(v-1)}{2y} -\frac{y(1-y)-t(v-1)}{2y}\sqrt{1- \frac{4yR}{(y(1-y)-t(v-1))^2}}\\
      &= \frac{tv-t}{y}+\sum_{n,k,j\geq0}\frac{1}{n+1}{2n\choose n}{2n+k\choose k}{2n+k+j\choose j}t^k(v-1)^{k}R^{n+1}y^{j-n-k-1}. \nonumber
\end{align}
Since we have taken $R$ with no constant term,  this is a series of $t\GK((y))[[t]]$. 

Now, 
we would like the coefficient of $y^{-1}$ in $M(y)$ to be $tv$. Equivalently,
\beq\label{t-R1}
t=\sum_{k,n\geq0}\frac{1}{n+1}{2n\choose n}{2n+k\choose k}{3n+2k\choose n+k}R^{n+1}(tv-t)^{k}.
\eeq
Again, this condition defines a \emm unique, series $R$ with constant term $0$. We finally conclude, as in the case $\om=0$, that this unique solution is indeed the series $\Rgf_1$ of~\eqref{omis1eqforH}, and that the corresponding value of $M(y)$ is $\Mnn(y)$.

According to Proposition~\ref{prop:M},  the series $\Qgf_1$ that counts quartic Eulerian orientations by vertices  ($t$) and clockwise faces ($v$) is given by
\[
  \Qgf_1=-v+\frac{1}{t^2}\sum_{k,n\geq0,~n+k>0}\frac{1}{n+1}{2n\choose n}{2n+k\choose k}{3n+2k-1\choose n+k-1}t^k(v-1)^{k}\Rgf_1^{n+1}.
\]
This gives the expression  of Theorem~\ref{thm:quartic}. By the Ambj\o rn-Budd bijection (Proposition~\ref{prop:bij}) and duality, this is also the \gf\ of Eulerian partial orientations counted by edges and vertices. For instance, $\Qgf_1=v(v+3)t+ \LandauO(t^2)$, and there are indeed $4$ Eulerian partial orientations with one edge: the loop map, with one vertex, admits $3$ partial orientations, while the link map, with two vertices, has a single one. We refer to Section~\ref{sec:DA-om1} for the statements dealing with D-algebraicity.
\qed

\medskip

   \begin{Remark}\label{rem:om1}
      In this case ($\om=1$), the series $\Fnn(y)$ defined by~\eqref{FM} is simply $\Rgf_1$ (see~\eqref{omis1eqforH}), hence is independent of $y$. In particular, the expression \eqref{Q-alt} specialises to~\eqref{Q1-alt} at $\omega=1$.  Along with Remark~\ref{rem:om0}, this means that $\Fnn(y)$ is a rational function of $y$  and $\Rgf:=\Fnn(0)$ for $\omega=0,1$ and any $v$. In Section~\ref{sec:Tutte}, we prove that $\Fnn(y)$ is an \emm algebraic, function of $y$ and $\Rgf$ when $v=1$ and $\om \neq -2,2$ is of the form $-2\cos(k\pi/m)$ for $k,m\in\mathbb{Z}$.
   \end{Remark}

   To finish, let us give our prediction for the dominant singularity of $\Qgf_1$, as a function of $t$, for fixed $v$.

   \begin{Prediction}[{\bf Critical point}]
  Fix $v\in(0,\infty)$. Let $y_{c}\in(1/6,1/2)$ be the unique value satisfying
\beq\label{yc-char1}
\frac{\pi v}{v-1}=2\, 
\arccos \left(\frac{1-4y_c}{2y_c}\right)
+\frac{\sqrt{(6y_c-1)(1-2y_c)}}{4y_{c}(3y_{c}-1)}.
\eeq
Then the dominant singularity of $\Qgf_1(t,v)$, as a function of $t$, is
$t_{c}=\frac{y_{c}(3y_{c}-1)}{v-1}$. At this point, $\Rgf=y_{c}(1-y_{c})^2:=\Rgf_c$.    
\end{Prediction}

  Again, the agreement with estimates derived from the first $38$ coefficients of $\Qgf_1$ is excellent, and at $v=1$ our prediction coincides with the known value $1/(4\sqrt 3 \pi)$.

  As in the case $\om=0$, our prediction relies on the assumption  that the discriminant occurring in~\eqref{My-exp1} has a double root $y_c$ when $t=t_c$ (for small $t>0$, it has $4$ positive roots $Y_-<Y_+ < Y_1 <1 <Y_2$, and the series $M(y)$ converges absolutely in the annulus $\{z: Y_+<|z|< Y_1\}$). This gives at once the above expressions of $t_c$ and $\Rgf_c$.
  Then, we determine $t_c$ (and $y_c$) in terms of $v$ using the fact that $tv=[y^{-1}]M(y)$. We do not give all details, but here are some counterparts of the explanations given in the case $\om=0$.  For $|y|=y_c$, the expression~\eqref{My-exp1} can be rewritten as
  \[
    M(y)= \frac{y(1-y)+y_c(3y_c-1)}{2y} + \frac{y-y_c}{2y} S(y),
  \]
  with
  \[
    S(y)=(y+y_c-1)\sqrt{ \frac{y^2+2y(y_c-1)+(3y_c-1)^2}{(y+y_c-1)^2}}.
  \]
  The right-hand side is meromorphic   in $\cs\setminus[1-y_c -\sqrt{y_c(1-2y_c)}, 1-y_c +\sqrt{y_c(1-2y_c)}]$.
The above expression for $M(y)$ has an explicit  antiderivative:
\[
  \widehat M(y):=
  y_{c}(3y_{c}-1)\log\left(1-y-3y_{c}-S(y)\right)-\frac{1}{4}(1+y_{c}-y)S(y)+\frac{2y-y^{2}}{4}.
\]
This time we take the principal value of the logarithm, which makes  the above function analytic on  $\cs\setminus[0, +\infty)$. Integrating $M(y)$ on the circle of radius $y_c$ centered at $0$ gives
\[
  t_cv=y_c(3y_c-1) \frac v{v-1}= \lim_{\epsilon\to 0^+}\frac{1}{2\pi i}\left(\widehat{M}(y_{c}-i\epsilon)-\widehat{M}(y_{c}+i\epsilon)\right),
\]
and a careful analysis of $\widehat M$ near $y_c$ yields~\eqref{yc-char1}.

   \section{Solution for $\bm{v=1}$:  the six vertex model}
\label{sec:six}
Our starting point is Proposition~\ref{prop:F}. We are looking for the unique series $F(x)$ in $t\GK[[x,t]]$ (where now $\GK=\qs(\om)$) such that  the series $M(x)$ that solves
\beq\label{FM-v=1}
F(x):=xM(x)(1-\om x-M(x))
\eeq
and is a multiple of $t$, namely,
\beq\label{MF}
M(x):= \frac{1-\omega x} 2\left( 1 - \sqrt {1- \frac{4F(x)}{x(1-\omega x)^2}}\right),
\eeq
also satisfies
\beq\label{F-prop-v=1}
F(M(x))= xM(x)(1-x-\omega M(x))
\eeq
and $[x^{-1}]M(x)=t$. As already noted, this forces  $M(x)$ to be a series in $t$ with coefficients in $\GK((x))$, and more precisely a series of $t/x\GK[[t/x,x]]$.

The solution of this case is significantly less direct than in the cases $\om=0$ and $\om=1$.
We will first construct a collection of series $F(x)$ satisfying~\eqref{FM-v=1} and~\eqref{F-prop-v=1}, indexed by an arbitrary series $q\equiv q(t)\in t\GK[[t]]$. This is done in Section~\ref{sec:ansatz-rigorous}, while Section~\ref{sec:ansatz} presents our Ansatz for reaching this solution. Finally, in Section~\ref{sec:qt} we determine the series $q(t)$ that  guarantees that $[x^{-1}]M(x)=t$.
  
In Appendix~\ref{app:complex}, we give an alternative argument to deduce the same result using complex analysis, somewhat along the lines of \cite{kostov,elvey-zinn}. In particular, this argument justifies our Ansatz $\Mnn(\chi(z))=\chi(z-\gamma)$ (see~\eqref{ansatz} below), even beyond the  case $v=1$.

\subsection{An Ansatz} \label{sec:ansatz}
As explained in the introduction, it is convenient to write $\om=-2\cos(2\alpha)$.
We will make an \emm Ansatz, on the form of $M(x)$, and hence of $F(x)$, by assuming that, up to a change of variables,~$M$ acts as a shift. That is,
\beq\label{ansatz}
  M(\chi(z)) =\chi(z-\gamma)
\eeq 
for some series $\chi(z)$, depending on $z$ and $t$, while $\gamma$ would only depend on $t$. Then by~\eqref{FM-v=1}, we would have
\beq\label{Fchi}
F(\chi(z))=\chi(z)\chi(z-\gamma) (1-\om \chi(z) -\chi(z-\gamma)),
\eeq
so that, upon replacing $z$ by $z-\gamma$,
\[
  F(M(\chi(z)))=   F(\chi(z-\gamma))=\chi(z-\gamma)\chi(z-2\gamma) (1-\om \chi(z-\gamma) -\chi(z-2\gamma)).
\]
On the other hand, by~\eqref{F-prop-v=1} we should have
\[
  F(M(\chi(z))) =  \chi(z)\chi(z-\gamma)(1-\chi(z)-\omega \chi(z-\gamma)).
\]
Comparing the last two identities shows that the change of variables $\chi$ should satisfy either $\chi(z-2\gamma)=\chi(z)$ or
\[
  \chi(z)+\omega \chi(z-\gamma) + \chi(z-2\gamma)=1,
\]
or equivalently
\beq\label{diff}
\chi(z+2\gamma)+\omega \chi(z+\gamma) + \chi(z)=1.
\eeq
We will proceed with the second choice (the first one did not appear to lead to a solution to the original equation). Combining~\eqref{Fchi} and~\eqref{diff} yields}
\beq\label{F-Lambda}
F(\chi(z))= \chi(z-\gamma) \chi(z)\chi(z+\gamma).
\eeq

\begin{Remark}\label{remark:v_not_1}
  In the general case where $v$ is not necessarily equal to $1$, we prove in Appendix~\ref{app:complex} (Proposition~\ref{prop:Xz_characterisation}) that, at an analytic level, the connection~\eqref{ansatz} between $\Mnn$ and a meromorphic function $\chi$ indeed holds true, but that the relation~\eqref{diff} should be replaced by
  \[
 \Xz(\gamma+z)+\omega\Xz(z)+\Xz(z-\gamma)=1+\frac{t(v-1)}{\Xz(z)}   .
\]
It would be extremely interesting to solve this equation, as we solve the $v=1$ case below.
   \end{Remark}

Let us return to the case $v=1$. Our first objective here is to find a function $\chi(z)$ that not only satisfies the difference equation~\eqref{diff}, but has also interesting properties as a series in $z$, so that~\eqref{F-Lambda} can be used to characterise $F(x)$.

One obvious solution of the difference equation~\eqref{diff} is the constant function
\[
  \chi_0(z):= \frac 1 {2+\omega}= \frac1{4\sin^2\!\alpha}.
\]
Hence we can focus on the homogeneous equation
\[
  \psi(z+2\gamma)+\omega \psi(z+\gamma) + \psi(z)=0,
\]
which can be written as
\[
  \left( \psi(z+2\gamma)- e^{2i\alpha}  \psi(z+\gamma)   \right)
  - e^{-2i\alpha} \left( \psi(z+\gamma)- e^{2i\alpha}  \psi(z)   \right)=0,
\]
or as
\[
  \left( \psi(z+2\gamma)- e^{-2i\alpha}  \psi(z+\gamma)   \right)
  - e^{2i\alpha} \left( \psi(z+\gamma)- e^{-2i\alpha}  \psi(z)   \right)=0.
\]
These two forms allow us to reduce the order to $1$, and to consider the equations
\beq\label{order1}
\psi_+(z+\gamma)= e^{2i\alpha}  \psi_+(z)   \qquad \text{and}\qquad  \psi_-(z+\gamma)= e^{-2i\alpha}  \psi_-(z) .
\eeq
Solutions to such equations are known in terms of Jacobi's theta functions. More precisely, let us introduce the following function, closely related to Jacobi's  function $\vartheta_1(z,q^{1/2})$ (see~\cite[Ch.~16]{AS}):
\begin{align}
  \theta(z)&:= \sum_{n\ge 0} (-1)^n q^{n(n+1)/2} \sin ((2n+1)z) \label{theta-def}
  \\
           &= \sin z \prod_{n\ge 1} (1-q^n) \left(1-q^n e^{-2iz}\right) \left(1-q^n e^{2iz}\right).  \label{theta-def-prod}
\end{align}
So far we consider $\theta(z)$, classically, as an analytic function of two complex variables $q$ and $z$, with $|q|<1$.   Then $\theta(-z)=-\theta(z)$ and, if we write $q=e^{2i\gamma}$ (with $\Im \gamma >0$), then it follows from~\eqref{theta-def-prod} that
\[
  \theta(z+\gamma)= - e^{-2iz-i\gamma} \theta(z).
\]
 These two identities imply that the series
\[
  \psi(z):= \frac{\theta(z+\alpha)}{\theta(z)}
\]
satisfies the second equation in~\eqref{order1}, while
\[
  \psi(-z)=\frac{\theta(z-\alpha)}{\theta(z)}
\]
satisfies the first one. We will thus consider solutions $\chi(z)$ of the second order equation~\eqref{diff} of the form
\[
  \chi(z)= c_+ \psi(z)+ c_-\psi(-z) + \frac1{4\sin^2\!\alpha}.
\]
Note that $\psi(z)$ has a pole of order 1 at $z=0$. However, in order to invert~\eqref{F-Lambda}, we would rather handle a function $\chi$ that vanishes at $z=0$. This leads us to choose $c_-=c_+:=c$ (so as to cancel the pole), and then to choose $c$ such that $\chi(0)=0$. After a few calculations based on the definition of $\psi$ in terms of $\theta$, we are led to
\beq\label{c-def}
c= -\theta'(0)/(8\sin^2\!\alpha \, \theta'(\alpha)),
\eeq
so that
\begin{equation}\label{eq:chi_formula}
  \chi(z)=  \frac1{4\sin^2\!\alpha}\left( 1 -\frac {\theta'(0)}{2 \theta'(\alpha)}\left( \psi(z)+\psi(-z)\right)\right).
\end{equation}


\begin{Lemma}\label{lem:chi}
  The above series $\chi$ is a formal power series in $q$ with coefficients that are polynomials in $\omega=-2\cos(2\alpha)$ and $\sin ^2 \! z$. More precisely, it expands as
  \beq\label{chi_expansion}
  \chi(z)= 4 q \sin^2\! z \left( 1 + \sum_{n\ge 1}q^n\Pol_{n}(4\sin^2\! z, \omega)\right)
  \eeq
  where each $\Pol_n$ is a bivariate polynomial with integer coefficients, of degree at most $n$ in its first variable. Hence $\chi$ can also be seen as a \fps\ in $q$ and $u:= 4 q \sin^2\! z$ with coefficients in~$\zs[\omega]$.

  Finally,
  \beq\label{order1a}
  \lim_{z\rightarrow 0} \frac {\chi   (z)}{\sin^2\! z}=
  \frac{1}{24\sin ^2\! \alpha} \left( \frac{\theta^{(3)}(0)}{\theta'(0)}-\frac{\theta^{(3)}(\alpha)}{\theta'(\alpha)}\right)= 4q+\LandauO(q^2).
  \eeq
\end{Lemma}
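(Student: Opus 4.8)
The plan is to prove the three assertions in order, exploiting the product formula~\eqref{theta-def-prod} for $\theta$ together with the substitution $q=e^{2i\gamma}$, and keeping track of everything as a formal power series in $q$. First I would rewrite $\psi(z)=\theta(z+\alpha)/\theta(z)$ using~\eqref{theta-def-prod}. Writing $w=e^{2iz}$, one has
\[
\theta(z)=\frac{w^{1/2}-w^{-1/2}}{2i}\prod_{n\ge1}(1-q^n)(1-q^nw^{-1})(1-q^nw),
\]
so the infinite products in $\psi(z)+\psi(-z)$ and in $\theta'(0)/\theta'(\alpha)$ can be expanded as power series in $q$ whose coefficients are Laurent polynomials in $w$ and in $e^{2i\alpha}$. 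The key symmetry to observe is that $\chi(z)$ is invariant under $z\mapsto -z$ (manifest from~\eqref{eq:chi_formula}) and under $\alpha\mapsto-\alpha$ (since $\psi(z)+\psi(-z)=\bigl(\theta(z+\alpha)+\theta(z-\alpha)\bigr)/\theta(z)$ and $\theta$ is odd). Hence each coefficient of $q^n$ in $\chi$ is a Laurent polynomial in $w$ symmetric under $w\mapsto w^{-1}$, i.e.\ a polynomial in $w+w^{-1}-2=-4\sin^2 z$, and likewise a polynomial in $-2\cos2\alpha=\omega$; this already gives that $\chi\in\qs[[q]]$ with coefficients in $\qs[\sin^2z,\omega]$. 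To upgrade $\qs$ to $\zs$ one notes that $\theta(z)/\sin z$, $\theta'(0)$ and the quotient $\theta'(0)/\theta'(\alpha)$ all have $q$-expansions with \emph{integer} coefficients (the constant terms being $1$, $1$, $1$ respectively, so the quotient is a well-defined element of $\zs[[q]]$ times a series in $\sin^2\alpha$), and that the overall prefactor $1/(4\sin^2\alpha)$ is cancelled: a direct low-order expansion shows the constant ($q^0$) and $q^1$-contributions, confirming $\chi=4q\sin^2z+\LandauO(q^2)$ and that after factoring $4q\sin^2z$ the remaining series lies in $\zs[\omega][[q]]$ with coefficients polynomial in $4\sin^2z$.

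For the degree bound I would argue by induction on $n$, or more cleanly by a direct estimate: in the expansion of $\psi(z)+\psi(-z)$, the coefficient of $q^n$ is a Laurent polynomial in $w$ supported on exponents of absolute value at most $n+1$ (each factor $(1-q^kw^{\pm1})$ contributes a power of $w$ of absolute value at most its $q$-degree, and the leading $w^{\pm1/2}$ prefactors from numerator and denominator cancel up to a bounded shift), so after the symmetrisation it is a polynomial of degree $\le n+1$ in $\sin^2 z$; combined with the explicit factor $4q\sin^2 z=u$ already pulled out, the coefficient of $q^n$ in the bracketed series $1+\sum q^n\Pol_n$ has degree $\le n$ in $4\sin^2z$. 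The bookkeeping of exponents is the only slightly delicate point, but it is entirely mechanical once the product formula is written out; I would phrase it as: the $q$-adic valuation of any monomial $q^aw^b$ appearing in $\theta(z\pm\alpha)\theta(z)^{-1}$ expanded in $\zs[e^{\pm2i\alpha}]((w))[[q]]$ satisfies $|b|\le a+1$, which is immediate from the product.

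Finally, for~\eqref{order1a}, since $\chi(0)=0$ and $\chi$ vanishes to order exactly $2$ in $z$ at $z=0$ (visible from $\chi=4q\sin^2z+\cdots$), the limit $\lim_{z\to0}\chi(z)/\sin^2 z$ equals $\tfrac12\chi''(0)$. Differentiating~\eqref{eq:chi_formula} twice and using $\psi(z)+\psi(-z)=\theta(z+\alpha)/\theta(z)+\theta(z-\alpha)/\theta(z)$ together with $\theta(-z)=-\theta(z)$, $\theta''(0)=0$ (again $\theta$ odd), and a Taylor expansion of $\theta$ about $0$ and about $\alpha$, the second derivative at $0$ collapses to the stated combination of $\theta^{(3)}(0)/\theta'(0)$ and $\theta^{(3)}(\alpha)/\theta'(\alpha)$; the $q$-expansion $4q+\LandauO(q^2)$ then follows by plugging the series~\eqref{theta-def} for $\theta$ into that expression. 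I expect the main obstacle to be purely organisational rather than conceptual: carefully justifying that all the quotients of theta-values occurring (notably $\theta'(0)/\theta'(\alpha)$ and the various third-derivative ratios) are genuine elements of $\zs[\omega][[q]]$ — that is, that $\sin\alpha$ enters only through $\omega=-2\cos2\alpha$ and never as an odd power — which is where the oddness of $\theta$ in $z$ and the resulting even dependence on $\alpha$ must be invoked repeatedly.
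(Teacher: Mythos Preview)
Your overall strategy --- symmetry under $z\mapsto -z$ and $\alpha\mapsto -\alpha$ to get polynomial dependence on $\sin^2 z$ and $\omega$, exponent-tracking for the degree bound, and $\tfrac12\chi''(0)$ for the limit~\eqref{order1a} --- is reasonable, and the last of these is essentially the paper's argument. But the central step, showing that the prefactor $1/(4\sin^2\alpha)$ is absorbed so that the coefficients land in $\zs[\omega]$ rather than $\zs[\omega,(\omega+2)^{-1}]$, is not established. Saying ``a direct low-order expansion shows the constant and $q^1$-contributions'' proves nothing about higher orders, and you rightly flag this as the ``main obstacle'' at the end --- but then leave it open. (Your remark that $\theta'(0)/\theta'(\alpha)$ is in $\zs[[q]]$ times a series in $\sin^2\alpha$ is also off: this ratio carries a factor $1/\cos\alpha$, which only disappears because a matching $\cos\alpha$ is hidden in $\psi(z)+\psi(-z)$; that cancellation too needs to be made explicit.)

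The paper closes this gap with a concrete device, working throughout with the \emph{sum} formula~\eqref{theta-def} rather than the product. Since $\sin((2n+1)z)/\sin z\in\zs[4\sin^2 z]$ has degree $n$ and $\cos((2n+1)\alpha)/\cos\alpha\in\zs[\omega]$, one reads off directly that each of $\theta(z)/\sin z$, $\bigl(\theta(z+\alpha)+\theta(z-\alpha)\bigr)/(2\sin z\cos\alpha)$, $\theta'(\alpha)/\cos\alpha$ and $\theta'(0)$ lies in $1+q\,\zs[4\sin^2 z,\omega][[q]]$, with the $\sin^2 z$-degree never exceeding the $q$-valuation (because the $n$th term has $q$-valuation $n(n+1)/2\ge n$). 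Assembling these four pieces gives $(\omega+2)\chi\in q\,\zs[4\sin^2 z,\omega][[q]]$ with the same degree bound, the factors $\cos\alpha$ cancelling exactly. The division by $\omega+2=4\sin^2\alpha$ and by $4\sin^2 z$ is then justified by observing that this expression for $(\omega+2)\chi$ \emph{vanishes identically} when $\alpha\to 0$ and when $z\to 0$, so each polynomial coefficient of $q^n$ is divisible by $(\omega+2)\cdot 4\sin^2 z$. This vanishing-and-dividing step is the idea missing from your proposal; it is not ``purely organisational'' but the heart of the argument. The sum formula also makes your degree bound immediate, whereas tracking $w$-exponents through inverses of infinite products is noticeably more delicate.
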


\begin{proof}
  Let us first observe  that for all $n$, $\frac{\cos((2n+1)\alpha)}{\cos\alpha}$ is a polynomial in $\omega=-2\cos(2\alpha)$,
  and $\frac{\sin((2n+1)z)}{\sin z}$ is a polynomial in $4\sin^{2}\!z$ of degree $n$, both with integer coefficients.

  We start by showing that $(\omega+2)\chi$ is a formal power series in $\mathbb{Z}[4\sin^{2}\!z,\omega][[q]]$. 
  From the definition~\eqref{theta-def}, we have
  \begin{align}
    \frac{\theta(z)}{\sin z}&=\sum_{n\ge 0} (-1)^n q^{n(n+1)/2} \,\frac{\sin ((2n+1)z)}{\sin z}&&\in 1+q\mathbb{Z}[4\sin^2\! z][[q]], \nonumber\\
    \frac{\theta(z+\alpha)+\theta(z-\alpha)}{2\sin z\cos \alpha}&=\sum_{n\ge 0} (-1)^n q^{n(n+1)/2} \,\frac{\sin ((2n+1)z)\cos((2n+1)\alpha)}{\sin z \cos \alpha}&&\in 1+q\mathbb{Z}[4\sin^2\! z,\omega][[q]], \nonumber\\
    \frac{\theta'(\alpha)}{\cos \alpha}&=\sum_{n\ge 0} (-1)^n q^{n(n+1)/2} (2n+1)\frac{\cos((2n+1)\alpha)}{\cos \alpha}&&\in 1+q\mathbb{Z}[\omega][[q]],  \label{exp3}\\
    \theta'(0)&=\sum_{n\ge 0} (-1)^n q^{n(n+1)/2}(2n+1)&&\in 1+q\mathbb{Z}[[q]]. \label{exp4}
  \end{align}
  Moreover, in these series the exponent of $\sin^2\!z$ never exceeds the exponent of $q$.  It follows that the series
  \[
    (\omega+2)\chi(z):=1-\frac{\theta'(0)}{2\theta'(\alpha)}\cdot\frac{\theta(z+\alpha)+\theta(z-\alpha)}{\theta(z)}
    =
    1 - \theta'(0)\cdot \frac{\cos \alpha}{\theta'(\alpha)} \cdot \frac{\theta(z+\alpha)+\theta(z-\alpha)}{2\sin z\cos \alpha} \cdot \frac{\sin z}{\theta(z)}
  \]
  belongs to  $q\mathbb{Z}[4\sin^2\! z,\omega][[q]]$, with again the same bound on the exponents of $\sin^2\!z$.

  We note additionally that the expression above
  converges to $0$ when either $\alpha\to 0$ (that is $\omega\to -2$) or $z\to 0$, so that the coefficient of $q^n$ in this series must be a multiple of  $(\om+2)4 \sin ^2\!z$ (again with integer coefficients).  That is, $\chi(z)\in 4q\sin^2\! z\mathbb{Z}[4\sin^2\! z,\omega][[q]]$, and again the exponent of $\sin^2\!z$ never exceeds the exponent of~$q$. Simply computing the first term of this series explicitly yields~\eqref{chi_expansion}, while expanding $\chi(z)$ as a series in $z$ using $\theta''(0)=\theta(0)=0$ yields~\eqref{order1a}.
\end{proof}

\subsection{Construction of  $\bm {F(x)}$}
\label{sec:ansatz-rigorous}
Guided by the above considerations, we will now construct from scratch  a collection of series $F(x)$ (and $M(x)$) satisfying
Conditions~\eqref{FM-v=1} and~\eqref{F-prop-v=1}. They depend on a variable $q$, which will be  specialised later to a well-chosen series in $t$, so as to satisfy $[x^{-1}]M(x)=t$ (Section~\ref{sec:qt}).  Underlying this construction
are three levels of Laurent series in $q$, whose coefficients involve respectively:
\begin{itemize}
\item  the variable $z$ of the previous subsection,
\item a variable $u$ that will be often instantiated to $4q\sin^2\!z$ as in  Lemma~\ref{lem:chi},
\item the original variable $x$.
\end{itemize}
The series involving $z$ will be denoted with greek letters (as above), those involving $u$ will be denoted in roman capital  letters (not italicised) and those involving $x$ in the usual font.
In the first part of this subsection, we define various \fps\ in $q$, with coefficients in $\GK(u)$, that are formal counterparts of the series of the previous subsection, in the sense that they specialise to these series when $u=4q\sin^2\!z$. Then in Proposition~\ref{prop:F-construct}, we perform the final step that takes us to series in $q$ with coefficients in $\GK(x)$.

\smallskip
\noindent{\bf Analogue of $\bm{e^{2iz}}$.}   First, we introduce a series of $\qs(u)[[q]]$  that corresponds to $e^{2iz}$ when $u=4q\sin^2\!z$:
\[
  \Erm( u)=1-\frac u {2q} \left({1-\sqrt{1-4q/u}}\right)
  = - \sum_{n\ge 1} \frac 1 {n+1}\binom{2n}{n} \frac{q^n}{u^n} .
\]
The notation $\Erm(u)$ should be reminiscent of the exponential $e^{2iz}$.
Note that
\beq\label{algE}
u=q\left(2-\Erm( u)-\frac 1{\Erm( u)}\right).
\eeq
Consequently, any symmetric polynomial in $\Erm(u)$ and $1/\Erm(u)$  (say, with coefficients in $\qs$ and degree $d$), is a polynomial in $u/q$ of degree $d$.

\smallskip
\noindent{\bf Analogue of the shift.}    The  shift transformation $z\mapsto z-\gamma$ of the previous subsection gives $e^{2iz} \mapsto q^{-1}e^{2iz}$ (since we had taken $q=e^{2i\gamma}$). It should thus correspond to $\Erm (u)\mapsto q^{-1} \Erm(u)$, hence, by the above identity, to $u\mapsto \Srm(u)$ with 
\begin{align}
  \Srm (u)&:=2q -  \Erm( u)- \frac {q^2}{\Erm( u)}  \label{SE}\\
          &=\frac{(1+u)^2}u q + \LandauO(q^2).\nonumber
\end{align}

We now examine the effect of the shift $u\mapsto \Srm(u)$ in certain series involving $\Erm(u)$.

\begin{Lemma}\label{lem:shift}
  Given a series $\Grm(u)\in \GK[[u,q]]$,
  the substitution $u\mapsto \Srm(u)$ in $\Grm(u)$ is well defined, as a series of $\GK(u)[[q]]$.

  Let $\Hrm(v)$ be a series of $\GK[v+1/v][[q]]$, say $\Hrm(v)=\sum_{m\ge 0} \Hrm_m(v+1/v) q^m$ for some polynomials~$\Hrm_m$, and assume that
  \[
    \forall m\ge 0,\    \deg \Hrm_m \le m  \qquad \text{and} \qquad m-\deg \Hrm_m\rightarrow \infty.
  \]
  Then  $\Grm(u):=\Hrm(\Erm( u))$ is a well-defined series in $\GK[u][[q]]$. Moreover, $\Grm(\Srm(u))=\Hrm(q^{-1}\Erm( u))$.
\end{Lemma}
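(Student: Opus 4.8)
The plan is to prove each of the three assertions of Lemma~\ref{lem:shift} in turn, the key point throughout being a degree bookkeeping argument in $q$. First I would handle the well-definedness of $\Grm(u)\mapsto \Grm(\Srm(u))$. Write $\Grm(u)=\sum_{k\ge 0}\Grm_k(u)\,q^k$ with $\Grm_k\in\GK[u]$. Since $\Srm(u)=\frac{(1+u)^2}{u}q+\LandauO(q^2)$ has no constant term in $q$, each power $\Srm(u)^j$ is a series in $q$ whose coefficient of $q^n$ is a Laurent polynomial in $u$ (indeed $\Srm(u)^j\in q^j u^{-j}\GK[u][[q]]$, using the explicit form~\eqref{SE}), and for fixed $n$ only the terms with $j\le n$ contribute. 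Hence substituting $u\mapsto\Srm(u)$ into the polynomial $\Grm_k(u)$ produces a series in $\GK(u)[[q]]$ that starts at order $\ge k$ in $q$ (because $\Srm(u)$ itself is $O(q)$, a nonzero polynomial $\Grm_k$ of degree $d_k$ gives something of order exactly $q^{\mathrm{ord}}$ with $\mathrm{ord}\le d_k$ but $\ge$ the lowest nonvanishing term — more simply, $\Grm_k(\Srm(u))-\Grm_k(0)$ is $O(q)$, and $\Grm_k(0)$ is just a constant). The sum over $k$ then converges $q$-adically, so $\Grm(\Srm(u))\in\GK(u)[[q]]$.

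Next, for the second part, I would show $\Grm(u):=\Hrm(\Erm(u))$ lies in $\GK[u][[q]]$, i.e.\ has \emph{no} poles at $u=0$. Expanding, $\Grm(u)=\sum_{m\ge 0}\Hrm_m\bigl(\Erm(u)+\Erm(u)^{-1}\bigr)q^m$. The remark following~\eqref{algE} is the crux: any symmetric polynomial of degree $d$ in $\Erm(u)$ and $1/\Erm(u)$ equals a polynomial of degree $d$ in $u/q$, by~\eqref{algE}. So $\Hrm_m\bigl(\Erm(u)+\Erm(u)^{-1}\bigr)$, with $\deg\Hrm_m=:d_m\le m$, equals a polynomial in $u/q$ of degree $d_m$, hence a Laurent polynomial in $q$ whose lowest power of $q$ is $-d_m$ and whose coefficients are polynomials in $u$ of degree $\le d_m$. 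Multiplying by $q^m$ gives a genuine power series in $q$ starting at order $m-d_m\ge 0$, with polynomial coefficients in $u$. Since $m-d_m\to\infty$, the sum over $m$ converges $q$-adically to an element of $\GK[u][[q]]$. This is exactly where the two hypotheses on the $\Hrm_m$ are used: $\deg\Hrm_m\le m$ ensures no net negative power of $q$ survives, and $m-\deg\Hrm_m\to\infty$ ensures $q$-adic convergence.

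Finally, for the shift identity $\Grm(\Srm(u))=\Hrm\bigl(q^{-1}\Erm(u)\bigr)$, I would argue as follows. From~\eqref{SE}, $\Srm(u)=2q-\Erm(u)-q^2/\Erm(u) = q\bigl(2-q^{-1}\Erm(u)-q\,\Erm(u)^{-1}\bigr)$. Comparing with the functional equation~\eqref{algE}, which says $u=q\bigl(2-\Erm(u)-\Erm(u)^{-1}\bigr)$, we see that $\Srm(u)$ satisfies the \emph{same} relation with $\Erm(u)$ replaced by $q^{-1}\Erm(u)$: namely $\Srm(u)=q\bigl(2-(q^{-1}\Erm(u))-(q^{-1}\Erm(u))^{-1}\bigr)$. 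Since $\Erm$ is characterised, as a series of the appropriate ring, by the algebraic relation~\eqref{algE} together with its expansion (the branch choice), it follows that $\Erm(\Srm(u))=q^{-1}\Erm(u)$, provided $q^{-1}\Erm(u)$ lies in the correct ring of series so that $\Erm$ evaluated there makes sense — this is where I expect the only real subtlety: $\Erm(u)=-\sum_{n\ge 1}\frac1{n+1}\binom{2n}{n}q^n u^{-n}$, so $q^{-1}\Erm(u)=-\sum_{n\ge 1}\frac1{n+1}\binom{2n}{n}q^{n-1}u^{-n}$ still lies in $q^{-1}u^{-1}\GK[[q/u]]\cdot$(something), and one must check $\Erm(\Srm(u))$ is computed in a ring where this identification is legitimate. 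Granting this, $\Grm(\Srm(u))=\Hrm\bigl(\Erm(\Srm(u))\bigr)=\Hrm\bigl(q^{-1}\Erm(u)\bigr)$, which is the claim; one should also note the right-hand side lies in $\GK(u)[[q]]$ by the first part applied to $\Srm$, consistent with the earlier assertion. The main obstacle is thus getting the ring-of-definition bookkeeping exactly right so that the substitution $\Erm(\Srm(u))=q^{-1}\Erm(u)$ is rigorously justified rather than merely formally plausible; everything else is routine $q$-adic convergence and the degree count enabled by~\eqref{algE}.
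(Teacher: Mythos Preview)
Your first two parts are correct and essentially coincide with the paper's argument: the substitution $u\mapsto\Srm(u)$ is well defined because $\Srm(u)\in q\GK(u)[[q]]$, and the identity $\Erm(u)+\Erm(u)^{-1}=2-u/q$ turns $\Hrm_m(\Erm+1/\Erm)$ into a polynomial of degree $\deg\Hrm_m$ in $u/q$, so the degree hypotheses give $\Grm(u)\in\GK[u][[q]]$.

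The third part has a genuine gap, exactly at the spot you flagged as ``the only real subtlety''. Your claimed identity $\Erm(\Srm(u))=q^{-1}\Erm(u)$ is \emph{false}. Both $q^{-1}\Erm(u)$ and $q\,\Erm(u)^{-1}$ satisfy the algebraic relation~\eqref{algE} with $u$ replaced by $\Srm(u)$ (indeed $q X^2-(\Erm(u)+q^2/\Erm(u))X+q$ factors as $q(X-q^{-1}\Erm(u))(X-q\Erm(u)^{-1})$), but the branch selected by the formal substitution is the \emph{other} one: the constant term in $q$ of $\Erm(\Srm(u))$ equals $-\sum_{n\ge1}C_n\bigl(u/(1+u)^2\bigr)^n=-u$, which matches $q\Erm(u)^{-1}=-u+\LandauO(q)$ and not $q^{-1}\Erm(u)=-1/u+\LandauO(q)$. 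In particular $q^{-1}\Erm(u)$ does \emph{not} live in $\GK[[u,q]]$, whereas $\Erm(\Srm(u))$ does. Your argument can be repaired by observing that $\Hrm(v)$ only depends on $v+1/v$, so $\Hrm(q\Erm(u)^{-1})=\Hrm(q^{-1}\Erm(u))$, and the final statement follows.

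The paper sidesteps this branch issue entirely. It writes $\Grm(u)=\sum_{m\ge0}q^m\Hrm_m(2-u/q)$ directly (using $\Erm+1/\Erm=2-u/q$), substitutes $u\mapsto\Srm(u)$, and then computes $2-\Srm(u)/q=\Erm(u)/q+q/\Erm(u)$ from~\eqref{SE}; since this equals $(q^{-1}\Erm(u))+(q^{-1}\Erm(u))^{-1}$, one reads off $\Grm(\Srm(u))=\Hrm(q^{-1}\Erm(u))$ without ever evaluating $\Erm$ at $\Srm(u)$.
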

\begin{proof}  
  The first point follows from the fact that $\Srm(u)$ is a series in $q$ (with coefficients in $\qs(u)$) that has no constant term.

  Now let $\Hrm(v)$ satisfy the assumptions of the lemma. As noticed above, it follows from~\eqref{algE} that $\Hrm_m( \Erm+1/\Erm)=\Hrm_m(2-u/q)$    is a polynomial in $u/q$ of degree $\deg \Hrm_m$. The assumptions on the degree of~$\Hrm_m$ then imply that $q^m \Hrm_m( \Erm+1/\Erm)$ is a polynomial in $q$ and $u$, and that its valuation in $q$ tends to infinity as $m$ increases. Hence
    \[
      \Grm(u)= \sum_{m\ge 0} q^m \Hrm_m(2- u/q)
    \]
    is well defined in $\GK[u][[q]]$.
  In particular, $\Grm(\Srm(u))$ is well defined by the first statement of the lemma.  We have
  \[
    \Grm(\Srm(u))= \sum_{m\ge 0} q^m \Hrm_m(2- \Srm(u)/q) =\sum_{m\ge 0} q^m \Hrm_m(\Erm(u) /q+ q/\Erm(u))
  \]
  by~\eqref{SE}, and this proves the final statement.
\end{proof}

\smallskip
\noindent{\bf Analogue of the series $\bm \theta$.} We now introduce  a series $\Trm(u)$ that is  the counterpart of $\theta(z)$ (up to a factor $\sin z$):
\[
  \Trm(u)=\sum_{n\ge 0}(-1)^n q^{n(n+1)/2}\, \frac{\Erm( u)^{n+1}-\Erm( u)^{-n}} {\Erm( u)-1} .
\]
Note that
\[
  \frac{\Erm( u)^{n+1}-\Erm( u)^{-n}} {\Erm( u)-1}= \frac 1 {\Erm(u)^n} + \cdots + \Erm(u)^n
\]
is a symmetric polynomial of degree $n$ in $\Erm(u)$ and $1/\Erm(u)$, hence a polynomial in $u/q$ of degree~$n$.  By Lemma~\ref{lem:shift},  $\Trm(u)$ is a series of $\qs[u][[q]]$.
It starts as follows:
\[
  \Trm(u)=1+u +(u^2-3) q+ \LandauO(q^2).
\]
By construction,
  \beq\label{T-theta}
  \sin z \, \Trm(4q\sin^2\!z)= \theta(z),
  \eeq
where $\theta(z)$ is defined by~\eqref{theta-def}.
We also define the following counterparts of $\theta(z+\alpha)+\theta(z-\alpha)$ and $\theta(z+\alpha)-\theta(z-\alpha)$:
\[
  \Trm^{+}(u):=2\sum_{n\ge 0}(-1)^n q^{n(n+1)/2}\cos((2n+1)\alpha)\,
  \frac{\Erm( u)^{n+1}-\Erm( u)^{-n}} {\Erm( u)-1},
\]
\[
  \Trm^{-}(u):=-2 \sum_{n\ge 0}(-1)^n q^{n(n+1)/2}\sin((2n+1)\alpha)
  \frac{\Erm( u)^{n+1}+\Erm( u)^{-n}}{\Erm( u)+1}.
\]
Lemma~\ref{lem:shift} applies again, and both series (in $q$) have polynomial coefficients in $u$. More precisely, given that  $\omega=-2\cos(2\alpha)=2(1-2 \cos^2\! \alpha)$, the former series has coefficients  in $\cos\alpha \qs[\cos^2\alpha, u]=\cos\alpha \qs[\omega, u]$
and the latter one in $\sin\alpha \qs[\cos^2\alpha, u]=\sin\alpha \qs[\omega, u]$. The first terms read:
  \[
    \frac{ \Trm^+(u)}{\cos \alpha}=2  (1-u(\om+1))+ \LandauO(q), \qquad
    \frac{\Trm^{-}(u)}{\sin \alpha }= 2(-1+u(\om-1)) + \LandauO(q).
  \]
  By construction,
  \beq\label{Tpm-theta}
  \sin z\  \Trm^+(4q\sin^2\!z)=\theta(z+\alpha)+\theta(z-\alpha) \quad \text{and } \quad
  -\cos z\ \Trm^-(4q\sin^2\!z)=\theta(z+\alpha)-\theta(z-\alpha)  .
  \eeq 
Lemma~\ref{lem:shift} also allows us to determine the values of the three $\Trm$-series at $\Srm(u)$.

\begin{Lemma}\label{lem:shiftT}
  We have the following identities:
  \begin{align*}
    (q-\Erm)  \Trm(\Srm(u))&={\Erm (\Erm-1)} \Trm(u), \\
    (q-\Erm) \Trm^+(\Srm(u))&=   \cos(2\alpha)     \Erm (\Erm-1)\Trm^+(u)+  \sin (2\alpha)\Erm(\Erm+1)\Trm^-(u),     
    \\
    (q+\Erm) \Trm^-(\Srm(u))&=   \sin(2\alpha)     \Erm (\Erm-1)\Trm^+(u)-  \cos(2\alpha)\Erm(\Erm+1)\Trm^-(u).
  \end{align*}
\end{Lemma}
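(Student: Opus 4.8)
The plan is to verify each of the three claimed identities by lifting the corresponding difference equations from the $\theta$-world to the $\Trm$-world, using the translation dictionary already established, namely \eqref{T-theta}, \eqref{Tpm-theta} and the fact (Lemma~\ref{lem:shift}) that these identities are identities between \fps\ in $q$ with \emph{polynomial} coefficients in $u$, so that it suffices to check them after the substitution $u\mapsto 4q\sin^2\! z$. I would first record the effect of the shift $u\mapsto\Srm(u)$ under this substitution: by construction $\Srm(4q\sin^2\!z)=4q\sin^2(z-\gamma)$ (this is exactly how $\Srm$ was designed, via $\Erm(u)\mapsto q^{-1}\Erm(u)$ corresponding to $e^{2iz}\mapsto q^{-1}e^{2iz}=e^{2i(z-\gamma)}$). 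Granting this, the three claimed identities, once multiplied through by the appropriate $\sin z$ or $\cos z$ factors, become assertions about $\theta(z-\gamma)$, $\theta(z-\gamma\pm\alpha)$ in terms of $\theta(z),\theta(z\pm\alpha)$.

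Concretely, for the first identity I would start from the quasi-periodicity relation $\theta(z+\gamma)=-e^{-2iz-i\gamma}\theta(z)$ recalled in Section~\ref{sec:ansatz}, rewrite it as $\theta(z-\gamma)=-e^{2iz-i\gamma}\theta(z)$ (replacing $z$ by $z-\gamma$ and using $q=e^{2i\gamma}$), and then translate every factor: $\sin(z-\gamma)\,\Trm(\Srm(u))=\theta(z-\gamma)$ on the left, $\theta(z)=\sin z\,\Trm(u)$ on the right, and the prefactor $-e^{2iz-i\gamma}$ together with $\sin(z-\gamma)$ must be re-expressed as a rational function of $\Erm(u)$ and $q$ (note $\sin z=\tfrac{1}{2i}(e^{iz}-e^{-iz})$, so $4q\sin^2\!z=q(2-\Erm-\Erm^{-1})=u$ recovers \eqref{algE}, and $\sin(z-\gamma)$ similarly becomes algebraic in $\Erm,q$). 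Matching these should produce exactly the factor $\Erm(\Erm-1)/(q-\Erm)$, giving the first line. For the second and third identities I would use the obvious trigonometric expansions $\theta(z-\gamma+\alpha)+\theta(z-\gamma-\alpha)$ and its companion, expand $\theta(z-\gamma\pm\alpha)$ via the shift relation applied at argument $z\pm\alpha$, i.e.\ $\theta(z\pm\alpha-\gamma)=-e^{2i(z\pm\alpha)-i\gamma}\theta(z\pm\alpha)$, and then re-group the resulting combination of $e^{\pm 2i\alpha}\theta(z\pm\alpha)$ into the symmetric and antisymmetric parts $\Trm^\pm$; the coefficients $\cos(2\alpha)$ and $\sin(2\alpha)$ appear precisely because $e^{2i\alpha}+e^{-2i\alpha}=2\cos(2\alpha)$ and $e^{2i\alpha}-e^{-2i\alpha}=2i\sin(2\alpha)$, and the asymmetry between $(q-\Erm)$ and $(q+\Erm)$ in the two lines comes from the $\Erm(\Erm-1)$ versus $\Erm(\Erm+1)$ denominators built into the definitions of $\Trm^+$ and $\Trm^-$ (which carry $1/(\Erm-1)$ and $1/(\Erm+1)$ respectively). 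Since all quantities in play are \fps\ in $q$ whose $q^m$-coefficient is a polynomial of degree $\le m$ in $u$ (by Lemma~\ref{lem:shift}), an identity valid after the analytic substitution $u=4q\sin^2\!z$ for all $z$ forces the formal identity; this is the standard principle already invoked in Lemma~\ref{lem:chi}.

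Alternatively, and perhaps more cleanly, I would avoid complex-analytic arguments altogether and work directly with the series definitions: each of $\Trm,\Trm^+,\Trm^-$ is a sum over $n\ge 0$ of $(-1)^nq^{n(n+1)/2}$ times a Laurent polynomial in $\Erm(u)$ symmetric (resp.\ antisymmetric) under $\Erm\mapsto 1/\Erm$. Applying $u\mapsto\Srm(u)$ replaces $\Erm$ by $q^{-1}\Erm$ (this is the content of the identity $\Grm(\Srm(u))=\Hrm(q^{-1}\Erm)$ in Lemma~\ref{lem:shift}), so $\Trm(\Srm(u))$ becomes $\sum_n (-1)^n q^{n(n+1)/2}\frac{(q^{-1}\Erm)^{n+1}-(q^{-1}\Erm)^{-n}}{q^{-1}\Erm-1}$. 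I would then multiply numerator and denominator by suitable powers of $q$ and $\Erm$, telescope using the shift $n(n+1)/2\to(n-1)n/2$ or $(n+1)(n+2)/2$ in the summation index, and collect terms; the prefactor $(q-\Erm)$ is exactly what clears the denominator $q^{-1}\Erm-1$, and the surviving sum is $\Erm(\Erm-1)\Trm(u)$ after re-indexing. The same bookkeeping, carried out for $\Trm^\pm$ with the extra weights $\cos((2n+1)\alpha)$ and $\sin((2n+1)\alpha)$, and using the product-to-sum identities $\cos((2n+1)\alpha)=\cos(2\alpha)\cos((2n{-}1)\alpha)-\sin(2\alpha)\sin((2n{-}1)\alpha)$ type relations to shift the angle index in tandem with the $\Erm$-exponent, produces the mixing with coefficients $\cos(2\alpha),\sin(2\alpha)$. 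The main obstacle I anticipate is precisely this last re-indexing: one must simultaneously shift the Gaussian exponent $n(n+1)/2$, the power of $\Erm$, and the angular argument $(2n+1)\alpha$, and keep careful track of the boundary term at $n=0$ (where $\frac{\Erm^{n+1}-\Erm^{-n}}{\Erm-1}=\Erm$, etc.) — getting the edge contributions and the sign $(-1)^n$ to line up correctly is where errors creep in, and it is worth doing the $n=0,1$ terms by hand against the stated first-order expansions ($\Trm(u)=1+u+(u^2-3)q+\LandauO(q^2)$, and the displayed leading terms of $\Trm^\pm/\!\cos\alpha$, $\Trm^\pm/\!\sin\alpha$) as a sanity check before presenting the general telescoping argument.
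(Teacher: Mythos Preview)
Your second approach---substituting $\Erm\mapsto q^{-1}\Erm$ via Lemma~\ref{lem:shift} directly into the defining sums for $\Trm,\Trm^+,\Trm^-$ and then re-indexing---is exactly what the paper does (its entire proof reads: ``Use Lemma~\ref{lem:shift} and the expressions of $\Trm$, $\Trm^+$ and $\Trm^-$ in terms of $\Erm$''). Your first approach, going through the $\theta$-world via $u=4q\sin^2\!z$ and the quasi-periodicity $\theta(z-\gamma)=-e^{2iz-i\gamma}\theta(z)$, is a genuine alternative that also works; it trades the combinatorial re-indexing for a one-line analytic identity but then needs the extra justification (which you correctly supply) that equality after the substitution $u=4q\sin^2\!z$ forces equality of the underlying \fps\ in $\GK[u][[q]]$, since the degree bounds make the map $F(u)\mapsto F(4q\sin^2\!z)$ injective. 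Either route is fine; the paper's direct computation is shorter, while your $\theta$-route makes the conceptual origin of the factors $\cos(2\alpha),\sin(2\alpha)$ and the asymmetry $(q-\Erm)$ versus $(q+\Erm)$ more transparent.
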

\begin{proof}
  Use Lemma~\ref{lem:shift} and the expressions of $\Trm$, $\Trm^+$ and $\Trm^-$ in terms of $\Erm$.
\end{proof}

\smallskip
\noindent {\bf Analogue of the series $\bm \chi$.} We  now introduce the following series in $q$:
\beq\label{Xdef}
\Xrm(u):=c\frac{\Trm^{+}(u)}{\Trm(u)}+ \frac 1{4\sin^2\!\al}
=\frac{1}{4\sin^{2}\alpha}\left(1-\frac{\Trm(0)}{\Trm^{+}(0)}\cdot \frac{\Trm^{+}(u)}{\Trm(u)}\right),
\eeq
where $c$ is given by~\eqref{c-def}.
From the second expression, this series vanishes at $u=0$.
The first term reads
\[
  \Xrm(u)=\frac u {1+u} + u\LandauO(q),
\]
Let us prove that this series has coefficients in $\qs[\omega, u, 1/(1+u)]\subset \qs[\omega][[u]]$. First, it follows from the above properties of $\Trm$ and $\Trm^+$,
that  $  4\sin^2\!\alpha \Xrm(u)$ has coefficients in $\qs[\om, u, 1/(1+u)]$. Then, if we specialise at $\al=0$, that is, $\om=-2$, the definition of this series, we find that it vanishes for this value of $\alpha$ (because $\Trm^+=2\Trm$ when $\alpha=0$). Given that $\om+2=4\sin^2\! \al$, this proves our claim.  Also, observe that  $\Xrm(4q\sin^2\!z)$ is  $\chi(z)$ (cf. Lemma~\ref{lem:chi}).
We finally denote
\[
  \Xrm^+(u)= c \,\left( \cos 2\alpha\, \frac{ \Trm^{+}(u)}{\Trm(u)} +{\sin 2\alpha}{\sqrt{1-4q/u}}\, \frac{\Trm^-(u)}{\Trm(u)}\right)+  \frac 1{4\sin^2\!\al},
\]
and
\beq\label{X-minus}
\Xrm^-(u)= c \,\left( \cos 2\alpha\, \frac{ \Trm^{+}(u)}{\Trm(u)} -{\sin 2\alpha}{\sqrt{1-4q/u}}\, \frac{\Trm^-(u)}{\Trm(u)}\right)+  \frac 1{4\sin^2\!\al}.
\eeq
Both series have coefficients in $\GK(u)$. More precisely, we can argue as above to prove that they have coefficients in $\qs[\om, u, 1/u, 1/(1+u)]$ (and in fact the coefficients of $\Xrm^-(u)$ have no pole at $u=-1$). We have
\beq\label{Xpm-exp}
\Xrm^+(u)=\frac{1-u+4u\cos^2\!\alpha }{1+u}+ \LandauO(q),
\quad\text{and}\quad
\Xrm^-(u)=\frac{(1+u)^2}u q+\LandauO(q^2).
\eeq
The specializations of these series at $u=4q\sin^2\!z$ are \emm not, well defined, but they should be thought of as counterparts of $\chi(z+\gamma)$ and $\chi(z-\gamma)$, respectively. In particular, observe that
\beq\label{eqfunc-X}
\Xrm^+(u)+\omega \Xrm(u)+ \Xrm^-(u)=1,
\eeq
which is the counterpart of $\chi(z+\gamma)+\om \chi(z)+\chi(z-\gamma)=1$.

\smallskip
Let us now examine the effect of the shift $u\mapsto \Srm(u)$ on the series $\Xrm(u)$ and $(\Xrm^-\Xrm^+)(u)$.
\begin{Lemma}\label{lem:shiftX} 
  We have 
  \[
    \Xrm(\Srm(u))= \Xrm^-(u) \qquad \text{and} \qquad (\Xrm^-\Xrm^+)(\Srm(u))= \Xrm(u)(1-\Xrm(u)-\omega \Xrm^-(u)).
  \]
\end{Lemma}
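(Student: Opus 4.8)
\textbf{Proof plan for Lemma~\ref{lem:shiftX}.}

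The strategy is to reduce both claimed identities to the shift formulas for the $\Trm$-series established in Lemma~\ref{lem:shiftT}, combined with the quadratic relation~\eqref{algE} linking $u$, $q$ and $\Erm(u)$ and the fact that $\Srm(u)$ corresponds to replacing $\Erm$ by $q^{-1}\Erm$. Throughout I write $\Erm$ for $\Erm(u)$ and use freely that $\Erm \in 1 + q\GK(u)[[q]]$ has a well-defined inverse and that $\Srm(u)$ is a series in $q$ with no constant term, so all substitutions $u \mapsto \Srm(u)$ are legitimate by Lemma~\ref{lem:shift}.

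\emph{First identity.} Start from the definition~\eqref{Xdef} of $\Xrm$ and substitute $u \mapsto \Srm(u)$:
\[
  \Xrm(\Srm(u)) = c\,\frac{\Trm^+(\Srm(u))}{\Trm(\Srm(u))} + \frac{1}{4\sin^2\!\alpha}.
\]
Now plug in the expressions from Lemma~\ref{lem:shiftT} for $\Trm^+(\Srm(u))$ and $\Trm(\Srm(u))$. The common factor $(q-\Erm)$ in numerator and denominator cancels, as does the factor $\Erm(\Erm-1)$ appearing in $\Trm(\Srm(u))$ and in the first summand of $\Trm^+(\Srm(u))$; the second summand of $\Trm^+(\Srm(u))$ contributes a term $\sin(2\alpha)\,\frac{\Erm+1}{\Erm-1}\,\frac{\Trm^-(u)}{\Trm(u)}$. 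It remains to identify the resulting expression with the right-hand side~\eqref{X-minus} of $\Xrm^-(u)$. The only nontrivial point is matching $\frac{\Erm+1}{\Erm-1}$ with $\sqrt{1-4q/u}$: from~\eqref{algE} one has $u/q = 2 - \Erm - 1/\Erm = -(\Erm-1)^2/\Erm$, hence $1 - 4q/u = 1 + 4\Erm/(\Erm-1)^2 = (\Erm+1)^2/(\Erm-1)^2$, and taking the square root that is $1$ at $q=0$ (where $\Erm = 1$, so both sides tend to $-1$ after sign bookkeeping — I will need to check signs carefully using the expansions $\Erm = 1 - q/u + O(q^2)$ and the stated leading term $\Xrm^-(u) = (1+u)^2 q/u + O(q^2)$ in~\eqref{Xpm-exp}) gives the desired match. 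This is the step I expect to be the main obstacle: getting the branch of the square root and the signs of $\cos(2\alpha)$, $\sin(2\alpha)$ consistent, since Lemma~\ref{lem:shiftT} mixes a $(q-\Erm)$ denominator for $\Trm^+$ with a $(q+\Erm)$ denominator for $\Trm^-$.

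\emph{Second identity.} Here the cleanest route is to avoid recomputing $\Xrm^+(\Srm(u))$ from scratch. Apply the functional equation~\eqref{eqfunc-X} at the shifted argument $\Srm(u)$ in the form
\[
  \Xrm^+(\Srm(u)) = 1 - \omega\,\Xrm(\Srm(u)) - \Xrm^-(\Srm(u)),
\]
and use the first identity $\Xrm(\Srm(u)) = \Xrm^-(u)$ together with a second application of the first identity's mechanism to identify $\Xrm^-(\Srm(u))$; alternatively, note that $\Srm$ is the formal analogue of the shift $z \mapsto z-\gamma$, so $\Xrm^-(\Srm(u))$ should equal the analogue of $\chi(z-2\gamma)$, which by the difference equation~\eqref{diff} equals $1 - \omega\chi(z-\gamma) - \chi(z)$, i.e.\ $1 - \omega\Xrm^-(u) - \Xrm(u)$ — and this can be verified directly by substituting Lemma~\ref{lem:shiftT} into~\eqref{X-minus} the same way as in the first part. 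Granting these, $\Xrm^+(\Srm(u)) = 1 - \omega\Xrm^-(u) - (1 - \omega\Xrm^-(u) - \Xrm(u)) = \Xrm(u)$. Then
\[
  (\Xrm^-\Xrm^+)(\Srm(u)) = \Xrm^-(\Srm(u))\,\Xrm^+(\Srm(u)) = \bigl(1 - \omega\Xrm^-(u) - \Xrm(u)\bigr)\Xrm(u),
\]
which is exactly $\Xrm(u)\bigl(1 - \Xrm(u) - \omega\Xrm^-(u)\bigr)$, the claimed right-hand side. Finally, since all series involved lie in $\GK(u)[[q]]$ and the identities hold as formal power series in $q$ (each coefficient being a rational-function identity in $u$ that one can check by clearing denominators and invoking~\eqref{algE}), the proof is complete. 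I should also double-check at the end that both sides have matching leading $q$-terms using~\eqref{Xpm-exp}, as a sanity check on the sign conventions.
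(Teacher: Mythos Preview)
Your treatment of the first identity matches the paper's exactly: substitute into~\eqref{Xdef}, apply Lemma~\ref{lem:shiftT}, and resolve the square-root/sign issue via the identity $\sqrt{1-4q/u}=(1+\Erm)/(1-\Erm)$ (which is precisely what the paper invokes).

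For the second identity your route is genuinely different from the paper's. The paper works directly with the expression~\eqref{Xpm} for $u\cdot(\Xrm^-\Xrm^+)(u)$, which lies in $\GK[[u,q]]$ (the square roots cancel in the product), so Lemma~\ref{lem:shift} applies cleanly; it then substitutes the three formulas of Lemma~\ref{lem:shiftT} together with~\eqref{SE} and checks the identity by brute-force algebra. Your approach --- computing $\Xrm^{+}(\Srm(u))$ and $\Xrm^{-}(\Srm(u))$ separately via~\eqref{eqfunc-X} --- is conceptually cleaner and mirrors the analytic picture $\chi(z\pm\gamma)$ perfectly, but it has a technical gap you should not gloss over: Lemma~\ref{lem:shift} only justifies the substitution $u\mapsto\Srm(u)$ for series in $\GK[[u,q]]$, whereas $\Xrm^{\pm}(u)$ individually carry the factor $\sqrt{1-4q/u}$ and have coefficients with poles at $u=0$ (recall $\Xrm^{-}(u)=(1+u)^2q/u+\LandauO(q^2)$). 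To make your argument rigorous you would need to extend the substitution rule --- for instance by declaring that the shift sends $\sqrt{1-4q/u}$ to $(q+\Erm)/(q-\Erm)$, consistently with $\Erm\mapsto q^{-1}\Erm$ --- and then verify that the resulting $\Xrm^{\pm}(\Srm(u))$ are honest elements of $\GK(u)[[q]]$. This is doable but not automatic; the paper's approach buys you well-definedness for free by staying inside $\GK[[u,q]]$ throughout.
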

\begin{proof}
  First, $\Xrm(\Srm(u))$ is well defined since $\Xrm(u)$ is a series in $q$ and $u$ (see Lemma~\ref{lem:shift}). Moreover,
  \[
    \Xrm(\Srm(u))= c \frac{\Trm^+(\Srm(u))}{\Trm(\Srm(u))} + \frac 1 {4\sin ^2\alpha}.
  \]
  We then use the expressions of $ \Trm^+(\Srm(u))$ and $\Trm(\Srm(u))$ given in Lemma~\ref{lem:shiftT}, and conclude using
  \[
    \frac{1+\Erm}{1-\Erm}=   \sqrt{1-4q/u}.
  \]

  We proceed similarly for the series
  \beq\label{Xpm}
  u \cdot (\Xrm^-\Xrm^+)(u)= u   \,\left(c \cos 2\alpha \frac{ \Trm^{+}(u)}{\Trm(u)}+  \frac 1{4\sin^2\!\al}\right)^2
  -c^2\sin^2 (2\alpha)(u-4q) \left(\frac{\Trm^-(u)}{\Trm(u)}\right)^2,
  \eeq
  which is indeed a series in $u$ and $q$.  We  replace $u$ by $\Srm(u)$ in this identity, inject the expressions of $ \Trm(\Srm(u))$, $\Trm^+(\Srm(u))$  and $\Trm^-(\Srm(u))$ given in Lemma~\ref{lem:shiftT}, and finally  the expression~\eqref{SE} of $\Srm(u)$ in terms $\Erm$. This gives the desired identity.
\end{proof}

We can now explicitly describe a series $F(x)$ satisfying~\eqref{FM-v=1} and~\eqref{F-prop-v=1}.

\begin{Proposition}\label{prop:F-construct}
  There exists a unique series $F(x)$ in $\GK[[x,q]]$ such that
  \beq\label{F-prod}
  F(\Xrm(u))=  \Xrm^+(u)\Xrm(u) \Xrm^-(u).
  \eeq
  This series is in fact a multiple of $q$. The unique series $M(x)$ of $q\GK((x))[[q]]$ that solves
  \beq\label{algFM}
  F(x)=x M(x) (1-\omega x-M(x))
  \eeq
  satisfies
  \beq\label{MXu}
  M(\Xrm(u))= \Xrm^-(u).
  \eeq
  It also satisfies, as desired
  \[
    F(M(x))=x M(x) (1-\omega M(x)-x).
  \]
\end{Proposition}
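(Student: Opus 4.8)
\textbf{Proof proposal for Proposition~\ref{prop:F-construct}.}

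The plan is to build $F(x)$ directly from the identity~\eqref{F-prod}, using the properties established for the $\Xrm$-series, and then to check the two quadratic/functional identities by symbolic manipulation of the substitutions $u \mapsto \Srm(u)$.

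\emph{Existence and wellposedness of $F(x)$.} First I would observe from~\eqref{Xpm-exp} and the expansion $\Xrm(u) = u/(1+u) + u\,\LandauO(q)$ that $\Xrm(u)$ is a series in $q$ whose leading term $u/(1+u)$ is invertible for composition: more precisely $\Xrm$ is a formal power series in $q$ with coefficients in $\GK[u,1/(1+u)]$, and the change of variable $u \mapsto \Xrm(u)$ can be inverted to express $u$ as a series in $x := \Xrm(u)$ and $q$ (the $q^0$-term being $u = x/(1-x)$). Substituting this inverse into the right-hand side $\Xrm^+(u)\Xrm(u)\Xrm^-(u)$, which by Lemma~\ref{lem:shiftX}'s preliminaries has coefficients in $\GK(u)$ with poles only at $u=0,-1$, one obtains a well-defined series $F(x) \in \GK[[x,q]]$; tracking the $q$-valuations (using $\Xrm^-(u) = (1+u)^2 q/u + \LandauO(q^2)$) shows $F(x)$ is a multiple of $q$, and in fact that $F(x)/x$ has no pole at $x=0$. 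Uniqueness is immediate since~\eqref{F-prod} determines $F$ on the (formal) curve $x = \Xrm(u)$, which sweeps out a full neighbourhood of $x=0$ in the $(x,q)$-formal sense.

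\emph{The quadratic identity~\eqref{algFM} and the formula~\eqref{MXu}.} From~\eqref{eqfunc-X} we have $\Xrm^+(u) = 1 - \omega \Xrm(u) - \Xrm^-(u)$, so~\eqref{F-prod} rewrites as
\[
  F(\Xrm(u)) = \Xrm(u)\,\Xrm^-(u)\,\bigl(1 - \omega \Xrm(u) - \Xrm^-(u)\bigr).
\]
Comparing this with~\eqref{algFM} written at $x = \Xrm(u)$, namely $F(\Xrm(u)) = \Xrm(u)\,M(\Xrm(u))\,(1-\omega\Xrm(u) - M(\Xrm(u)))$, we see that $M(\Xrm(u)) = \Xrm^-(u)$ is a solution of the quadratic equation in $M(\Xrm(u))$; the other root is $1 - \omega\Xrm(u) - \Xrm^-(u) = \Xrm^+(u)$, which has nonzero constant term in $q$ by~\eqref{Xpm-exp}, whereas $\Xrm^-(u)$ is a multiple of $q$. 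Since $M(x)$ is required to lie in $q\GK((x))[[q]]$, it must be the root that is a multiple of $q$, and therefore $M(\Xrm(u)) = \Xrm^-(u)$. One has to check that the expansion of $\Xrm^-(u)$ in powers of $x=\Xrm(u)$ indeed lands in $q\GK((x))[[q]]$ rather than just $\GK[[x,q]]$: this follows because $\Xrm^-(u)/q$ has a simple pole at $u=0$, corresponding under $u \mapsto x/(1-x) + \LandauO(q)$ to a pole at $x=0$, giving the Laurent behaviour in $x$.

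\emph{The target identity $F(M(x)) = x\,M(x)\,(1-\omega M(x) - x)$.} This is where the shift lemma does the work. Substitute $u \mapsto \Srm(u)$ in~\eqref{F-prod}:
\[
  F\bigl(\Xrm(\Srm(u))\bigr) = \Xrm^+(\Srm(u))\,\Xrm(\Srm(u))\,\Xrm^-(\Srm(u)).
\]
By Lemma~\ref{lem:shiftX}, $\Xrm(\Srm(u)) = \Xrm^-(u)$ and $(\Xrm^-\Xrm^+)(\Srm(u)) = \Xrm(u)\bigl(1 - \Xrm(u) - \omega\Xrm^-(u)\bigr)$, so the right-hand side equals $\Xrm^-(u)\,\Xrm(u)\,\bigl(1 - \Xrm(u) - \omega\Xrm^-(u)\bigr)$. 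On the left-hand side, $F(\Xrm(\Srm(u))) = F(\Xrm^-(u)) = F(M(\Xrm(u)))$ by~\eqref{MXu}. Writing $x = \Xrm(u)$ and $M(x) = \Xrm^-(u)$, this reads precisely
\[
  F(M(x)) = M(x)\,x\,\bigl(1 - x - \omega M(x)\bigr),
\]
which is the asserted identity. The only subtlety to address carefully is that the substitution $u \mapsto \Srm(u)$ into $F(\Xrm(u))$ is legitimate — i.e. that $\Xrm(\Srm(u))$ lies in the domain where $F$ has been defined as a series. Since $\Srm(u)$ is a multiple of $q$ with coefficients in $\GK(u)$ and $\Xrm$ is a series in $q$ and $u$ (Lemma~\ref{lem:shift}), the composite $\Xrm(\Srm(u)) = \Xrm^-(u)$ is a multiple of $q$ with coefficients in $\GK(u)$; substituting such a quantity into the series $F(x) \in q\GK[[x,q]]$ (noting $F$ is a multiple of $q$, hence $F$ of anything that is itself $\LandauO(q)$ is well defined order by order) gives a well-defined element of $\GK(u)[[q]]$, and the identity then holds coefficientwise in $q$.

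\emph{Expected main obstacle.} The genuinely delicate point is bookkeeping the rings: making sure at each step that the substitutions ($u \mapsto \Srm(u)$, and the inversion $u \leftrightarrow x = \Xrm(u)$) stay within rings where the series are defined, and in particular that $M(x)$ ends up in $q\GK((x))[[q]]$ with exactly the right Laurent behaviour at $x=0$. The algebraic verifications of Lemma~\ref{lem:shiftX}'s two identities reduce, via~\eqref{algE} and~\eqref{SE}, to polynomial identities in $\Erm$ and $1/\Erm$ (equivalently in $u/q$), but these I would present as routine once Lemma~\ref{lem:shiftT} is in hand; the conceptual content is entirely in the observation that $u \mapsto \Srm(u)$ is the formal avatar of the shift $z \mapsto z-\gamma$, so that~\eqref{F-prod} together with~\eqref{eqfunc-X} automatically produces~\eqref{F-prop-v=1}.
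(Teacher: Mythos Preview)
Your argument for Steps~2 and~3 (deriving $M(\Xrm(u))=\Xrm^-(u)$ from the quadratic factorisation and then the final identity via $u\mapsto\Srm(u)$) is exactly the paper's proof. The existence step is also essentially the same: your inversion of $x=\Xrm(u)$ order-by-order in $q$ produces the same compositional inverse $U(x)\in x\GK[[x,q]]$ that the paper obtains by Lagrange inversion in $u$ (noting $\Xrm(u)\in u\GK[[u,q]]$ with invertible linear term $\Xrm_1=1+\LandauO(q)$). Where your write-up is weaker is in not explicitly establishing that the product $\Xrm^+(u)\Xrm(u)\Xrm^-(u)$ lies in $\GK[[u,q]]$ before substituting $u\mapsto U(x)$: you only note poles at $u=0,-1$, but to land in $\GK[[x,q]]$ one needs the product to be a genuine power series in $u$. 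The paper handles this cleanly by observing (from~\eqref{Xpm}) that $u\,\Xrm^+(u)\Xrm^-(u)\in\GK[[u,q]]$ and $\Xrm(u)/u\in\GK[[u,q]]$, so their product is regular.

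One small slip: your remark that ``$F(x)/x$ has no pole at $x=0$'' is false. One has $F(0)=\Xrm_1\cdot\bigl[u\,\Xrm^+(u)\Xrm^-(u)\bigr]_{u=0}$, which is the nonzero series computed later in Lemma~\ref{lem:v1_final} (it equals $\widetilde\Rgf$). This does not affect the rest of your argument.
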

\begin{proof}
  As argued below~\eqref{Xdef}, the series $\Xrm(u)$ can be seen as a series in $u$, with constant term~$0$ and coefficients in $\qs[\om][[q]]$. Recall that $\Xrm(4q \sin^2\!z)=\chi(z)$. The coefficient of $u$ in $\Xrm(u)$ is thus
  \beq\label{X1}
  \Xrm_1:=   \lim_{u\rightarrow 0} \frac{\Xrm(u)}u =   \lim_{z\rightarrow 0} \frac{\Xrm(4q\sin^2\!z)}{4q\sin^2\!z}=   \lim_{z\rightarrow 0} \frac {\chi   (z)}{4q\sin^2\! z}=1+ \LandauO(q),
  \eeq
  by the last result of Lemma~\ref{lem:chi}.
  Hence    $\Xrm(u)$ has a compositional inverse $U(x)$. By the Lagrange inversion formula, its coefficients are polynomials in the coefficients $\Xrm_i$ of $\Xrm(u)$, divided by powers of $\Xrm_1$: hence they are \fps\ in $q$. Moreover, 
  \[
    U(x)= \frac{x}{\Xrm_1}  + \LandauO(x^2).
  \]

  Let us now consider the right-hand side of~\eqref{F-prod}, namely $ \Xrm^+(u)\Xrm(u) \Xrm^-(u)$. When we expand the product  $\Xrm^+(u) \Xrm^-(u)$, the square roots disappear, and, as already observed in the proof of Lemma~\ref{lem:shiftX},  $u\Xrm^+(u) \Xrm^-(u)$ is a \fps\ in $u$ and $q$. Moreover, $\Xrm(u)$ is a \fps\ in $u$ and $q$, and a multiple of $u$ as already discussed. Hence the right-hand side of~\eqref{F-prod} is a \fps\ in $q$ and $u$, and the substitution $u\mapsto  U(x)$ in~\eqref{F-prod} defines $F(x)=(\Xrm^-\Xrm\Xrm^+)(U(x))$ as a \fps\ in $x$ and $q$. The fact that~$\Xrm^-(u)$ is a multiple of $q$ (see~\eqref{Xpm-exp}) implies that the same holds for $F(x)$.

  Let us now define $M(x)$ in $q\GK((x))[[q]]$ by~\eqref{algFM}, or equivalently~\eqref{MF}. The coefficient of $q^n$ in this series is a Laurent series in $x$ with valuation at least $-n$. In other words, $M(x)$ lies in $q/x\GK[[x,q/x]]$. Now  $\Xrm(u)= u/(1+u)+u\LandauO(q)$ belongs to $u\GK[[u,q]]$, and $q/\Xrm(u)$ can be seen to be in $q/u\GK[[u,q]]$. Hence  the substitution $M(\Xrm(u))$ is well-defined as a series in $u$ and $q/u$, and by definition of $M(x)$,
  \begin{align*}
    F(\Xrm(u))&= \Xrm(u) M(\Xrm(u)) (1- \omega \Xrm(u) - M(\Xrm(u)))\\
              & = \Xrm(u) \Xrm^+(u) \Xrm^-(u)  &&\text{by definition of } F,\\
              &= \Xrm(u) \Xrm^-(u) (1-\omega \Xrm(u) - \Xrm^-(u)) &&\text{by~\eqref{eqfunc-X}.}
  \end{align*}
  Comparing the first and third lines shows that $M(\Xrm(u))$ is either $\Xrm^-(u)$ or $1-\omega \Xrm(u) - \Xrm^-(u)= \Xrm^+(u)$. Since $M(\Xrm(u))$ is a multiple of $q$, while $\Xrm^+(u)$ is not, we conclude that
  $    M(\Xrm(u))= \Xrm^-(u)$, as stated in~\eqref{MXu}.

  Let us now prove the final statement of the proposition. It suffices to prove it for $x$ of the form $\Xrm(u)$. We have
  \begin{align*}
    F(M(\Xrm(u)))&=  F(\Xrm^-(u))  &&\text{by~\eqref{MXu}},\\
                & = F(\Xrm(\Srm(u))) &&\text{by Lemma~\ref{lem:shiftX}}, \\
                &=\Xrm(\Srm(u))  \left(\Xrm^- \Xrm^+\right)(\Srm    (u))  &&\text{by definition of } F,\\
                &=\Xrm^-(u) \Xrm(u) (1-\Xrm(u)-\omega \Xrm^-(u)) && \text{by Lemma~\ref{lem:shiftX} again},\\
                &=\Xrm(u)M(\Xrm(u)) (1-\Xrm(u)-\omega M(\Xrm(u))), &&\text{by~\eqref{MXu} again},
  \end{align*}
  as desired.
\end{proof}

\subsection{Determination of $\bm{q\equiv q(t)}$. End of the proof of Theorem~\ref{thm:allomega}}
\label{sec:qt}
We are nearly there! Now let $q$ be  \emm any, \fps\ in $t$ with no constant term, having coefficients in $\GK=\qs(\omega)$. Then the series $F(x)$ becomes a series in $t\GK[[x,t]]$ that satisfies all conditions of Proposition~\ref{prop:F} (for $v=1$), except for the last one: we still need $[x^{-1}] M(x)=t$. This will force the choice of the series $q\equiv q(t)$.

\begin{Lemma}\label{ct-H}
  Let $F(x)$ and $M(x)$ be the series in $q$ defined in Proposition~\ref{prop:F-construct}.   Then 
  \[
    [x^{-1}] M(x)= \frac{\cos \al}{64 \sin^3œ\!\al} \left(\frac{\theta''(\al)}{\theta'(\al)} - \frac{\theta(\al)\theta^{(3)}(\al)}{\theta'(\al)^2}\right),
  \]
  where $\theta(z)$ is defined by~\eqref{theta-def}.
\end{Lemma}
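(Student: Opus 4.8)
The plan is to compute $[x^{-1}]M(x)$ by a residue/contour-integral argument, exploiting the Ansatz change of variables $x=\Xrm(u)$, and ultimately $u=4q\sin^2\!z$, $x=\chi(z)$, $M(\chi(z))=\chi(z-\gamma)$, which converts extraction of the coefficient of $x^{-1}$ into a genuine complex contour integral over a period of the $z$-variable. Concretely, writing $M(x)=\sum_{n\ge -N} m_n(q)x^n$ as an element of $q/x\,\GK[[x,q/x]]$, we have $[x^{-1}]M(x)=\frac{1}{2\pi i}\oint M(x)\,\frac{dx}{x}$ along a small circle $|x|=r$, where for fixed small $q>0$ the integrand is analytic. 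Substituting $x=\chi(z)$, with $\chi$ given by~\eqref{eq:chi_formula}, transforms this into $\frac{1}{2\pi i}\oint \chi(z-\gamma)\,\frac{\chi'(z)}{\chi(z)}\,dz$ over the image contour, which (using $\chi(z)=4q\sin^2z+\LandauO(q^2)$ and $q=e^{2i\gamma}$) can be taken to be a horizontal segment of length $\pi$ in the $z$-plane at suitable imaginary part; the poles of $\chi'/\chi$ enclosed are exactly the zeros of $\chi$, i.e. $z\in\pi\zs$, with $\chi(z)\sim 4q z^2$ near $z=0$ contributing a double pole.

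The key computation is therefore the residue of $\chi(z-\gamma)\chi'(z)/\chi(z)$ at $z=0$. Since $\chi$ vanishes to order $2$ at $0$ (it is even and starts like $4q\sin^2z$), we have a simple pole of $\chi'/\chi$ there with residue $2$, but the factor $\chi(z-\gamma)$ is analytic and contributes its Taylor expansion: the residue equals $2\,\chi(-\gamma)$ plus a correction from the subleading term of $\chi'/\chi-2/z$. Writing $\chi(z)=z^2 g(z)$ with $g$ analytic and even, $g(0)=4q+\LandauO(q^2)$, one gets $\chi'/\chi=2/z+g'(z)/g(z)$, so $\mathrm{Res}_{z=0}=2\chi(-\gamma)+0$ because $g'/g$ is odd hence vanishes at $0$ — wait, more carefully, $\mathrm{Res}_{z=0}\bigl(\chi(z-\gamma)(2/z+g'/g)\bigr)=2\chi(-\gamma)+\chi(-\gamma)\cdot[g'/g](0)=2\chi(-\gamma)$ since $g'/g$ is analytic at $0$. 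But this is not yet the answer: one must also account for the fact that the horizontal contour has width exactly one period $\pi$ in $z$ and $\chi$ is \emph{not} $\pi$-periodic; rather it picks up a multiplicative/additive quasi-period from the theta quasi-periodicity~$\theta(z+\gamma)=-e^{-2iz-i\gamma}\theta(z)$. So the two vertical sides of the contour rectangle do not cancel, and the integral is the sum of the enclosed residue(s) and a boundary term coming from this quasi-periodicity. I would evaluate the boundary term using the product formula~\eqref{theta-def-prod} and the relations $\theta(\pm\gamma\mp z)$ type identities.

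After assembling the residue at $z=0$ (giving $2\chi(-\gamma)$, and $\chi(-\gamma)=\chi(\gamma)$ by evenness) with the quasi-periodicity boundary term, the result should be a closed expression in $\theta$ and its derivatives evaluated at $0$ and at $\alpha$; the final identification uses $\chi(z)=\tfrac1{4\sin^2\alpha}\bigl(1-\tfrac{\theta'(0)}{2\theta'(\alpha)}(\psi(z)+\psi(-z))\bigr)$, the definitions $\psi(z)=\theta(z+\alpha)/\theta(z)$, and Taylor-expanding $\theta$ near $0$ using $\theta(0)=\theta''(0)=0$ (so $\theta(z)=\theta'(0)z+\theta^{(3)}(0)z^3/6+\LandauO(z^5)$). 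Expanding everything to the needed order and simplifying — here is where the $\theta''(\alpha)/\theta'(\alpha)$ and $\theta(\alpha)\theta^{(3)}(\alpha)/\theta'(\alpha)^2$ terms emerge, from the interaction of the $\gamma$-shift (equivalently $z\to z+\gamma$, picking up $e^{-2iz}$ factors and a derivative structure around $\alpha$) with the double pole at $0$ — should yield exactly $\frac{\cos\alpha}{64\sin^3\alpha}\bigl(\frac{\theta''(\alpha)}{\theta'(\alpha)}-\frac{\theta(\alpha)\theta^{(3)}(\alpha)}{\theta'(\alpha)^2}\bigr)$.

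\textbf{Main obstacle.} The delicate point is making the contour-integral/residue argument rigorous at the level of \emph{formal} power series in $q$ (or $t$): $M(\chi(z))=\chi(z-\gamma)$ only holds as an identity of formal series in appropriate variables, and $M(x)$, $\chi(z)$, $\chi(z-\gamma)$ live in different completions, so one must justify that the substitution is legitimate coefficient-by-coefficient in $q$ and that "extracting $[x^{-1}]$" corresponds correctly to "summing residues over $z\in\pi\zs$ in a period strip plus the quasi-period boundary term." Equivalently, one can sidestep complex analysis entirely and argue purely formally via the $\Erm,\Srm,\Trm,\Xrm$-series of Section~\ref{sec:ansatz-rigorous}: since $M(\Xrm(u))=\Xrm^-(u)$ and $x=\Xrm(u)$ with $\Xrm(u)=u/(1+u)+u\LandauO(q)$ invertible, one has $[x^{-1}]M(x)=[x^{-1}]\bigl(\Xrm^-(U(x))\bigr)$ which by the residue rule for the change of variable equals $[u^{-1}]\bigl(\Xrm^-(u)\,\Xrm'(u)/\Xrm(u)^2\cdot \Xrm(u)\bigr)=[u^{-1}]\bigl(\Xrm^-(u)\Xrm'(u)/\Xrm(u)\bigr)$, an honest formal-residue computation in $\GK(u)[[q]]$ using the explicit rational-in-$u$ forms of $\Xrm,\Xrm^-$ in terms of $\Trm,\Trm^\pm,\Erm$; then translating back via $\Trm(4q\sin^2z)\sin z=\theta(z)$ etc. (the dictionary~\eqref{T-theta}--\eqref{Tpm-theta}) produces the stated $\theta$-expression. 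I expect the bookkeeping of this formal residue — tracking the $1/u$, $1/(1+u)$, and $\sqrt{1-4q/u}$ structure and its image under the $\theta$-dictionary — to be the real work; the complex-analytic picture above is the guide for what the answer must be.
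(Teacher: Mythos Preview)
Your proposal contains a basic but consequential error in the residue formula: the coefficient of $x^{-1}$ in a Laurent series $M(x)$ is $\frac{1}{2\pi i}\oint M(x)\,dx$, \emph{not} $\frac{1}{2\pi i}\oint M(x)\,\frac{dx}{x}$ (the latter would extract $[x^{0}]M(x)$). This mistake propagates to both of your suggested routes. In the formal version you write $[x^{-1}]M(x)=[u^{-1}]\bigl(\Xrm^-(u)\Xrm'(u)/\Xrm(u)\bigr)$, whereas the correct change of variables $x=\Xrm(u)$ gives simply $[u^{-1}]\bigl(\Xrm^-(u)\Xrm'(u)\bigr)$. In the complex-analytic version the integrand should be $\chi(z-\gamma)\chi'(z)$, not $\chi(z-\gamma)\chi'(z)/\chi(z)$; your entire residue analysis at $z=0$ (the double zero of $\chi$, the $2/z$ pole of $\chi'/\chi$, and the claim that the residue contributes $2\chi(-\gamma)$) is therefore aimed at the wrong function.

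Beyond this, your outline remains at the level of a plan rather than an argument: you do not actually compute either the corrected residue or the quasi-periodicity boundary terms, and you give no indication of how the specific combination $\theta''(\alpha)/\theta'(\alpha)-\theta(\alpha)\theta^{(3)}(\alpha)/\theta'(\alpha)^2$ would emerge. The paper takes a rather different tack: starting from the correct formal residue $[u^{-1}]\bigl(\Xrm^-(u)\Xrm'(u)\bigr)$, it observes that only the $\Trm^-$-part of $\Xrm^-$ contributes (the rest being a $u$-derivative), reducing the problem to $-\sin(2\alpha)\,[u^{-1}]\bigl(\sqrt{1-4q/u}\,\Yrm(u)\Xrm'(u)\bigr)$. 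The key step is then an explicit antiderivative identity (Lemma~\ref{lem:int}): the integrand equals a total $u$-derivative plus the simple term $-c_3/(2u\sqrt{1-4q/u})$, and it is this $c_3$ that carries the desired $\theta$-expression. The antiderivative identity itself is proved by passing to the $z$-variable and recognising both sides as elliptic functions with a single double pole at $0$, whence they must be linearly related; the constants are fixed by matching the polar expansion at $0$.
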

The proof is given in Appendix~\ref{app:xm1}.
Now the right-hand side of the above expression is a series in $q$ with coefficients in $\qs[\om]$ and no constant term, starting with $q+\LandauO(q^2)$. By inversion, there exists a unique \fps\ in $t$, say $q\equiv q(t)$, having coefficients in $\qs[\om]$ and no constant term,  such that the above expression equals $t$. This series starts as follows:
\[
  q=t+\left(6+6 \omega \right) t^{2} +\left(45 \omega^{2}+84 \omega +48\right) t^{3}+\left(378 \omega^{3}+998 \omega^{2}+1076 \omega +436\right) t^{4}+\LandauO(t^5).
\]
The series $F(x)$ and $M (x)$ obtained for this value of $q$ satisfy all conditions of Proposition~\ref{prop:F} (for $v=1$), and hence must coincide with $\Fnn(x)$ and $\Mnn(x)$, respectively.

The next step in the proof of Theorem~\ref{thm:allomega} is to express the \gf\ $\Qgf$ in terms of this series $q(t)$. We  start from the alternative expression~\eqref{Q-alt}, which, since $v=1$, specialises to:
\[
  \Qgf=\frac{t-\Fnn(0)}{(\omega+2)t^2}-1.
\]
The announced expression of $\Qgf$ (with $\widetilde\Rgf=\Fnn(0)$) is then a direct consequence of the following lemma.
\begin{Lemma}\label{lem:v1_final}
  We have
  \[
    \Fnn(0)=\frac{\cos^{2}\!\alpha}{96\sin^{4}\!\alpha}\cdot \frac{\theta(\alpha)^{2}}{\theta'(\alpha)^{2}}
    \left(\frac{\theta^{(3)}(0)}{\theta'(0)}-\frac{\theta^{(3)}(\alpha)}{\theta'(\alpha)}\right).
  \]
\end{Lemma}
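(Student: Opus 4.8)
The plan is to read off $\Fnn(0)=F(0)$ directly from the construction of $F$ in Proposition~\ref{prop:F-construct}. Since $\Xrm(u)=\frac{u}{1+u}+u\,\LandauO(q)$ is $u$ times a unit of $\GK[u,1/(1+u)][[q]]$, composing a power series $F$ with $\Xrm$ and extracting the coefficient of $u^0$ simply returns $F(0)$; hence, by~\eqref{F-prod},
\[
\Fnn(0)=F(0)=[u^0]\bigl(\Xrm^+(u)\,\Xrm(u)\,\Xrm^-(u)\bigr).
\]
I would factor the right-hand side as $\frac{\Xrm(u)}{u}\cdot\bigl(u\,\Xrm^+(u)\Xrm^-(u)\bigr)$, a product of two series of $\GK[u,1/(1+u)][[q]]$ that are regular at $u=0$ (the square roots in $\Xrm^\pm$ cancel in the product, as in the proofs of Lemma~\ref{lem:shiftX} and Proposition~\ref{prop:F-construct}). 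Consequently
\[
\Fnn(0)=\Xrm_1\cdot[u^0]\bigl(u\,\Xrm^+(u)\Xrm^-(u)\bigr),\qquad \Xrm_1:=\lim_{u\to0}\frac{\Xrm(u)}{u}.
\]

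It then remains to evaluate the two ingredients. For $\Xrm_1$, combining~\eqref{X1} with~\eqref{order1a} gives
\[
\Xrm_1=\lim_{z\to0}\frac{\chi(z)}{4q\sin^2\! z}=\frac{1}{96\,q\sin^2\!\alpha}\left(\frac{\theta^{(3)}(0)}{\theta'(0)}-\frac{\theta^{(3)}(\alpha)}{\theta'(\alpha)}\right).
\]
For the second factor I would use the explicit formula~\eqref{Xpm} for $u\,\Xrm^+(u)\Xrm^-(u)$: at $u=0$ the first summand vanishes and the second leaves $4q\,c^2\sin^2(2\alpha)\bigl(\Trm^-(0)/\Trm(0)\bigr)^2$. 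Passing to the limit $z\to0$ in~\eqref{T-theta} and~\eqref{Tpm-theta} yields $\Trm(0)=\theta'(0)$ and $\Trm^-(0)=-2\theta(\alpha)$; plugging in $c=-\theta'(0)/(8\sin^2\!\alpha\,\theta'(\alpha))$ from~\eqref{c-def} and using $\sin^2(2\alpha)=4\sin^2\!\alpha\cos^2\!\alpha$, one gets
\[
[u^0]\bigl(u\,\Xrm^+(u)\Xrm^-(u)\bigr)=\frac{q\cos^2\!\alpha\,\theta(\alpha)^2}{\sin^2\!\alpha\,\theta'(\alpha)^2}.
\]
Multiplying the two expressions, the factor $q$ and one factor $\sin^2\!\alpha$ cancel, leaving exactly the asserted
\[
\Fnn(0)=\frac{\cos^2\!\alpha}{96\sin^4\!\alpha}\cdot\frac{\theta(\alpha)^2}{\theta'(\alpha)^2}\left(\frac{\theta^{(3)}(0)}{\theta'(0)}-\frac{\theta^{(3)}(\alpha)}{\theta'(\alpha)}\right).
\]
As everywhere in this section, the identity is first obtained as an identity of power series in $q$ with coefficients rational in $\omega$, and then specialised at the series $q=q(t)$ of Section~\ref{sec:qt} to give the statement about $\Fnn(0)$.

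The computation is essentially mechanical once it is set up, so the only points requiring care are: (i) justifying the factorisation $\Xrm^+\Xrm\Xrm^-=\frac{\Xrm}{u}\cdot(u\,\Xrm^+\Xrm^-)$ into two series regular at $u=0$, which rests on the cancellation of square roots already used in Lemma~\ref{lem:shiftX}; and (ii) the bookkeeping of the limits $\Trm(0)$, $\Trm^-(0)$ and the subsequent simplification with $c$ and the half-angle identity. For intuition (and as an alternative closer to the Ansatz of Section~\ref{sec:ansatz}) one may argue analytically instead: from $F(\chi(z))=\chi(z-\gamma)\chi(z)\chi(z+\gamma)$ and $\chi(0)=0$ we get $F(0)=\lim_{z\to0}\chi(z-\gamma)\chi(z)\chi(z+\gamma)$, where $\chi(z)=z^2\cdot\frac{1}{24\sin^2\!\alpha}\bigl(\frac{\theta^{(3)}(0)}{\theta'(0)}-\frac{\theta^{(3)}(\alpha)}{\theta'(\alpha)}\bigr)+\LandauO(z^4)$ by~\eqref{order1a}, while $\chi(z\pm\gamma)=\pm\frac{i\cos\alpha\,\theta(\alpha)}{2\sin\alpha\,\theta'(\alpha)}\cdot\frac1z+\LandauO(1)$, the latter deduced from the quasi-periodicity $\theta(z+\gamma)=-e^{-2iz-i\gamma}\theta(z)$ with $q=e^{2i\gamma}$ together with $\theta(0)=\theta''(0)=0$ (so $\theta(\gamma)=0$, $\theta'(\gamma)=-e^{-i\gamma}\theta'(0)$); the powers of $z$ then cancel and the surviving constant is once again the claimed value.
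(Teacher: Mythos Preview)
Your proof is correct and follows essentially the same route as the paper: both write $\Fnn(0)=\left.\frac{\Xrm(u)}{u}\cdot\bigl(u\,\Xrm^+(u)\Xrm^-(u)\bigr)\right|_{u=0}$, evaluate the first factor as $\Xrm_1$ via~\eqref{X1} and~\eqref{order1a}, and evaluate the second via~\eqref{Xpm} together with $\Trm(0)=\theta'(0)$, $\Trm^-(0)=-2\theta(\alpha)$ and the value~\eqref{c-def} of $c$. Your additional analytic heuristic through $\chi(z\pm\gamma)$ is a nice sanity check not present in the paper.
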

\begin{proof}
  We will specialise the definition~\eqref{F-prod} of $F\equiv \Fnn$ at $u=0$.  We have seen that $\Xrm(u)$ is a series of $u\qs[\om][[u,q]]$. Moreover, it follows from~\eqref{X1} and~\eqref{order1a} that the coefficient of $u^1$ in $\Xrm(u)$  is
    \[
      \Xrm_{1}:=[u^{1}]\Xrm(u)=\frac{1}{96q\sin ^2\! \alpha} \left( \frac{\theta^{(3)}(0)}{\theta'(0)}-\frac{\theta^{(3)}(\alpha)}{\theta'(\alpha)}\right).
    \]
         On the other hand, it follows from~\eqref{Xpm} that the product $\Xrm^+(u) \Xrm^-(u)$ has a simple pole at $u=0$, with residue
  \beq\label{residue}
  4q c^2\sin^2 (2\alpha) \left( \frac{\Trm^-(0)}{\Trm(0)}\right)^2,
  \eeq
  where $c$ is given by~\eqref{c-def}. Since $\Trm(0)=\theta'(0)$ and $\Trm^{-}(0)= -2 \theta(\al)$, this reads
    \[
      q\,\frac{\cos^2\!\al}{\sin^2\!\al}\cdot \frac{\theta(\al)^2}{\theta'(\al)^2}.
    \]
    We now write
    \[
      \Fnn(0) = \left. \frac{\Xrm(u)}{u} \cdot \big(u \Xrm^+(u) \Xrm^-(u)\big)\right|_{u=0 } = \Xrm_1 q\,\frac{\cos^2\!\al}{\sin^2\!\al}\cdot \frac{\theta(\al)^2}{\theta'(\al)^2} ,
    \]
  and the result follows.
\end{proof}

To conclude the proof of Theorem~\ref{thm:allomega}, we need to prove that $\Qgf$ is D-algebraic, which is done in Section~\ref{sec:DA}, and that $t$ has a D-finite expression in $\widetilde\Rgf$ in some cases, which is done in Section~\ref{sec:Tutte}.

\begin{Remark}
  \noindent
     Observe that the definition of $\widetilde\Rgf$ as $\Fnn(0)$ in this section is consistent with the cases $\om=0$ and $\om=1$ solved in Section~\ref{sec:01}, where we had $\Fnn(y)=\Rgf_0/(1-y)$ in the former case, and $\Fnn(y)=\Rgf_1$ in the latter.
\end{Remark}

\subsection{Predicted singular   behaviour}

      In this section we discuss the dominant singularity and singular behaviour
      of $\Qgf$ (still taken at $v=1$)
      as a function of $t$, for a fixed
       value of $\omega\in\mathbb{R}$.
      Clearly, a key point is the implicit definition of $q$ by~\eqref{t-q}. We denote by $t(q)$ the right-hand side of this expression. This is a well defined meromorphic function of $q$ in the open unit disk, satisfying $t(0)=0$ and having poles at points~$q$ where $\theta'(\alpha, q)=0$ (note that we use here the complete notation $\theta(\alpha,q)$ rather than $\theta(\alpha)$ as above, but the prime always indicates a derivative in the first variable). We will consider two candidates for the dominant singularity of $\Qgf$ and determine the singular behaviour of $\Qgf$ 
      in each case.

      The first natural candidate       we consider is
            $t_1:=t(1)$. We analyse this possibility using the same method as in~\cite[Sec.~4]{kostov}; see also~\cite[Sec.~3]{elvey-ness-raschel} where this method is applied to walks in the quarter plane. The first step is to use the Jacobi identity: 
      \[
        \theta(z,q)= i\, \frac{\hat{q}^{\frac{1}{8}}}{q^{\frac{1}{8}}}\sqrt{\frac{-\log(\hat{q})}{2\pi}}\exp\left(\frac{\log(\hat{q})z^{2}}{2\pi^{2}}\right)\theta\left(\frac{i}{2\pi}\log(\hat{q})z,\hat{q}\right),
      \]
where $q$ and $\hat{q}$ are related by $\log(q)\log(\hat{q})=4\pi^{2}$, so $\hat{q}\to 0$ as $q\to 1$. Substituting this into the expressions of~$t$ and $\Rgf$ given in  Theorem~\ref{thm:allomega} yields the following, where $s=\frac{i   \log\left(\hat{q}\right)\alpha}{2  \pi}$:
  \begin{align*}
    \frac{64\sin^{3}\! \alpha}{\cos \alpha}\,t&=-\log( \hat q )  \frac{4\alpha\theta'(s, \hat q)^{2}-
\theta''(s, \hat q)\left(
4\alpha\theta(s, \hat q) 
- \pi i\theta'(s, \hat q)\right)-\pi i\theta^{(3)}(s, \hat q)\theta(s, \hat q)
                                                }{2 \, {\left(2  \alpha \theta(s, \hat q)+ \pi i \theta'(s, \hat q)\right)^{2}}}
    \\
&~~~-\frac{4\theta(s, \hat q)
         }{ \, {2  \alpha \theta(s, \hat q)+ \pi i \theta'(s, \hat q)}},
    \\
\frac{96\sin^{4}\! \alpha}{\cos^{2}\! \alpha}\,\Rgf&=-\frac{{\left(
8 \, \alpha^{3} \theta(s, \hat q)
+ 12 \pi i \alpha^{2} \theta'(s, \hat q)
- 6 \, \pi^{2} \alpha  \theta''(s, \hat q)
- \pi^{3} i \theta^{(3)}(s, \hat q)\right)} \theta(s, \hat q)^{2}
}{{\left(2  \alpha \theta(s, \hat q)
+ \pi i \theta'(s, \hat q)\right)^{3}}}\\
                                                &~~~ -\frac{\theta^{(3)}(0, \hat q)}{\theta'(0, \hat q)}\cdot \frac{\pi^{2}\theta(s, \hat q)^{2}}{{\left(2  \alpha \theta(s, \hat q)+ \pi i \theta'(s, \hat q)\right)^{2}} }.
 \end{align*}
We now assume $\om\in(-2,2)$, so that $\alpha\in (0, \pi/2)$. Then $x:=e^{-is}=\hat{q}^{\frac{\alpha}{2\pi}}$ tends to $0$ as $q$ tends to~$1$, and moreover $y:=\hat q e^{4is}$ tends to $0$ as well.  Using \eqref{theta-def-init}, we can write
   \[
     \theta(s, \hat q)= -\frac i {2x} +\frac i 2 (y+1)x+ x^2 \beta(x,y),
   \]
      where $\beta(x,y) \in \qs[x][[y]]$.
   Replacing $\theta(s, \hat q)$ by this expression in the above expression of $\Rgf$ gives~$\Rgf$ as a \fps\ in $x$ and $y$ (see our {\sc Maple} session~\cite{bmep-ref-arxiv} for details),
 with the following initial terms around $\hat q=0$:

  \beq\label{R-exp}
    \Rgf=\frac{\cos^{2}\! \alpha}{24\sin^{4}\! \alpha} \frac{\alpha}{(\pi-2\alpha)^{2}} \left(\pi-\alpha-\frac {6\pi^2}{\pi-2\alpha} \hat{q}^{\frac{\alpha}{\pi}}\right)+o(\hat{q}^{\frac{\alpha}{\pi}}).
  \eeq
Analogously, one obtains
  \beq\label{t-exp}
    t=\frac{\cos \alpha}{16\sin^{3}\! \alpha}\frac{1}{\pi-2\alpha}
    \left(1+  \frac {2\hat{q}^{\frac{\alpha}{\pi}}} {\pi-2\alpha} (\alpha \log (\hat q) -\pi)\right) +o(\hat{q}^{\frac{\alpha}{\pi}}).
  \eeq
Setting $\hat q=0$ in the latter expression yields the value of the singularity $t_1$:
\beq\label{t1-val}
  t_1=\frac{\cos \alpha}{16\sin^{3}\! \alpha}\frac{1}{\pi-2\alpha}=\frac{1}{4\arccos(\om/2)}\frac{\sqrt{2-\om}}{(2+\om)^{3/2}}.
\eeq
From~\eqref{t-exp} we also derive
  \[
    \hat q^{\frac{\alpha}{\pi}}\sim- \frac{\pi-2\alpha}{2\pi} \frac{1-t/t_1}{\log(1-t/t_1)}
  \]
  (see~\cite[Sec.~7]{mbm-courtiel} for a rigorous singular study of an equation of the same type as~\eqref{t-exp}). This identity allows us to rewrite the expansion~\eqref{R-exp} of $\Rgf$ in terms of $1-t/t_1$ rather than $\hat q$. Combining this series for $\Rgf$ with  the expression of $\Qgf$ given in Theorem~\ref{thm:allomega}, we obtain
the singular behaviour of $\Qgf$ predicted by Kostov~\cite{kostov} and Zinn-Justin~\cite{zinn-justin-6V-random}: 
\begin{equation}\label{eq:log-asymptotics}
  \Qgf -P\sim -{8\pi\alpha}\frac{1-t/t_1}{\log(1-t/t_1)},
\end{equation} 
for some polynomial $P$ in $t$. This behaviour has been proven to be correct
in~\cite{mbm-aep1} for $\om=0$ and $\om=1$.

The other natural candidate  we consider is
a point $t_0$ corresponding to some $q_{0}$, of modulus less than $1$, satisfying $t'(q_{0})=0$. There the implicit function theorem does not apply to define~$q$ in terms of $t$. In this case we use the following equations, which were proven in~\cite{elvey-zinn} (the first is~(22) while the second combines~(22) and~(23) and is equivalent to the first equation in the proof of Proposition 5.2 in~\cite{elvey-zinn}):
\beq \label{t-dR}
\begin{cases}
t&=\displaystyle -\frac{\cos\alpha}{8\sin^{3}\!\alpha}q\frac{d}{dq}\frac{\theta(\alpha,q)}{\theta'(\alpha,q)},\\
\displaystyle  \frac{d \Rgf}{dt}&\displaystyle =\frac{\cos\alpha}{\sin\alpha} \cdot \frac{\theta(\alpha,q)}{\theta'(\alpha,q)}.
\end{cases}
  \eeq
  Hence $t$ is, up to an explicit factor,~$q$ times the $q$-derivative of $d\Rgf/dt$. We can then write $t$ and $d \Rgf/dt$ (which are both meromorphic in~$q$ near $q_0$) as series around $q=q_{0}$ as follows: 
\begin{align*}
t&=t_0+c_{2}(q_{0}-q)^2+\LandauO\!\left((q_{0}-q)^3\right),\\
  \frac{d \Rgf}{dt}
  &=b_{0}+b_{1}(q_{0}-q)+\LandauO\!\left((q_{0}-q)^2\right),
\end{align*}
for constants $c_{2},b_{0},b_{1}$. It then follows from~\eqref{t-dR} that $b_{1}=2t_0(\omega+2)/q_{0}$.  Combining these yields
  \[
    \frac{d \Rgf}{dt}=b_{0}+\frac{2t_0(\omega+2)}{q_{0}}\sqrt{\frac{t_0}{-c_{2}}}(1-t/t_0)^{1/2}+\LandauO\!\left(1-t/t_0\right),
  \]
which allows us to write $\Qgf=-1+\frac{t-\Rgf}{(\omega+2)t^{2}}$ as a the following series for some polynomial $P$ in $t$: 
\begin{equation}\label{eq:map-like-asymptotics}
  \Qgf=P+\frac{4}{3q_{0}}\sqrt{-\frac{t_0}{c_{2}}}(1-t/t_0)^{3/2}+\LandauO\!\left((1-t/t_0)^{2}\right),
\end{equation}
which has a map-like singularity.

It remains to determine which (if any) of these two cases holds for each value of $\omega$.

\begin{Prediction}
  There exists a value $\om_c\approx -0.764$ such that the dominant singularity of the series $\Qgf$ is
  \begin{itemize}
  \item of the logarithmic type~\eqref{eq:log-asymptotics} for $\om\in (\om_c,2)$, with $t_1$ given by~\eqref{t1-val},
    \item of the map type~\eqref{eq:map-like-asymptotics} for $\om<\om_c$ (with $t_0<0$) and for $\om>2$ (with $t_0>0$).
    \end{itemize}
    In terms of the coefficients  of $\Qgf$, these behaviours correspond respectively to 
     \beq\label{qn-asympt}
  q_n:=  [t^n] \Qgf \sim \frac \kappa {t_1^n n^2 (\log n)^2}  \qquad \text{and} \qquad  q_n\sim \frac \kappa {t_0^n n^{5/2}}.
\eeq
\end{Prediction}

\begin{proof}[Some justification]
  As discussed above, the main question is the behaviour of $t(q)$ as  a function of $q$, the location of its poles, and a comparison of the values $t_1=t(1)$ (which we have determined in~\eqref{t1-val} for $\om\in(-2,2)$) and $t_0=t(q_0)$ such that $t'(q_0)=0$ (if such a value exists).
    Our predictions mainly rely on an estimate  of $\theta(\alpha,q)$ obtained from the first $16$ terms of the expansion~\eqref{theta-def-init}, which gives a rational approximation of $t(q)$. We complete this with the explicit value of the first~40 coefficients of the series $\Qgf$.

Let us begin with non-negative values of $\om$. By Pringsheim's theorem, we know that the radius of convergence of $\Qgf$ is one of the dominant singularities, so we only explore positive values of~$t$.  Looking for poles of $t(q)$,  we find  that there is a root $q_{c}\in(-1,0)$ of $\theta'(\alpha,q)$ such that the function $q\mapsto t(q)$ increases for $q\in(q_{c},0]$, with $t(q_{c})=-\infty$ and $t(0)=0$. Then the behaviour for positive values of $q$ depends on whether $\om<2$ or $\om>2$ (Figure~\ref{fig:tqplots-positive}). If $\om<2$, then $t(q)$ also increases from $t(0)=0$ to $t(1)=t_1$. Hence the dominant singularity corresponds to $q=1$, and we have the behaviour  described in~\eqref{eq:log-asymptotics}, predicted by Kostov and Zinn-Justin and proved rigorously in~\cite{mbm-aep1} for $\om=0$ and $\om=1$.
  If $\omega>2$,
  we observe that there is some $q_{0}\in(0,1)$ for which $t'(q_0)=0$ and $t(q)$ is increasing for $q\in[0,q_{0}]$. Then the dominant singularity is $t_0=t(q_0)$ and this yields the map-like singular behaviour~\eqref{eq:map-like-asymptotics}.

\begin{figure}[h!]
  \centering
  \includegraphics[width=55mm]{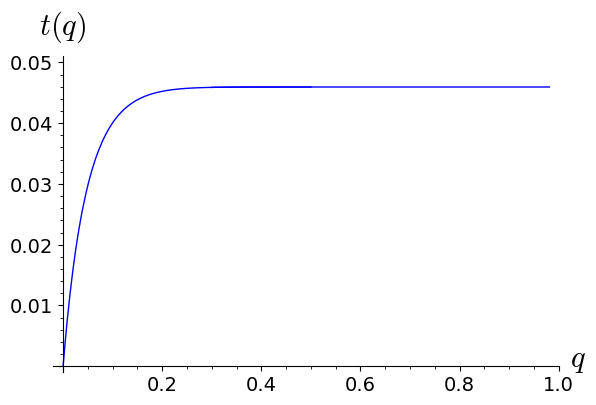}
  \includegraphics[width=55mm]{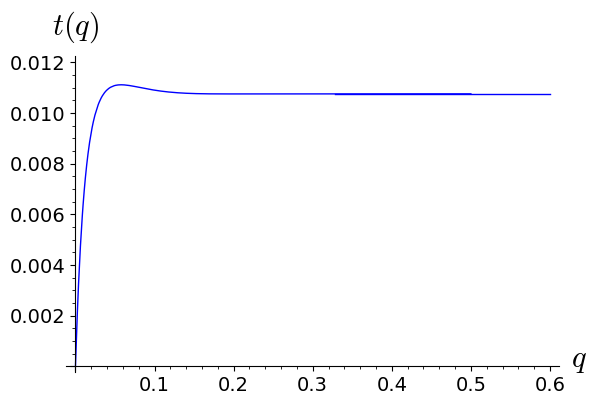}
  \caption{Plots of our approximation of the function $t(q)$ for $\om \in [0,2)$ and $\om >2$ (left $\om=1$, right $\om=7$).}
  \label{fig:tqplots-positive}
\end{figure}

The negative values of $\om$ are non-physical, but we still discuss them because the whole interval $[-2,2]$ was considered in~\cite{kostov,zinn-justin-6V-random}. The first difficulty is that the coefficients of the series $\Qgf$ may have negative signs. However, looking at the $40$ coefficients that we have determined suggests that a transition occurs somewhere between $-1$ and $-1/2$: below $-1$, the signs seem to be alternating, so that a dominant singularity should be real and negative, while above $-1/2$, the signs seem to be positive, so that one of the dominant singularities should be real and positive. This will be explained by the competition between a positive and a negative singularity which both exist on $[-2,0)$.  For $\om<-2$ first, we find a root $q_0<0$ of $t'(q)$  and a pole $q_c>0$ of $t(q)$  such that $t(q)$ increases from $t_0=t(q_0)<0$ to $t(q_c)=+\infty$ on the interval $[q_0,q_c]$ (Figure~\ref{fig:tqplots-negative}, left). We thus expect the map-like singular behaviour~\eqref{eq:map-like-asymptotics}. For $\om\in (-2,0)$, there is no pole $q_c>0$ but the root of $t'(q)$ at some $q_0<0$ remains. The map $t(q)$ increases from $t_0=t(q_0)<0$ to $t_1=t(1)$. The question is thus whether $|t_0|<t_1$ or not. We observe that there is some $\om_c\approx -0.764$ such that $|t_0|<t_1$  for $\om\in (-2,\om_c)$, yielding a singularity  of the map type~\eqref{eq:map-like-asymptotics}, while $|t_0|>t_1$  for  $\om\in (\om_c, 0)$, yielding the logarithmic singularity~\eqref{eq:log-asymptotics} as we already found for $\om\in(0,2)$ (see the rightmost two plots in Figure~\ref{fig:tqplots-negative}). It appears that Kostov and Zinn-Justin didn't consider the possibility of a transition in $[-2,0]$, as they predicted  a uniform log-type behaviour in this interval.

\begin{figure}[h!]
  \centering
 \includegraphics[width=45mm]{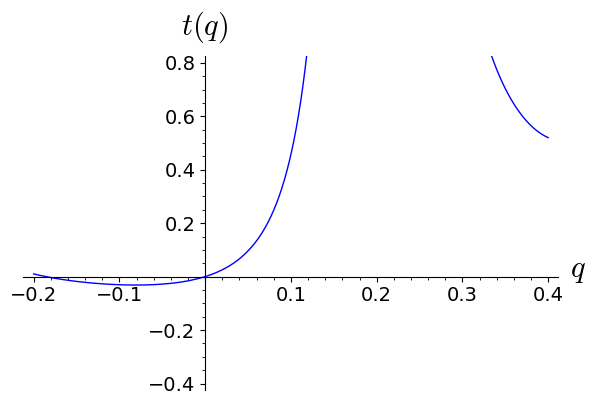}
 \includegraphics[width=45mm]{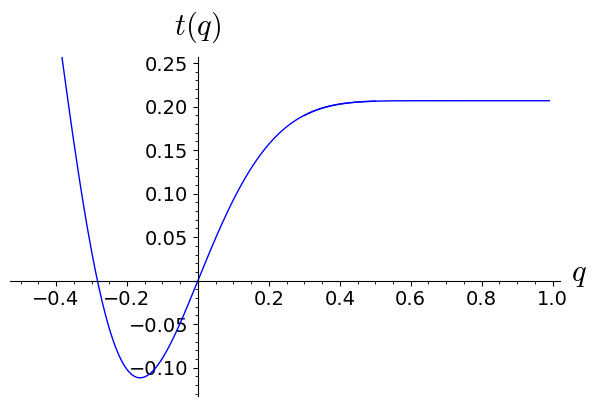}
  \includegraphics[width=45mm]{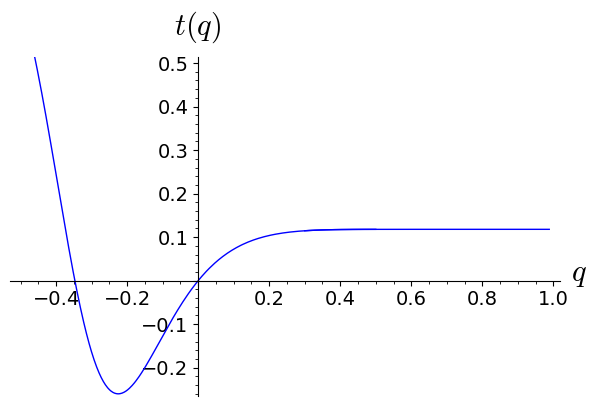}
  \caption{Plots of our approximation of the function $t(q)$ for $\om<-2 $, $\om \in (-2, \om_c)$ and $\om\in (\om_c,0)$ (left $\om=-3$, center $\om=-1$, right $\om=-1/2$).}
  \label{fig:tqplots-negative}
\end{figure}

We can confirm our predictions using the $40$ known coefficients of $\Qgf$. On the left of Figure~\ref{fig:ratio} we show the ratio of two consecutive coefficients, as a function of $\om$, avoiding the critical zone around~$\om_c$. The convergence to the value $t_1$ (red curve) is clear for $\om \in[-1/2,2]$. It even seems that this could persist beyond $\om=2$, but we do not believe this to be the case.
On the right-hand side of Figure~\ref{fig:ratio}  we plot estimates of the exponent of $n$ in the asymptotic behaviour~\eqref{qn-asympt} of $q_n$, namely $n^2(1- q_{n+1} q_{n-1}/q_n^2)$. They are in agreement with a double transition, at $\om_c$ and at $2$.

\begin{figure}[htb]
  \centering
  \includegraphics[width=55mm]{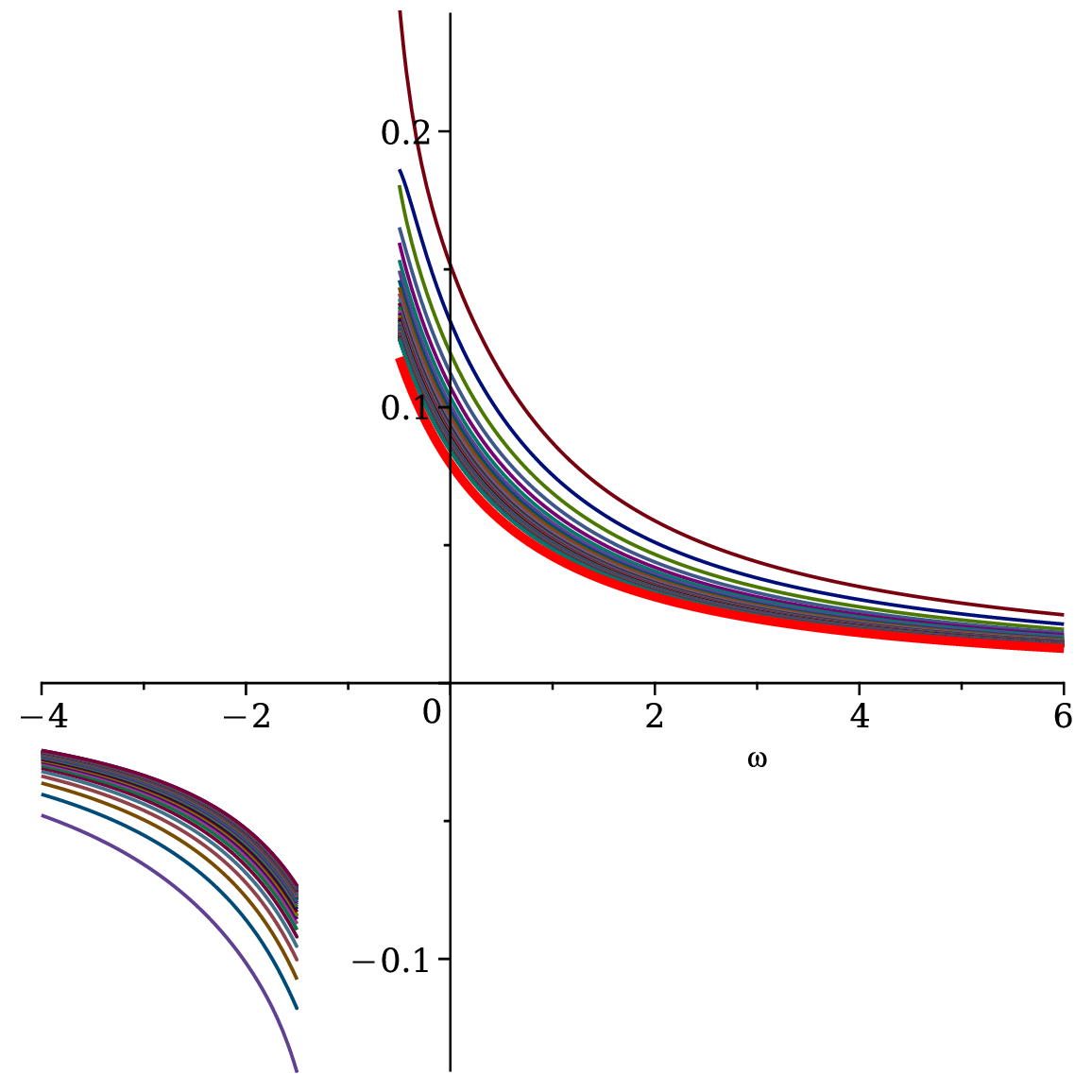} \hskip 10mm
  \includegraphics[width=55mm]{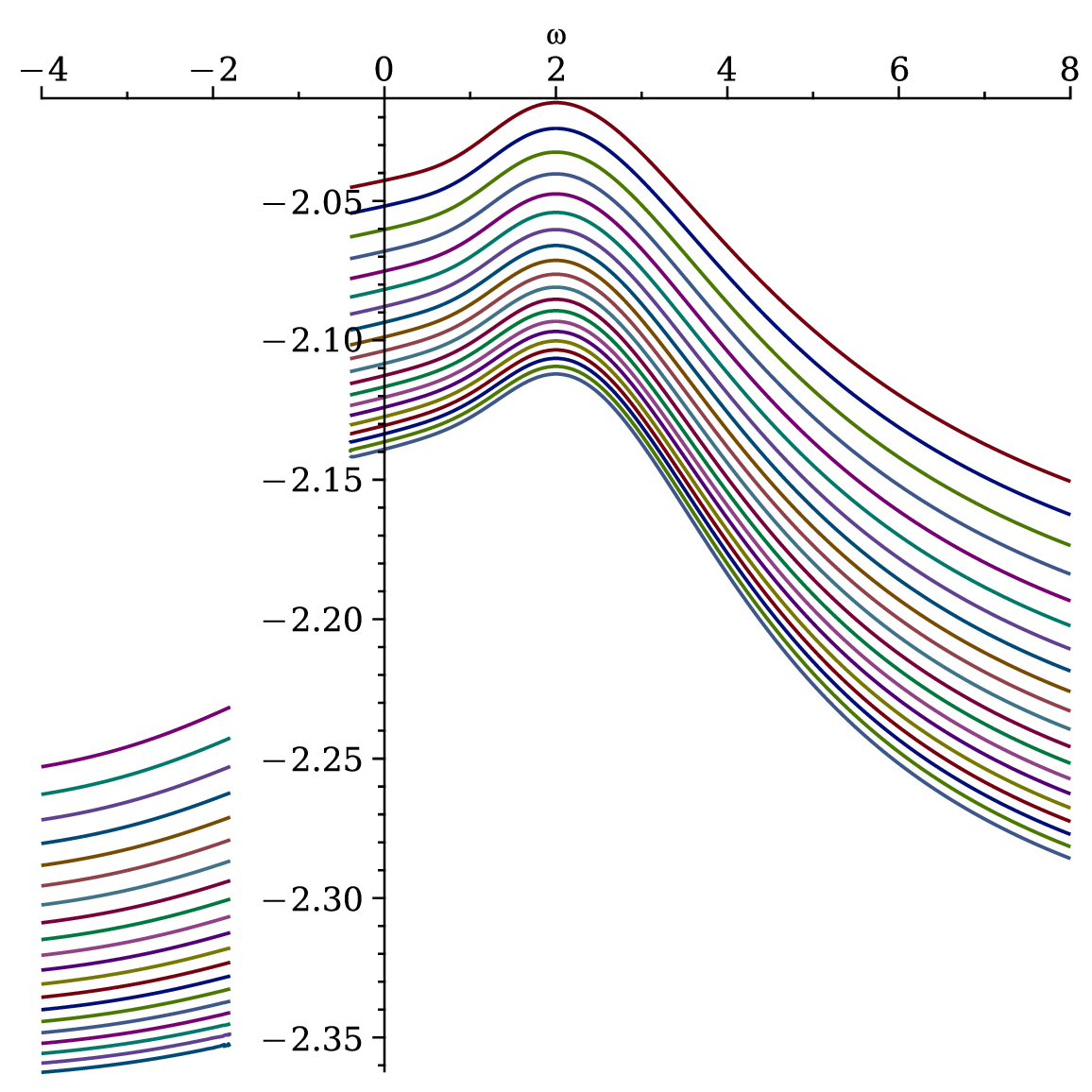}
  \caption{\emm Left:, The ratio of the coefficients of $t^n$ and $t^{n+1}$ in the series $\Qgf$, as functions of $\om$, for $n<40$. The red bottom curve  is the value of $t_1$ given in~\eqref{t1-val}. \emm Right:, Estimates of the critical exponent. Both figures show that a transition occurs at some $\om_c\in (-2,-1)$, and the figure on the right also clearly shows the transition at $\om=2$.}
  \label{fig:ratio}
\end{figure}

\end{proof}

\section{Solution for $\bm{v=1}$ and special values of $\bm{\om}$}
\label{sec:Tutte}
At this stage, we have solved the three cases $\om=0$, $\om=1$ and $v=1$. It is natural to look at the doubly specialised cases: $v=1$ \emm and, $\om\in\{0,1\}$. As already observed in~\cite{elvey-zinn}, say when $v=1$ and $\om=0$, we have in Theorems~\ref{thm:general} and~\ref{thm:allomega} two very different characterisations of the same series $\Rgf:= \Rgf_0= \widetilde \Rgf=\Fnn(0)$. The former one looks simpler: $t$ has an explicit D-finite expression in~$\Rgf$.  Also the series involving the catalytic variable $x$ are much simpler when $\om=0$:  we have seen that $\Fnn(x)$ is simply $\Rgf/(1-x)$ (see Remark~\ref{rem:om0}). Similar statements apply to the case $v=\om=1$, where $\Fnn(x)=\Rgf$ is even independent of $x$; see Remark~\ref{rem:om1}.

In this section we present an alternative way to derive $\Mnn(x)$ and $\Fnn(x)$ from the characterisation of Section~\ref{sec:charac}, which works for the doubly specialised cases where $v=1$ and $\om \neq  -2,2$ is of the form $-2\cos(k\pi/m)$, for integers $k$ and $m$. This approach implies  that, in these cases, the generating function $\Fnn(x)$ is algebraic in~$x$ and $\Rgf:=\Fnn(0)$, with base field $\qs$. By~\eqref{MF}, this means that the same holds for $\Mnn(x)$, as $\om$ is also algebraic over $\qs$ in these cases. Since $t=[x^{-1}] \Mnn(x)$, this implies that $t$ is a D-finite function of $\Rgf$.
  The latter property was already shown in~\cite[Thm.~6.1]{elvey-zinn}, using the explicit solution of Theorem~\ref{thm:allomega} and its modular properties.

\begin{Proposition}\label{prop:alg}
  Take $v=1$ and  $\om=-2\cos(k\pi/m)$, excluding the two extreme values $\om=-2,2$. Then the series $\Fnn(x)$ defined by~\eqref{FM}, and the associated series $\Mnn(x)$, are algebraic over $\qs(x,\Rgf)$, where $\Rgf:=\Fnn(0)$.

  Consequently, $t$  is a D-finite function of $\Rgf$.
\end{Proposition}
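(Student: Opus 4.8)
\textbf{Proof plan for Proposition~\ref{prop:alg}.}

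The plan is to exploit the functional equation~\eqref{Z-def} of Proposition~\ref{prop:F}, which at $v=1$ reads $xy(1-x-\om y)-\Fnn(y) = \big(xy(1-y-\om x)-\Fnn(x)\big)\Znn(x,y)$ with $\Znn(x,y)\in\GK[[x,y,t]]$, together with the involution $\Mnn(\Mnn(x))=x$ and the kernel relation~\eqref{kernel-F}. The idea, in the spirit of Tutte's invariant method for coloured maps (see~\cite{bernardi-mbm-alg}), is to build an algebraic ``invariant'': a rational (or algebraic) function $I(x)$ of $x$, with coefficients algebraic over $\cs(\Rgf)$, such that $I(\Mnn(x))=I(x)$. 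The special value $\om=-2\cos(k\pi/m)$ is precisely what makes such a finite invariant exist: writing $\om=-2\cos(2\alpha)$ with $\alpha=k\pi/(2m)\in\pi\qs$, the homogeneous difference equation~\eqref{order1} has solutions that become \emph{periodic} (the multiplier $e^{2i\alpha}$ is a root of unity of order dividing $2m$), so a suitable symmetric function of the $2m$ shifts $z, z-\gamma,\dots$ is invariant under $z\mapsto z-\gamma$, i.e.\ under $x\mapsto\Mnn(x)$. Concretely, I expect $\chi$ and its iterates under $\Mnn$ to satisfy a degree-$2m$ algebraic relation over $\cs(\Rgf)$; equivalently, $\Mnn$ is an automorphism of finite order $2m$ of a genus-$0$ (or elliptic degenerating to genus-$0$ at $q=0$) curve, and the field of $\Mnn$-invariant functions is itself a rational function field $\cs(w)$ for an explicit generator $w=w(x)$.

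The key steps, in order: (i) From $v=1$ and the Ansatz machinery of Section~\ref{sec:six}, record that $\Mnn(\Xrm(u))=\Xrm^-(u)$ and that the shift $u\mapsto\Srm(u)$ has order $2m$ when $\alpha\in\pi\qs$ (because iterating $2m$ times multiplies $\Erm(u)$ by $q^{-2m}\cdot(e^{2i\alpha})^{\pm2m}$-type factors that, combined with the two branch choices, return to $u$); deduce that $\Mnn^{\circ 2m}=\id$. Actually the cleaner route avoids $q$ entirely: work directly with~\eqref{Z-def} at $v=1$, substitute $y=\Mnn(x)$ to get~\eqref{kernel-F}, and then iterate, substituting $x\mapsto\Mnn(x)$, to produce a system of polynomial relations among $x,\Mnn(x),\Mnn^{(2)}(x),\dots$ and $\Rgf=\Fnn(0)$; the periodicity forced by $\om=-2\cos(k\pi/m)$ closes this system after $2m$ steps. (ii) From this finite system, eliminate the intermediate iterates to obtain a single polynomial relation $P(x,\Mnn(x),\Rgf)=0$ with $P\in\cs[x,y,R]$, proving $\Mnn(x)$ algebraic over $\cs(x,\Rgf)$; then $\Fnn(x)=x\Mnn(x)(1-\om x-\Mnn(x))$ is algebraic as well, and setting $x=0$ is consistent. (iii) Finally, since $t=[x^{-1}]\Mnn(x)$ is the residue of an algebraic function of $x$ whose coefficients are algebraic over $\cs(\Rgf)$, the classical fact that the residue (a period) of an algebraic form is D-finite in the parameter $\Rgf$ gives that $t$ is D-finite in $\Rgf$; cite the references on D-finiteness in Section~\ref{sec:fps} (\cite{Li88,lipshitz-df}) for the stability of D-finiteness under the operations involved (algebraic substitution, diagonal/residue extraction).

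The main obstacle I anticipate is step (ii): setting up the elimination cleanly and proving that the system genuinely closes after exactly $2m$ iterations with no degeneration, i.e.\ that $\Mnn$ has order precisely $2m$ (not a proper divisor) and that the eliminant is non-trivial. This is essentially a statement about the Galois/monodromy structure of the curve cut out by~\eqref{kernel-F}, and controlling it rigorously — rather than just at the level of the $q$-expansion — requires care. A secondary technical point is to argue that the algebraic relation, a priori valid as an identity of power series in $t$, indeed has coefficients algebraic over $\cs(\Rgf)$ (rather than over $\cs(t,\Rgf)$); here one uses that $t$ itself is not independent of $\Rgf$ — they are related by $t=[x^{-1}]\Mnn(x)$ — so the natural base field is $\cs(\Rgf)$, and one must verify $\Rgf$ is transcendental over $\cs$ so that $\cs(\Rgf)$ is a genuine rational function field. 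I would handle the order-$2m$ claim by exhibiting the invariant explicitly via the theta-function description (Lemma~\ref{lem:chi} and the product formula~\eqref{theta-def-prod}), where the root-of-unity multiplier makes periodicity transparent, and then transporting the resulting algebraic identity back to the formal-power-series world using the uniqueness in Proposition~\ref{prop:F}.
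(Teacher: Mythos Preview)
Your high-level framing --- Tutte-style invariants --- is right, and the D-finiteness argument in step~(iii) is fine. But steps~(i)/(ii) contain a genuine gap: the map $\Mnn$ is an \emph{involution}, $\Mnn^{(2)}(x)=x$, for \emph{every} $\omega$ (Proposition~\ref{prop:M-charac}). So your proposed iteration $x\mapsto\Mnn(x)\mapsto\Mnn^{(2)}(x)\mapsto\cdots$ closes after two steps and returns only~\eqref{FM} and~\eqref{kernel-F}; there is no ``order $2m$'' to exploit. Likewise the shift $u\mapsto\Srm(u)$ sends $\Erm(u)\mapsto q^{-1}\Erm(u)$ and has infinite order --- you are conflating $\alpha$ (which parametrises $\omega$) with $\gamma$ (which parametrises $q$). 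The root of unity is $e^{2i\alpha}$, not $q$.

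What the paper does is entirely elementary and avoids theta functions. First, there is a \emph{universal} invariant, valid for all $\omega$: combining~\eqref{FM} and~\eqref{kernel-F} one checks directly that
\[
A(x):=x^{2}-x-\Fnn(x)/x \qquad\text{satisfies}\qquad A(\Mnn(x))=A(x).
\]
Second, for $u=e^{2i\alpha}$ a primitive $n$th root of unity, one writes down explicit linear forms $B_{j}(x)=(u^{j}-u^{-j})\Mnn(x)-(u^{j-1}-u^{1-j})x+c_{j}$ (for suitable constants $c_j=c_{1-j}$). The involution $\Mnn$ acts on them by \emph{reflection}, $B_{j}(\Mnn(x))=B_{1-j}(x)$, and $n$-periodicity $B_{j+n}=B_{j}$ then makes the product $B(x):=\prod_{j=0}^{n-1}B_{j}(x)$ a second invariant. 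The crucial algebraic point --- which your plan does not anticipate --- is that each pair $B_{j}(x)B_{-j}(x)$ eliminates $\Mnn(x)$ in favour of $\Fnn(x)/x$ via the quadratic relation~\eqref{FM}, so $B(x)$ is a Laurent polynomial in $x$ and a polynomial in $\Fnn(x)$. Third, an \emph{invariant lemma} (any invariant lying in $x\cs[[x,t]]$ must vanish) lets one subtract a polynomial in $A(x)$ from $B(x)$ to obtain a polynomial relation among $x$, $\Fnn(x)$, and finitely many auxiliary series $h_{0},\dots,h_{k-2}\in\cs((t))$. Finally --- and this is a further step your plan omits --- one must argue that the $h_{j}$ are in fact power series in $\Rgf$, so that the general algebraicity theorem for polynomial equations with one catalytic variable~\cite{mbm-jehanne,popescu} yields algebraicity of $\Fnn(x)$ over $\cs(x,\Rgf)$.
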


  \begin{Remark}
    We find this result to be reminiscent of an old result of Tutte on properly coloured planar triangulations: when the number of colours is of the form $2 + 2 \cos (k\pi/m)$, excluding the extreme values $0$ and $4$, the associated \gf\ is algebraic~\cite{tutte-chromatic-revisited}. The same phenomenon was later proved to hold as well for the more general Potts model, both on triangulations and general planar maps~\cite{bernardi-mbm-alg}. Our proof of Proposition~\ref{prop:alg}  is in spirit rather close to the one of Tutte: it uses  a notion of \emm invariants,, which lead to  a  polynomial equation in one catalytic variable~$x$ satisfied by $\Fnn(x)$. Algebraicity then follows from a general result on such equations~\cite{mbm-jehanne,popescu}. 

    One could alternatively start from our theta-function expressions to show that $\Fnn(x)$ is algebraic in $x$ and $\Rgf$. Along the lines of \cite{elvey-zinn}, we would use properties of elliptic functions to show algebraicity in $x$ and modular properties to show algebraicity in $t$.
  \end{Remark}

\medskip
From now on, let us fix $v=1$.
It follows from the characterisation of $\Mnn(x)$, and more precisely from~\eqref{FM} and~\eqref{kernel-F}, that
\beq\label{A-inv}
A(\Mnn(x))=A(x), \qquad \text{where} \qquad  A(x):= x^{2}-x-\frac{\Fnn(x)}{x}.
\eeq
Note that this holds regardless of the value of $\om$. By analogy with Tutte's work, we will call \emm invariant, any series $I(x)$ in $x^{-\ell}t^{-m}\cs[[x,t]]$, for some $\ell, m \in \ns$, such that $I(\Mnn(x))=I(x)$. Hence our characterisation of $\Mnn(x)$, in the case $v=1$, just tells us that $A(x)$ is an invariant.

For the special value $\om=-2\cos({2\alpha})$ with $\alpha \in \pi\qs$, we will construct a second invariant $B(x)$, and derive from the following \emm invariant lemma,  that $A(x)$ and $B(x)$ are polynomially related.

\begin{Lemma}\label{lem:inv} 
  Let $I(x)$ be an invariant that belongs to $x\cs[[x,t]]$. Then $I(x)=0$.
\end{Lemma}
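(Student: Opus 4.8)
The plan is to detect a mismatch of valuations. Introduce the variable $u:=t/x$ and work in the ring $\cs[[x,u]]$, into which $\cs[[x,t]]$ embeds through the substitution $t=xu$; this embedding is injective, since in $\sum_{a,b\ge 0}c_{ab}x^{a}t^{b}=\sum_{a,b}c_{ab}\,x^{a+b}u^{b}$ the coefficient of $x^{i}u^{j}$ is $c_{i-j,\,j}$ if $i\ge j$ and $0$ otherwise, so the image determines all the $c_{ab}$. Recall from Section~\ref{sec:charac} (proof of Proposition~\ref{prop:M-charac}) that $\Mnn(x)$ is a series of $\tfrac{t}{x}\,\cs[[t/x,x]]=u\,\cs[[x,u]]$; in other words, $\Mnn(x)$ has positive $u$-adic valuation. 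Denote by $v_{u}(\cdot)$ the $u$-adic valuation on $\cs[[x,u]]$ (the least power of $u$ occurring, with coefficient in $\cs[[x]]$).

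Now write $I(x)=\sum_{n\ge 1}a_{n}(t)x^{n}$ with $a_{n}\in\cs[[t]]$, assume for contradiction that $I\ne 0$, and set $j_{0}:=\min\{v_{t}(a_{n}):a_{n}\ne 0\}<\infty$. Expanding in $\cs[[x,u]]$ gives $I(x)=\sum_{n,j}a_{n,j}\,x^{n+j}u^{j}$, where $a_{n}(t)=\sum_{j}a_{n,j}t^{j}$; the coefficient of $u^{j_{0}}$ is $\sum_{n\ge1}a_{n,j_{0}}x^{n+j_{0}}$, which is a nonzero element of $\cs[[x]]$ (distinct values of $n$ give distinct monomials, and some $a_{n,j_{0}}\ne 0$), while no lower power of $u$ appears. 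Hence $v_{u}(I(x))=j_{0}$. On the other hand, $a_{n}(t)=a_{n}(xu)\in\cs[[xu]]$ has $u$-adic valuation $v_{t}(a_{n})$, and $\Mnn(x)^{n}\in u^{n}\cs[[x,u]]$, so $a_{n}(t)\Mnn(x)^{n}\in u^{\,v_{t}(a_{n})+n}\cs[[x,u]]\subseteq u^{\,j_{0}+1}\cs[[x,u]]$, using $v_{t}(a_{n})\ge j_{0}$ and $n\ge 1$. The sum $I(\Mnn(x))=\sum_{n\ge1}a_{n}(t)\Mnn(x)^{n}$ converges in $\cs[[x,u]]$, because for each $j$ only the terms with $n\le j$ can contribute to the coefficient of $u^{j}$; and it lies in $u^{\,j_{0}+1}\cs[[x,u]]$, so $v_{u}(I(\Mnn(x)))\ge j_{0}+1$.

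Since $I$ is an invariant, $I(\Mnn(x))=I(x)$ as elements of $\cs[[x,u]]$, whence $j_{0}=v_{u}(I(x))=v_{u}(I(\Mnn(x)))\ge j_{0}+1$, a contradiction. Therefore $I(x)=0$. The only delicate point is the preliminary bookkeeping that places both sides of $I(\Mnn(x))=I(x)$ in the common ring $\cs[[x,u]]$ and verifies that the substituted series converges there; once this is set up the conclusion is forced, the gain of one unit in the valuation being exactly the factor $x$ in each monomial $a_{n}(t)x^{n}$ of $I$ pitted against the ``$t/x$-type'' leading behaviour of $\Mnn(x)$.
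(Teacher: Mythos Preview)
Your proof is correct and follows essentially the same strategy as the paper's: both exploit that $\Mnn(x)\in\tfrac{t}{x}\cs[[t/x,x]]$ to derive a valuation-type contradiction from $I(\Mnn(x))=I(x)$. The paper tracks the minimal total degree $N=n+m$ and finds that the coefficient of $t^N$ in $I(\Mnn(x))$ carries a negative power of $x$, contradicting $I\in x\cs[[x,t]]$; your version, via the substitution $u=t/x$ and the single $u$-adic valuation, is a clean repackaging of the same idea.
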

\begin{proof}
   Assume that $I(x) \neq 0$, and let $N$ be minimal such that there exists $n$ and $m$ such that $m+n=N$ and $[x^{n}t^m]I(x)\neq0$. Let $n$ be maximal for this property. Note that $n\ge 1$.  Recall that $\Mnn(x)$ lies in $t/x\cs[[t/x,x]]$, and reads $t/x (1+\LandauO(t/x) + \LandauO(x))$ (recall that $v=1$). This implies that $I(\Mnn(x))$ has valuation~$N$ in $t$, and that the coefficient of~$t^N$ has valuation~$-n$ in $x$. But, since
     $I(\Mnn(x))=I(x)$, this gives a term with a negative power of $x$ in $I(x)$, a contradiction.
\end{proof}
We now embark on the construction of a second invariant. Let $u=e^{2i\alpha}$,  let $n$ be the smallest positive integer satisfying $u^{n}=1$ and define
\[
  c_{j}:=\frac{u^j-u-1+u^{1-j}}{u-1},
\]
so that in particular $c_0=c_1=0$ and $c_j=c_{1-j}$, and define
 \begin{align*}
    B_{j}(x)&:=(u^j-u^{-j})\Mnn(x)-(u^{j-1}-u^{1-j})x+c_{j},\\
  B(x)&:=\prod_{j=0}^{n-1}B_{j}(x).
\end{align*}
Recall from Proposition~\ref{prop:M-charac} that $\Mnn(\Mnn(x))=x$. It follows  that $B_{j}(\Mnn(x))=B_{1-j}(x)$, and from $u^{n}=1$, we also have $B_{j+n}(x)=B_{j}(x)$. Together these imply that $B(\Mnn(x))=B(x)$, so that $B(x)$ is an invariant.

Moreover, returning to the definition~\eqref{FM} of $\Fnn(x)$, and using $\omega=-u-u^{-1}$ as well as $c_{-j}=c_{j+1}$, we can write
\[
  B_{j}(x)B_{-j}(x)=(u^j-u^{-j})^{2}\,\frac{\Fnn(x)}{x}-\left((u^{j-1}-u^{1-j})x-c_{j}\right)\left((u^{j+1}-u^{-j-1})x+c_{j+1}\right).
\]
This allows us to express the product $B(x)$ in terms of $\Fnn(x)$ as well. If $n$ is even, say  $n=2k$, we have $u^k=-1$ and 
\begin{multline*}
  B(x)=B_0(x) B_k(x) \prod_{j=1}^{k-1} B_j(x) B_{-j}(x)\\
  =-x(1+u)^2((1-u^{-1})^2x+2u^{-1})\prod_{j=1}^{k-1}\left((u^j-u^{-j})^{2}\frac{\Fnn(x)}{x}  \right.
  \\
  \left. -\left((u^{j-1}-u^{1-j})x-c_{j}\right)\left((u^{j+1}-u^{-j-1})x+c_{j+1}\right)\right),
\end{multline*}
while if  $n=2k-1$, we have
\begin{multline*}
  B(x)=B_0(x) \prod_{j=1}^{k-1} B_j(x) B_{-j}(x) \\
  =x(u-u^{-1})\prod_{j=1}^{k-1}\left((u^j-u^{-j})^{2}\frac{\Fnn(x)}{x}-\left((u^{j-1}-u^{1-j})x-c_{j}\right)\left((u^{j+1}-u^{-j-1})x+c_{j+1}\right)\right).
 \end{multline*}
 In both cases $x^{k-2}B(x)$ is a polynomial in $x$ and $\Fnn(x)$
with coefficients in $\qs(u)$, so $B(x)\in x^{2-k}\overline\qs
   [[x,t]]$ since $u^n=1$.

 \medskip

 \noindent{\bf Examples.}
For $\om \in\{-2, 2\}$, that is, $u\in\{-1,1\}$, we have $n\in\{1,2\}$, $k=1$,
   $B_0(x)=0$, hence $B(x)=0$ is a trivial invariant.

For $\om=0$, that is, $\al=\pi/4$ and $u=i$, we have $n=4$, $k=2$ and obtain $B(x)=16(1-x) \Fnn(x)$.  The above invariant lemma, applied to $B(x)-B(0)$, tells us at once that $\Fnn(x)=\Fnn(0)/(1-x)=\Rgf/(1-x)$.

For $\om=1$, that is, $\al=\pi/3$ and $u=e^{2{\pi i}/3}$, we  have $n=3$, $k=2$ and obtain $B(x)=-3\sqrt 3 i \Fnn(x)$. The invariant lemma tells us that $\Fnn(x)=\Fnn(0)= \Rgf$.

For $\om=-1$, that is, $\alpha=\pi/6$ and $u=e^{{\pi i}/3}$, we have $n=6$, $k=3$, and obtain $B(x)=27(x-2) \Fnn(x) (\Fnn(x)/x+2-2x)$. We will express $B(x)$ as a polynomial in $A(x)$ (defined in~\eqref{A-inv}), as described below in the general case.
\qee

\medskip

Let us thus return to the general case, and assume that $\om\neq-2,2$, that is, $k\ge 2$. Then $B(x)$ is a polynomial of degree $k-1$ in $\Fnn(x)$, and a Laurent polynomial of valuation $k-2$ in $x$. Since $A(x)$ has valuation $-1$ in $x$, by expanding $B(x)$ around $x=0$ we can find
 unique Laurent series $h_{j}\in\mathbb{C}((t))$ satisfying
\[
  I(x):=B(x)+A(x)^{k-2}h_{k-2}+\cdots+A(x)h_{1}+h_{0}\ \ \in\  x t^{-m}\cs[[x,t]]
\]
for some $m\in \ns$. For instance, in the above example where $\om=-1$, we have $k=3$ and 
  \[
    h_1= -54 \Fnn(0) \qquad \text{and} \qquad h_0 =-27\Fnn(0)(\Fnn(0) - 2\Fnn'(0) - 4).
  \]
  In general, given that $A(x) \sim -\Rgf/x$ and $B(x)\sim c\, \Rgf^{k-1}/x^{k-2}$ around $x=0$, for an explicit constant $c\in \cs$, the series $h_{k-2}$ will always be of the form $c' \Rgf$, for some non-zero constant $c'$. More generally, the $h_j$ have rational expressions in $\Fnn(0)\equiv \Rgf$, $\Fnn'(0)$, \ldots, $\Fnn^{(k-2)}(0)$. Also, since the $h_j$'s only depend on $t$, $I(x)$ is an invariant. By Lemma~\ref{lem:inv} (possibly applied to $t^mI(x)$), we have in fact $I(x)=0$, that is,
 \begin{equation}\label{eq:polyinF}
    B(x)+A(x)^{k-2}h_{k-2}+\cdots+A(x)h_{1}+h_{0}=0.
\end{equation}
The expression on the left-hand side is  an explicit polynomial in $\Fnn(x)$, of degree $k-1$ (only $B(x)$ contributes to this dominant term), and a Laurent polynomial in $x$ of valuation $-(k-2)$ (found both in $B(x)$ and in $A(x)^{k-2}$).  This immediately implies that $\Fnn(x)$ is algebraic in $x$, but over the field generated over $\overline\qs$ by the series $h_j$. For instance, as seen above in the examples, we get $\Fnn(x)=\Rgf/(1-x)$ when $\om=0$, and $\Fnn(x)=\Rgf$ when $\om=1$.

It remains to prove that we can replace the field generated by the
$h_j$'s by $\overline\qs(\Rgf)$. This means that $\Fnn(x)$ is algebraic over $\overline \qs(x,\Rgf)$, or equivalently over $\qs(x,\Rgf)$, as claimed by the proposition.

Recall that the series $\Fnn(x)$ belongs to $t\GK[[x,t]]$ (see for instance Proposition~\ref{prop:F}). Also, expanding $\Fnn(x)$ at first order in $t$ gives 
  \[
    \Fnn(x)= \frac{1-\om x}{1-x} t+ \LandauO(t^2).
 \]
In particular, $\Rgf:=\Fnn(0)=t+ \LandauO(t^2)$. Expanding now $\Fnn(x)$ at first order in $x$ gives
  \beq\label{F-start}
    \Fnn(x)= \Rgf+ x\Rgf \widetilde \Fnn(x),
  \eeq
  for some series $\widetilde \Fnn(x)$ in $x$ and $t$, or equivalently in $x$ and $\Rgf$.
  The final step to proving Proposition~\ref{prop:alg} is to show that~\eqref{eq:polyinF} uniquely defines $\widetilde \Fnn(x)$ and the $h_j$ as series $\Rgf$ and $x$ {with coefficients in $\overline\qs$ (where only $\widetilde \Fnn(x)$ depends on $x$ of course).
  Then Theorem~1.4 in~\cite{popescu} implies that $\widetilde{\Fnn}(x)$ is algebraic over $\overline\qs(x,\Rgf)$, hence so is $\Fnn(x)$\footnote{Let us specialise Theorem~1.4 in~\cite{popescu} to the field $k=\overline \qs$,  two variables $X_1=x$ and $X_2=\Rgf$,  a single polynomial equation in unknown series $y$, $y_0, \ldots, y_{k-2}$ with coefficients in $\overline \qs[x,\Rgf]$. Assume that there exists a solution to this equation -- in our case, this is $(\widetilde \Fnn(x), h_0, \ldots, h_{k-2})$ -- such that $\widetilde \Fnn(x) \in \overline \qs\llbracket x, \Rgf\rrbracket$ and $h_j \in \overline \qs\llbracket \Rgf\rrbracket$ for all~$j$. Then Popescu's theorem tells us that there also exists a solution $(y, y_0, \ldots, y_{k-2}) \in \overline \qs\llbracket x, \Rgf\rrbracket\times \left(\overline \qs\llbracket \Rgf\rrbracket\right)^{k-1}$ that is \emm algebraic, over $\overline \qs(x,\Rgf)$. But since  $(\widetilde \Fnn(x), h_0, \ldots, h_{k-2})$ is the \emm unique, solution in this ring, then it must coincide with the algebraic solution.}. We show the uniqueness of $\widetilde \Fnn(x)$ and the $h_j$ below.

    \begin{Lemma} The series $\widetilde \Fnn(x)\in\mathbb{K}[[x,\Rgf]]$ and $h_j\in\mathbb{K}((\Rgf))$ are uniquely determined by~\eqref{F-start} and~\eqref{eq:polyinF}, and in fact $h_{j}\in\Rgf\mathbb{K}[[\Rgf]]\subset \Rgf \overline\qs[[\Rgf]]$.
    \end{Lemma}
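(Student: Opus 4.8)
The plan is to substitute~\eqref{F-start} everywhere and then solve for $\widetilde\Fnn(x)$ and the $h_j$ recursively, by induction on the order in $\Rgf$ (equivalently in $t$, since $\Rgf=t+\LandauO(t^2)$); recall that, by~\eqref{F-start} and Proposition~\ref{prop:F}, the relevant ring for $\widetilde\Fnn(x)$ is $\GK[x,1/(1-x)][[\Rgf]]$.

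\textbf{Shape of the two invariants.} After~\eqref{F-start} the invariant $A(x)=x^{2}-x-\Fnn(x)/x$ of~\eqref{A-inv} reads $A(x)=x^{2}-x-\Rgf/x-\Rgf\,\widetilde\Fnn(x)$, so, as a Laurent series in $x$ with coefficients in $\GK[[\Rgf]]$, it has a \emph{simple} pole at $x=0$ with $[x^{-1}]A(x)=-\Rgf$; hence $A(x)^{\ell}$ has a pole of order exactly $\ell$ with leading coefficient $(-\Rgf)^{\ell}$, a unit of $\GK((\Rgf))$. For the invariant $B(x)=\prod_{j}B_{j}(x)$, the key point is that, since $c_{1}=0$ and $u^{0}-u^{0}=0$, one has $B_{1}(x)=(u-u^{-1})\Mnn(x)$, and the quadratic relation $\Mnn(x)^{2}=(1-\om x)\Mnn(x)-\Fnn(x)/x$ (rewriting~\eqref{FM-v=1}) then gives $B_{1}(x)B_{-1}(x)=(u-u^{-1})^{2}\Fnn(x)/x$. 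Plugging this and the formulas for $B_{j}(x)B_{-j}(x)$ recalled before~\eqref{eq:polyinF} into $B(x)$, one obtains $x^{k-2}B(x)=\Fnn(x)\,\widetilde R\bigl(x,\Fnn(x)\bigr)$ for an explicit $\widetilde R\in\GK[x,F]$ of degree $k-2$ in $F$, with $\widetilde R(x,0)=x^{k-2}\rho(x)$, $\rho\in\GK[x]$ and $\rho(0)\neq0$ (here $\om\neq\pm2$ is used, through $c_{2},\dots,c_{n-1}\neq0$). A consequence used below: in the product expansion of $B(x)$, a monomial with a pole of order $\ge i$ at $x=0$ must select the ``$\Fnn(x)/x$'' branch in at least $i+1$ of the pairs $B_{j}B_{-j}$ (the pair $B_{1}B_{-1}$ offers only that branch), hence carries at least $i+1$ factors $\Fnn(x)$; therefore $[x^{-i}]B(x)\in\Rgf^{\,i+1}\GK[[\Rgf]]$, and $[\Rgf^{\,i+1+r}][x^{-i}]B(x)$ depends on $\widetilde\Fnn$ only through its coefficients of $\Rgf^{0},\dots,\Rgf^{r}$.

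\textbf{The series $h_{j}$.} Given $\widetilde\Fnn$, equation~\eqref{eq:polyinF} determines the $h_{j}$ by descending pole elimination: $h_{k-2}$ cancels the order-$(k-2)$ pole of $B(x)$, then $h_{k-3}$ the next pole of $B(x)+A(x)^{k-2}h_{k-2}$, and so on down to $h_{0}$ (the resulting combination then has positive $x$-valuation and, being an invariant, vanishes by Lemma~\ref{lem:inv}, i.e.\ satisfies~\eqref{eq:polyinF}); the triangular leading coefficients $1,-\Rgf,\dots,(-\Rgf)^{k-2}$ make the $h_{j}$ unique given $\widetilde\Fnn$. To get $h_{j}\in\Rgf\GK[[\Rgf]]$, set $B^{(j)}(x):=B(x)+\sum_{\ell>j}A(x)^{\ell}h_{\ell}$ and prove by descending induction on $j$ that $[x^{-i}]B^{(j)}(x)\in\Rgf^{\,i+1}\GK[[\Rgf]]$ for all $i\ge0$ and $h_{j}=-[x^{-j}]B^{(j)}(x)/(-\Rgf)^{j}\in\Rgf\GK[[\Rgf]]$: the $B(x)$-contribution is handled by the previous paragraph, while $[x^{-i}](A(x)^{\ell}h_{\ell})=h_{\ell}\,[x^{-i}]A(x)^{\ell}$ has $\Rgf$-valuation $\ge i+1$ because $[x^{-i}]A(x)^{\ell}\in\Rgf^{\,i}\GK[[\Rgf]]$ (each pick of the term $-\Rgf/x$ inside $A(x)^{\ell}$ costs one power of $\Rgf$) and $h_{\ell}\in\Rgf\GK[[\Rgf]]$ by the induction hypothesis.

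\textbf{The series $\widetilde\Fnn$, and the main obstacle.} We show by induction on $n\ge0$ that $[\Rgf^{n}]\widetilde\Fnn(x)$ and $[\Rgf^{n+1}]h_{0},\dots,[\Rgf^{n+1}]h_{k-2}$ are uniquely determined. Dividing~\eqref{eq:polyinF} by $\Rgf$ and extracting the coefficient of $\Rgf^{n}$: the only occurrence of $[\Rgf^{n}]\widetilde\Fnn(x)$ coming through $B(x)/\Rgf$ is the top term $x\,[\Rgf^{n}]\widetilde\Fnn(x)\,\rho(x)$, while the order-$(n+1)$ parts of the $h_{j}$ enter through the terms $(x^{2}-x)^{j}\,[\Rgf^{n+1}]h_{j}$; those are in turn governed by the $[x^{<0}]$ pole-elimination relations, in which $[\Rgf^{n}]\widetilde\Fnn(x)$ occurs only through finitely many of its Taylor coefficients at $x=0$, and triangularly. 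Solving this finite linear system determines the coefficients of $x^{\ge2}$ in $x\,[\Rgf^{n}]\widetilde\Fnn(x)\,\rho(x)$ in terms of data already known by induction, hence (since $\rho(0)\neq0$) all but finitely many Taylor coefficients of $[\Rgf^{n}]\widetilde\Fnn(x)$; the residual freedom is exactly the $k-1$ constants $[\Rgf^{n+1}]h_{j}$. This freedom is killed by imposing $[\Rgf^{n}]\widetilde\Fnn(x)\in\GK[x,1/(1-x)]$ together with $\Fnn(0)=\Rgf$ (which fixes $[\Rgf^{n}]\widetilde\Fnn(0)$): writing the $h_{j}$-contribution as $-H(x^{2}-x)$ with $\deg H\le k-2$, one gets $x\,[\Rgf^{n}]\widetilde\Fnn(x)\,\rho(x)=-H(x^{2}-x)+(\text{known})$, and membership in $\GK[x,1/(1-x)]$ prescribes the value of $H$ at $w=r(r-1)$ for every root $r\neq1$ of $\rho$, plus at $w=0$. \emph{The main obstacle is precisely this last point}: one must check that these prescriptions pin down the degree-$(k-2)$ polynomial $H$, i.e.\ that $\rho$ has at least $k-2$ roots $r\neq0,1$ with pairwise distinct (and nonzero) values $r(r-1)$. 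This is where the hypothesis $\om=-2\cos(k\pi/m)$, $\om\neq\pm2$, enters, forcing $u$ to be a primitive $n$-th root of unity with $c_{2},\dots,c_{n-1}\neq0$, which controls the factorisation of $\rho$; granting it, the induction closes and the lemma follows.
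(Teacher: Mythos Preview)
Your treatment of the shape of $A(x)$ and $B(x)$ and your descending-pole argument for $h_j\in\Rgf\GK[[\Rgf]]$ are correct and essentially the paper's. The gap is in the last paragraph: you identify a ``main obstacle'' (that $\rho$ has enough roots $r\neq0,1$ with pairwise distinct nonzero values $r(r-1)$) but leave it unresolved, and you invoke the extra hypothesis $[\Rgf^n]\widetilde\Fnn(x)\in\GK[x,1/(1-x)]$, which is not part of the lemma --- the statement asserts uniqueness in all of $\GK[[x,\Rgf]]$.

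In fact the obstacle is an artefact of a bound that is one unit too weak. Your estimate that $[\Rgf^{i+1+r}][x^{-i}]B(x)$ depends on $\widetilde\Fnn$ through order $r$ can be sharpened to order $r-1$: any monomial that actually involves some $\widetilde\Fnn$ must take the $\widetilde\Fnn$ branch in at least one of the $\ell$ factors $\Fnn(x)/x=\Rgf(1/x+\widetilde\Fnn)$, hence the $1/x$ branch in at most $\ell-1$ of them; since a pole of order $i$ already forces at least $i+1$ choices of the $1/x$ branch (as you observe), this gives $\ell\ge i+2$, so the orders of the $\widetilde\Fnn$'s sum to at most $r-1$. With this sharper bound, determining $[\Rgf^{n+1}]h_j$ via $[\Rgf^{n+1+j}x^{-j}]$ uses $\widetilde\Fnn$ only through order $n-1$, and the system \emph{decouples}: first compute each $[\Rgf^{n+1}]h_j$, then extract the full coefficient of $\Rgf^{n+1}$ in~\eqref{eq:polyinF}. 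Since $\Fnn$ has $\Rgf$-valuation $1$, all powers $\Fnn^{\ge2}$ in $B$ contribute only lower-order data, and the sole remaining unknown is $[\Rgf^{n+1}]\Fnn(x)$ multiplied by the coefficient of $\Fnn(x)^1$ in $B(x)$ --- a nonzero Laurent polynomial in $x$ whenever $\om\neq\pm2$. Dividing by it gives $[\Rgf^n]\widetilde\Fnn(x)$ directly. This is the paper's argument, and it bypasses the root-separation question entirely.
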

    
\begin{proof}
  We have argued above that the $h_j$ are Laurent series in $t$. Hence they are Laurent series in $\Rgf$. Let us  show that they lie in $\Rgf\mathbb{K}[[\Rgf]]$.  We will show  by induction on $n$ that $[\Rgf^{n}]h_{j}=0$ for each $j$, starting at some negative $n$ and ending at $n=0$. Clearly for sufficiently small $n$ the result holds, so we can proceed to the inductive step: assume that $[\Rgf^{m}]h_{i}=0$ for all $i$ and all $m<n$, for some fixed $n\le 0$. That is, $h_i=\LandauO(\Rgf^n)$ for all~$i$. Observe that it follows from~\eqref{F-start} and from the definition~\eqref{A-inv} of $A(x)$  that for all $i,j\ge 0$,
     \[
      [x^{-j} ] A(x)^i =
      \begin{cases}
        0  & \text{if } i<j,\\
        (-1)^j \Rgf^j & \text{if } i=j,\\
        \LandauO(\Rgf^{j+1}) & \text{if } i>j.
      \end{cases}
    \]
Analogously, it follows from~\eqref{F-start} and the expression of $B(x)$ that for $0\le j \le k-2$, the coefficient of $x^{-j}$ in $B(x)$ is a multiple of $\Rgf^{j+1}$. Let us now fix $j \in \llbracket 0, k-2\rrbracket$.
   Extracting the coefficient of $\Rgf^{n+j}x^{-j}$ on both sides of \eqref{eq:polyinF} yields $[\Rgf^{n}]h_{j}=0$, as required. This completes the induction, so $h_{j}\in\Rgf\mathbb{K}[[\Rgf]]$.

We can now determine $[\Rgf^{n}]h_{j}$ and $[\Rgf^{n-1}]\widetilde F(x)$ by induction on $n\ge 0$. For $n=0$ we have just seen that these coefficients are $0$. For the inductive step, assume that $n\geq 1$ and that $[\Rgf^{m}]h_{j}$ and $[\Rgf^{m-1}]\widetilde F(x)$ are known for all $m<n$. Then each $[\Rgf^{n}]h_{j}$, for $0\le j \le k-2$, can be determined by extracting
    the coefficient of $\Rgf^{n+j}x^{-j}$ from~\eqref{eq:polyinF}.
    Finally, $[\Rgf^{n-1}]\widetilde F(x)$ (or equivalently $[\Rgf^n] \Fnn(x)$) can be determined by analysing the coefficient of $\Rgf^{n}$ on both sides of \eqref{eq:polyinF}: this coefficient is known in each term $A(x)^i h_i$, and in $B(x)$, the only unknown part is $[\Rgf^n] \Fnn(x)$ multiplied by the coefficient of $\Fnn(x)^1$ in $B(x)$ (seen as a polynomial in $x$ and $\Fnn(x)$, and in fact a multiple of $\Fnn(x)$), which we can check to be non-zero as soon as $\om \neq -2,2$. This completes our second induction.
\end{proof}

An explicit polynomial relating $\Fnn(x)$, $x$ and $\Rgf$ can be worked out explicitly for fixed values of $\om$ using the effective strategy of~\cite{mbm-jehanne}.

\medskip

  \noindent{\bf Example.} For $\om=-1$, we have obtained
  \[
    B(x)-54 \Rgf A(x)-27\Rgf (\Rgf - 2\Fnn'(0) - 4)=0.
  \]
  Using the approach of~\cite{mbm-jehanne}, or even Brown's original \emm quadratic method,~\cite[Sec.~2.9]{goulden-jackson}, we first find\footnote{Surprisingly, this series in $\Rgf$ is closely related to the \gf\ of bipartite maps.}
  %
  \[
    \Fnn'(0)= \frac{ (1+8\Rgf)^{3/2}-1-12\Rgf+8\Rgf^2}{16\Rgf},
  \]
  and then, writing $\Rgf=\Sgf(1+2\Sgf)$ with $\Sgf=\LandauO(\Rgf)$, 
  \[
    \Fnn(x)=x^2-x+ \frac{\Rgf +(\Sgf+2x-x^2)\sqrt{(1-x)^2+4 \Sgf+4 \Sgf^2}}{2-x}.
  \]
  Hence $\Fnn(x)$ (resp.~$\Mnn(x)$) has degree $4$ (resp.~$8$) over $\cs(x,\Rgf)$. The coefficient of $x^{-1}$ in $\Mnn(x)$ (which must be $t$) is a D-finite series of $\Sgf$ (and thus of $\Rgf$), for which a linear DE can be automatically computed using creative telescoping:
  \[
    \Sgf (\Sgf+1) (8 \Sgf-1) (4 \Sgf+1) t''(\Sgf)-4 \Sgf (\Sgf+1) (8 \Sgf-1) t'(\Sgf)+2 (4 \Sgf+1)^2 t(\Sgf)=0,
  \]
 and solved in terms of hypergeometric functions:
  \[
    t=\frac{\Sgf(1+\Sgf)(1-8\Sgf)}2\cdot \frac{d}{d\Sgf}
    \left[ {(1-8\Sgf)^{-1/4}}\  _2F_1\left(\frac 1  4,\frac 1 4; 1;- \frac{64 \Sgf^3(1+\Sgf)}{1-8\Sgf}\right)\right],
  \]
  where we recall that $\Sgf=(\sqrt{1+8\Rgf}-1)/4$.

\section{Differential algebraicity}
\label{sec:DA}
In this section, we prove that the series $\Qgf$ is D-algebraic (in all its variables) in the three cases that we have completely solved: $\om=0$, $\om=1$ and $v=1$. We rely on the closure properties of D-algebraic series established in~\cite[Sec.~6]{BeBMRa17}. We also need an additional closure property for implicitly defined series, which we establish in Section~\ref{sec:closure}. In Sections~\ref{sec:DA-om0} and~\ref{sec:DA-om1}, we try to minimise  the orders of differential equations satisfied by $\Qgf$ when $\om=0$ or $\om=1$.

Below, we denote partial derivatives with subscripts, e.g., $F_x:=\partial F/\partial x$. 

\subsection{D-algebraicity for implicit functions}
\label{sec:closure}
In Section~\ref{sec:fps} we have defined a series $F(x_1,\ldots, x_d)$ (say, with  coefficients in $\qs$) to be D-algebraic in the $x_i$'s if it satisfies a polynomial differential equation in each $x_i$, with coefficients in $\qs(x_1, \ldots, x_d)$. However,  we can equivalently require to have  a DE in each $x_i$ \emm with coefficients in $\qs$,, see~\cite[Sec.~6.1]{BeBMRa17}.
\begin{Proposition}\label{prop:implicitDA}
  Let $\Psi(x_1, \ldots, x_d,y)$ be a D-algebraic series in $y$ and the $x_i$'s.
 
  Let $F\equiv F(x_1, \ldots, x_d)$ be a formal power series in~$x_1, \ldots, x_d$ with no constant term, satisfying $\Psi( x_1, \ldots, x_d, F)=0$ and $\Psi_y(x_1, \ldots, x_d, F){\neq}0$. Then~$F$ is D-algebraic in $x_1, \ldots, x_d$.
\end{Proposition}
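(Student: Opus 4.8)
The statement is a closure property: if $\Psi$ is D-algebraic in each of its variables, then any formal power series root $F$ of $\Psi(\mathbf{x},y)=0$ that is ``simple'' (meaning $\Psi_y(\mathbf{x},F)\neq 0$) inherits D-algebraicity in each $x_i$. The plan is to fix one variable, say $x:=x_i$, treat the remaining $x_j$'s as extra parameters (equivalently, work over the field $\GK$ obtained by adjoining them to $\qs$), and show $F$ is D-algebraic in $x$ over $\GK$. I would first reduce to the case where the ground field is just $\qs$ by the remark recalled in the paragraph before the proposition (a DE in each variable over $\qs(x_1,\ldots,x_d)$ can be taken over $\qs$), so it suffices to produce, for each $i$, a nontrivial polynomial differential equation in $x_i$ for $F$ with constant coefficients.

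\textbf{Key steps.} First I would record that, by hypothesis, $\Psi$ itself satisfies a polynomial differential equation in $y$, say $P(\Psi,\Psi_y,\ldots,\Psi_{y^{(r)}})=0$ with $P\in\qs[X_0,\ldots,X_r]$ nontrivial, where derivatives are with respect to $y$; but more to the point, $\Psi$ is D-algebraic in $x=x_i$, so it satisfies $Q(x,\Psi,\Psi_x,\ldots,\Psi_{x^{(s)}})=0$ for a nontrivial polynomial $Q$. The classical trick (this is exactly the implicit-function closure lemma; compare the closure results of~\cite[Sec.~6]{BeBMRa17}) is to differentiate the relation $\Psi(\mathbf{x},F)=0$ with respect to $x$ repeatedly using the chain rule. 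Writing $G:=\Psi$ and evaluating at $y=F$, the $n$-th derivative $\frac{d^n}{dx^n}\,\Psi(\mathbf{x},F)=0$ expresses a universal polynomial (Faà di Bruno type) in the partial derivatives of $\Psi$ evaluated at $(\mathbf{x},F)$ and in $F,F',\ldots,F^{(n)}$; solving the $n=1$ identity gives
\[
F' \;=\; -\,\frac{\Psi_x(\mathbf{x},F)}{\Psi_y(\mathbf{x},F)},
\]
which is legitimate as a formal power series identity because $\Psi_y(\mathbf{x},F)\neq 0$ (and, having no constant term issues with $F$, $\Psi_y(\mathbf{x},F)$ is a unit in the relevant ring of series). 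Iterating, every $F^{(n)}$ is a rational function (with denominators powers of $\Psi_y(\mathbf{x},F)$) of finitely many of the partial derivatives $\partial_x^a\partial_y^b\Psi$ evaluated at $(\mathbf{x},F)$. Now introduce new unknowns for the finitely many ``generators'' needed: because $\Psi$ is D-algebraic in both $x$ and $y$, the collection of all its partial derivatives $\{\partial_x^a\partial_y^b\Psi\}$ lies in a differential field extension of $\qs(x,\mathbf{x}',\Psi)$ of finite transcendence degree — this is the standard fact that a D-algebraic function has all its derivatives (in all available directions) confined to a finitely generated differential field. Evaluating at $y=F$ (a differential substitution that is a ring homomorphism) keeps everything inside a field of finite transcendence degree over $\qs(x)$; hence $F$, together with all its $x$-derivatives, lies in such a field, which forces a nontrivial algebraic relation among $F,F',\ldots,F^{(m)},x$ for some $m$ — i.e., $F$ is D-algebraic in $x$. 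Repeating for each $i$ finishes the proof.

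\textbf{Main obstacle.} The delicate point is the transcendence-degree bookkeeping: one must argue carefully that the full set of \emph{mixed} partial derivatives $\partial_x^a\partial_y^b\Psi$ generates a finite-transcendence-degree extension, given only that $\Psi$ is D-algebraic separately in $x$ and in $y$ (not jointly). The clean way is: D-algebraicity in $y$ gives a bound $B$ such that $\Psi_{y^{(B)}}$ is algebraic over $\qs(x,\mathbf{x}',\Psi,\ldots,\Psi_{y^{(B-1)}})$; differentiating that algebraic relation in $x$ shows $\partial_x$ of the generators stays algebraic over $\qs(\mathbf{x})(\Psi,\Psi_y,\ldots,\Psi_{y^{(B-1)}}, \Psi_x,\ldots,\Psi_{x^{(C)}})$ where $C$ comes from D-algebraicity in $x$; a short closure argument (derivations preserve the algebraic closure of a differential subfield) then confines \emph{all} mixed partials to the finitely generated differential field $\qs(\mathbf{x})\langle \Psi\rangle$ generated by $\Psi$ under $\partial_x,\partial_{x_1},\ldots,\partial_{x_d}$ acting on the finite set $\{\Psi,\Psi_y,\ldots,\Psi_{y^{(B-1)}}\}$ — which has finite transcendence degree by the separate D-algebraicity hypotheses. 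After the substitution $y=F$ the image field still has finite transcendence degree over $\qs(x_i)$, and $F\in$ that field, so $F$ is D-algebraic in $x_i$. The rest is routine chain-rule bookkeeping that I would not spell out in full.
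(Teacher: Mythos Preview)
Your proposal is correct and follows essentially the same approach as the paper: differentiate $\Psi(\mathbf{x},F)=0$ repeatedly, use the nonvanishing of $\Psi_y$ to express each $F^{(n)}$ rationally in the mixed partials of $\Psi$ evaluated at $(\mathbf{x},F)$, and then observe that these mixed partials (hence the $F^{(n)}$) lie in a field of finite transcendence degree over $\qs$. The paper handles the finite-transcendence-degree step more crisply, invoking~\cite[Prop.~6.3]{BeBMRa17} to confine all mixed partials of $\Psi$ to the field generated by the finitely many $\delta_x^i\delta_y^j\Psi$ with $i\le d_1$ and $j\le d_2$, whereas your ``main obstacle'' paragraph arrives at the same conclusion by a slightly more roundabout closure argument.
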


\begin{proof}
  Let us first restrict our attention to the case $d=1$, and write $\Psi(x,F(x))=0$. Let us denote by $\delta_x$ and $\delta_y$ the differentiations with respect to $x$ and $y$, respectively. Let $d_1$ and $d_2$ be the (minimal) orders over $\qs$ of $\Psi(x,y)$ in $x$ and $y$, respectively. As proved in~\cite[Prop.~6.3]{BeBMRa17}, all $x$-derivatives of $\Psi$ belong to $\qs(\Psi, \delta_x \Psi, \ldots, \delta_x^{d_1}\Psi)$. Similarly, all $y$-derivatives of $\Psi$ belong to $\qs(\Psi, \delta_y \Psi, \ldots, \delta_y^{d_2}\Psi)$. It follows that all partial derivatives of $\Psi$ (in $x$ and $y$) belong to the extension of $\qs$ generated by the series $\delta_x^i \delta_y^j\Psi$, for $0\le i\le d_1$ and $0\le j \le d_2$, and in particular to an extension of $\qs$ of finite transcendence degree. By specialization, the field $\GF$ generated over~$\qs$ by the partial derivatives of $\Psi$ evaluated at $(x,F(x))$ has also finite transcendence degree.

  Let us differentiate the identity $\Psi(x,F(x))=0$ in $x$. This gives:
  \[
    \Psi_x + (\delta_xF)\, \Psi_y=0,
  \]
  where  the derivatives of $\Psi$ are evaluated at $(x,F(x))$. Since by assumption $\Psi_y(x,F(x)){\neq}0$, we conclude that $\delta_x F$ belongs to the field $\GF$.
   More generally, by differentiating $i$ times the identity $\Psi(x,F(x))=0$, we conclude by induction on $i$ that $\delta_x^iF$ belongs to that same field, which has finite transcendence degree. This means that $F$ is D-algebraic in $x$.  

  The argument is exactly the same for $d>1$.
\end{proof}

Let us also state two other results  borrowed  from~\cite[Sec.~6]{BeBMRa17}.

\begin{Proposition}\label{prop:composition}
  If $\Lambda(y_1,\ldots, y_k)$ is a D-algebraic series (or function) of $k$ variables, and $G_1(x_1, \ldots, x_d), \ldots, G_k(x_1, \ldots, x_d)$ are D-algebraic in all $x_i$'s, then the composition $H(x_1, \ldots, x_d):=\Lambda(G_1, \ldots, G_k)$, if well defined,  is also D-algebraic in the $x_i$'s.
\end{Proposition}
This is Proposition~6.5 in~\cite{BeBMRa17}. In passing, let us mention that to make the proof of this proposition completely rigorous, one should use the language of fields of finite transcendence degree rather than rational expressions of $H$ and its derivatives, so as to avoid problems with denominators that might cancel after specializing the variables $y_i$ to the $G_i$'s.  

The next lemma also follows from~\cite[Sec.~6.2]{BeBMRa17}.

\begin{Lemma}
  The series $\theta(z)\equiv \theta(z;q)$ defined by~\eqref{theta-def-init} is D-algebraic in $z$ and $q$.
\end{Lemma}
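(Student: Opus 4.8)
The plan is to establish D-algebraicity of $\theta(z;q)$ in each of its two variables separately, using the classical fact that Jacobi theta functions satisfy the heat equation together with a low-order algebraic differential equation in $z$. First I would recall that the product formula~\eqref{theta-def-prod} (or directly the series~\eqref{theta-def-init}) shows $\theta$ is a well-defined element of $\qs[[q,z]]$ after expanding $\sin$ and the trigonometric pieces as power series; more precisely $\theta(z;q)\in q^0$-free form with $\theta(z;q)=\sin z + q(\cdots)+\cdots$, so it lies in $\qs[\sin z,\cos z][[q]]$. The key classical identity is the heat-type equation
\[
  4\,\frac{\partial \theta}{\partial \tau} = \frac{\partial^2 \theta}{\partial z^2},
\]
where $q=e^{2i\tau}$; rewriting $\partial/\partial\tau$ in terms of $q\,\partial/\partial q$ gives a relation of the form $a(q)\,q\,\theta_q = \theta_{zz}$ for an explicit nonzero constant $a$. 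This is the crucial structural input and it can be verified termwise on the series~\eqref{theta-def-init}, since $\partial_z^2\sin((2n+1)z) = -(2n+1)^2\sin((2n+1)z)$ and $q\,\partial_q\, q^{n(n+1)/2} = \tfrac{n(n+1)}2 q^{n(n+1)/2}$, and one checks $-(2n+1)^2 = -4\cdot\tfrac{n(n+1)}2 - 1$, i.e. the two operators differ by the scalar $-1$ after the appropriate normalization — so in fact $\theta_{zz} = -\theta - 4\,q\,\theta_q$ up to sign conventions. I would carry out this termwise check carefully to fix the exact constants.

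Granting such an identity, here is how I would finish. For D-algebraicity in $z$: differentiate the heat relation in $z$ and eliminate. Concretely, from $\theta_{zz}+\theta = -4\,q\,\theta_q$ one also gets, differentiating in $z$, that $\theta_{zzz}+\theta_z = -4\,q\,\theta_{qz} = -4\,q\,(\theta_z)_q$. More efficiently, one can simply invoke that $\theta(\,\cdot\,;q)$, as a function of $z$ for fixed $q$, is a classical Jacobi theta function and hence satisfies a second-order \emph{linear} ODE in $z$ after a suitable change — but to keep the argument self-contained and purely formal I would instead note the following: from the two relations $\theta_{zz} = c_1\theta + c_2\, q\theta_q$ and (differentiating in $z$) $\theta_{zzz} = c_1\theta_z + c_2\, q(\theta_z)_q$, together with the analogous relation one obtains by taking $q\partial_q$ of the first, one sees that the $\qs$-vector space spanned by all iterated $z$-derivatives of $\theta$ is contained in the $\qs(q,z)$-span of $\{\theta, \theta_z, \theta_q, \theta_{qz}, q\theta_{qq}, \ldots\}$, which has finite transcendence degree over $\qs(q,z)$; hence $\theta$ is D-algebraic in $z$. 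For D-algebraicity in $q$: the logarithmic derivative trick works cleanly. From the product formula~\eqref{theta-def-prod}, $\log\theta = \log\sin z + \sum_{n\ge1}\log(1-q^n) + \sum_{n\ge1}\log(1-q^ne^{-2iz}) + \sum_{n\ge1}\log(1-q^ne^{2iz})$, so
\[
  q\,\frac{\theta_q}{\theta} = -\sum_{n\ge1}\frac{nq^n}{1-q^n} - \sum_{n\ge1}\frac{nq^ne^{-2iz}}{1-q^ne^{-2iz}} - \sum_{n\ge1}\frac{nq^ne^{2iz}}{1-q^ne^{2iz}},
\]
and each of these three sums is a $q$-logarithmic derivative of a classical theta/Dedekind-type object; in any case, each is known to be D-algebraic in $q$ (the first is, up to normalization, $q\,(\log\eta)'$ which satisfies the Ramanujan-type ODE system for $E_2$, and the other two are translates). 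Alternatively, and perhaps more simply, I would again lean on the heat equation: it already expresses $q\theta_q$ through $\theta$ and $\theta_{zz}$, so combining it with the D-algebraicity in $z$ just established, one iterates $q\partial_q$ and checks that all $q$-derivatives of $\theta$ stay in a finitely generated (over $\qs(q,z)$) differential field, giving D-algebraicity in $q$ as well.

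The main obstacle — and the place I would spend the most care — is making the ``finitely generated differential field'' bookkeeping rigorous rather than hand-wavy, i.e. exhibiting an explicit finite generating set closed under both $\partial_z$ and $q\partial_q$ modulo the heat relation, so that Proposition~\ref{prop:implicitDA}-style transcendence-degree arguments (in the spirit of~\cite[Sec.~6]{BeBMRa17}) apply. In practice the cleanest route is probably: (i) verify termwise that $\theta$ satisfies the exact heat relation with explicit constants; (ii) deduce a genuine algebraic ODE in $z$ of order $3$ with coefficients in $\qs(q)$ by eliminating $q\theta_q$ between the heat relation and its $z$-derivative combined with one more relation; (iii) for the $q$-direction, invoke the standard fact that $q(\log\eta)'$ — equivalently the Eisenstein series $E_2$ — is D-algebraic in $q$ (satisfying a nonlinear ODE of order $2$, part of the Ramanujan system), and that the two shifted sums above are values of the same D-algebraic family, then use closure of D-algebraic series under the operations in~\cite[Sec.~6.2]{BeBMRa17} to conclude that $q\theta_q/\theta$, hence $\theta$ itself, is D-algebraic in $q$. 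Everything else is routine.
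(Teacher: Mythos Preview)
Your heat-equation computation is essentially right (the constant is $-8$, not $-4$: one has $\theta_{zz}+\theta+8q\theta_q=0$), and the overall architecture is the same as the paper's, which also cites~\cite{BeBMRa17} and uses the heat equation for the $q$- (or $\tau$-) direction. But your argument for D-algebraicity in $z$ has a genuine gap.

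The elimination you propose cannot work. Differentiating $\theta_{zz}+\theta+8q\theta_q=0$ in $z$ gives $\theta_{zzz}+\theta_z+8q\theta_{qz}=0$, and each further $z$-derivative introduces a new mixed derivative $\partial_q\partial_z^{k}\theta$; the heat equation never relates these to one another, so there is nothing to eliminate and no pure $z$-ODE emerges. Your fallback claim that the set $\{\theta,\theta_z,\theta_q,\theta_{qz},\theta_{qq},\dots\}$ has finite transcendence degree is precisely what needs to be proved, not assumed. (The aside that $\theta$ satisfies a ``second-order linear ODE in $z$'' is also incorrect: theta functions are not D-finite in $z$.)

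What is missing is the Weierstrass input, which is exactly what the paper invokes: one has $\partial_z^2\log\theta(z;q)=-\wp(z)+c(q)$ for some $c$ depending only on $q$, and $\wp$ satisfies $(\wp')^2=4\wp^3-g_2\wp-g_3$. Rewriting this in terms of $\theta,\theta_z,\theta_{zz},\theta_{zzz}$ gives the genuine order-$3$ algebraic ODE in $z$ you were aiming for, with coefficients $c,g_2,g_3$ that are series in $q$ only. These coefficients are themselves D-algebraic in $q$ (Ramanujan's system for Eisenstein series), and \emph{then} the heat equation closes the loop: the field generated over $\qs(z,q)$ by $\theta,\theta_z,\theta_{zz}$ and $E_2,E_4,E_6$ is stable under both $\partial_z$ and $q\partial_q$, hence of finite transcendence degree, giving D-algebraicity in both variables. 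Your plan~(iii) for the $q$-direction is in the right spirit, but note that the ``shifted sums'' $\sum_n\frac{n e^{\pm 2iz}q^n}{1-e^{\pm 2iz}q^n}$ are not literally $E_2$ at a shifted argument, and showing they are D-algebraic in $q$ again leads you back to the $\wp$/Eisenstein machinery. One valid reordering of your ideas would be: first establish a $q$-ODE for $\theta$ (this needs real work, not just ``same family''), and then observe from the heat equation that every $\partial_q^k\theta$ is a $\qs(q)$-linear combination of even-order $z$-derivatives of $\theta$, turning the $q$-ODE into a $z$-ODE. Either way, the Weierstrass relation (or something equivalent) is the missing piece.
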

More precisely, it is explained in the proof of Proposition~6.7 in~\cite{BeBMRa17} that the following series,
\[
  \bar\theta(z;\tau):=\sum_{n\in \zs}(-1)^n e^{i(2n+1)z+{\pi i}\tau(n+1/2)^2}
\]
is D-algebraic in $z$ (due to its relation to Weierstrass' $\wp$ function) and $\tau$ (due to the heat equation). (There is a typo in~\cite{BeBMRa17}, where the factor $(-1)^n$ is missing in the definition of $\bar\theta$.) This series is related to our series $\theta(z;q)$ by
\[
  \bar \theta(z; \tau)=2i  e^{{\pi i}\tau/4} \theta(z;q),
\]
with $q= e^{2{\pi i}\tau}$, and the D-algebraicity of $\theta$ follows by a (D-algebraic) change of variables.

\begin{Corollary}
  The specializations of $\Qgf$ at $\om=0$, at $\om=1$, and at $v=1$ (that is, the series $\Qgf_0=2\Ggf$, $\Qgf_1$ and $\tilde \Qgf$ of Theorems~\ref{thm:general},~\ref{thm:quartic} and~\ref{thm:allomega}), are D-algebraic (in $t$ and $v$ for the first two series, in $t$ and $\om$ for the third one).
\end{Corollary}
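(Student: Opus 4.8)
The plan is to treat each of the three specialisations separately, reducing in each case to the closure properties recalled above: D-algebraicity is preserved by rational operations, by differentiation, by composition (Proposition~\ref{prop:composition}), and — crucially — by implicit definition (Proposition~\ref{prop:implicitDA}). In all three cases the generating function of interest is built from a key ``$\Rgf$''-series that is defined implicitly by an equation of the form $t=\Omega(\text{stuff})$, and the main work is to check that the defining equation has a D-algebraic left-hand side minus right-hand side, so that Proposition~\ref{prop:implicitDA} applies.

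\medskip
First I would treat $\om=0$ (Theorem~\ref{thm:general}). The series $\Rgf_0$ is defined by~\eqref{t-R0-first}, which we rewrote as $t=\Omega(\Rgf_0, t(v-1))$ in~\eqref{t-Omega0}. The generating series $\Omega(a,b)=\sum_{n,k\ge 0}\frac{1}{n+1}\binom{2n}{n}\binom{2n+k}{k}\binom{2n+k}{n} a^{n+1} b^k$ is an explicit bialgebraic (indeed, diagonal-of-rational-type) power series, hence D-algebraic in $a$ and $b$; then $\Psi(t,v,a):=\Omega(a,t(v-1))-t$ is D-algebraic in $t$, $v$ and $a$ by Proposition~\ref{prop:composition}, and it vanishes at $a=\Rgf_0$ with non-vanishing $a$-derivative (the coefficient of $a^1 b^0$ in $\Omega$ is $1$). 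Proposition~\ref{prop:implicitDA} then gives D-algebraicity of $\Rgf_0$ in $t$ and $v$. Finally $\Ggf=\Qgf_0/2$ is obtained from $\Rgf_0$ by another series of the same shape, namely the sum displayed in Theorem~\ref{thm:general} applied to $\Rgf_0$ and $t(v-1)$, followed by division by $t^2$ and subtraction of $v/2$: all of these operations preserve D-algebraicity, so $\Ggf$ (and hence $\Qgf_0=2\Ggf$) is D-algebraic in $t$ and $v$. The case $\om=1$ (Theorem~\ref{thm:quartic}) is handled verbatim, replacing the trinomial coefficients $\binom{2n+k}{n}$ by $\binom{3n+2k}{n+k}$; the relevant two-variable series is again an explicit algebraic-coefficient power series, hence D-algebraic, and the same chain of closure properties applies.

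\medskip
For $v=1$ (Theorem~\ref{thm:allomega}) I would argue from the theta-function formulas. By the Lemma recalled just above, $\theta(z)\equiv\theta(z;q)$ is D-algebraic in $z$ and $q$; hence so are all its $z$-derivatives $\theta'(z),\theta''(z),\theta^{(3)}(z)$ (differentiation preserves D-algebraicity), and therefore their specialisations at $z=\alpha$ and $z=0$, where $\om=-2\cos(2\alpha)$ — note $\cos\alpha$, $\sin\alpha$ are algebraic functions of $\om$, and the specialisation $z\mapsto\alpha(\om)$ is a D-algebraic substitution, so the resulting series in $q$ and $\om$ remain D-algebraic by Proposition~\ref{prop:composition}. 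Consequently the right-hand side of~\eqref{t-q}, call it $\tau(q,\om)$, is D-algebraic in $q$ and $\om$; since $\tau=q+\LandauO(q^2)$, the implicit equation $t=\tau(q,\om)$ defines $q\equiv q(t,\om)$, and Proposition~\ref{prop:implicitDA} (applied to $\Psi(t,\om,q):=\tau(q,\om)-t$, whose $q$-derivative is $1+\LandauO(q)\ne 0$) shows $q$ is D-algebraic in $t$ and $\om$. Then $\widetilde\Rgf=\Fnn(0)$, given by the displayed formula of Theorem~\ref{thm:allomega}, is a rational expression in $\cos\alpha,\sin\alpha$ and the D-algebraic series $\theta(\alpha),\theta'(\alpha),\theta^{(3)}(\alpha),\theta^{(3)}(0),\theta'(0)$ composed with the D-algebraic series $q(t,\om)$, hence D-algebraic in $t$ and $\om$; and finally $\widetilde\Qgf=\dfrac{t-\widetilde\Rgf}{(\om+2)t^2}-1$ is D-algebraic, being a rational combination of D-algebraic series.

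\medskip
The step I expect to be the main obstacle is making Proposition~\ref{prop:implicitDA} genuinely applicable in the $\om=0$ and $\om=1$ cases: one must verify that the bivariate ``$\Omega$''-type series $\sum \frac{1}{n+1}\binom{2n}{n}\binom{2n+k}{k}\binom{2n+k}{n} a^{n+1}b^k$ (and its $\om=1$ analogue) is itself D-algebraic in its two variables. These are not obviously algebraic, but they are diagonals (or Hadamard-type specialisations) of rational functions — e.g. via the classical contour-integral representations of the Catalan and trinomial factors — hence D-finite in each variable, a fortiori D-algebraic; alternatively one can exhibit an explicit algebraic equation relating $\Omega(a,b)$, $a$, $b$ after a suitable substitution, as is done in the $v=1$ reduction in~\cite{mbm-aep1}. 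A subsidiary technical care, already flagged in the remark after Proposition~\ref{prop:composition}, is that all compositions be phrased in the language of fields of finite transcendence degree generated by the relevant partial derivatives evaluated at the substitution point, so that no spurious denominator vanishes; with that convention in place the argument goes through as sketched.
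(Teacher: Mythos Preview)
Your proposal is correct and follows essentially the same approach as the paper: in each case one first shows that the intermediate series ($\Rgf_0$, $\Rgf_1$, or $q$) is D-algebraic via Proposition~\ref{prop:implicitDA} (using that the defining $\Omega$-series, resp.\ the theta-function expression, is D-finite/D-algebraic), and then concludes for $\Qgf$ via Proposition~\ref{prop:composition}. Your added justification that the two-variable $\Omega$-series are D-finite (as diagonals of rational functions) and your care about the non-vanishing of the relevant partial derivative are exactly the details the paper leaves implicit in its brief proof.
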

\begin{proof}
  In all three cases, we define a first bivariate series (either $\Rgf_0$, or $\Rgf_1$, or $q$) by an implicit D-algebraic equation; see~\eqref{t-R0-first},~\eqref{t-R1-first} and~\eqref{t-q}. Proposition~\ref{prop:implicitDA} implies that this series is D-algebraic. Then we express $\Qgf_0$, $\Qgf_1$ or $\tilde \Qgf$ as a D-algebraic series in this implicit series and the two remaining variables. We conclude using Proposition~\ref{prop:composition}.
\end{proof}

\subsection{The case $\bm{\om=0}$}
\label{sec:DA-om0}

Let us now be more explicit, and try to obtain DEs of small orders. We refer to the {\sc Maple} session available on~\cite{bmep-ref-arxiv}
for details.  Recall the definition~\eqref{t-Omega0} of $\Rgf\equiv \Rgf_0$, with
\[
  \Omega(r,u)=\sum_{n,k\geq0}\frac{1}{n+1}{2n\choose n}{2n+k\choose k}{2n+k\choose n} u^kr^{n+1}.
\]
This is a D-finite series in two variables $r$ and $u$. It can be checked that its partial derivatives span a vector space of dimension (at most) $3$ over $\qs(r,u)$, generated for instance by $\Omega$, $\Omega_r$ and~$\Omega_u$.
In particular, the second order derivatives $\Omega_{r,r}$, $\Omega_{u,u}$ and $\Omega_{r,u}$ can be expressed as linear combinations of $\Omega$, $\Omega_r$ and $\Omega_u$. More precisely, they are linear combinations of $\Phi:=\Omega-u \Omega_u$ and $\Omega_r$. This means that:
\begin{itemize}
\item we have a first  vector space, generated by $\Omega$, $\Omega_r$ and $\Omega_u$, closed by partial derivations in~$r$ and $u$ --- that is, containing all partial derivatives of $\Omega$,
\item we also have another (a priori smaller)  vector space, generated by $\Phi$ and $\Omega_r$, or by $\Phi$ and $\Phi_r$,  closed by partial derivations.  (Here we use the fact that $\Phi_u=-u \Omega_{u,u}$ only involves a second order derivative of $\Omega$.) This space thus contains all partial derivatives of $\Phi$, as well as all partial derivatives of $\Omega$, except possibly $\Omega$ and $\Omega_u$.
\end{itemize}
Let us now differentiate~\eqref{t-Omega0} with respect to $t$ (remembering that $\Rgf$ is a function of $t$). This gives a relation between $\Omega_r$ and $\Omega_u$, evaluated at $(\Rgf,t(v-1))$:
\[
  1= \Rgf ' \Omega_r + (v-1) \Omega_u.
\]
Here, the derivative of $\Rgf$ is with respect to $t$. Once multiplied by $t$, this can be rewritten as
\beq\label{R1-Omr}
t  \Rgf ' \Omega_r = \Phi,
\eeq
where the functions are evaluated at $(\Rgf, t(v-1))$ again. Recall that $\Omega_r(r,u)$ and $\Phi(r,u)$ generate a vector space over $\qs(r,u)$ that is closed by derivation. Hence,  differentiating the above identity with respect to $t$ gives another identity of the form $A\Omega_r = \Phi$, 
for $A$ a rational function in $t$, $v$, $\Rgf $, $\Rgf '$ and $\Rgf ''$. Comparing the last  two equations proves that $A= t\Rgf '$, so that $\Rgf $ is D-algebraic in $t$ of order (at most) $2$. More precisely, writing $u=t(v-1)$, we have:
\begin{align}\label{ED-R0}
&  t^2 \Rgf  \left(16 \Rgf ^2 +\left(8 u^{2}+20 u -1\right)\Rgf  +u \left(u -1\right)^{3} \right)
  \Rgf ''
  -t^3\left(4  \Rgf -3  u \left(u -1\right)\right) (\Rgf ')^{3}\\
 & -3 t^{2} u \left(8 u +1\right) \Rgf  (\Rgf ')^2
  +3u^2 \Rgf \left(12 t\Rgf  - t \left(u -1\right)^{2} \right) \Rgf '
  -16u^{2} \Rgf  ^{3} +u^{2} \left(4 u -1\right) \left(u -1\right) \Rgf ^{2}=0.
  \nonumber
\end{align}
 The case $v=1$ studied in~\cite{mbm-aep1} (that is, $u=0$) is much simpler, with $\Rgf  (16 \Rgf -1) \Rgf ''=4t (\Rgf ')^3$, but the order remains $2$ only. For the $v$-derivatives of $\Rgf$, the situation is more generic, because the counterpart of~\eqref{R1-Omr}, namely 
\beq\label{R2-Omr}
(v-1) \Rgf_v \Omega_r +t = \Phi,
\eeq
is no longer homogeneous. Consequently, we have to differentiate~\eqref{t-Omega0} three times in $v$ (rather than twice when dealing with $t$-derivatives) and we end up with a third order differential equation satisfied by $\Rgf$.

We now want to work out a differential equation (first, in the main variable $t$) for the \gf\ $\Qgf_0=2\Ggf$, given by~\eqref{Q0-R}. Let us define $L$ and $\Lambda$ by  
\[
  \Lgf:=  t^2\left(  \Qgf_0+v\right)= \Lambda(\Rgf,t(v-1)),
\]
with
\[
  \Lambda(r,u)= \sum_{{n,k\geq0},{~n+k>0}}\frac{1}{n+1}{2n\choose n}{2n+k\choose k}{2n+k-1\choose n} u^kr^{n+1}.
\]
Here we find that $\Lambda$ belongs to the vector space spanned by $1$, $\Omega$, $\Omega_r$ and $\Omega_u$. The form of its expression shows that it will be easier to work at first with
\[
  \overline \Lambda:= \Lambda + \frac{2r} 3 \left(1 + \Omega/u\right),
\]
which is found to lie in the 2D vector space spanned by $\Phi= \Omega- u\Omega_u$ and $\Omega_r$. Recall that this space is closed by
derivation, and hence contains the partial derivatives of $\overline \Lambda$. Define accordingly
\beq\label{hatL}
\overline \Lgf:=\Lgf+ \frac {2 v\Rgf}{3(v-1)} = \overline \Lambda(\Rgf,t(v-1)).
\eeq
We can now write this function, and all its $t$-derivatives, as linear combinations  of $\Phi$ and $\Omega_r$, evaluated at $(\Rgf,t(v-1)) $, with coefficients in $\qs(\Rgf,\Rgf', \Rgf'', \ldots, t(v-1))$. Using~\eqref{R1-Omr}, this gives expressions of these derivatives as multiples of $\Phi$. In particular, we find
\beq\label{dlog-h}
\frac{\overline \Lgf '}{\overline \Lgf} =
\frac 4 t\cdot  \frac{t^2 (\Rgf')^2+ tu (1-u) \Rgf'+3u^2\Rgf }
{4t \Rgf \Rgf'  +3tu (1-u) \Rgf'+u(1+8u) \Rgf},  
\eeq
with $u=t(v-1)$. Going back to the series $\Lgf$, this means that we have a  polynomial equation, say~$(E_1)$,
that relates $\Lgf, \Lgf',\Rgf$ and  $\Rgf'$, with coefficients in $\qs(t,v)$. Now we differentiate $(E_1)$ with respect to $t$: this gives a polynomial equation in $\Lgf, \Lgf', \Lgf'', \Rgf, \Rgf'$ and $\Rgf''$ (and $t$ and $v$). 
  Thanks to~\eqref{ED-R0}, we can express~$\Rgf''$ as a rational function of $\Rgf$ and $\Rgf'$. We thus end up with a polynomial equation $(E_2)$ in $\Lgf, \Lgf', \Lgf'', \Rgf$ and $\Rgf'$, not involving $\Rgf''$.
  We repeat the above procedure:  we differentiate $(E_2)$ and eliminate $\Rgf''$. This gives a polynomial equation $(E_3)$ in $\Lgf, \Lgf', \Lgf'', \Lgf''', \Rgf$ and~$\Rgf'$. At this stage we have $3$ polynomial equations $(E_1)$, $(E_2)$, $(E_3)$ that involve $\Rgf$ and~$\Rgf'$, as well as $\Lgf$ and its first three derivatives. We eliminate $\Rgf$ and $\Rgf'$ between these three equations (using resultants, or Gröbner bases): this gives a polynomial relation between  $\Lgf$ and its first three derivatives, with coefficients in $\qs(t,v)$.
  It means that $\Lgf$, and thus $\Qgf_0$ and $\Ggf$, satisfy a third order differential equation.

Similarly, to obtain a differential equation in $v$, we start from~\eqref{hatL} and differentiate it with respect to $v$ up to fourth order, which yields an expression for $L$ and each of its first four $v$-derivatives in terms of $t,v,\Phi,\Omega_{r}$ and $v$-derivates of $\Rgf$. We then use~\eqref{R2-Omr} and the third order differential equation in $v$ satisfied by $\Rgf$ to write these expressions in terms of the four series $\Rgf, \Rgf_{v}, \Rgf_{v,v}$ and $\Phi$ (as well as $t$ and $v$). This means that $L$ and its first four $v$-derivatives are algebraically related over $\qs(t,v)$, so that $\Lgf$ and $\Qgf_0$ satisfy differential equations of order $4$ in $v$.

\subsection{The case $\bm{\om=1}$}
\label{sec:DA-om1}
The derivation of DEs for $\Rgf_1$ and $\Qgf_1$ completely parallels what we have done for $\om=0$. We refer again to our {\sc Maple} session for details. Let us simply give two explicit equations: first, the second order DE in $t$ satisfied now by $\Rgf\equiv \Rgf_1$:
\begin{multline*}
  t^2\Rgf \left(27 \Rgf   ^{2} +\left(36 u -1\right) \Rgf-u \left(4 u -1\right)^{2}  \right) \Rgf''
  -2t^{3} \left(3 \Rgf   -2  u \left(4 u -1\right)\right)(\Rgf')  ^{3}\\
  -6 t^{2} u \left(12 u +1\right) \Rgf   (\Rgf') ^{2}
  +6tu^2 \Rgf \left(18 \Rgf +  \left(4 u -1\right)    \right) \Rgf'
  -54  u^{2}\Rgf   ^{3} -2 u^{2} \left(12 u -1\right) \Rgf   ^{2} =0,
\end{multline*}
and then, an expression of the following log-derivative (again, with respect to $t$):
\[
  \frac{\overline \Lgf '}{\overline \Lgf} =
  \frac 3 t\cdot  \frac{t^2 (\Rgf')^2+ tu (1-4u) \Rgf'+6u^2\Rgf }
  {3t \Rgf \Rgf'  +2tu (1-4u) \Rgf'+u(1+12u) \Rgf},
\]
where $\overline \Lgf$ is defined by
\[
  \overline L:=  t^2\left( \Qgf_1+v\right)+ \frac{v \Rgf}{2(v-1)}.
\]
These expressions imply that $\Qgf_1$ satisfies a third order DE in $t$.

\section{Towards more combinatorial proofs}
\label{sec:combin}

In this section, we first give simple combinatorial interpretations in terms of trees of the series~$\Qgf$ for the cases $\om=0$ and $\om=1$ (Sections~\ref{sec:trees0} and~\ref{sec:trees1}). This raises the question of finding bijective proofs. Then, in Section~\ref{sec:comb01}, we take $\om=0$ and $v=1$, and explain in this special case how the characterisation of $\Mnn(x)$ given by~\eqref{eq-M} can be proved combinatorially without introducing the second catalytic variable $y$ of Section~\ref{sec:funceq}.

\subsection{A tree interpretation for $\bm{\om=0}$}
\label{sec:trees0}

We begin with combinatorial interpretations of the series $\Rgf_0\equiv \Rgf_0(t,v)$ and $2\Ggf= \Qgf_0(t,v)$ of Theorem~\ref{thm:general}.

\begin{Proposition}\label{prop:trees0}
  Consider rooted plane binary trees with edges of two types (say
  solid and dashed, see Figure~\ref{fig:RG}).  Define the \emm charge, of such a tree to be the number of solid edges minus the number of dashed edges. Call a tree of charge $0$ {\emm balanced,}. Call a leaf  {\em special} if it is the right child of its parent and is incident to a dashed edge.

  Then the series $t-\Rgf_{0}$ of
  Theorem~\ref{thm:general} counts, by leaves ($t$) and special leaves ($v$), balanced trees in which no proper subtree
  is balanced. Moreover, the series $t^2(v+2\Ggf)$ counts the subset of these trees in which the root vertex is joined to its left child by a solid edge.
\end{Proposition}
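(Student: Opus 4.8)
\textbf{Proof proposal for Proposition~\ref{prop:trees0}.}

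The plan is to verify the two claimed tree enumerations by extracting explicit algebraic (in fact Lagrange-type) equations for their generating functions and matching them against the characterisations~\eqref{t-R0-first} and~\eqref{Q0-alt}. First I would set up the generating function for balanced trees with no proper balanced subtree by a charge-conditioning argument. For a plane binary tree with the two edge types, the natural parameter to track is the charge $c$, so I would introduce the refined series $T_c(t,v)$ counting rooted plane binary trees of charge $c$, by leaves and special leaves, and write the canonical recursive decomposition of a binary tree into root plus left and right subtrees. The subtlety is the ``no proper subtree balanced'' condition in the statement: a tree is counted by $t-\Rgf_0$ precisely when it is balanced but every proper subtree has nonzero charge. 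I would first handle the \emph{unrestricted} balanced-subtree series, i.e.\ let $R(t,v)$ count balanced trees in which no proper subtree is balanced, and relate it to the full charge-graded family via a ``first return to charge $0$'' decomposition along the spine: a general tree that hits charge $0$ somewhere below the root can be cut at the topmost proper balanced subtree, and this substitution is exactly the kind of composition that inverts to give the implicit equation~\eqref{t-R0-first}. Concretely, the weight $(v-1)^k t^k$ appearing in~\eqref{t-R0-first} should encode the $k$ special leaves that are ``exposed'' (a dashed right-child leaf accounts for a factor, and the $v-1$ arises from inclusion--exclusion between ``special'' and ``ordinary'' leaves), while the binomial coefficient $\tfrac1{n+1}\binom{2n}{n}\binom{2n+k}{k}\binom{2n+k}{n}$ is a standard count of lattice-path-like configurations recording how solid and dashed steps interleave around the $n+1$ leaves with the charge staying away from $0$ except at the ends; I would recognise this via the cycle lemma / Lagrange inversion for walks on $\zs$ with $\pm1$ steps conditioned to be excursions.

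The second step is the series $t^2(v+2\Ggf)$. Here the extra condition is that the root is joined to its left child by a solid edge. I would decompose such a tree as: root, solid left edge to a subtree $A$, and a right edge (solid or dashed) to a subtree $B$. Charge balance forces $\mathrm{charge}(A)+\mathrm{charge}(B)\in\{-1,0\}$ depending on the type of the right edge, and the ``no proper balanced subtree'' condition restricts $A$ and $B$ to lie in appropriate charge-graded families built from $R=t-\Rgf_0$. Summing over the possible charge splits yields a bilinear expression in the charge-graded series $T_c$, and using the known generating function $T_c = (\text{something})\cdot R^{|c|/\text{step}}$-type identity (again from Lagrange inversion on excursion-to-meander walks) I would collapse this to a single-variable expression in $\Rgf_0$. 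The target is the right-hand side of~\eqref{Q0-alt} after clearing the $2t^2$, namely
\[
  t^2(2\Ggf+v)=\tfrac12\Big(t-\Rgf_0 + \sum_{n,k\ge 0} \tfrac{1}{n+1}\binom{2n}{n}\binom{2n+k}{k}\binom{2n+k+1}{n} t^{k+1}(v-1)^{k+1}\, \Rgf_0^{n+1}\Big),
\]
so I would show the tree series equals exactly this. The binomial $\binom{2n+k+1}{n}$ versus the $\binom{2n+k-1}{n}$ of Theorem~\ref{thm:general} is accounted for by the shift of two in the exponent of $t(v-1)$, which matches the two ``forced'' edges (the solid left edge and its companion) at the root; this consistency check is reassuring and I would use it to calibrate signs and index ranges.

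The main obstacle I anticipate is the bookkeeping around the \emph{special leaf} statistic and the inclusion--exclusion that turns a $v$-weighted count of special leaves into the $(v-1)^k$ weights of~\eqref{t-R0-first}. A special leaf is a very local feature (a dashed edge that is a right child and a leaf), but whether a given dashed-right-child leaf is ``exposed'' in the Lagrange-inversion picture depends on how the excursion decomposition cuts the tree, so I would need a clean lemma identifying, among the $n+1$ leaves of a minimal balanced tree, exactly which ones can carry the $v$-weight freely. I expect the right framework is to think of the tree's \emph{contour word} as a Łukasiewicz-type path together with a marking, and to see the $\binom{2n+k}{k}$ factor as the choice of $k$ marked positions among $2n+k$; then ``special'' leaves correspond to marked positions of a particular type, and the passage from $v$ to $v-1$ is the standard ``marked or not'' substitution. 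Once that lemma is in place the rest is a finite amount of binomial manipulation, and the two parts of the proposition follow by comparing with~\eqref{t-R0-first} and~\eqref{Q0-alt} respectively. I would remark explicitly that, as the statement says, a genuinely bijective proof (a direct map between these trees and the labelled quadrangulations of Section~\ref{sec:charac}, or the orientations themselves) remains open; what I give is only an enumerative verification.
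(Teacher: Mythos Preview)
Your proposal is not really a proof but a plan, and the plan has a genuine gap: you never establish any concrete algebraic identity for the tree series. You hope to match directly against the explicit binomial sums~\eqref{t-R0-first} and~\eqref{Q0-alt} via a ``first return to charge $0$'' decomposition and Lagrange/cycle-lemma arguments, but you yourself identify the obstruction: the special-leaf weight $v$ is a local edge feature that does not interact cleanly with an excursion decomposition, and you have no lemma that converts $v$-weighted leaves into the $(v-1)^k$ factors. Without that, none of the binomial bookkeeping can start.

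The paper's proof avoids all of this. It never touches the explicit sums~\eqref{t-R0-first} or~\eqref{Q0-alt}. Instead it introduces a single catalytic variable $x$ tracking the charge and writes the obvious root decomposition as a quadratic equation for the full series $U(x)$ counting \emph{all} trees with no proper balanced subtree (not just the balanced ones):
\[
  U(x)=(x+1/x)\bigl(t+U(x)-U_0\bigr)\bigl(tx+tv/x+(x+1/x)(U(x)-U_0)\bigr),
\]
where $U_0=[x^0]U(x)$ is the series we want. The key step, which your proposal misses entirely, is the substitution $x\mapsto\sqrt{y/(1-y)}$: it turns this equation into precisely the quadratic~\eqref{eq-M} that characterises $\Mnn(y)$ at $\omega=0$, with $R$ replaced by $t-U_0$. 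The condition $[y^{-1}]M(y)=tv$ follows from $[x^0]U(x)=U_0$, and the uniqueness in Proposition~\ref{prop:M} then forces $t-U_0=\Rgf_0$. The second claim falls out by the same mechanism applied to $\widetilde U(x)=\frac{x^2}{1+x^2}U(x)$ and reading off $[x^0]\widetilde U(x)=[y^{-2}]M(y)=t^2(v+\Qgf_0)$. No binomial identities, no inclusion--exclusion on special leaves, no first-return decomposition are needed.
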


\begin{figure}[ht]
  \setlength{\captionindent}{0pt}
  \scalebox{0.9}{\input{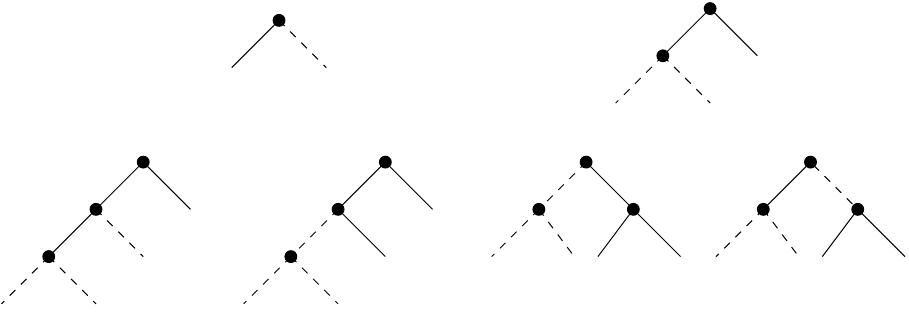_t}}
  \caption{The balanced trees with at most 4  leaves involved in the
    expansion of $t-\Rgf_{0}=(1+v)t^2+(1+3v)t^3+(3+14v+3v^2)t^4+\LandauO(t^5)$, for the series $\Rgf_{0}$ of    Theorem~\ref{thm:general}. The polynomials keep track of      the  various plane embeddings  and the exchange of the two    edge types.}
  \label{fig:RG}
\end{figure}

See Figure~\ref{fig:RG} for an illustration. This combinatorial interpretation refines, with the additional variable $v$, the one given in~\cite[Prop.~9.2]{mbm-aep1}. It would probably be possible to refine the proof of that proposition so as to include~$v$. But  we give here a different proof, which provides in addition a tree interpretation of the series $\Mnn(x)$ defined by~\eqref{M-def}.
\begin{proof}
  Let $U(x)\equiv U(x,t,v)$ be the generating function for rooted plane binary trees with solid and dashed edges, having no proper balanced subtrees, counted by charge (variable $x$), leaves (variable $t$) and special leaves (variable $v$). Let $U_0=[x^{0}]U(x)$. The first part of the proposition is equivalent to the statement that $U_0=t-\Rgf_{0}$. We claim that the following equation holds:
  \begin{equation}\label{eq:Ueq}
    U(x)=(x+1/{x})(t+U(x)-U_0)\big(tx+{tv}/{x}+(x+{1}/{x})(U(x)-U_0)\big).
  \end{equation}
  This follows simply by decomposing a tree counted by $U(x)$ into its left and right subtrees:
  \begin{itemize}
  \item   the initial factor $(x+{1}/{x})$ comes from the choice of the type of the left edge of the root ($x$ if the edge is solid, ${1}/{x}$ otherwise),
  \item the factor $t+U(x)-{U_0}$ counts the possibilities for the left subtree: $t$ for a leaf,
      and $U(x)-{U_0}$ for a (non-leaf) unbalanced tree,
  \item similarly, the term $tx+{tv}/{x}$ corresponds to the case where the right subtree of the root is a leaf (solid or dashed), while $(x+{1}/{x})(U(x)-{U_0})$ counts the possibilities for  a non-leaf unbalanced  right subtree.
  \end{itemize}
  The above equation defines $U(x)$ uniquely as a series in $t$ with no constant term. Observe that~$U(x)$ belongs in fact to $t\qs[v,x,1/x][[t]]$. More precisely, given that $U(-x)$ satisfies the same equation as $U(x)$, it is a series of $t\qs[v,x^2,1/x^2][[t]]$. Now define 
  \beq\label{MU}
  M(y):=\frac{tv}{y}+\frac{t}{1-y}+\frac{1}{y(1-y)}\left(U\left(\sqrt{\frac{y}{1-y}}\right)-U_0\right),
  \eeq
  which is a series $t\qs[v,y,1/y, 1/(1-y)][[t]] \subset \qs[v]((y))[[t]]$.  Substituting $x\to \sqrt{\frac{y}{1-y}}$ in~\eqref{eq:Ueq} yields
  \[
    t-U_0=(1-y)(yM(y)-t(v-1))(1-M(y)),
  \]
  which is precisely~\eqref{eq-M} with $R$ replaced by $t-U_0$. 
  This series does not depend on~$y$. Moreover, it follows from $[x^{0}]U(x)=U_0$ that $[y^{-1}]M(y)=tv$. As discussed in Section~\ref{sec:om0}, based on Proposition~\ref{prop:M}, this means that $M(y)$ is the series $\Mnn(y)$ defined by~\eqref{M-def} (for $\om=0$). In particular,  we have $t-U_0=\Rgf_{0}$ as claimed in the proposition. 

  We now consider the generating function $\widetilde{U}(x)$ counting trees in which the left edge of the root is solid. By the same decomposition as above,
  we find
  \[
    \widetilde{U}(x)=x(t+U(x)-U_0)\big(tx+{tv}/{x}+(x+{1}/{x})(U(x)-U_0)\big)
    = \frac{x^2}{1+x^2} U(x).
  \]
  It remains to prove that $t^2(v+2 \Ggf)=[x^{0}]\widetilde{U}(x)$. Given that $\widetilde U(x)$ has coefficients in $\qs[v,x^2,1/x^2]$, we have
  \begin{align*}
    [x^{0}]\widetilde{U}(x)&=[y^{0}]\frac{1}{1-y}\, \widetilde{U}\!\left(\sqrt{\frac{y}{1-y}}\right)\\
                           &=[y^0]\frac{y}{1-y\,}U\!\left(\sqrt{\frac{y}{1-y}}\right)\\
                           &=[y^{-1}]\left(\frac{U_0}{1-y}+yM(y)-tv-\frac{ty}{1-y}\right)  \qquad\text{by~\eqref{MU}}\\
                           &=[y^{-2}]M(y)=t^2(v+\Qgf_0) \qquad \text{by~\eqref{Q-M}.}
  \end{align*}
  To conclude, we recall that $\Ggf=\Qgf_0/2$.
\end{proof}

\subsection{A tree interpretation for $\bm{\om=1}$}
\label{sec:trees1} 

We now move to  the series $\Rgf_1\equiv \Rgf_1(t,v)$ and $\Qgf_1\equiv \Qgf_1(t,v)$ of Theorem~\ref{thm:quartic}. In the following proposition we give an interpretation of $\Rgf_1$ and $\Qgf_1$  in terms of unary/binary trees. This is \emm not, a refinement of the interpretation in our previous article, which involved instead  ternary trees. 

\begin{figure}[ht]
  \setlength{\captionindent}{0pt}
  \scalebox{0.9}{\input{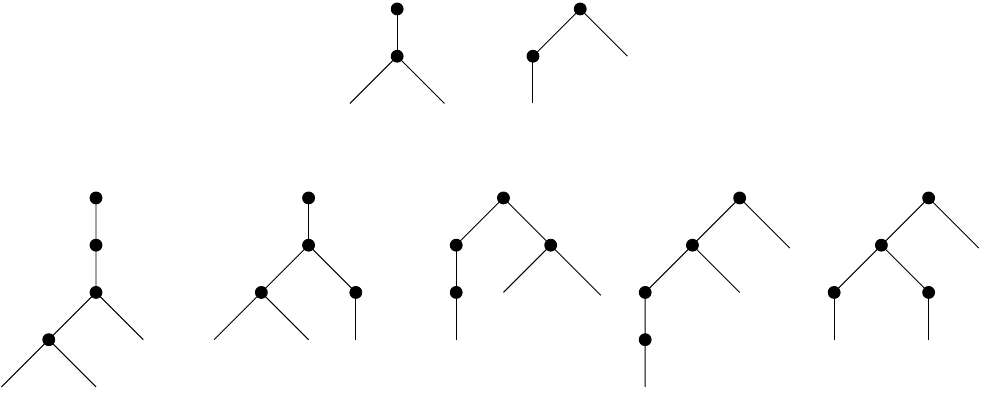_t}} 
  \caption{The balanced trees with $2$ or $3$  leaves involved in the
    expansion of $t-\Rgf_{1}=(1+2v)t^2+2(1+4v+v^2)t^3+\LandauO(t^4)$, for the series $\Rgf_{1}$ of
    Theorem~\ref{thm:quartic}.  The polynomials keep track of
    the  various plane  embeddings.} 
  \label{fig:R4}
\end{figure}

\begin{Proposition}\label{prop:trees1}
  Consider rooted plane trees where each inner vertex has $1$ or $2$ children. We call these unary/binary trees. Define the \emm charge, of such a tree to be the number of binary vertices minus the number of unary vertices. Call a tree of charge $0$ {\emm balanced,}. Call a leaf of such a tree {\em special} if it is the right child of its parent (which has $2$ children).

  Then the series $t-\Rgf_{1}$ of
  Theorem~\ref{thm:quartic} counts, by leaves ($t$) and special leaves ($v$), balanced trees in which no proper subtree is balanced. Moreover, the series $t^2(v+\Qgf_{1})$ counts trees with charge $1$ having no balanced subtrees. 
\end{Proposition}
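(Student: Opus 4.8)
\textbf{Proof plan for Proposition~\ref{prop:trees1}.}
The plan is to mimic the strategy used for the case $\om=1$ in the proof of Proposition~\ref{prop:trees0}, replacing the binary trees with solid/dashed edges by unary/binary trees, and to verify that the resulting generating function satisfies exactly the characterisation of $\Mnn(x)$ from Proposition~\ref{prop:M} when $\om=1$. First I would introduce $U(x)\equiv U(x,t,v)$, the generating function of rooted plane unary/binary trees having no proper balanced subtree, counted by charge (variable $x$), leaves (variable $t$) and special leaves (variable $v$), and set $U_0:=[x^0]U(x)$. Decomposing such a tree at its root gives a functional equation: the root has degree $1$ or $2$; if degree $1$ its single child contributes a factor $\frac1x(t+U(x)-U_0)$ (a leaf or a non-leaf unbalanced tree), and if degree $2$ the left child contributes $x\,(t+U(x)-U_0)$ while the right child contributes $tv+(U(x)-U_0)$ (the special leaf gets the extra weight $v$, a non-leaf unbalanced right subtree gets no extra weight). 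Hence
\beq\label{Ueq1}
U(x)=\frac1x\bigl(t+U(x)-U_0\bigr)+x\bigl(t+U(x)-U_0\bigr)\bigl(tv+U(x)-U_0\bigr).
\eeq
This determines $U(x)$ uniquely as an element of $t\qs[v,x,1/x][[t]]$.

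The next step is to package $U(x)$ into a candidate for $\Mnn(y)$. Guided by~\eqref{eq-M1} (the $\om=1$ case), I would define
\beq\label{MU1}
M(y):=\frac{tv}{y}+\frac1y\Bigl(U\bigl(\tfrac{y}{1-y}\bigr)-U_0\Bigr)\quad\text{or a similar substitution}
\eeq
and check that after substituting $x\mapsto \tfrac{y}{1-y}$ (or whatever change of variable makes the two factors of~\eqref{Ueq1} turn into the two factors $(yM(y)-t(v-1))$ and $(1-y-M(y))$) one obtains exactly $t-U_0=\bigl(yM(y)-t(v-1)\bigr)\bigl(1-y-M(y)\bigr)$, i.e.~\eqref{eq-M1} with $R$ replaced by $t-U_0$. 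The right-hand side being independent of $y$ is automatic once the change of variable is correct; the condition $[y^{-1}]M(y)=tv$ follows from $[x^0]U(x)=U_0$. By Proposition~\ref{prop:M} (applied with $\om=1$, as invoked in Section~\ref{sec:om1}), this forces $M(y)=\Mnn(y)$ and $t-U_0=\Rgf_1$, which is the first assertion. Pinning down the precise substitution $x\mapsto x(y)$ that converts~\eqref{Ueq1} into the quadratic~\eqref{eq-M1} is the main technical point; I expect the bijective dictionary between the "left-factor/right-factor" of the tree decomposition and the algebraic factors $\bigl(yM(y)-t(v-1)\bigr)$, $\bigl(1-y-M(y)\bigr)$ to be slightly fiddlier here than in the $\om=0$ binary-tree case, because the degree-$1$ term in~\eqref{Ueq1} contributes a $\frac1x$-monomial that must be absorbed consistently.

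For the second assertion, let $V(x)$ be the generating function of unary/binary trees of charge $1$ with no balanced subtree. Decomposing the root: if it has degree $1$, the child is an unbalanced tree of charge $2$; if degree $2$, one child is balanced-subtree-free of charge $1$ and the other is a leaf or unbalanced, with the charges adding to $1$. A short computation of the same flavour as the one giving $\widetilde U(x)=\tfrac{x^2}{1+x^2}U(x)$ in Proposition~\ref{prop:trees0} should express $V(x)$ (or rather $[x^0]$ of an auxiliary shifted series) in terms of $U(x)$. I would then extract the relevant coefficient: using that $U(x)$ has coefficients that are Laurent polynomials in $x$ supported on a controlled range of exponents, I would rewrite the constant term $[x^0]$ of the charge-$1$ series as a coefficient extraction $[y^{-2}]M(y)$, and invoke $t^2(v+\Qgf_1)=[y^{-2}]\Mnn(y)$ from~\eqref{Q-M} to conclude $t^2(v+\Qgf_1)$ counts the charge-$1$ trees with no balanced subtree. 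Throughout I would cross-check against the explicit low-order expansions $t-\Rgf_1=(1+2v)t^2+2(1+4v+v^2)t^3+\LandauO(t^4)$ and the given expansion of $\Qgf_1$, matching them to the trees in Figure~\ref{fig:R4}; the hardest step overall is getting the correct change of variables in the passage from~\eqref{Ueq1} to~\eqref{eq-M1}, after which everything else is bookkeeping.
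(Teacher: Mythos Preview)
Your plan follows exactly the paper's approach, but you are making two steps harder than they need to be.

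\emph{The substitution.} The change of variable is not ``slightly fiddlier'' than in the $\om=0$ case; it is simpler. Take $x=1/y$ and set
\[
  M(y):=\frac{1}{y}\bigl(tv+U(1/y)-U_0\bigr),
\]
so that $tv+U(1/y)-U_0=yM(y)$ and $t+U(1/y)-U_0=yM(y)-t(v-1)$. Substituting $x\to 1/y$ in your equation~\eqref{Ueq1} gives
\[
  yM(y)-tv+U_0=(yM(y)-t(v-1))(M(y)+y),
\]
which rearranges directly to $t-U_0=(yM(y)-t(v-1))(1-y-M(y))$. No rational substitution in $y/(1-y)$ is needed; the degree-$1$ term $\frac1x(t+U(x)-U_0)$ is exactly what produces the $-y$ in the factor $1-y-M(y)$.

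\emph{The second assertion.} You do not need an auxiliary series $V(x)$ or an identity analogous to $\widetilde U(x)=\frac{x^2}{1+x^2}U(x)$. Since $M(y)=\frac{1}{y}(tv+U(1/y)-U_0)$, one has immediately $[y^{-2}]M(y)=[x^{1}]U(x)$, and $[x^{1}]U(x)$ is by definition the generating function of trees of charge~$1$ with no proper balanced subtree. A tree of charge~$1$ is itself unbalanced, so this is the same as ``no balanced subtree''. Then~\eqref{Q-M} gives $t^2(v+\Qgf_1)=[y^{-2}]\Mnn(y)=[x^{1}]U(x)$ and you are done.
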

See Figure~\ref{fig:R4} for an illustration.

\begin{proof}
  Let $U(x)$ be the generating function for   unary/binary trees with no proper balanced subtrees, counted by charge (variable $x$), leaves (variable $t$) and special leaves (variable $v$). Let $U_0=[x^{0}]U(x)$.
  We claim that the following equation holds:
  \begin{equation}\label{eq:Ueqquartic}
    U(x)=x(t+U(x)-U_0)(tv+U(x)-U_0)+\frac{1}{x}(t+U(x)-U_0).
  \end{equation}
  This follows simply by decomposing a tree counted by $U$ at its root.
  In the case that the root vertex is binary, the factor $t+U(x)-U_0$ accounts for all possibilities for the left subtree, while the factor $tv+U(x)-U_0$ accounts for all possibilities for the right subtree. The factor $x$ accounts for the binary root. In the case of a unary vertex there is only the factor $(t+U(x)-U_0)$, and the factor $1/x$ for the unary root. Since a unary/binary tree with $n+1$ leaves has~$n$ binary vertices, the series $U(x)$ belongs to $t\qs[v][[tx,1/x]]$. Now define
  \[
    M(y):=\frac{1}{y}\left(tv+U\!\left(\frac{1}{y}\right)-U_0\right),
  \]
  which is a series in $t/y\qs[v][[t/y,y]]$.   Substituting $x\to \frac{1}{y}$ in~\eqref{eq:Ueqquartic} yields
  \[
    t-U_0=(yM(y)-t(v-1))(1-y-M(y)),
  \]
  which is precisely~\eqref{eq-M1} with $R$ replaced by $t-U_0$. Moreover, it follows from $[x^{0}]U(x)=U_0$ that $[y^{-1}]M(y)=tv$. As discussed in Section~\ref{sec:om1}, based on Proposition~\ref{prop:M}, this means that $M(y)$ is the series $\Mnn(y)$ defined by~\eqref{M-def} (for $\om=1$). In particular, $t-U_0=\Rgf_{1}$. 

  Finally, recall from~\eqref{Q-M} that $t^2(\Qgf_{1}+v)=[y^{-2}]M(y)$. This can be rewritten as $[x^{1}]U(x)$,   which is, by definition of $U(x)$, the generating function for unary/binary trees with charge $1$ having no balanced subtrees.
\end{proof}

\subsection{A combinatorial way to characterise $\bm{\Mnn(x)}$ (case $\bm{\om=0, v=1}$)}
\label{sec:comb01}

In Section~\ref{sec:charac}, we defined the series
  $\Mnn(x)$ in one catalytic variable $x$ by
  \beq\label{M-def-bis}
    \Mnn(x):=\frac{1}{x}\Pnn\left(\frac{1}{x}\right)+[y^{1}]\Dnn(x,y),
  \eeq
  and gave an intrinsic characterisation of it; see  Proposition~\ref{prop:M-charac}. However, this characterisation is derived from the equations of Section~\ref{sec:funceq}, which involve the \emm two, catalytic variables $x$ and $y$. It is natural to ask if this second variable $y$ can be avoided when deriving the characterisation of~$\Mnn(x)$. This is what we do in this section, for the special case $\om=0$, $v=1$,
corresponding to the enumeration of Eulerian orientations --- the problem initially posed in \cite{BoBoDoPe}. In this case, we showed in Section~\ref{sec:om0} that the characterisation of $\Mnn(x)\in t\qs((x))[[t]]$  reduces to the fact that the series
  \[
    \Rgf_0:=x(1-x)\Mnn(x)(1-\Mnn(x))
  \]
  is independent of $x$, together  with the condition $t=[x^{-1}]\Mnn(x)$. Here, we explain combinatorially why $x(1-x)\Mnn(x)(1-\Mnn(x))$ does not depend on $x$. Since  a similar property   holds for all~$v$, not just $v=1$, it is natural to ask
  whether an analogous combinatorial explanation  exists in this more general case.

\begin{Proposition}\label{prop:Pnn_om0_v1A}
  When $\om=0$ and $v=1$, we have the following additional identities:
  \[
    [y^{1}]\Dgf(x,y)=\frac{1}{1-x}\Pgf\left(\frac{t}{1-x}\right),
  \]
  and
   \begin{equation}\label{Pnn_om0_v1A}
    \Pnn\left(\frac{1}{x}\right)=t+\frac{1}{x}\Pnn\left(\frac{1}{x}\right)^{2}+2[x^{<0}]\left(\frac{1}{1-x}\Pnn\left(\frac{1}{x}\right)\Pnn\left(\frac{1}{1-x}\right)\right).
  \end{equation}
   The latter equation implies that $x(1-x)\Mnn(x)(1-\Mnn(x))$ does not depend on $x$.
\end{Proposition}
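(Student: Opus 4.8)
The statement to be proved has two parts: the two displayed identities for $[y^1]\Dgf(x,y)$ and for $\Pnn(1/x)$, and the consequence that $x(1-x)\Mnn(x)(1-\Mnn(x))$ is $x$-free. I will take the first identity as the combinatorial heart of the matter and the others as formal consequences. First I would prove the formula $[y^{1}]\Dgf(x,y)=\frac{1}{1-x}\Pgf\!\left(\frac{t}{1-x}\right)$ combinatorially, at $\om=0,v=1$. Recall that $[y^1]\Dgf(x,y)$ counts D-patches with exactly one outer corner labelled $1$ (hence outer degree $2$), by inner quadrangles ($t$) and inner digons ($x$), with no bicoloured quadrangle. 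Such a D-patch has its root edge together with the co-root face $f$: if $f$ is a digon we peel it off (factor $x$ times a smaller such D-patch), and if $f$ is a quadrangle then, since $\om=0$, $f$ is not bicoloured, which at $v=1$ forces the quadrangle to be labelled $0,1,0,1$ and its removal exposes a subpatch structure. Unwinding this peeling (a geometric series in $x$ coming from the chain of digons, each digon contributing a factor $1/(1-x)$ worth of ``room'', together with a patch hanging off) should directly give $\frac1{1-x}\Pgf\!\left(\frac{t}{1-x}\right)$: the substitution $t\mapsto t/(1-x)$ is exactly the effect of allowing each inner quadrangle of the patch to be ``inflated'' by an arbitrary (geometric) number of digons along a distinguished boundary.

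Next I would derive equation~\eqref{Pnn_om0_v1A}. Starting point is the specialization to $\om=0,v=1$ of equation~\eqref{Pnn}, which reads
\[
  \Pnn(y)=t+y\Pnn(y)^{2}+2[y^{>0}]\Bigl(\Pnn(y)\,[z^{1}]\Dnn\!\left(\tfrac{1}{y},z\right)\Bigr).
\]
Using $\Dnn(x,y)=\frac1v\Dgf(x,yt)$ from~\eqref{change}, the coefficient $[z^1]\Dnn(1/y,z)$ is $t\,[y^1]\Dgf(1/y,\cdot)$ up to the normalization, which by the first identity equals (after the change of variables) $\frac{t}{1-1/y}\Pgf\!\left(\frac{t}{1-1/y}\right)$; translating back through $\Pnn(y)=t\Pgf(yt)$ this is $\frac{1}{1-1/y}\Pnn\!\left(\frac{1}{1-1/y}\right)\cdot\frac1t\cdot(\text{bookkeeping})$. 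Substituting $y\mapsto 1/x$ and being careful with which powers of $x$ survive the $[y^{>0}]$ extraction (which becomes a $[x^{<0}]$ extraction after inversion) should produce exactly~\eqref{Pnn_om0_v1A}. This is a purely formal manipulation once the dictionary between $\Pgf,\Dgf$ and $\Pnn,\Dnn$ and between $\frac1{1-1/y}$ and $\frac{1}{1-x}$ (with $y=1/x$) is set up correctly.

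Finally, for the consequence: by definition~\eqref{M-def} and the first identity of the proposition, $\Mnn(x)=\frac1x\Pnn(1/x)+\frac{1}{1-x}\Pnn\!\left(\frac{1}{1-x}\right)$ at $\om=0,v=1$ (after rewriting $[y^1]\Dnn(x,y)$ via $[y^1]\Dgf$ and the change of variables~\eqref{change}). Write $P:=\Pnn(1/x)$ and $\widehat P:=\Pnn\!\left(\tfrac{1}{1-x}\right)$, so $\Mnn(x)=\frac1xP+\frac1{1-x}\widehat P$; note $\widehat P$ is obtained from $P$ by the involutive substitution $x\mapsto 1-x$, and this involution swaps $\frac1xP$ with $\frac{1}{1-x}\widehat P$, so $\Mnn(x)$ is visibly invariant under $x\mapsto 1-x$. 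Now compute $x(1-x)\Mnn(x)(1-\Mnn(x))=x(1-x)\Mnn(x)-x(1-x)\Mnn(x)^2$. The key step is to use~\eqref{Pnn_om0_v1A} to eliminate the ``non-negative-power'' part. Concretely, $x(1-x)\Mnn(x)^2=(1-x)\Pnn(1/x)\cdot\bigl(\tfrac1x\Pnn(1/x)\bigr)+\cdots$; expanding and invoking~\eqref{Pnn_om0_v1A} (which expresses $\Pnn(1/x)$ as $t$ plus $\frac1x\Pnn(1/x)^2$ plus twice the negative-power part of a symmetric-looking product) lets one check that $x(1-x)\Mnn(x)(1-\Mnn(x))-t$ has no negative powers of $x$; by the $x\mapsto 1-x$ symmetry it also has no negative powers of $1-x$, i.e.\ no pole at $x=1$; and a degree/valuation count (each coefficient is a Laurent polynomial in $x$ and $\frac1{1-x}$) shows the only surviving part is the constant-in-$x$ term. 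Hence $x(1-x)\Mnn(x)(1-\Mnn(x))=\Rgf_0(t)$, independent of $x$.

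\textbf{Main obstacle.}
The delicate point is the combinatorial proof of $[y^{1}]\Dgf(x,y)=\frac1{1-x}\Pgf\!\left(\frac{t}{1-x}\right)$: one must argue precisely that at $\om=0,v=1$ the only D-patches of outer degree $2$ are chains of digons attached to a patch, and that inserting digons along the boundary of each quadrangle of the patch produces exactly the substitution $t\mapsto t/(1-x)$ with an overall prefactor $1/(1-x)$. Getting the bookkeeping of corners and the geometric series right (so that the digon insertions are counted without overcounting and match the right boundary of the patch) is the part that requires genuine care; everything downstream is formal power series algebra and a symmetry argument under $x\mapsto 1-x$.
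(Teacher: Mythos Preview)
Your overall strategy is right—prove the first identity combinatorially, plug it into~\eqref{Pnn} to get~\eqref{Pnn_om0_v1A}, then use both to deduce the $x$-independence—but two steps have genuine problems.

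\textbf{The combinatorial step.} Your peeling picture is garbled. When $\om=0$ the co-root face $f$ of a D-patch of outer degree~$2$ \emph{cannot} be bicoloured; since the root vertex $v_0$ has label~$0$ and all its neighbours are labelled~$1$, a non-digon $f$ must be labelled $0,1,2,1$, not $0,1,0,1$ as you write. Also, the substitution $t\mapsto t/(1-x)$ in $\Pgf$ does not correspond to ``inflating inner quadrangles''; it reflects the \emph{outer} structure of the $1$-patch. The paper's argument is much cleaner than a recursive peel: simply delete $v_0$ and all incident edges. Because every face touching $v_0$ is either a digon $0,1$ or a colourful quadrangle $0,1,2,1$, what remains is a single $1$-patch $P$ (this is where $\om=0$ is essential—bicoloured faces $0,1,0,1$ would disconnect the complement). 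To reconstruct $D$ from $P$, one chooses, at each outer corner of $P$ labelled~$1$, how many edges to draw from that vertex to $v_0$; consecutive such edges bound digons, and between two distinct $1$-corners (separated by a $2$-corner) one gets a quadrangle. For a patch with outer degree $2m$ and $n$ inner quadrangles, this contributes $t^{m+n}/(1-x)^{m+1}$ (the extra factor at the root corner, counted twice), and summing over patches gives exactly $\frac{1}{1-x}\Pgf\!\left(\frac{t}{1-x}\right)$.

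\textbf{The final implication.} Your $x\mapsto 1-x$ symmetry is the right idea, and you correctly note that~\eqref{Pnn_om0_v1A} forces $[x^{<0}]\,x(1-x)\Mnn(x)(1-\Mnn(x))=0$. But ``degree/valuation count'' is not enough as stated: you need to rule out a polynomial-in-$x$ part. The clean way (which the paper uses) is to observe that, since $\Pnn(y)$ has polynomial coefficients, the formula $\Mnn(x)=\frac1x\Pnn(1/x)+\frac{1}{1-x}\Pnn(1/(1-x))$ makes each $t^n$-coefficient of $\Mnn$ a symmetric function of $1/x$ and $1/(1-x)$ with no constant term, hence a polynomial in $1/(x(1-x))$ with no constant term. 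Then each $t^n$-coefficient of $x(1-x)\Mnn(1-\Mnn)$ is again a polynomial in $1/(x(1-x))$; one with no negative powers of $x$ is necessarily constant. Your argument can be repaired by checking the behaviour at $x=\infty$ (the leading $t/x$ terms of the two summands in $\Mnn(x)$ cancel, so $\Mnn(x)=O(1/x^2)$ there and $x(1-x)\Mnn(1-\Mnn)$ stays bounded), but you should say so explicitly.
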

We note that Timothy Budd has independently deduced the second equation \cite{budd-flat}.

\begin{figure}[ht]
  \setlength{\captionindent}{0pt}
  \scalebox{0.8}{\input{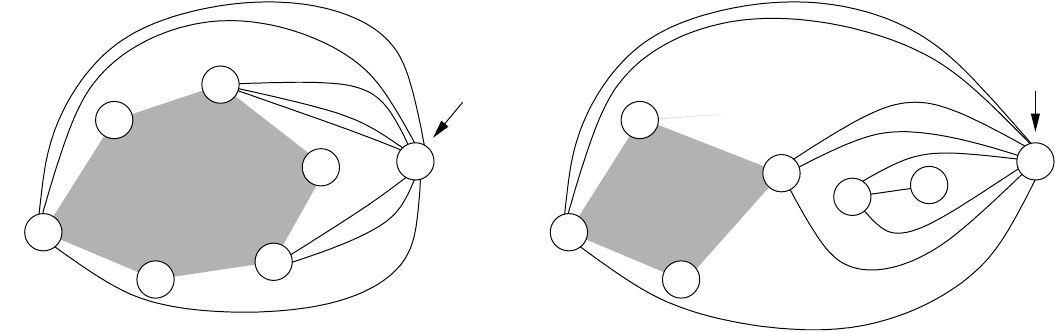_t}}
  \caption{From a D-patch of outer degree $2$ to a $1$-patch $P$. The rightmost D-patch does not contribute as we forbid bicoloured quadrangles.} 
  \label{fig:charac}
\end{figure}

\begin{proof}
  We refer to Definition~\ref{def:patches} and Section~\ref{sec:GF} for the definitions of patches, D-patches and their \gfs. Since we take $\om=0$, we only count maps with no inner bicoloured quadrangle.  Starting with a $D$-patch $D$ with outer degree $2$, let~$v_{0}$ be the root vertex and $c_{1}$ be the  outer corner with label $1$  (Figure~\ref{fig:charac}).
  Remove from $D$ the root vertex and all incident edges: since we forbid bicoloured quadrangles, this leaves a unique connected component $P$, which we root at the corner inherited from $c_{1}$ (Figure~\ref{fig:charac}, left).
   All inner faces of $P$ 
  are quadrangles. Any  outer vertex $v$ of~$P$ comes from a face of~$D$ containing $v_0$: if $v$ is adjacent to $v_0$ in $D$ it has label~$1$ by definition of D-patches, otherwise it has  label~$2$ because we forbid  bicoloured
 quadrangles in~$D$. Hence, $P$ is a $1$-patch. To reconstruct a $D$-patch $D$ starting from $P$, one can choose to add any number of digons at each outer corner of $P$ labelled $1$, making the choice twice for the root corner of $P$. Hence the contribution to $[y^{1}]\Dgf(x,y)$ corresponding to a patch~$P$ with outer degree $2m$ and $n$ inner quadrangles is $\frac{1}{(1-x)^{m+1}}t^{m+n}$. Summing over all patches $P$ yields the first equation of the proposition. After the change of variables~\eqref{change}, it reads
    \beq\label{DPmod}
      [y^{1}]\Dnn(x,y)=\frac{1}{1-x}\Pnn\left(\frac{1}{1-x}\right).
    \eeq
  Substituting $y$ by $1/x$ and then the above identity into~\eqref{Pnn} yields~\eqref{Pnn_om0_v1A}.  Moreover, substituting into our definition~\eqref{M-def-bis} for $\Mnn$, we can write  
  \[
    \Mnn(x)=\frac{1}{x}\Pnn\left(\frac{1}{x}\right)+\frac{1}{1-x}\Pnn\left(\frac{1}{1-x}\right).
  \]
  Given that the series $\Pnn(x)$ has coefficients in $\qs[x]$,  this implies that the coefficient of $t^n$ in $\Mnn(x)$ is a polynomial in $1/(x(1-x))$, with no constant term. Equation~\eqref{Pnn_om0_v1A} is equivalent to
   \[
      [x^{<0}]\, x\!\left(\Mnn(x)-\Mnn(x)^{2}\right)=0.
    \]
    It follows that $[x^{<0}]x(1-x)\left(\Mnn(x)-\Mnn(x)^{2}\right)=0$. Since $x(1-x)\left(\Mnn(x)-\Mnn(x)^{2}\right)$ is a series in $t$ with polynomial coefficients in $1/(x(1-x))$, the fact that it contains no negative power of~$x$ forces it to be independent of $x$, as claimed.
\end{proof}

We believe that this combinatorial explanation
could open the door to deeper results about the structure of random Eulerian orientations. As a specific example, we briefly describe how this could allow one to fairly efficiently generate \emm Boltzmann patches, at criticality with a  prescribed outer degree $2\ell$.
By this, we mean that each  patch $P$ with outer degree
$2\ell$ occurs with probability proportional to $t_{c}^{\ff(P)}$ where $\ff(P)$ 
is the number of faces of $P$ 
and $t_{c}$ is the radius of convergence of the series $[y^\ell] \Pgf(y)$. Recall that $[y^1]\Pgf(y)= 1+\Qgf_0$, so that by Theorem~1.2 in~\cite{mbm-aep1}, $t_{c}={1}/{(4\pi)}$ for $\ell=1$ and moreover, $[y^1]\Pgf(y)$ converges at $t_c$. We expect this to hold  for any $\ell$. Boltzmann models of this form have been considered for several classes of maps, see for example
\cite{BettMier-disksI,gwynne2019convergence,holden2023convergence}. In order to sample such objects, in the spirit of~\cite{boltzmann}, one could use our combinatorial decompositions underlying~\eqref{Pnn_om0_v1A} with the probabilities of the decomposition at each step being determined by the coefficients of $\Pnn(x;t_c)$ (here we have made the dependency of $\Pnn$ in $t$ explicit).  More precisely, let us write 
  \[
    \Pnn(x;t_{c})=\sum_{\ell\ge 0}p_{\ell}x^{\ell},
  \]
where $p_\ell$ is the \gf\ of patches of outer degree $2\ell$, evaluated at $t_c$ (modified by the change of variables~\eqref{change}). In particular, each patch $P$ of outer degree $2\ell$ should occur with probability $t_{c}^{\ell+1 +\ff(P)}/p_\ell$. Then by~\eqref{DPmod},
\[
  [y^1] \Dnn(x,y;t_c)= \frac{1}{1-x}\Pnn\left(\frac{1}{1-x};t_{c}\right)=\sum_{j\ge 0}d_{j}x^{j},
\]
where
\[
  d_{j}=\sum_{k\ge 0}{k+j\choose j}p_{k},
\] 
and  Equations~\eqref{DPmod} and~\eqref{Pnn_om0_v1A} imply that for $\ell>0$,
\[
  p_{\ell}=\sum_{j=0}^{\ell-1}p_{j}p_{\ell-1-j}+2\sum_{j\ge 0}p_{\ell+j}d_{j}
  = \sum_{j=0}^{\ell-1}p_{j}p_{\ell-1-j}+2\sum_{j\ge 0}p_{\ell+j}\sum_{k\ge 0}{k+j\choose j}p_{k}.
\]
Then in order to sample a patch $P$ with outer degree
$2\ell$, one can use the following recursive algorithm.
\begin{itemize}
\item If $\ell=0$ return the atomic patch, that is, the map with only one vertex and no edge.
\item Otherwise, choose the  pair $( 1,j)$ for some $0\leq j\leq \ell-1$
  or $(2,j)$ for $0\leq j$, where $(1,j)$ is chosen with probability ${p_{j}p_{\ell-1-j}}/{p_{\ell}}$ and $(2,j)$ is chosen with probability ${2p_{\ell+j}d_{j}}/{p_{\ell}}$.
\item If the chosen pair is $(1,j)$:
\begin{itemize}
\item recursively use the algorithm to sample patches $P_{1}$ with outer degree $2j$ and $P_{2}$ with outer degree $2(\ell-1-j)$,
\item construct $P$ by
  joining the root vertex of  $P_{1}$ and the co-root vertex of $P_{2}$ by an edge, which is the new root edge (this is the construction that underlies the second term in the expression of Lemma~\ref{NewPequation}; see also the second case in Figure~\ref{fig:newDproof_cases}).
\end{itemize}
\item If the chosen pair is $(2,j)$:
\begin{itemize}
\item recursively use the algorithm to sample a patch $P_{1}$ with outer degree $2(\ell +j)$,
\item choose a non-negative integer $k$   with probability ${{k+j\choose j}p_{k}}/{d_{j}}$,
\item recursively use the algorithm to sample a patch $P_{2}$ with outer degree $2k$,
\item construct from $P_2$ a D-patch $D$ with outer degree $2$ and $j$ inner digons   as described in the proof of Proposition~\ref{prop:Pnn_om0_v1A}, uniformly in one of the ${k+j\choose j}$ possible ways,
\item construct an E-patch $E$ with one outer corner labelled $1$ and $\ell$ outer corners labelled~$-1$ using  $P_{1}$ as subpatch and $D$ as contracted map, as in the proof of Proposition~\ref{prop:symmetry} (see also Figure~\ref{fig:decontract}),
  \item finally, construct $P$ in one of two ways from $E$ (with equal probability) as in the proof of Lemma~\ref{NewPequation}.
\end{itemize}
\end{itemize}
In order to apply this algorithm, the values $p_{j}$ and $d_{j}$ must first be computed to a sufficient precision.
In fact we will obtain exact values by computing $\Pnn(x;t_{c})$ explicitly below.
For fixed $t\in [0, 1/(4\pi)]$, corresponding to $\Rgf_0\in [0,\frac{1}{16}]$, it follows from~\eqref{M-def-bis} that we can determine $\Pnn$ by the integral
\[
 \frac{1}{y}\Pnn\left(\frac{1}{y}\right)=\frac{1}{2\pi i}\oint_{\mathcal{C}}\frac{1}{y-x}\Mnn(x)dx,
\]
where $\mathcal{C}$ is a counterclockwise contour within the annulus where $\Mnn(x)$ converges and $y$ is taken to be outside the contour $\mathcal{C}$. Recall that this annulus shrinks to a circle of radius $y_c=1/2$ centered at the origin when $t\rightarrow t_c$, as discussed below Prediction~\ref{pred:0}, and we take $\mathcal C$ to be this circle.
At the critical point, $\Rgf_0=\frac{1}{16}$, and we can compute the above  integral exactly:
\[
  \Pnn(x;t_{c})=\frac{1}{4x}-\frac{2-x}{4\pi x\sqrt{1-x}}\arccos\left(\frac{x}{2-x}\right).
\]
Rearranging this expression yields
\[
  [y^1]\Dnn\left(x,y;t_{c}\right)=\frac{1}{1-x}\Pnn\left(\frac{1}{1-x};t_{c}\right)
  =\frac{1}{4}-\frac{1-2x}{4\pi\sqrt{x(x-1)}} \arccos\left(\frac 1 {1-2x}\right).
\]
 Both series $\Pnn$ and $\Dnn$ are D-finite, and their
    coefficients satisfy linear recurrences of order $2$ with polynomial coefficients, which allows one to compute them very quickly. One even obtains closed form expressions: for $n>0$,
   \[
     p_n= \frac 1 { \pi 4^{n+1}} \binom{2n}{n-1}
     \left( 2 - \frac \pi  2 -  \sum_{k=1}^{n-1}\frac{2^k (k-1)!k!}{(2k+1)!}
     \right)
   \]
   and
      \[
     d_n= \frac{4^n}{2\pi} \frac{(n-1)!(n+1)!}{(2n+1)!} \sum_{k=0}^{n-1} \frac{(2k+1)!}{2^k k! (k+2)!},
   \]
   with $p_0=1/(4\pi)$ and $d_0=1/4-1/(2\pi)$.

\section{Final remarks}
\label{sec:final}
In this section we briefly mention some further directions of research and open questions on labelled quadrangulations.

In the last 20 years, a common direction of study on planar map models has been to consider their geometric properties under either a local limit \cite{angel-schramm} or scaling limit \cite{le-gall-topological} as their size goes to infinity. Understanding the geometry of labelled quadrangulations under either of these limits would be very interesting. 

An additional parameter that one can study in labelled quadrangulations is the collection of labels themselves. For the analogous model on the square lattice, with $v=1$ and $\omega\geq 0$, there have been several recent advances in this direction \cite{duminil2022logarithmic,glazman2023transition,duminil2024delocalization}.
{While a complete understanding of the limiting behaviour of the labels and the underlying map is probably beyond our reach, we suggest below several observables that one may be able to  determine using our methods or extensions thereof.
\begin{itemize}
\item The distribution of the label of a uniform random vertex in a uniform random labelled quadrangulation with $n$ faces. Indeed, in a subsequent article the second author will derive this distribution for $v=1$ and general $\omega\geq0$.
\item The number of quadrangles in a Boltzmann random patch with a given boundary length (see Section \ref{sec:comb01}) 
\item The length of a {\em level line}, a curve passing through $0$'s in the labelled quadrangulation, or perhaps through edges joining $0$'s to $1$'s. This appears to be particularly natural in E-patches, in connection with the decomposition of E-patches that underlies Proposition~\ref{prop:symmetry}. If such a level line is suitably defined, one would expect it to converge to a certain Schramm-Loewner evolution, namely SLE(4), as is the case of level lines in the Gaussian free field \cite{Schramm2009Contour}. 
\end{itemize}
For each of these observables, one could hope to verify that the limiting distribution is consistent with predictions based on physics, while the dependence of the distribution on $\omega$ and $v$ could be used to identify phase transitions in the model.}

Finally, let us recall that many geometric results on random maps have been proven using bijection with trees, so this aim serves as an additional motivation to find bijections for the results in Sections~\ref{sec:trees0} and~\ref{sec:trees1}. We note that for the case $\omega=1$, Albenque, Duchi and Schabanel have made progress on finding such a bijection (personal communication).

\bigskip
\noindent {\bf Acknowledgments.}
We thank Timothy Budd and Nina Holden for interesting discussions.
We are also grateful to both reviewers for their careful reading and for their suggestions which improved the article.

\appendix

\section{Proof of Lemma~\ref{ct-H}}
\label{app:xm1}

Recall from the proof of Proposition~\ref{prop:F-construct} that the coefficient of $q^n$ in $M(x)$ is a Laurent series in $x$, of valuation at least $-n$.
Moreover, the value $M(\Xrm(u))=\Xrm^-(u)$ is explicit (see~\eqref{MXu}), and $\Xrm(u)$ lies in $u\GK[[u,q]]$.     By the residue formula, we can write
\begin{align*}
  [x^{-1}] M(x) &= \frac 1 {2{\pi i}} \int M(x) dx
  \\ &= \frac 1 {2{\pi i}} \int M(\Xrm(u))\Xrm'(u) du\\
                &= \frac 1 {2{\pi i}} \int \Xrm^-(u)\Xrm'(u) du ,\\
\end{align*}
where integrals are taken  on small oriented circles
around the origin. These should be considered as formal integrals (i.e., \fps\  in $q$). Returning to the expression~\eqref{X-minus} of $\Xrm^-(u)$, we note that only the part involving $\Trm^-(u)$ contributes to the integral, and conclude that
\beq\label{ctH1}
[x^{-1}] M(x) =- \frac{ \sin (2\al) }{2{\pi i}} \int \sqrt{1-\frac {4q}u}\Yrm(u) \Xrm'(u)du,
\eeq
where we denote
\[
  \Yrm(u):= c \frac{\Trm^-(u)}{\Trm(u)}.
\]

As it happens, we can write explicitly an anti-derivative of the
integrand, up to some simple term.
\begin{Lemma}\label{lem:int}
  The following identity holds:
  \begin{multline*}
    \sqrt{1-\frac {4q}u} \Yrm(u)\Xrm'(u) =-\frac{c_3}{2u \sqrt{1-\frac{4q}u}}
    \\
    +\frac\partial{\partial u}  \left( \sqrt{1-\frac{4q}u} \left(
        \frac 1 2 \Yrm(u)\left(\Xrm(u)+ \frac 1
          {4\sin^2\!\al}\right)+ c_2 \left(1+2u
          \frac{\Trm'(u)}{\Trm(u)}\right)\right)\right) ,   \end{multline*}
  with
  \[
    c_2=\frac{\theta(\al)}{32 \sin^4\!\alpha\, \theta'(\al)} \qquad \hbox{and} \qquad
    c_3=\frac1{64 \sin^4\!\al}\left( \frac{\theta''(\al)}{\theta'(\al)} -
      \frac{\theta(\al) \theta^{(3)}(\al)}{\theta'(\al)^2}\right).
  \]
\end{Lemma}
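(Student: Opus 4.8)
\textbf{Proof plan for Lemma~\ref{lem:int}.}
The statement is an identity between two explicit meromorphic expressions in $u$ (depending on $q$ and $\al$), so in principle it can be checked by direct differentiation. However, the cleanest route is to work on the $z$-side, using the substitution $u=4q\sin^2\!z$ together with the dictionary established in Section~\ref{sec:ansatz-rigorous}: $\Xrm(4q\sin^2\!z)=\chi(z)$, $\Trm(4q\sin^2\!z)\sin z=\theta(z)$, $\Trm^-(4q\sin^2\!z)(-\cos z)=\theta(z+\al)-\theta(z-\al)$, hence $\Yrm(4q\sin^2\!z)=-c\,(\theta(z+\al)-\theta(z-\al))/(\cos z\,\theta(z))$, and $\sqrt{1-4q/u}=(1+\Erm)/(1-\Erm)$ which becomes $e^{iz}+e^{-iz})/(e^{iz}-e^{-iz})\cdot(\cdots)$, i.e.\ essentially $\cos z/(i\sin z)=-i\cot z$ up to the precise branch chosen in $\Xrm^-$. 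Under $u=4q\sin^2\!z$ we have $du = 4q\sin(2z)\,dz$, so $\partial_u = \frac{1}{4q\sin 2z}\,\partial_z$, and the claimed identity is equivalent to an identity of the form $f(z) = c_3\, g(z) + \partial_z\bigl(h(z)\bigr)$ for explicit $f,g,h$ built from $\theta$ and its derivatives at $z$ and at $z\pm\gamma$ (recall $q=e^{2i\gamma}$). The plan is therefore: (i) translate every object in the statement to the $z$-variable using the above dictionary; (ii) recognise that, by~\eqref{F-Lambda} and~\eqref{diff}, the combination $\tfrac12\Yrm(\Xrm+\tfrac1{4\sin^2\!\al})$ corresponds, up to $z$-derivative bookkeeping, to a primitive of $\chi(z-\gamma)\chi'(z)$ (the $z$-analogue of $\Xrm^-\Xrm'$), so that the content of the lemma is a statement about primitives of products of shifts of $\chi$; (iii) use the logarithmic-derivative form of the quasi-periodicity $\theta(z+\gamma)=-e^{-2iz-i\gamma}\theta(z)$, which gives $\frac{d}{dz}\log\frac{\theta(z+\gamma)}{\theta(z)}=-2i+\frac{\theta'(z+\gamma)}{\theta(z+\gamma)}-\frac{\theta'(z)}{\theta(z)}$ and its companion with $z-\gamma$, to collapse the telescoping sum; (iv) evaluate the ``error term'' coming from the non-exact part, which is a pole term at $u=0$ (equivalently at $z=0$), and identify its coefficient with $c_3$ by expanding $\theta$ near $0$ using $\theta(0)=\theta''(0)=0$.

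Concretely, the bulk of the work is step (iii)–(iv). For step (iii) one writes the integrand as a total $z$-derivative plus a residual term by the standard trick for first-order linear difference equations: since $\Xrm$ satisfies $\Xrm^+(u)+\omega\Xrm(u)+\Xrm^-(u)=1$ (Eq.~\eqref{eqfunc-X}) and $\Xrm(\Srm(u))=\Xrm^-(u)$ (Lemma~\ref{lem:shiftX}), the product $\Xrm^-\Xrm'$ is, after multiplication by $\sqrt{1-4q/u}=du/(4q\,dz\cdot(\text{factor}))$, visibly close to $\partial_u$ of a symmetric combination $\tfrac12\Yrm(\Xrm+\text{const})$; the discrepancy is precisely the term $c_2\partial_u\bigl(\sqrt{1-4q/u}(1+2u\,\Trm'/\Trm)\bigr)$ (which on the $z$-side is $\partial_z$ of something proportional to $\theta(z)\theta'(z)$-type data) plus the genuinely non-exact pole contribution $-c_3/(2u\sqrt{1-4q/u})$. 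The constants $c_2$ and $c_3$ are then pinned down by matching Laurent expansions at $u=0$: the most singular behaviour of $\Xrm^+\Xrm^-$ at $u=0$ has residue computed already in the proof of Lemma~\ref{lem:v1_final} (cf.\ Eq.~\eqref{residue}), namely $4qc^2\sin^2(2\al)(\Trm^-(0)/\Trm(0))^2 = q\frac{\cos^2\!\al}{\sin^2\!\al}\frac{\theta(\al)^2}{\theta'(\al)^2}$, and similarly $\Yrm(u)$ has a simple pole at $u=0$; matching the $O(1/u)$ and $O(u^{-1/2})$ coefficients on both sides, and using $\Trm(0)=\theta'(0)$, $\Trm^-(0)=-2\theta(\al)$, $\Trm^{-\prime}(0)$ and $\Trm'(0)$ in terms of $\theta''(\al),\theta^{(3)}(\al)$ (obtained by differentiating the defining series~\eqref{theta-def}), yields the stated closed forms for $c_2$ and $c_3$. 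I expect the main obstacle to be the careful branch/sign bookkeeping in $\sqrt{1-4q/u}$ and in the passage $u\leftrightarrow 4q\sin^2\!z$ (the two square-root branches in $\Xrm^\pm$ must be tracked consistently), together with the somewhat delicate Taylor expansion of the $\theta$-quotients at $z=0$ needed to extract $c_3$; everything else is a telescoping computation. Once Lemma~\ref{lem:int} is proved, Lemma~\ref{ct-H} follows at once: substituting the identity into the formal integral~\eqref{ctH1}, the exact-derivative part integrates to zero over a small circle around the origin, and only the residue of $-\frac{\sin(2\al)}{2\pi i}\cdot\bigl(-\frac{c_3}{2u\sqrt{1-4q/u}}\bigr)$ survives, giving $[x^{-1}]M(x)=\frac{\sin(2\al)}{2}c_3\cdot\frac{1}{\,(\text{value of }\tfrac1{\sqrt{1-4q/u}}\text{'s relevant coefficient})}$; using $\sin(2\al)=2\sin\al\cos\al$ and the expression for $c_3$ above collapses this to $\frac{\cos\al}{64\sin^3\!\al}\bigl(\frac{\theta''(\al)}{\theta'(\al)}-\frac{\theta(\al)\theta^{(3)}(\al)}{\theta'(\al)^2}\bigr)$, as claimed.
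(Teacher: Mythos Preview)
Your overall strategy (pass to the $z$-variable via $u=4q\sin^2\!z$, rewrite everything in terms of $\theta$ and its shifts, and fix the constants by Laurent expansion at the origin) matches the paper's. But your proof sketch stops short of the one genuinely non-mechanical step, and the mechanism you propose in its place (``telescoping'' via the quasi-periodicity of $\theta'/\theta$) is not what actually closes the argument.

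After the change of variable and simplification, the identity to be proved reduces (this is steps~1--3 in the paper) to the clean $z$-identity
\[
  \frac{\theta(z-\al)\theta'(z+\al)-\theta(z+\al)\theta'(z-\al)}{\theta(z)^2}
  \;=\; \tilde c_3 + \tilde c_2\,\frac{\partial}{\partial z}\!\left(\frac{\theta'(z)}{\theta(z)}\right),
\]
with $\tilde c_j=c_j/c^2$. The paper does \emph{not} prove this by telescoping; it observes that both sides are \emph{doubly periodic} (period $\pi$ and $\gamma$) with a single double pole at $z=0$ in the fundamental domain and no other poles. Two such elliptic functions are automatically affinely related, which establishes the identity for \emph{some} constants $\tilde c_2,\tilde c_3$; the precise values then drop out of the Laurent expansion at $z=0$. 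Your proposal never isolates this reduced identity and never invokes the ellipticity argument. The ``collapse the telescoping sum'' step you describe via $\frac{d}{dz}\log\frac{\theta(z+\gamma)}{\theta(z)}$ does not by itself produce a primitive for $\Yrm\Xrm'$: the left-hand side above is not a derivative of a ratio of $\theta$'s, and the $c_2$-correction term (the $1+2u\,\Trm'/\Trm$ part) is precisely what absorbs the double pole so that only a constant remains---something that is visible only once you recognise both sides as elliptic of the same type. If you want to avoid the elliptic argument you would need an explicit algebraic verification of the displayed identity (e.g.\ via addition formulas for $\theta$), which you have not supplied.

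In short: your reduction plan is correct, and your method for pinning down $c_2,c_3$ (match Laurent coefficients at the origin) is the right one; but the existence of constants making the identity hold is the crux, and for that you should replace the vague telescoping paragraph by the ellipticity argument above.
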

We defer the proof of this lemma to complete the calculation of
$[x^{-1}]M(x)$. Returning to~\eqref{ctH1}, we find
\[
  [x^{-1}] M(x) = \frac{\sin (2\al) }{2{\pi i}} \int \frac{c_3}{2u
    \sqrt{1-\frac{4q}u}} \,du =
  \sin \al \cos\al \, c_3,
\]
which completes the proof of Lemma~\ref{ct-H}.
\qed

\begin{proof}[Proof of Lemma~\ref{lem:int}.]
  
  \noindent   {\bf 1.} As we prefer to handle power series in $u$ and $q$
  rather than series involving arbitrarily small powers of $u$, we first multiply the identity that we want to prove by $\sqrt{1-4q/u}$. Then, we (partially) evaluate    the derivative occurring in the right-hand side. Now the identity that we need to prove reads:
  \begin{multline*}
    \left( 1-\frac{4q}u\right) \Yrm(u)\Xrm'(u)= -\frac{c_3}{2u} +
    \frac{2q}{u^2}\left(
      \frac 1 2 \Yrm(u)\left(\Xrm(u)- \frac 1
        {4\sin^2\!\al}\right)+ c_2 \left(1+2u
        \frac{\Trm'(u)}{\Trm(u)}\right)\right)
    \\+\left(1-\frac {4q}u\right) \frac{\partial}{\partial u} \left(
      \frac 1 2 \Yrm(u)\left(\Xrm(u)- \frac 1
        {4\sin^2\!\al}\right)+ c_2 \left(1+2u
        \frac{\Trm'(u)}{\Trm(u)}\right)\right).
  \end{multline*}

  \noindent    {\bf 2. From $\bm u$ to $\bm z$.} Now, recall that $\Xrm(u)$ satisfies
  $\Xrm(4q\sin^2\!z)=\chi(z)$, where $\chi(z)$ is given by~\eqref{eq:chi_formula}. Analogously, it follows from~\eqref{T-theta} and~\eqref{Tpm-theta} that
  \[
    \Yrm (4q \sin^2\! z)= \Upsilon(z):= -c\,\frac{\sin z}{\cos z}\cdot \frac{\theta(z+\alpha)-\theta(z-\alpha)}{\theta(z)}.
  \]
  Moreover, if two series $\Brm(u)$ and $\beta(z)$ are
  related by   $\Brm(4q\sin^2\!z)=\beta(z)$, then
  \[
    8q \sin z \cos z\, \Brm'(4q\sin^2\! z)=\beta'(z).
  \]
  Combining this with the fact that $\Trm(4q\sin^2\!z)=\theta(z)/\sin z$, we find in particular that,  for $u=4q\sin^2\!z$, 
    \[
      1+2u
      \frac{\Trm'(u)}{\Trm(u)}= \frac{\sin z}{\cos z} \cdot \frac{\theta'(z)}{\theta(z)}.
    \]
  Hence the
  formula that we want to prove, written at $u=4q\sin^2\!z$, simplifies into
  \[
    \frac{\cos z}{\sin z} \Upsilon(z) \chi'(z)= c_3+  \frac{\partial}{\partial z} \left(
      \frac 1 2  \frac{\cos z}{\sin z} \Upsilon(z)\left(\chi(z)- \frac 1
        {4\sin^2\!\al}\right)+ c_2   \frac{\theta'(z)}{\theta(z)}\right)
    .\]

  \noindent{\bf 3. Back to the series $\theta$.} We now use the definitions of
  $\chi$ and $\Upsilon$ in terms of the series $\theta$. Let us also denote
  \[
    \theta_+(z)=\theta(z+\alpha)+\theta(z-\alpha) \qquad \text{and} \qquad \theta_-(z)=\theta(z+\alpha)-\theta(z-\alpha).
  \]
  Then, upon dividing by $c^2$, the above identity reads
  \[
    - \frac{\theta_-(z)}{\theta(z)} \frac{\partial}{\partial z}\left(
      \frac{\theta_+(z)}{\theta(z)}\right)
    =
    \tilde c_3 +\frac{\partial}{\partial
      z}\left(-\frac{\theta_-(z)\theta_+(z)}{2\theta(z)^2} +\tilde c_2 \frac{\theta'(z)}{\theta(z)}\right),
  \]
  where  
  \beq\label{c-tilde}
  \tilde c_2= \frac{c_2}{c^2} =\frac{2\theta(\al) \theta'(\al)}{\theta'(0)^2}
  \qquad\hbox{and}\qquad
  \tilde c_3= \frac{c_3}{c  ^2} =\frac 1 {\theta'(0)^2}
  \left( \theta'(\al) \theta''(\al)- \theta(\al)\theta^{(3)}(\al)\right).
  \eeq
  We now add $\frac{\partial}{\partial
    z}\left(\frac{\theta_-\theta_+}{2\theta^2}\right)$ to both sides, and express
  $\theta_+$ and $\theta_-$ in terms of $\theta$:  the identity that we want to prove is now:
  \beq\label{id4}
  \frac{ \theta(z-\al)\theta'(z+\al)-  \theta(z+\al)\theta'(z-\al)}{\theta(z)^2}
  = \tilde c_3+ \tilde c_2 \frac{\partial}{\partial
    z}\left( \frac{\theta'(z)}{\theta(z)}\right).
  \eeq
  
  \noindent {\bf 4. An elliptic function identity.} Recall that $\theta$ is
  closely related to Jacobi's theta function. Let us set $q=e^{2i \gamma}$, with $\gamma$ in the upper half of the
  complex plane. Then $\theta(z)$ is an entire function of $z$, which satisfies one periodicity property and one quasi-periodicity
  property (already used in Section~\ref{sec:ansatz}): 
  \[
    \theta(z+\pi)= -\theta(z) \qquad \hbox{and} \qquad  \theta(z+\gamma)=
    -e^{-i\gamma -2iz} \theta(z).
  \]
  From this, it follows that
  \[
    \frac{\theta'(z+\pi)}{\theta(z+\pi)} =  \frac{\theta'(z)}{\theta(z)} \qquad \hbox{and} \qquad 
    \frac{\theta'(z+\gamma)}{\theta(z+\gamma)} =  \frac{\theta'(z)}{\theta(z)}
    -2i.
  \]
  In particular, this implies that $\frac{\partial}{\partial
    z}\left(\frac{\theta'(z)}{\theta(z)}\right)$ is doubly periodic (or: elliptic). Note that
  it has a double pole at $z=0$, and no other poles in its fundamental domain $\{ a\pi + b\gamma: a,b \in [0, 1)\}$.

  The left-hand side of~\eqref{id4} can be written
  \[
    \frac{\theta(z+\al)\theta(z-\al)}{\theta(z)^2}
    \left( \frac {\theta'(z+\al)}{\theta(z+\al)} - \frac
      {\theta'(z-\al)}{\theta    (z-\al)}\right).
  \]
  Under this form, one easily checks that it is also elliptic (because each factor is elliptic). Its only pole in the fundamental domain is a double pole at $0$. But it is known that two elliptic
  functions with a double pole at $0$ and no other pole are linearly related. So there
  must indeed exist constants $\tilde c_2$ and $\tilde c_3$
  satisfying~\eqref{id4}. Their expressions~\eqref{c-tilde} are obtained by
  expanding~\eqref{id4} around $z=0$ up to the constant term.
\end{proof}

\section{Analytic transformation of $\Mnn(x)$}
\label{app:complex}

In this appendix we move to a complex analytic setting, and convert the characterisation of $\Mnn(x)$ given in Proposition~\ref{prop:M-charac} to a system of functional equations satisfied by a meromorphic
function~$\Xz$ related to $\Mnn$ by $\Mnn(\Xz(z))=\Xz(z-\gamma)$. This form was precisely our Ansatz in  Section~\ref{sec:ansatz} in  the case $v=1$ (see~\eqref{ansatz}). So this appendix can be seen as
 an inspiration for this Ansatz. As we will see in the  main result of this section (Proposition~\ref{prop:Xz_characterisation}), such a function $\chi$ exists for  general~$v$. 

In the rest of the paper we have only handled formal power series. Here all our series will be seen as functions of their (real or complex) variables. More precisely, $v$ and $\om$ are arbitrary real numbers with $v>0$, while the main variable $t$  is fixed to a sufficiently small (real and positive) value. Then the series $\Fnn(x)$ and $\Mnn(x)$ become functions of the complex variable $x$.  We first establish some of their basic properties, including a ``one-cut'' property for $\Mnn$. This property is often \emm assumed, to hold in the physics literature on maps~\cite{kostov,BDG-planaires,eynard-bonnet-potts}.

 \begin{Proposition}\label{prop:analytic}
   Fix $v,\omega\in\mathbb{R}$ with $v>0$. There is some $\epsilon>0$ such that the following holds: for fixed $t\in(0,\epsilon)$, there is a simply connected bounded domain  $\Gamma\subset\mathbb{C}$, containing $0$ and closed under complex conjugation, with the following properties:
\begin{enumerate}[label={\rm\roman*)},ref={\rm\roman*)}] 
\item \label{item:F} the series $\Fnn(x)$ converges uniformly to a holomorphic function for $x\in\Gamma$,
\item \label{item:M} the series $\Mnn(x)$ converges uniformly to $\overline{x}$ for $x\in\partial\Gamma$, the boundary of $\Gamma$,
\item \label{item:int} we have
   \[
    \frac{1}{2\pi i}\oint_{\vec{\partial\Gamma}}
    \Mnn(x)dx=tv,
  \]
  where the integral is taken counterclockwise along the boundary of $\Gamma$,
\item \label{item:cut} the function $\Lnn(x):=\left(x-\om x^2 - t(v-1)\right)^2- 4x\Fnn(x)$ has exactly two roots~$c_{0}$ and $c_{1}$ in~$\Gamma$, which are both simple and satisfy $0\leq c_{0}<c_{1}$,
\item \label{item:ext} the function $x\Mnn(x)$ extends to a bounded holomorphic function on $\Gamma\setminus[c_{0},c_{1}]$, which is related to $\Lnn(x)$ by
  \[
    \Lnn(x)=\left(x(1-\omega x)+t(v-1)-2x\Mnn(x)\right)^{2},
  \]
\item \label{item:cut-diff} for $x\in[c_0,c_1]$, we have
  \[
    \lim_{\epsilon\to 0^+}\Mnn(x+i\epsilon)+\Mnn(x-i\epsilon)=1-\omega x+\frac{t(v-1)}{x}.
  \]
\end{enumerate}
\end{Proposition}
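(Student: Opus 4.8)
\textbf{Proof plan for Proposition~\ref{prop:analytic}.}

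The overarching strategy is a fixed-point/perturbation argument in $t$: for $t=0$ all the objects degenerate to explicit expressions, and for small $t>0$ everything is a small analytic perturbation thereof. Concretely, I would first analyse the limit $t\to 0$. The recursion~\eqref{Mnn-rec} (equivalently the defining relations~\eqref{FM}--\eqref{MF-z}) forces $\Mnn(x)=tv/x + t/(1-x) + \LandauO(t^2)$, so for small $t$ the ``principal part'' of $\Mnn$ is concentrated near $x=0$, and $\Fnn(x)=t\,(1-\om x)/(1-x)+\LandauO(t^2)$ is genuinely holomorphic near $0$. I would fix a small disc $D_\rho=\{|x|<\rho\}$ (with $\rho$ independent of $t$, e.g. $\rho=1/2$) on which the series~\eqref{F0-ser} for $\Fnn$ converges uniformly once $t<\epsilon(\rho)$; this gives~\ref{item:F}, at least with $\Gamma$ replaced temporarily by $D_\rho$. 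The domain $\Gamma$ itself will then be defined as the region enclosed by the curve $\{x: \Mnn(x)=\overline x\}$, and the bulk of the work is to show this curve exists, is a simple closed analytic Jordan curve near (but slightly inside) $\partial D_\rho$, and is symmetric under conjugation.

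For~\ref{item:cut} and~\ref{item:ext}, the key observation is that the quadratic equation~\eqref{FM} for $\Mnn(x)$ has discriminant proportional to $\Lnn(x)=(x-\om x^2-t(v-1))^2-4x\Fnn(x)$, so $x(1-\om x)+t(v-1)-2x\Mnn(x)=\pm\sqrt{\Lnn(x)}$; this is exactly the identity in~\ref{item:ext}. At $t=0$, $\Lnn(x)=x^2(1-\om x)^2$, a perfect square with a double root at $x=0$ and no root near the rest of $D_\rho$ (shrinking $\rho$ if $\om\neq 0$). Perturbing in $t$, the double root at $0$ splits into two simple roots $c_0,c_1$ of $\Lnn$; since $\Lnn$ has real coefficients (being a polynomial in $x$ with coefficients in $\qs(\om,v)[[t]]$ evaluated at real data) its non-real roots come in conjugate pairs, so if I can rule out a genuine conjugate pair I get two real roots, and a short sign/positivity check on $\Lnn$ along the real axis near $0$ (using $\Lnn(0)=t^2(v-1)^2\ge 0$ and the leading $t$-behaviour) pins down $0\le c_0<c_1$. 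Then $\sqrt{\Lnn(x)}$ admits a single-valued holomorphic branch on $\Gamma\setminus[c_0,c_1]$, and choosing the branch that matches $x(1-\om x)+t(v-1)-2x\Mnn(x)=x(1-\om x)-t(v-1)+\LandauO(t^2)>0$ near $\partial D_\rho$ gives the holomorphic extension of $x\Mnn(x)$ claimed in~\ref{item:ext}, with~\ref{item:cut-diff} following because across the cut the two branches of the square root differ only by a sign, so their sum is $x(1-\om x)+t(v-1)$, i.e. $\Mnn(x+i0)+\Mnn(x-i0)=1-\om x+t(v-1)/x$.

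Items~\ref{item:M} and~\ref{item:int} are the heart of the matter and where I expect the main obstacle. For~\ref{item:M} I would use the functional relation $\Mnn(\Mnn(x))=x$ from Proposition~\ref{prop:M-charac}: if $x$ lies on the curve where $\Mnn(x)=\overline x$, this is automatically compatible with conjugation-symmetry, but to \emph{produce} such a curve I need to understand the image of a circle $|x|=r$ under $\Mnn$. At $t=0^+$, on $|x|=r$ we have $\Mnn(x)\approx tv/x = (tv/r)e^{-i\arg x}$, whose modulus is constant; choosing $r$ so that $tv/r = r$, i.e. $r=\sqrt{tv}$, makes $|\Mnn(x)|\approx r = |x|$ and $\arg\Mnn(x)\approx -\arg x$, which is exactly $\Mnn(x)\approx\overline x$ on that circle. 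So $\Gamma$ should be a perturbation of the disc of radius $\sqrt{tv}$, and I would set up a contraction-mapping argument (or apply the implicit function theorem in a suitable Banach space of analytic functions on an annulus) to deform $|x|=\sqrt{tv}$ into the true level curve $\{\Mnn(x)=\overline x\}$, using that $\Mnn$ is an involution to check the deformed curve is genuinely invariant and closed. The conjugation symmetry is inherited because all coefficients of $\Mnn$ are real, so $\overline{\Mnn(\overline x)}=\Mnn(x)$, hence the curve and $\Gamma$ are conjugation-stable and $c_0,c_1$ real. Finally~\ref{item:int} follows by combining~\ref{item:M} with the defining initial condition $[x^{-1}]\Mnn(x)=tv$: deforming $\vec{\partial\Gamma}$ to a small circle around $0$ (legitimate since $\Mnn$ is holomorphic in $\Gamma\setminus[c_0,c_1]$ away from $x=0$ except for the simple pole structure coming from $tv/x$), the contour integral $\frac1{2\pi i}\oint\Mnn(x)\,dx$ picks out precisely the residue $[x^{-1}]\Mnn(x)=tv$; I must be slightly careful that the cut $[c_0,c_1]$ contributes nothing, which is where I use that $\Mnn$ is \emph{bounded} near the cut (so the cut has zero ``mass''), a point that itself requires that $c_0,c_1$ are simple roots of $\Lnn$ so that $\sqrt{\Lnn(x)}\sim\text{const}\cdot(x-c_i)^{1/2}$ is integrable. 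The most delicate technical point throughout is making the radius $\rho$ of the analyticity disc, the splitting of the roots, and the deformation of the level curve all uniform in $t\in(0,\epsilon)$ with a single $\epsilon=\epsilon(v,\om)$; I would handle this by rescaling $x=\sqrt{t}\,\xi$ early on, which turns $\Mnn$ into a series in $\sqrt t$ with a $t$-independent leading term $v/\xi + \sqrt t/(1-\sqrt t\,\xi)\to v/\xi$, making all the estimates genuinely uniform.
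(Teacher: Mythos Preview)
Your overall strategy---treat everything as a perturbation of the $t=0$ situation---is sound, and your computations of the leading behaviour ($\Mnn(x)\approx tv/x$, $\Gamma\approx\{|x|<\sqrt{tv}\}$, $c_0\approx t(\sqrt v-1)^2$, $c_1\approx t(\sqrt v+1)^2$) are exactly right. But the paper's execution differs from yours at two key places, and one of your steps has a genuine problem.

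\textbf{Defining $\Gamma$ and proving~\ref{item:M}.} You propose to produce the curve $\{\Mnn(x)=\overline x\}$ by a contraction-mapping or implicit-function-theorem argument in a Banach space of analytic functions. This is heavy machinery, and because the equation $\Mnn(x)=\overline x$ is antiholomorphic you would have to work in real coordinates, which is awkward. The paper avoids all of this: it simply \emph{defines} $\Gamma$ as (the filled-in connected component of) the set $\{x:|\Mnn(x)|>|x|\}$ inside a suitable annulus. Then $|\Mnn(x)|=|x|$ on $\partial\Gamma$ by construction, and the remaining task is to upgrade $|\Mnn(x)|=|x|$ to $\Mnn(x)=\overline x$. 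This is done by a short direct estimate: if $\Mnn(x)\neq\overline x$ for some boundary point $x$, set $y=\overline{\Mnn(x)}$; then $|y|=|x|$ and (using $\Mnn\circ\Mnn=\mathrm{id}$) also $\Mnn(y)=\overline x$. Integrating the bound $|\Mnn'(z)+tv/z^2|<\tfrac12\,tv/|z|^2$ along the shorter arc of the circle $|z|=|x|$ from $x$ to $y$ gives a contradiction with $|\Mnn(y)-\Mnn(x)|=|x-y|$. This is both simpler and more robust than the deformation you envisage.

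\textbf{Counting the roots of $\Lnn$.} You perturb the double root of $\Lnn$ at $x=0$. The paper instead uses the argument principle: writing $\Lnn(x)=x^2\Knn(x)^2$ with $\Knn(x)=1-\omega x+t(v-1)/x-2\Mnn(x)$, one has $|1-\Knn(x)|<1$ on $\partial\Gamma$ (so $\log\Knn$ is single-valued there), whence $\tfrac1{2\pi i}\oint\Lnn'/\Lnn=2$. The two roots are then located on $[0,\infty)$ by the intermediate value theorem, checking $\Lnn(0)\ge 0$, $\Lnn((v+1)t)<0$, and $\Lnn(\sigma^{-2}t)\ge 0$. Your perturbation argument can be made to work after the rescaling $x=\sqrt t\,\xi$, but you then need to verify that $c_0,c_1$ land inside $\Gamma$, which in your setup is itself only defined implicitly; the paper's approach separates these concerns cleanly.

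\textbf{The integral~\ref{item:int}.} Your proposed deformation of $\partial\Gamma$ to ``a small circle around $0$'' is problematic: the cut $[c_0,c_1]$ sits between $\partial\Gamma$ and $0$ (indeed $c_0\ge 0$ and $c_1$ is of order $t$, both inside $\Gamma$), so you cannot shrink the contour without crossing it. The fix is immediate once you realise it: $\partial\Gamma$ lies in the annulus where the Laurent series for $\Mnn$ converges, so $\tfrac1{2\pi i}\oint_{\partial\Gamma}\Mnn(x)\,dx=[x^{-1}]\Mnn(x)=tv$ directly by Cauchy's formula for Laurent coefficients, with no deformation needed. The paper does exactly this.
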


\begin{proof} Recall from Section~\ref{sec:charac} that for fixed $v,\omega\in\mathbb{R}$, we have $\Mnn(x)\in \frac{t}{x}\mathbb{R}[[x,\frac{t}{x}]]$. For $j,k\in\mathbb{Z}$, denote
  $m_{j,k}:=[t^{j}x^{k}]\Mnn(x)$. This number is non-zero only if $j\geq 1$ and $j+k\geq 0$. Similarly, denote $f_{j,k}:=[t^{j}x^{k}]\Fnn(x)$, which is non-zero only if $j\geq 1$ and $k\ge 0$ (Proposition~\ref{prop:F}).
  Using the definition~\eqref{M-def} of $\Mnn$ in terms of patches, and crude bounds on the number of planar maps with a given number of edges, one can show that the values $|m_{j,k}|$ grow at most exponentially in both $j$ and $|k|$. From Proposition~\ref{prop:M} and~\eqref{FM} respectively, this implies that the coefficients of $\Znn(x,y)$ and $\Fnn(x)$ (seen as \fps\  in $t$, $x$ and $y$) also grow at most exponentially.
  Consequently,
  we can choose $\sigma$ sufficiently small to ensure that $\Fnn(\sigma;\sigma)$ and $\Znn(\sigma,\sigma;\sigma)$ converge absolutely, where we have exceptionally  indicated in the notation the dependency in $t$ (as the final variable). Moreover, for all sufficiently small $\sigma>0$, we have
  \beq\label{ineqM1}
    \sum_{(j,k)\neq(1,-1)}(|k|+1)|m_{j,k}|\sigma^{2j+k-1}<\min\left(\frac{v}{2},\frac{1}{2}\right),
  \eeq
  and
  \[\sum_{(j,k)\neq(1,0)}|f_{j,k}|\sigma^{j+k-1}<\frac{v}{2(v+1)},\]
  because  the exponent of $\sigma$ in {each} non-zero summand is always positive.
   Fix $\sigma>0$ such that the above properties hold,
  and moreover 
  \beq\label{conds-sigma}
    \sigma< \min \left\{\frac{1}{2}, \frac{v}{2}, \frac{1}{2v}, \frac{1}{v+1}, \frac 1{|\om|}\right\},
  \eeq
  where $1/|\omega|=\infty$ when $\omega=0$. Such a $\sigma$ exists
  due to our assumption that $v>0$.
  Recall that $[x^{-1}]\Mnn(x)=tv$. It follows from~\eqref{ineqM1} 
  that for $|x|,|\frac{t}{x}|<\sigma$ we have
  \beq\label{ineq-M1}
    \left|\Mnn(x)-\frac{vt}{x}\right|=\left|\sum_{(j,k)\neq(1,-1)}m_{j,k}x^{j+k} (t/x)^{j}\right|<\sum_{(j,k)\neq(1,-1)}|m_{j,k}t/x|\sigma^{2j+k-1}<\frac{v|t|}{2|x|}
  \eeq
and
 \beq\label{ineq-M2}
 \left|x\Mnn'(x)+\frac{vt}{x}\right|=\left|\sum_{(j,k)\neq(1,-1)}km_{j,k}x^{j+k} (t/x)^{j}
  \right|<\sum_{(j,k)\neq(1,-1)}|km_{j,k}t/x|\sigma^{2j+k-1}<\frac{v|t|}{2|x|},
\eeq
where $\Mnn'(x)$ denotes the derivative of $\Mnn(x)$ with respect to $x$. In particular, for fixed $t\in[0,\sigma^{2})$, $\Mnn(x)$ converges absolutely in the annulus $A_1:=\{x: |x|,|\frac{t}{x}|<\sigma\}$, and in fact for $|x|,|\frac{t}{x}|\le\sigma$. Similarly, for $|x|,|t|<\sigma$, we have
\beq\label{ineq-F1}
    \left|\Fnn(x)-t\right|=\left|\sum_{(j,k)\neq(1,0)}f_{j,k}x^{k} t^{j}\right|<\sum_{(j,k)\neq(1,0)}|f_{j,k}t|\sigma^{j+k-1}<\frac{v|t|}{2(v+1)}.
  \eeq
Now let $\epsilon=\sigma^{9}$ and fix $t\in[0,\epsilon)$. For $1\le j \le 4$, let us define more generally the annulus $A_{j}$ by:
\[
  A_{j}=\{x\in\mathbb{C}:t\sigma^{-j}<|x|<\sigma^{j}\}.
\]
Observe that $A_4 \subset A_3 \subset A_2 \subset A_1$. In particular, we have  $|x|,|\frac{t}{x}|<\sigma$ for any $x$ in these annuli, so~\eqref{ineq-M1} and~\eqref{ineq-M2} both hold. Then for $x\in A_{j}$ with $j>1$, it follows from~\eqref{ineq-M1} and~\eqref{conds-sigma} that 
\beq\label{ineq-M3}
  t\sigma^{1-j}<\frac{vt}{2}\sigma^{-j}<\frac{vt}{2|x|}<|\Mnn(x)|<\frac{3vt}{2|x|}<\frac{3v}{2}\sigma^{j}<\sigma^{j-1},
\eeq
that is, $\Mnn(x)\in A_{j-1}$. We will use this to show that for $x\in A_{4}$ we have $\Mnn(\Mnn(x))=x$. 
 Consider the series $\Znn(x,y)$ (see Proposition~\ref{prop:M}). As discussed, $\Znn(\sigma,\sigma;\sigma)$ converges absolutely, so $\Znn(x,y;t)$ converges absolutely for $|x|,|y|<\sigma$ (recall that $t<\sigma^9<\sigma$). In particular, for $x\in A_{4}$, we have $|\Mnn(x)|<\sigma$, and thus $\Znn(x,y)$ converges at $y= \Mnn(x)$. So there can be no pole in $\Znn(x,y)$ as $y\to \Mnn(x)$, and the numerator of $\Znn(x,y)$ must vanish at $y=\Mnn(x)$. That is,
 \[
   \left(1-\frac{1}{x}\Mnn(\Mnn(x))\right)\left(1-x-\omega\Mnn(x)+\frac{t}{\Mnn(x)}(v-1)-\Mnn(\Mnn(x))\right)=0.
 \]
Since $x\in A_{4}$, we have $\Mnn(x)\in A_{3}$ and $\Mnn(\Mnn(x))\in A_{2}$, hence the modulus of the second term is bounded from below by $1-\sigma^{4}-|\omega|\sigma^{3}-|v-1|\sigma^{3}-\sigma^2$. From \eqref{conds-sigma} we have $|v-1|\sigma<\max\{v,1\}\sigma<\frac{1}{2}$ and $|\omega|\sigma^{3}<\sigma^2<\frac{1}{4}$, which  implies that this expression is positive. Hence, for $x \in A_4$,  we have
\[
  \Mnn(\Mnn(x))=x.
\]
Now let $\widetilde{\Gamma}=\{x\in \overline{A_{4}}:|\Mnn(x)|>|x|\}$. Since $\Mnn(x)$ is a series with real coefficients that converges in this region, $\widetilde{\Gamma}$ is closed under conjugation. Since $t<\sigma^9$, it follows from~\eqref{ineq-M3} and~\eqref{conds-sigma} that~$\widetilde{\Gamma}$ contains the (non-empty)  annulus
  \[
  A_5:=  \left\{x \in \cs: t \sigma^{-4} \le |x| <\sqrt{\frac{vt}2}\right\}.
  \]
  The inequalities~\eqref{ineq-M3} and~\eqref{conds-sigma} also imply that no point of the outer boundary of $A_4$, where $|x|=\sigma^4$,  is  in~$\widetilde \Gamma$. Let~$\Gamma$ be the union of
  the connected component of $\widetilde \Gamma$ that contains $A_5$ and all the (bounded) regions that it contains. Then $\Gamma$ is a simply connected bounded domain, closed by conjugation, containing $0$,
  whose boundary $\partial\Gamma$
forms a non-contractible loop in $A_{4}$ on which $|\Mnn(x)|=|x|$ (see Figure~\ref{fig:M_regions}, right). Since any point~$x$ of~$\Gamma$ satisfies $|x|<\sigma$, Point~\ref{item:F} of the proposition holds. Moreover, since $\partial \Gamma\subset A_1$, the series $\Mnn(x)$ is analytic in a neighbourhood of~$\partial \Gamma$, proving convergence in Point~\ref{item:M}.

\begin{figure}[ht]
  \centering
  \includegraphics[scale=0.7]{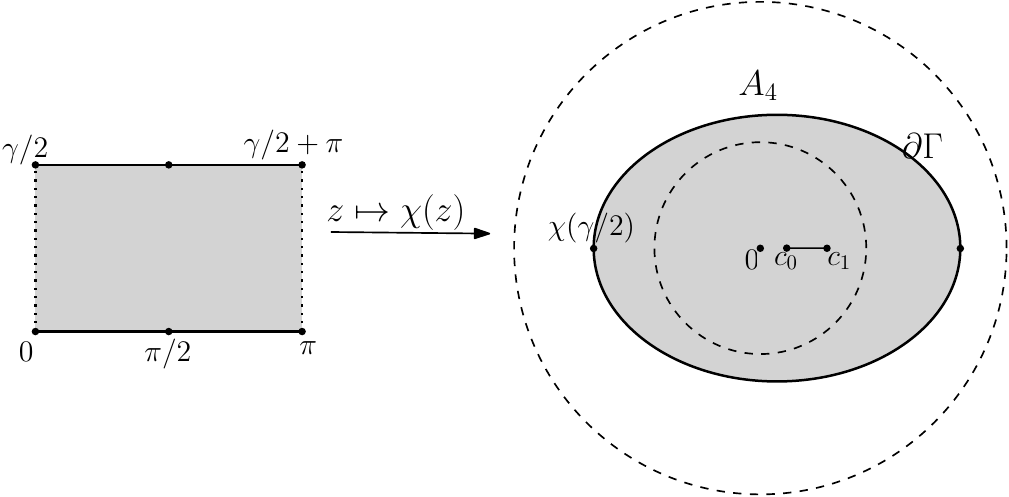}
  \caption{{\em Right:} the region $\Gamma$ (shaded) and the annulus $A_{4}$ (with dashed boundary), where $\Fnn(x)$ and $\Mnn(x)$ are respectively holomorphic, along with roots $c_{0}$ and $c_{1}$ of $\Lnn(x)$. {\em Left:} the cylinder on which $\Xz$ is defined. We fix  $\Xz(0)=c_{0}$ and this implies that $\Xz(\pi/2)=c_{1}$.}
  \label{fig:M_regions}
\end{figure}

Let us complete the proof of~\ref{item:M} by showing that for $x\in\partial\Gamma$, we have $\Mnn(x)=\overline{x}$. Suppose the contrary, that there is some $x\in\partial\Gamma$ with $\Mnn(x)\neq\overline{x}$, and write $y=\overline{\Mnn(x)}$. Since $x$ is on the boundary of $\Gamma$, we have $|y|=|x|$. Moreover, since $x \in A_4$, we have $\Mnn(\Mnn(x))= \Mnn(\bar y)=x$, thus
$\Mnn(y)=\overline{x}$ (because the series $\Mnn$ has real coefficients).
Writing $r=|x|=|y|$, let $\ell$ be the shortest arc of the circle of radius $r$ around $0$ between $x$ and $y$. Then the length of $\ell$ is at most $\frac{\pi}{2}|x-y|$ (this is equivalent to the inequality $\theta\le\pi \sin \frac \theta 2$ for $0\le \theta \le \pi$). Moreover, for $z$ on this arc, we have, according to~\eqref{ineq-M2}:
\[
  \left|\Mnn'(z)+\frac{tv}{z^2}\right|<\frac{vt}{2|z^2|}=\frac{vt}{2r^2}.
\]
Hence, taking the integral along this arc, we have
\begin{multline*}
    \frac{vt}{2r^2}\frac{\pi}{2}|x-y|>
    \left|\int_{x}^{y}\left(\Mnn'(z)+\frac{tv}{z^{2}}\right) dz\right|=\left|\Mnn(y)-\Mnn(x)-\frac{tv}{y}+\frac{tv}{x}\right|\\
    = \left| \bar x -\bar y-\frac{tv}{y}+\frac{tv}{x} \right| =|x-y|\left|1+\frac{tv}{r^2}\right|,
\end{multline*}
which is a contradiction (because $v$ and $t$ are positive and $\pi<4$).
Therefore, for $x\in\partial\Gamma$, we have $\Mnn(x)=\overline{x}$. This concludes the proof of~\ref{item:M}.

Let us now address Point~\ref{item:int}. Since $\partial\Gamma$ is in the interior of $A_4$, where $\Mnn(x)$ converges absolutely, this is precisely Cauchy's formula, as $[x^{-1}]\Mnn(x)=tv$.

\medskip
We now embark on the proof of Point~\ref{item:cut}.
As a formal power series, $\Fnn(x)$ is related to $\Mnn(x)$ by~\eqref{FM}:
\[
  \Fnn(x):=(x\Mnn(x)-t(v-1))(1-\om x-\Mnn(x)).
\]
Moreover, as already discussed, $\Fnn(\sigma;\sigma)$ converges absolutely, so $\Fnn(x;t)$ converges for $|x|<\sigma$.
In particular, for $x\in A_{1}$, the series $\Fnn(x;t)$ and $\Mnn(x;t)$ both converge, so they must be related by the equation above. We now define the function $\Lnn(x)$, for $|x|<\sigma$, as the discriminant of the above quadratic equation in $\Mnn(x)$:
\[
  \Lnn(x):=\left(x(1-\omega x)-t(v-1)\right)^{2}-4x\Fnn(x).
\]
It is also holomorphic for $|x|<\sigma$, and in particular in $\Gamma$.
Moreover, for $x\in A_1$, we also have:
\beq\label{LM}
  \Lnn(x)=\left(x(1-\omega x)+t(v-1)-2x\Mnn(x)\right)^{2}.
  \eeq 
By the argument principle, the number of roots of $\Lnn(x)$ in $\Gamma$, counted with multiplicity, is
\[
  \frac{1}{2\pi i}\oint_{\vec{\partial\Gamma}}\frac{\Lnn'(x)}{\Lnn(x)}dx,
\]
assuming that $\Lnn$ does not vanish on the contour, which we will now check. Writing $\Knn(x)=1-\omega x+\frac{t(v-1)}{x}-2\Mnn(x)$, we have $\Lnn(x)=x^{2}\Knn(x)^{2}$ and for $x\in{\partial\Gamma}\subset A_{4}$, we have $|\Mnn(x)|=|x|$, so $|1-\Knn(x)|<|\omega|\sigma^{4}+|v-1|\sigma^4+2\sigma^{4}<1$, where the second inequality is due to \eqref{conds-sigma}. In particular, $\Lnn$ does not vanish on~$\partial \Gamma$, and moreover the function $\log(\Knn(x))$, defined with the principal value of $\log$, is analytic on a neighborhood of $\partial\Gamma$. Therefore
\beq\label{even-int}
  \frac{1}{2\pi i}\oint_{\vec{\partial\Gamma}}\frac{\Lnn'(x)}{\Lnn(x)}dx=
  \frac{1}{2\pi i}\oint_{\vec{\partial\Gamma}}\left(\frac{2}{x}+\frac{2\Knn'(x)}{\Knn(x)}\right)dx=2.
\eeq
Hence $\Lnn(x)$ has exactly two roots in $\Gamma$, counted with multiplicity.
We will now show that these lie on the real line. To do this we note that $0$ and $\sigma^{-2}t$ belong to $\Gamma$, and we will show that $\Lnn(0),\Lnn({\sigma^{-2}t})\geq 0$ while $0<(v+1)t<{\sigma^{-2}t}$ and $\Lnn((v+1)t)<0$, as then it follows from the intermediate value theorem that $\Lnn(x)$ has roots $c_{0}\in[0,(v+1)t)$ and $c_{1}\in ((v+1)t,{\sigma^{-2}t}]$, since for $x\in\mathbb{R}\cap\Gamma$ we have $\Lnn(x)\in\mathbb{R}$. 
The first three inequalities are fairly simple: $\Lnn(0)=t^2(v-1)^2\ge 0$, while ${\sigma^{-2}t}\in A_1$ where~\eqref{LM} holds, so $\Lnn({\sigma^{-2}t})\geq 0$. The inequality $0<(v+1)t<{\sigma^{-2}t}$ follows from \eqref{conds-sigma}. Finally, combining the definition of $\Lnn$ in terms of $\Fnn$ with \eqref{ineq-F1} yields
  \[\Lnn((v+1)t)\leq (2t-\omega t^{2}(v+1)^2)^{2}-4t(v+1)\left(t-\frac{vt}{2(v+1)}\right)=t^{2}\left(\left(2-\omega t(v+1)^2\right)^{2}-4-2v\right)
  \]
  (note that~\eqref{ineq-F1} applies because $t(v+1)<\sigma$).
  Then the fact that $\Lnn((v+1)t)<0$ follows from $|\omega t(v+1)^2|<\sigma^{6}$ and $(2+\sigma^{6})^2<4+2v$, which both follow from \eqref{conds-sigma} and $t<\sigma^{9}$. Hence by the intermediate value theorem, $\Lnn(x)$ has roots $c_{0}\in[0,(v+1)t)$ and $c_{1}\in ((v+1)t,{\sigma^{-2}t}]$, so these are the only two roots of $\Lnn(x)$ in $\Gamma$ (see Figure~\ref{fig:M_regions}, right). This completes the proof of~\ref{item:cut}.

\medskip
We now show that $\Mnn(x)$, which we have so far proved to be analytic in $A_1$, has a meromorphic continuation to $\Gamma\setminus[c_{0},c_{1}]$. From \eqref{LM}, it suffices to show that $\Lnn(x)$ has a holomorphic square root in $\Gamma\setminus[c_{0},c_{1}]$ which extends
  \beq\label{sq-def}
    \sqrt{\Lnn(x)}:=x(1-\omega x)+t(v-1)-2x\Mnn(x),
  \eeq 
  defined on $A_1$. We apply  Lemma \ref{lem:rootexist} below with $\Omega=\Gamma\setminus[c_{0},c_{1}]$, $\Pi=\Gamma \cap A_1$, $h=\Lnn$ and $s$ the right-hand side of~\eqref{sq-def}. This lemma implies that
    such a square root exists if and only if for any smooth closed curve~$\gamma$ in $\Gamma\setminus[c_{0},c_{1}]$,
\[ 
  \frac{1}{2\pi i}\oint_{\gamma}\frac{\Lnn'(x)}{\Lnn(x)}dx \in 2\mathbb{Z}.
\] 
This holds in our case because the left-hand side
is $2$ when $\gamma=\vec{\partial\Gamma}$ (see~\eqref{even-int}) and the integrand 
has no singularities in $\Gamma\setminus[c_{0},c_{1}]$, so the left-hand side above 
is
twice  the number of times $\gamma$ winds counterclockwise around the cut $[c_{0},c_{1}]$. We thus conclude that $\sqrt{\Lnn(x)}$ extends analytically to $\Gamma\setminus[c_0,c_1]$.

Since $\Lnn(x)$ has roots at $c_{0}$ and $c_{1}$, the function $\sqrt{\Lnn(x)}$ must have branch cuts at these points. Moreover, $\Lnn(x)$ has no other roots in $\Gamma$, so $\sqrt{\Lnn(x)}$ has no other branch cuts in $\Gamma$. This implies that for $x_c\in[c_{0},c_{1}]$, the limit of $\sqrt{\Lnn(x)}$ as $x$ approaches $x_{c}$ from above the cut and the same limit as $x$ approaches $x_{c}$ from below the cut must sum to $0$. The relation between $\sqrt{\Lnn(x)}$ and $\Mnn(x)$ implies
\[
  \lim_{\epsilon\to 0^+}\Mnn(x+i\epsilon)+\Mnn(x-i\epsilon)=1-\omega x+\frac{t(v-1)}{x}.
\]
This completes the proof of~\ref{item:cut-diff}, and of the whole proposition.
\end{proof}

Below we give an elementary proof of the result used at the end of the last proof about the existence of an analytic square root of a given analytic function.
We found an alternative explanation using algebraic topology in a blog post by Evan Chen \cite{Chen_roots} (see Theorem 2). We searched several textbooks for this result, but were not able to find it. Nonetheless, we expect it exists somewhere.
  
\begin{Lemma}\label{lem:rootexist}
    Let $\Pi\subset\Omega\subset\mathbb{C}$ be path connected regions and let $h:\Omega\to\mathbb{C}\setminus\{0\}$ and $s:\Pi\to\mathbb{C}\setminus\{0\}$ be an analytic functions satisfying $s(z)^{2}=h(z)$ for $z\in\Pi$. Then $s$ extends to an analytic function on $\Omega$ satisfying $s(z)^{2}=h(z)$ for $z\in\Omega$ if and only if, for every smooth closed curve $\gamma$ in $\Omega$, the following holds
    \begin{equation}
      \frac{1}{2\pi i}\oint_{\gamma}\frac{h'(z)}{h(z)}\in 2\mathbb{Z}.\label{eq:rootexist}
    \end{equation}
\end{Lemma}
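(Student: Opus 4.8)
The plan is to prove both directions by constructing a candidate continuation via analytic continuation along paths and checking that condition~\eqref{eq:rootexist} is exactly the monodromy obstruction. For the ``only if'' direction: if $s$ extends analytically to all of $\Omega$ with $s^2=h$, then $h=s^2$ is a square on $\Omega$, so $h'/h = 2s'/s$; hence for any smooth closed curve $\gamma\subset\Omega$ we have $\frac{1}{2\pi i}\oint_\gamma \frac{h'(z)}{h(z)}\,dz = 2\cdot\frac{1}{2\pi i}\oint_\gamma \frac{s'(z)}{s(z)}\,dz$, and the latter integral is an integer (it is the winding number of $s\circ\gamma$ around $0$, using that $s$ is nonvanishing on $\Omega$). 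This gives the claimed membership in $2\zs$.

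For the ``if'' direction, I would fix a basepoint $z_0\in\Pi$ and the given value $s(z_0)$. Since $h$ is nonvanishing and holomorphic on $\Omega$, along any path $\gamma:[0,1]\to\Omega$ starting at $z_0$ one can define a continuous branch of $\tfrac12\log h(\gamma(\cdot))$ (by integrating $\tfrac12 h'/h$ along $\gamma$, normalized so that its exponential times a unit constant agrees with $s(z_0)$ at the start), and set $\tilde s_\gamma(z):=$ the value of $\exp(\tfrac12\log h)$ at the endpoint. This defines a germ at each endpoint; standard analytic continuation along paths shows $\tilde s_\gamma$ is locally holomorphic and squares to $h$. The only issue is well-definedness: two paths $\gamma_1,\gamma_2$ from $z_0$ to $z$ give the same value iff the integral $\frac{1}{2\pi i}\oint_{\gamma_1 \cdot \gamma_2^{-1}}\tfrac12\,\frac{h'}{h}\,dz\in\zs$, i.e. iff $\frac{1}{2\pi i}\oint_{\gamma_1\cdot\gamma_2^{-1}}\frac{h'}{h}\,dz\in 2\zs$, which is precisely~\eqref{eq:rootexist} applied to the loop $\gamma_1\cdot\gamma_2^{-1}$ (after smoothing, which changes nothing since the integrand is holomorphic). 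Hence the continuation is single-valued, defining a holomorphic function $\hat s:\Omega\to\cs$ with $\hat s^2=h$; since $h\neq 0$ on $\Omega$, also $\hat s\neq 0$. Finally, on $\Pi$ both $\hat s$ and $s$ are holomorphic square roots of $h$ agreeing at $z_0$; since $\Pi$ is connected and $\hat s/s$ is a continuous function into $\{\pm1\}$ equal to $1$ at $z_0$, we get $\hat s|_\Pi = s$, so $\hat s$ is the desired extension.

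I would organize the write-up as: (1) reduce to loops based at a fixed $z_0$ and note that smoothing/homotopy within $\Omega$ does not affect $\oint h'/h$; (2) define the continuation $\hat s$ by path-integration of $\tfrac12 h'/h$; (3) prove single-valuedness using~\eqref{eq:rootexist}; (4) check $\hat s$ is holomorphic, nonvanishing and $\hat s^2=h$; (5) check $\hat s|_\Pi=s$ via connectedness of $\Pi$. The main obstacle is purely expository: making the ``analytic continuation along a path'' argument rigorous without invoking heavier topological machinery (covering spaces / the fact that $\cs\setminus\{0\}$ has fundamental group $\zs$ and $z\mapsto z^2$ is the index-$2$ cover). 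Since the paper explicitly wants an elementary proof, I would phrase step~(2)--(3) concretely via the primitive $G_\gamma(z):=\exp\!\big(\tfrac12\int_\gamma \tfrac{h'}{h}\big)\cdot c$ with $c$ a fixed constant fixing the value at $z_0$, and verify directly that changing $\gamma$ to a homotopic path leaves $G_\gamma$ unchanged while a generating loop contributes a factor $\exp(\pi i\cdot(\text{even}))=1$ under~\eqref{eq:rootexist}; everything else is routine.
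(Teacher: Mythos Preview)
Your proposal is correct and follows essentially the same approach as the paper's proof: both directions are handled via the primitive $\exp\!\big(\tfrac12\int h'/h\big)$, with the ``only if'' direction using $h'/h=2s'/s$ and the winding-number argument, and the ``if'' direction defining the extension by path-integration and checking single-valuedness via~\eqref{eq:rootexist}. The only cosmetic differences are that the paper first isolates and proves the identity $h(y)=h(x)\exp\!\big(\int_x^y h'/h\big)$ explicitly (which you invoke implicitly when asserting $\hat s^2=h$), and that the paper obtains $\hat s|_\Pi=s$ directly from this identity applied to $s$ on $\Pi$, whereas you use the $\{\pm1\}$-valued ratio argument; both are equally valid.
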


\begin{proof}
We first prove the following: for any $x,y\in\Omega$ we have
\begin{equation}  \label{eq:logexist}
  h(y)=h(x)\exp\left(\int_{x}^{y}\frac{h'(z)}{h(z)}dz\right),
\end{equation}
where the integral is taken along any smooth curve $\rho:[0,1]\to\Omega$ with $\rho(0)=x$ and $\rho(1)=y$ (the integral is well defined as $h$ does not vanish in $\Omega$). Writing $\xi(u):=h(\rho(u))$ and $\beta(u):=\int_0^{u}\frac{\xi'(v)}{\xi(v)}dv$ for $u\in[0,1]$, the above equation is equivalent to
\[
  \xi(1)=\xi(0)\exp\left(\beta(1)\right).
\]
From the definition of $\beta(u)$, we have immediately $\beta'(u)= \frac{\xi'(u)}{\xi(u)}$, which implies that
\[
  \frac{d}{du}
  \left( \xi(u)e^{-\beta(u)}\right)=\xi'(u)e^{-\beta(u)}-\xi(u)\beta'(u)e^{-\beta(u)}=0.\]
Hence $\xi(u)e^{-\beta(u)}$ is a constant for $u\in[0,1]$, and, in particular
\[
  \xi(1)e^{-\beta(1)}=\xi(0)e^{-\beta(0)}=\xi(0),
\]
as required.

We will now use \eqref{eq:logexist} to prove the lemma. First, if $s(z)$ does extend to $\Omega$, we can apply \eqref{eq:logexist} with $h$ replaced by $s$. Applying this to a smooth closed curve $\gamma$ starting and ending at some point $x\in\Omega$ then dividing both sides by $s(x)=s(y)$ yields
\[
  1=\exp\left(\oint_{\gamma}\frac{s'(z)}{s(z)}dz\right)=\exp\left(\oint_{\gamma}\frac{h'(z)}{2h(z)}dz\right).
\]
Hence 
\[
  \oint_{\gamma}\frac{h'(z)}{2h(z)}dz\in 2\pi i\mathbb{Z},
\]
as required.

For the other direction, assume that \eqref{eq:rootexist} holds for any smooth closed curve $\gamma$ in $\Omega$. Fix $x\in\Pi$. By~\eqref{eq:logexist}, with $h$ replaced by $s$ and $\Omega$ replaced by $\Pi$, for any $y\in\Pi$ we have
\beq\label{s-prop}
  s(y)=s(x)\exp\left(\int_{x}^{y}\frac{s'(z)}{s(z)}dz\right)=s(x)\exp\left(\int_{x}^{y}\frac{h'(z)}{2h(z)}dz\right),
\eeq
where the integral may be along any smooth curve in $\Pi$ from $x$ to $y$. For
$y\in\Omega$, let us now define $\tilde s(y)$ by
  \[
    \tilde s(y):=s(x)\exp\left(\int_{x}^{y}\frac{h'(z)}{2h(z)}dz\right),
  \]
where the integral is taken along any smooth curve in $\Omega$ from $x$ to $y$. Note that by \eqref{eq:rootexist}, any two integrals for the same points $x$ and $y$ differ by a multiple of $2\pi i$, so the value of $\tilde s(y)$ defined above does not depend on the chosen curve from $x$ to $y$. Hence this defines an analytic function~$\tilde s$ on $\Omega$ which coincides with the original function $s$ on $\Pi$ (see~\eqref{s-prop}).  Finally squaring both sides in the definition of $\tilde s(y)$ and comparing to \eqref{eq:logexist} yields $\tilde s(y)^2=h(y)$, as required. 
\end{proof}

We will now parametrise the domain $\Gamma\setminus [c_{0},c_{1}]$ using the  following classical result (see for example~\cite[Chap.~5, Sec.~1]{Goluzin1969geometric} for an equivalent statement with ``cylinder'' replaced by ``annulus'').

\begin{Theorem}\label{thm:uniformisation}
  Any doubly connected domain $B$ other than the punctured disk and punctured plane is conformally equivalent to some cylinder $\Cyl:=\{x+ciy:x\in\mathbb{R}, y\in(0,1)\}/\pi\mathbb{Z}$ with $c>0$. Moreover, the value of $c$ is uniquely determined by $B$.  The function $\chi$ mapping this cylinder onto $B$ is uniquely determined once we choose the point $\lim_{z\rightarrow 0} \chi(z)$ on the boundary of $B$.
  \end{Theorem}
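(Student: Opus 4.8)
\medskip
\noindent\textbf{Proof plan.}
This is the classical conformal classification of doubly connected domains, equivalent (via $w\mapsto e^{2iz}$) to the statement with ``annulus'' in place of ``cylinder'' proved in~\cite[Chap.~5, Sec.~1]{Goluzin1969geometric}; I sketch the argument I would give. Write $\widehat{\mathbb C}\setminus B=E_{0}\sqcup E_{1}$ for the two connected components of the complement. The first step is to dispose of the degenerate cases: if, say, $E_{1}$ is a single point, then $B\cup E_{1}$ is simply connected and the Riemann mapping theorem identifies it with $\mathbb C$ or with a disk, so that $B$ is a punctured plane or a punctured disk, both excluded by hypothesis; hence I may assume $E_{0}$ and $E_{1}$ are non-degenerate continua. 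The core of the argument is then the standard \emph{harmonic period} construction of the uniformising map: solve the Dirichlet problem on $B$ with boundary data $0$ on $\partial B\cap\overline{E_{0}}$ and $1$ on $\partial B\cap\overline{E_{1}}$, obtaining (Perron's method) a bounded harmonic $u$ on $B$ with $0<u<1$; pick a loop $\gamma$ generating $\pi_{1}(B)\cong\mathbb Z$ and set $\beta:=\int_{\gamma}{}^{*}du$, which by Green's theorem equals the finite positive Dirichlet energy of $u$. Then
\[
  w(z):=\exp\!\Bigl(\tfrac{2\pi}{\beta}\int_{z_{0}}^{z}\bigl(du+i\,{}^{*}du\bigr)\Bigr)
\]
is single valued (its exponent has the single period $2\pi i$) and maps $B$ into the annulus $\{\,r<|w|<1\,\}$ with $r=e^{-2\pi/\beta}$; using the boundary behaviour of $u$ and the argument principle (the winding number of $w$ along $\gamma$ is $1$) one checks that $w$ is a conformal bijection onto that annulus. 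Composing with a branch of $z\mapsto-\tfrac i2\log w$ then gives a conformal bijection of $B$ onto $\Cyl$ with $c=\pi/\beta>0$, and inverting produces the required map $\chi$.

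For the uniqueness of $c$ I would reduce to the classical fact that the modulus of an annulus is a conformal invariant (Schottky): a conformal map between $\{\,r<|w|<1\,\}$ and $\{\,r'<|w|<1\,\}$ extends by repeated Schwarz reflection across the boundary circles to an automorphism of $\mathbb C\setminus\{0\}$, hence is of the form $w\mapsto aw$ or $w\mapsto a/w$, which forces $r=r'$; since $B$ determines the annulus above up to conformal equivalence, it determines $r$, hence $c=-\tfrac12\log r$. For the uniqueness of $\chi$ once $\lim_{z\to0}\chi(z)$ is prescribed, I would compute $\operatorname{Aut}(\Cyl)$: transporting the holomorphic automorphisms of the annulus — the rotations and the rotations composed with $w\mapsto r/w$ — through $w=e^{2iz}$, this group is $\{\,z\mapsto z+a\,\}\cup\{\,z\mapsto -z+ci+a\,\}$ with $a\in\mathbb R/\pi\mathbb Z$. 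If $\chi_{1},\chi_{2}\colon\Cyl\to B$ are conformal with $\lim_{z\to0}\chi_{1}(z)=\lim_{z\to0}\chi_{2}(z)$, then $\psi:=\chi_{2}^{-1}\circ\chi_{1}$ extends continuously to $\partial\Cyl$ and, by the (prime end) boundary correspondence for conformal maps, fixes $0$; an automorphism of $\Cyl$ fixing a boundary point must fix its boundary circle, hence is not of the second type, hence is a translation, hence the identity. Thus $\chi_{1}=\chi_{2}$. In all situations of interest in the paper the relevant normalising boundary point is accessible — for $B=\Gamma\setminus[c_{0},c_{1}]$ in Proposition~\ref{prop:analytic} it is the slit tip $c_{0}$ — so the limit is literal; in general it is read at the level of prime ends.

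The main obstacle, and essentially the only input beyond citing the Riemann mapping theorem, Schwarz reflection and the boundary correspondence for conformal maps, is the potential-theoretic justification of the harmonic period construction on a domain with possibly rough boundary: making rigorous sense of the Dirichlet solution $u$ (its boundary values need only be understood in a weak sense, e.g.\ through harmonic measure) and verifying that $w$ is a genuine bijection onto the annulus rather than merely a proper holomorphic map. For the piecewise-analytic domains arising in this paper these points are immediate. Alternatively, one could derive the whole statement from the uniformisation theorem: $B\cong\mathbb H/\langle w\mapsto\lambda w\rangle$ for a unique $\lambda>1$, which after taking logarithms is the cylinder with $c=\pi^{2}/\log\lambda$, and the invariance of $\lambda$ is just the fact that the conjugacy class in $\operatorname{PSL}(2,\mathbb R)$ of the cyclic deck group is an invariant of $B$.
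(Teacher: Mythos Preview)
Your sketch is a correct and thorough outline of the classical proof, but there is nothing to compare: the paper does not prove Theorem~\ref{thm:uniformisation}. It is stated as a classical result and simply cited to~\cite[Chap.~5, Sec.~1]{Goluzin1969geometric} (in the annulus formulation, equivalent via $w\mapsto e^{2iz}$), exactly as you already note in your opening sentence.
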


We shall apply this to the domain $B:=\Gamma\setminus[c_0,c_1]$, with $\chi(0)=c_0$. See Figure~\ref{fig:M_regions} for an illustration of  the parametrising function $\Xz$, with  $\gamma=2ic$. We will extend $\Xz$ meromorphically from the cylinder to $\cs$.
The characterisation of $\Mnn(x)$, once rewritten at $x=\chi(z)$, will translate into the following simpler characterisation of $\chi$, which justifies and generalises (to the case $v\neq1$) the Ansatz of Section~\ref{sec:ansatz}.

\begin{Proposition}\label{prop:Xz_characterisation}
  Fix $v,\omega\in\mathbb{R}$ with $v>0$. For $t>0$, sufficiently small, there is a value $\gamma\in i\mathbb{R}_{>0}$ and a meromorphic function $\Xz:\mathbb{C}\to\mathbb{C}$ satisfying the following equations for $z\in\mathbb{C}$:
\begin{align*}
\Xz(z)&=\Xz(z+\pi),\\
\Xz(z)&=\Xz(-z),\\
\Xz(\gamma+z)+\omega\Xz(z)+\Xz(z-\gamma)&=1+\frac{t(v-1)}{\Xz(z)},\\
\int_0^{\pi}\Xz\left(\frac{\gamma}{2}-z\right)\Xz'\left(\frac{\gamma}{2}+z\right)dz&=-2{\pi i} tv.
\end{align*}
Moreover, $\Xz$ has one root and no poles in the closed cylinder 
$\overline \Cyl:=\{x+y\gamma/2:x\in\mathbb{R}, y\in[0,1]\}/\pi\mathbb{Z}$, while $\Xz(z)\Xz(\gamma-z)$ has no poles in this region.

Finally, for $z$ in the strip $\Strip:=\{x+y\gamma/2:x\in\mathbb{R}, y\in(0,1)\}$, we have
 \beq\label{relations-strip}
  \Mnn(\Xz(z))=\Xz(\gamma-z).
\eeq
\end{Proposition}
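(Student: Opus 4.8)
\textbf{Proof plan for Proposition~\ref{prop:Xz_characterisation}.}

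The plan is to obtain the meromorphic function $\Xz$ as the Riemann uniformising map of the doubly connected domain $B:=\Gamma\setminus[c_0,c_1]$ supplied by Proposition~\ref{prop:analytic}, and then translate, one by one, the analytic properties of $\Mnn$ and $\Fnn$ into the four displayed functional equations. First I would invoke Theorem~\ref{thm:uniformisation} to get a modulus $c>0$ and a conformal bijection $\chi$ from the cylinder $\Cyl=\{x+ciy\}/\pi\zs$ onto $B$, normalised by $\lim_{z\to 0}\chi(z)=c_0$; set $\gamma:=2ic\in i\rs_{>0}$ and $\Strip:=\{x+y\gamma/2:\, y\in(0,1)\}$, so that $\Xz$ maps $\Strip$ conformally onto $B$ and extends continuously to the two boundary lines $y=0$ and $y=1$, which both get sent onto the slit $[c_0,c_1]$ (run twice). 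Because $\partial\Gamma$ is a real-analytic Jordan curve symmetric under conjugation, and $[c_0,c_1]\subset\rs$, the Schwarz reflection principle applies across both boundary lines of $\overline\Strip$: reflecting across $y=1$ gives the relation $\Xz(z)=\Xz(\gamma-\bar z)$ for $z$ near that line, and reflecting across $y=0$ gives $\Xz(z)=\overline{\Xz(\bar z)}$; iterating these two reflections propagates $\Xz$ to a meromorphic function on all of $\cs$ and yields at once the periodicity $\Xz(z+\pi)=\Xz(z)$ (the cylinder direction) and, after composing the two reflections, the symmetry $\Xz(z)=\Xz(-z)$. This also shows $\Xz$ has no poles and exactly one zero (the preimage of $0\in\Gamma$, which lies on the ``equator'' $y=1/2$ since $0$ is interior and real) in $\overline\Cyl=\overline\Strip/\pi\zs$, and that $\Xz(z)\Xz(\gamma-z)$ is pole-free there, this last point because on the equator $\gamma-z$ is the conjugate reflection of $z$.

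Next I would establish the shift relation, which is the heart of the matter. By Proposition~\ref{prop:analytic}\ref{item:ext}--\ref{item:cut-diff}, on $\Strip$ the composite $\Mnn(\Xz(z))$ is well-defined and analytic, and by the kernel-type identity $\Mnn(\Mnn(x))=x$ of Proposition~\ref{prop:M-charac} together with the fact (from the proof of Proposition~\ref{prop:analytic}) that $\Mnn$ maps $\partial\Gamma$ to $\overline{\partial\Gamma}$ and the cut to itself, the function $z\mapsto\Mnn(\Xz(z))$ is again a conformal map of $\Strip$ onto $B$ that interchanges the two boundary components. By the uniqueness clause of Theorem~\ref{thm:uniformisation} (uniqueness up to the rotation/reflection of the cylinder fixing its two boundary circles), and because $\Mnn$ reverses the two sides of the slit, this map must equal $\Xz$ precomposed with the orientation-reversing isometry $z\mapsto \gamma-z$ of $\overline\Strip$ (one still has to pin down that the real part is not shifted; this follows from the conjugation symmetry and $\Xz(0)=c_0$, which is a fixed point of the slit involution). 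That gives exactly $\Mnn(\Xz(z))=\Xz(\gamma-z)$, i.e.\ the final displayed relation~\eqref{relations-strip}. Having this, the third functional equation is pure algebra: substitute $x=\Xz(z)$ into the kernel identity $\Fnn(\Mnn(x))=(x\Mnn(x)-t(v-1))(1-x-\om\Mnn(x))$ of Proposition~\ref{prop:F} and into $\Fnn(x)=(x\Mnn(x)-t(v-1))(1-\om x-\Mnn(x))$, using $\Mnn(\Xz(z))=\Xz(\gamma-z)$ and $\Mnn(\Xz(z-\gamma))=\Xz(2\gamma-z)=\Xz(z-2\gamma)$ (the last step uses $\pi$-periodicity only if $2\gamma\equiv$ something, so more carefully: apply~\eqref{relations-strip} at $z-\gamma$ to get $\Mnn(\Xz(z-\gamma))=\Xz(2\gamma-z)$, then use the $z\mapsto -z$ symmetry and $\pi$-periodicity to rewrite as needed); comparing the two expressions for $\Fnn(\Xz(z-\gamma))$ forces, exactly as in Section~\ref{sec:ansatz}, either a two-periodicity (excluded as there) or $\Xz(z+\gamma)+\om\Xz(z)+\Xz(z-\gamma)=1+t(v-1)/\Xz(z)$, which is the third equation. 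Finally, the integral normalisation is a change of variables: Proposition~\ref{prop:analytic}\ref{item:int} says $\frac1{2\pi i}\oint_{\vec{\partial\Gamma}}\Mnn(x)\,dx=tv$; pushing this contour to the slit and parametrising the slit-doubling contour by $z\mapsto\Xz(z)$ along $\gamma/2+[0,\pi]$ (the equator, traversed once) turns $\oint \Mnn(x)\,dx$ into $\int_0^\pi \Mnn(\Xz(\gamma/2+z))\,\Xz'(\gamma/2+z)\,dz=\int_0^\pi \Xz(\gamma/2-z)\,\Xz'(\gamma/2+z)\,dz$ up to the factor $-2\pi i$, using~\eqref{relations-strip} at $\gamma/2+z$; matching constants gives the fourth equation.

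The main obstacle will be making the uniqueness/rigidity argument in the second paragraph airtight: Theorem~\ref{thm:uniformisation} only fixes $\Xz$ once a boundary point is chosen, and the cylinder still has the automorphism group of rotations (the $\pi$-periodic translations) together with the two reflections, so I need to check carefully that $\Mnn\circ\Xz$ really is $\Xz$ composed with the specific isometry $z\mapsto\gamma-z$ and not with some $z\mapsto\gamma-z+\mathrm{const}$ or $z\mapsto z+\mathrm{const}$. This is where the conjugation symmetry of $\Gamma$, the reality and order $c_0<c_1$ of the branch points, and the normalisation $\Xz(0)=c_0$ must all be used in concert: $\Xz(0)=c_0$ pins the real translation, $\Mnn(c_0)$ being the ``other end'' $c_1$ of the cut (which follows since $\Mnn$ swaps the lips of the slit and fixes its endpoints setwise, combined with $\Mnn$ being an involution with no fixed slit-endpoint) forces the reflection rather than a rotation, and conjugation symmetry rules out a residual shift. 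A secondary technical point is justifying that $\Xz$ extends continuously (indeed real-analytically) to the closed strip so that Schwarz reflection is legitimate; this rests on $\partial\Gamma$ being real-analytic, which is built into its construction in Proposition~\ref{prop:analytic} (it is a level curve $|\Mnn(x)|=|x|$ of analytic functions). Everything else is routine once these rigidity statements are secured.
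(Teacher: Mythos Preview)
Your overall architecture (uniformise $B=\Gamma\setminus[c_0,c_1]$ by a cylinder, then read off the four identities) is right, but the central rigidity step in your second paragraph has a genuine gap. You claim that $z\mapsto\Mnn(\Xz(z))$ is a conformal bijection from $\Strip$ onto $B$ and then invoke uniqueness from Theorem~\ref{thm:uniformisation}. This fails for $v\neq 1$: by Proposition~\ref{prop:analytic}\ref{item:cut} one has $\Lnn(0)=t^2(v-1)^2>0$, so $c_0>0$ and the origin lies in $B$; but $x\Mnn(x)\to tv\neq 0$ as $x\to 0$, so $\Mnn$ has a simple pole at $0\in B$. Hence $\Mnn\circ\Xz$ has a pole at the (interior) zero of $\Xz$ and is not a map into $B$ at all, let alone a biholomorphism, so the uniqueness clause does not apply. (Relatedly, your claim that both boundary lines map to the slit is wrong: the top boundary $\Im z=|\gamma|/2$ maps to $\partial\Gamma$, not to $[c_0,c_1]$; and the zero of $\Xz$ lies on a vertical symmetry axis $\Re z\in\{0,\pi/2\}$ by $\Xz(-\bar z)=\overline{\Xz(z)}$, not on a horizontal ``equator''.)

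The paper avoids this obstacle by never asserting that $\Mnn$ maps $B$ to $B$. Instead it uses only the boundary identity $\Mnn(x)=\bar x$ on $\partial\Gamma$ (Proposition~\ref{prop:analytic}\ref{item:M}) together with the conjugation symmetry $\Xz(-\bar z)=\overline{\Xz(z)}$ (obtained, as you suggest, from the uniqueness of the uniformising map) to get $\Mnn(\Xz(\gamma-z))=\Xz(z)$ on the single line $\Im z=|\gamma|/2$. It then \emph{defines} $\Xz$ on $\gamma-\overline\Strip$ by this relation, and on $-\overline\Strip$ by evenness. The third functional equation is not derived algebraically from the shift relation (your route, which would need~\eqref{relations-strip} outside $\Strip$ before $\Xz$ is extended there) but rather from the cut jump relation of Proposition~\ref{prop:analytic}\ref{item:cut-diff}: for $z\in\rs$ the two limits $\Xz(\gamma\pm z)$ are precisely $\Mnn$ evaluated on opposite sides of the cut at $\Xz(z)\in[c_0,c_1]$, and their sum is $1-\omega\Xz(z)+t(v-1)/\Xz(z)$. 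That identity on $\rs$ is then used as a three-term recurrence to propagate $\Xz$ meromorphically to all of $\cs$. Your integral argument is essentially correct once you drop the ``push to the slit'' step and simply parametrise $\partial\Gamma$ by the top boundary line.
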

 
\begin{proof}
  The region $B:=\Gamma\setminus [c_{0},c_{1}]$ is doubly connected, and it is neither
  the punctured disk nor the punctured plane, so by Theorem~\ref{thm:uniformisation}, there is a unique value $\gamma\in i\mathbb{R}_{>0}$ (where $\gamma=2ic$ in the notation of the theorem) and a unique bijective conformal map
$ \Xz$ from the cylinder $\Cyl:=\Strip/\pi\mathbb{Z}$ to~$\Gamma$
satisfying $\Xz(0)=c_{0}$ when $\Xz$ is extended by continuity to the closed cylinder  $\overline \Cyl$ (which is $\overline \Strip/\pi\mathbb{Z}$). 
  The map $\Xz$ sends the top boundary $(\mathbb{R}+\gamma/2)/\pi\mathbb{Z}$ of the cylinder
  bijectively to  $\partial\Gamma$.
  The bottom boundary $\mathbb{R}/\pi\mathbb{Z}$ is sent to $[c_{0},c_{1}]$, with $c_{0}$ and $c_{1}$ each having a unique inverse while each point in $(c_{0},c_{1})$ has two inverses accounting for the limits coming from above and below. Thus $\Xz$ is 1-to-1 from $\overline \Cyl \setminus (0, \pi)$ to
  $\overline{\Gamma}\setminus(c_{0},c_{1})$,
  and $2$-to-$1$ from $(0, \pi)$ to $(c_0,c_1)$.
  In particular, since $c_{0}\geq 0$, (see Proposition~\ref{prop:analytic}~\ref{item:cut}) the function $\Xz$ has a unique root in $\overline \Cyl$, as claimed in the proposition. The fact that it has no pole is obvious, since the closed cylinder is sent by $\chi$ to $\overline \Gamma$.

  For convenience we will now see $\Xz$ as a function on $\overline \Strip\subset\mathbb{C}$ rather than $\overline \Cyl=\overline \Strip/\pi\mathbb{Z}$, satisfying  $\Xz(z)=\Xz(z+\pi)$. We notice that the function sending $z$ to $\overline{\Xz(-\overline{z})}$ is also a $\pi$-periodic function defined on $\Strip$, and a conformal map from $\Cyl$ to $\Gamma$ (which is closed by conjugation) sending $0$ to~$c_0$.
   By Theorem~\ref{thm:uniformisation} this function must actually be equal to $\Xz$, that is,
  \beq\label{unique-chi}
    \Xz(-\overline{z})=\overline{\Xz(z)} \qquad \text{ for all } z\in\overline{\Strip}.
    \eeq
    In particular, any point $z\in \rs$ is sent by $\chi$ {to a point} in $[c_0,c_1]$, and then we have
    \[
      \Xz(-z)=\Xz(z)=\Xz(\pi-z),
    \]
    which implies in particular that $\Xz(\pi/2)=c_1$. 
    Then, for $z\in\mathbb{R}+\gamma/2$, we have
\beq\label{M-prop-gamma}
  \Mnn(\Xz(\gamma-z))=\Mnn(\Xz(-\overline{z}))=\Mnn\left(\overline{\Xz(z)}\right)=\Xz(z),
\eeq
where we have first used the fact that $\gamma -z=-\overline z$, then the identity~\eqref{unique-chi} on $\chi$, and finally Proposition~\ref{prop:analytic}~\ref{item:M}.

We will now extend $\chi(z)$ to a meromorphic function on $\mathbb{C}$. Our construction is illustrated in Figure \ref{fig:Strip_extension}).
  Using the rule $\Xz(z)=\Mnn(\Xz(\gamma- z))$, which we have just seen to hold on the top boundary of $\Strip$,
  we first extend $\chi$ to the doubled closed strip $\{x+y\gamma:x\in\mathbb{R}, y\in[0,1]\}$.
Subsequently, we use the rule $\Xz(-z)=\Xz(z)$, which holds on the real axis, to extend $\Xz$ to the  strip $\{x+y\gamma:x\in\mathbb{R}, y\in[-1,1]\}$, of height $2|\gamma|$.
By construction,  the extension satisfies,
  for $z\in\overline{\Strip}$:
\[
   \Xz(z)=\Xz(-z)~~~~~\text{and}~~~~~\Mnn(\Xz(z))=\Xz(\gamma-z).
   \]
   This establishes in particular~\eqref{relations-strip}.

   \begin{figure}[h]
  \centering
  \includegraphics[scale=0.85]{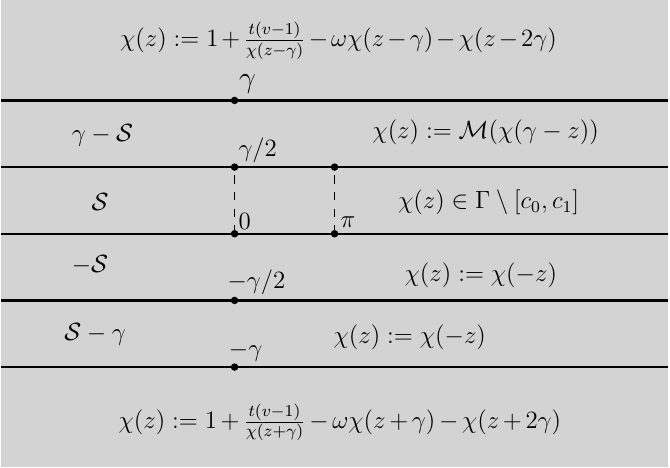}
  \caption{The function $\Xz$ is first defined on $\overline{\Strip}$ by the parametrisation, then on $\gamma-\overline{\Strip}$, then on $-\overline{\Strip}$ and $\overline{\Strip}-\gamma$, and finally on the outer regions using the rules shown. The resulting function is meromorphic on $\mathbb{C}$ because all rules used hold on the boundaries between the regions.}
  \label{fig:Strip_extension}
\end{figure}
Now for $z\in\mathbb{R}$ and $\epsilon>0$ sufficiently small we have $z+i\epsilon\in\Strip$ and $-z+i\epsilon\in\Strip$, so
\begin{align*}
  \Xz(\gamma-z)+\Xz(\gamma+z)&=\lim_{\epsilon\to 0^+}\Xz(\gamma-z-i\epsilon)+\Xz(\gamma+z-i\epsilon)\\
&=\lim_{\epsilon\to 0^+}\Mnn(\Xz(z+i\epsilon))+\Mnn(\Xz(-z+i\epsilon))\\
&=\lim_{\epsilon\to 0^+}\Mnn(\Xz(z+i\epsilon))+\Mnn\left(\overline{\Xz(z+i\epsilon)}\right) \hskip 10mm \text{by~\eqref{unique-chi}}\\
&=1-\omega\Xz(z)+\frac{t(v-1)}{\Xz(z)}  \hskip 10mm \text{by Proposition~\ref{prop:analytic}~\ref{item:cut-diff}},
\end{align*}
where for the last equality we use the fact that $\Xz(z+i\epsilon)$ and $\overline{\Xz(z+i\epsilon)}$ approach $\Xz(z)$ from opposite sides of the cut $[c_{0},c_{1}]$.
That is, for $z\in \rs$
\[
  1+\frac{t(v-1)}{\Xz(z)}=\Xz(\gamma+z)+\omega\Xz(z)+\Xz(z-\gamma).
\]
Now for $\Im(z)\geq0$ we use this equation to define $\Xz(\gamma+z)$ in terms of $\Xz(z)$ and $\Xz(z-\gamma)$: this first gives the value of $\chi$  on the strip $\{ z: |\gamma| \le \Im(z) \le 2|\gamma|\}$, then on the strip of height $|\gamma|$ just above this one, and so on. Analogously,  for $\Im(z)\leq0$ we use this equation to define $\Xz(z-\gamma)$ in terms of $\Xz(z)$ and $\Xz(z+\gamma)$. This extends $\Xz$ to a meromorphic function defined on all of $\mathbb{C}$, noting that poles can appear in $\Xz(z+\gamma)$ either following a root of $\Xz(z)$ or a pole of $\Xz(z)$ or $\Xz(z-\gamma)$, and similarly for $\Xz(z-\gamma)$. By the meromorphic extension, this function $\Xz$  satisfies the first three equations of the proposition, which originally hold in $\Strip$ because of the way we chose to extend~$\Xz$.

Now consider the integral for $\Xz$ occurring in the proposition.
It can be related to an integral for $\Mnn$ by substituting $x=\Xz(\frac{\gamma}{2}+z)$ (so that $x$ runs clockwise along $\partial \Gamma$ as $z$ moves from $0$ to $\pi$), which yields
\[
  \int_0^{\pi}\Xz\left(\frac{\gamma}{2}-z\right)\Xz'\left(\frac{\gamma}{2}+z\right)dz=-\oint_{\vec{\partial\Gamma}}\Mnn(x)dx=-2{\pi i} tv,
\]
where we have used~\eqref{M-prop-gamma} to write $\Mnn(x)=\chi(\gamma/2-z)$ and then Proposition~\ref{prop:analytic}~\ref{item:int}.

Let us finally prove the statement of the proposition dealing with the poles of $\Xz(z)\Xz(\gamma-z)$ in $\overline\Cyl$.  For $z\in\Strip$, we have $\Xz(z)\Xz(\gamma-z)=\Xz(z)\Mnn(\Xz(z))$ (by~\eqref{relations-strip}), which is bounded by Proposition~\ref{prop:analytic}~\ref{item:ext} since $\Xz(z)\in\Gamma\setminus[c_0,c_1]$. Hence $z\mapsto \Xz(z)\Xz(\gamma-z)$ cannot have a pole in $\overline{\Strip}$. 
\end{proof}
In the case $v=1$, the third equation of Proposition~\ref{prop:Xz_characterisation} specialises to~\eqref{diff}, which we can solve in terms of Jacobi's theta function as in Section~\ref{sec:ansatz}. Then the series $\Mnn(x)$, seen as an analytic function, is characterised implicitly by~\eqref{relations-strip}.

\bigskip


\bibliographystyle{abbrv}
\bibliography{oe}

\end{document}